\theoremstyle{remark}
\newtheorem{para}{\bf}[subsection]
\newtheorem{rem}[para]{\bf Remark}
\theoremstyle{definition}
\newtheorem{dfn}[para]{Definition}
\theoremstyle{plain}
\newtheorem{thm}[para]{Theorem}
\newtheorem{lemma}[para]{Lemma}
\newtheorem{cor}[para]{Corollary}
\newtheorem{prop}[para]{Proposition}
\newenvironment{numequation}{\addtocounter{para}{1}
\begin{equation}}{\end{equation}}
\newcommand{\vpi}{{\varpi}}
\newcommand{\vep}{{\varepsilon}}
\newcommand{\bbA}{{\mathbb A}}
\newcommand{\bbB}{{\mathbb B}}
\newcommand{\bbD}{{\mathbb D}}
\newcommand{\bbF}{{\mathbb F}}
\newcommand{\bbG}{{\mathbb G}}
\newcommand{\bbM}{{\mathbb M}}
\newcommand{\bbN}{{\mathbb N}}
\newcommand{\bbP}{{\mathbb P}}
\newcommand{\bbQ}{{\mathbb Q}}
\newcommand{\bbS}{{\mathbb S}}
\newcommand{\bbT}{{\mathbb T}}
\newcommand{\bbU}{{\mathbb U}}
\newcommand{\bbV}{{\mathbb V}}
\newcommand{\bbX}{{\mathbb X}}
\newcommand{\bbZ}{{\mathbb Z}}
\newcommand{\bM}{{\bf M}}
\newcommand{\bT}{{\bf T}}
\newcommand{\frb}{{\mathfrak b}}
\newcommand{\frg}{{\mathfrak g}}
\renewcommand{\frm}{{\mathfrak m}}
\newcommand{\fro}{{\mathfrak o}}
\newcommand{\frt}{{\mathfrak t}}
\newcommand{\frF}{{\mathfrak F}}
\newcommand{\frU}{{\mathfrak U}}
\newcommand{\frX}{{\mathfrak X}}
\newcommand{\cA}{{\mathcal A}}
\newcommand{\cB}{{\mathcal B}}
\newcommand{\cD}{{\mathcal D}}
\newcommand{\cE}{{\mathcal E}}
\newcommand{\cF}{{\mathcal F}}
\newcommand{\cG}{{\mathcal G}}
\newcommand{\cI}{{\mathcal I}}
\newcommand{\cK}{{\mathcal K}}
\newcommand{\cL}{{\mathcal L}}
\newcommand{\cM}{{\mathcal M}}
\newcommand{\cN}{{\mathcal N}}
\newcommand{\cO}{{\mathcal O}}
\newcommand{\cR}{{\mathcal R}}
\newcommand{\cS}{{\mathcal S}}
\newcommand{\cT}{{\mathcal T}}
\newcommand{\cX}{{\mathcal X}}
\newcommand{\cY}{{\mathcal Y}}
\newcommand{\sD}{{\mathscr D}}
\newcommand{\sE}{{\mathscr E}}
\newcommand{\sF}{{\mathscr F}}
\newcommand{\sG}{{\mathscr G}}
\newcommand{\sL}{{\mathscr L}}
\newcommand{\sM}{{\mathscr M}}
\newcommand{\sN}{{\mathscr N}}
\newcommand{\sR}{{\mathscr R}}
\newcommand{\Q}{{\mathbb Q}}
\newcommand{\Z}{{\mathbb Z}}
\newcommand{\Qp}{{\mathbb Q_p}}
\newcommand{\Cp}{{\mathbb C_p}}
\newcommand{\Zp}{{\mathbb Z_p}}
\newcommand{\Fq}{{\mathbb F_q}}
\newcommand{\fronr}{{\hat{\fro}^{\rm nr}}}
\newcommand{\Lnr}{{\hat{L}^{\rm nr}}}
\newcommand{\Ind}{{\rm{Ind}}}
\newcommand{\ind}{{\rm{ind}}}
\newcommand{\Lie}{{\rm{Lie}}}
\newcommand{\Pf}{{\it Proof. }}
\newcommand{\Sp}{{\rm Sp}}
\newcommand{\Spec}{{\rm Spec}}
\newcommand{\Spf}{{\rm Spf}}
\newcommand{\coker}{{\rm coker}}
\newcommand{\eqdef}{\;\stackrel{\text{\tiny def}}{=}\;}
\newcommand{\lra}{\longrightarrow}
\newcommand{\Loc}{{\mathscr Loc}}
\newcommand{\hra}{\hookrightarrow}
\newcommand{\im}{{\rm im}}
\newcommand{\midc}{{\; | \;}}
\newcommand{\ra}{\rightarrow}
\newcommand{\sub}{\subset}
\newcommand{\tsD}{{\widetilde{\sD}}}
\newcommand{\tcD}{{\widetilde{\cD}}}
\newcommand{\tcT}{{\widetilde{\cT}}}
\newcommand{\Coh}{{\rm Coh}}
\newcommand{\GL}{{\rm GL}}
\newcommand{\SL}{{\rm SL}}
\newcommand{\Sym}{{\rm Sym}}
\newcommand{\car}{\stackrel{\simeq}{\longrightarrow}}
\newcommand{\GN}{G_0}
\newcommand{\Gnc}{\bbG(n)^\circ}
\newcommand{\Dgn}{\cD^{\rm an}(\bbG(n)^\circ)}
\newcommand{\Dgnt}{\cD^{\rm an}(\bbG(n)^\circ)_{\theta_0}}
\newcommand{\Dgnn}{D(\bbG(n)^\circ,\GN)}
\newcommand{\Dgnnt}{D(\bbG(n)^\circ,\GN)_{\theta_0}}
\newcommand{\DGNt}{D(\GN)_{\theta_0}}
\newcommand{\ok}{k}
\begin{document}

\title{Locally analytic representations of $\GL(2,L)$ via semistable models of $\bbP^1$}
\author{Deepam Patel}
\address{Department of Mathematics, Purdue University,
150 N. University Street, West Lafayette, IN 47907, U.S.A.}
\email{deeppatel1981@gmail.com}
\author{Tobias Schmidt}
\address{Institut f\"ur Mathematik, Humboldt-Universit\"at zu Berlin,
Rudower Chaussee 25, 12489 Berlin, Germany}
\email{Tobias.Schmidt@mathematik.hu-berlin.de}
\author{Matthias Strauch}
\address{Indiana University, Department of Mathematics, Rawles Hall, Bloomington, IN 47405, U.S.A.}
\email{mstrauch@indiana.edu}

\thanks{M. S. would like to acknowledge the support of the National Science Foundation (award DMS-1202303). T. S. would like to acknowledge support of the Heisenberg programme of Deutsche Forschungsgemeinschaft.}

\begin{abstract} In this paper we study certain sheaves of $p$-adically complete rings of differential operators on semistable models of the projective line over the ring of integers in a finite extension $L$ of ${\mathbb Q}_p$. The global sections of these sheaves can be identified with (central reductions of) analytic distribution algebras of wide open congruence subgroups. It is shown that the global sections functor furnishes an equivalence between the categories of coherent module sheaves and finitely presented modules over the distribution algebras. Using the work of M. Emerton, we then describe admissible representations of $\GL_2(L)$ in terms of sheaves on the projective limit of these formal schemes.
As an application, we show that representations coming from certain equivariant line bundles on Drinfeld's first \'etale covering of the $p$-adic upper half plane are admissible.
\end{abstract}

\maketitle

\tableofcontents

\section{Introduction}

Let $L/\Qp$ be a finite extension with ring of integers $\fro = \fro_L$. In  \cite{PSS2}, and when $L = \Qp$, we considered a particular family $(\bbX_n)_n$ of semistable integral models of $\bbP^1_{\fro}$. These integral models have their natural analogues when we replace the base field $\Qp$ by $L$, and so we will henceforth denote by $\bbX_n$ the analogously defined integral model of $\bbP^1_\fro$, and we let $\frX_n$ denote the completion of $\bbX_n$ along its special fiber. Whereas in \cite{PSS2} we have studied sheaves of {\it logarithmic} arithmetic differential operators on $\frX_n$, we define and study here a different family of sheaves of differential operators. As is shown in \cite{PSS3}, the sheaf $\sD^{{\rm log},\dagger}_{\frX_1,\Q}$ of logarithmic arithmetic differential operators on $\frX_1$ has non-vanishing $H^1$. For this reason we have sought to replace the sheaves of logarithmic arithmetic differential operators by sheaves of differential operators
$\tsD^\dagger_{n,k,\Q}$ which have the property that $\frX_n$ is $\tsD^\dagger_{n,k,\Q}$-affine. By this we mean that every coherent $\tsD^\dagger_{n,k,\Q}$-module $\sM$ is generated (over $\tsD^\dagger_{n,k,\Q}$) by its global sections, and that $H^i(\frX_n,\sM) = 0$ for all $i>0$, cf. \cite{BB81}. In particular, the global sections functor $H^0(\frX_n, -)$ gives an equivalence of categories of coherent $\tsD^\dagger_{n,k,\Q}$-modules and coherent modules over $H^0\Big(\frX_n, \tsD^\dagger_{n,k,\Q}\Big)$. The latter ring is canonically isomorphic to a central reduction of the analytic distribution algebra $\cD^{\rm an}(\bbG(k)^\circ)$, as defined by M. Emerton in \cite[ch. 5]{EmertonA}. The rigid-analytic wide open group $\bbG(k)^\circ$ has the property that $\bbG(k)^\circ(\Cp) = 1+\vpi^k M_2(\frm_\Cp)$, where $\frm_\Cp$ is the maximal ideal of the valuation ring of $\Cp$.

\vskip8pt

Section \ref{models} discusses some preliminaries regarding the integral models $\bbX_n$ and the formal models $\frX_n$. In section \ref{new_sheaves}, we define and study sheaves $\tcD^{(m)}_{n,k}$ (for $k \ge n$) of differential operators on $\bbX_n$. When $m=0$ the sheaf $\tcD^{(0)}_{n,k}$ is generated by the pull-back of $p^k\cT_\bbX$ and the structure sheaf, where $\bbX =\bbX_0 = \bbP^1_\fro$ is the smooth model and $\cT_\bbX$ its relative tangent sheaf over $\bbS = \Spec(\fro)$. By completion, the sheaf $\tcD^{(m)}_{n,k}$ gives rise to a sheaf of $p$-adically complete differential operators on $\frX_n$ which we denote by $\tsD^{(m)}_{n,k}$. Then we consider the inductive limit

$$\tsD^\dagger_{n,k} = \varinjlim_m \tsD^{(m)}_{n,k}$$

\vskip8pt

and we put $\tsD^\dagger_{n,k,\Q} = \tsD^\dagger_{n,k} \otimes_\Z \Q$. The main result of section \ref{loc_n} is that $\frX_n$ is $\tsD^\dagger_{n,k,\Q}$-affine. This partly generalizes the work of C. Huyghe in \cite{Huyghe97}, where she shows $\sD^\dagger_\Q$-affinity for all projective spaces, and, moreover, considers differential operators with coefficients. The sheaf of arithmetic differential operators considered in \cite{Huyghe97} is identical to our sheaf $\tsD^\dagger_{0,0,\Q}$ on the smooth model $\bbX = \bbX_0$. The proofs which lead to $\tsD^\dagger_{n,k,\Q}$-affinity of $\frX_n$ are in general easy variations of the proofs given by Huyghe in the case when $(n,k) = (0,0)$. In many cases we follow her strategy and arguments word for word, and we do not claim any originality here. In some cases we have chosen to slightly expand her arguments. And we have decided to first discuss properties of the sheaves of rings $\tsD^{(m)}_{n,k}$ (in sec. \ref{new_sheaves}) and then discuss coherent modules over these sheaves (in sec. \ref{loc_n}), whereas in \cite{Huyghe97} this is often done in a more parallel way.

 \vskip8pt

In section \ref{loc} we pass to the limit over all $n$ and prove the main technical result of the paper. The sheaves $\tsD^\dagger_{n,n,\Q}$
assemble to a sheaf of infinite order differential operators $\tsD^\dagger_{\infty,\Q}$ on the space $\frX_\infty=\varprojlim_n\frX_n$.
The space $\frX_\infty$ has a natural $\GL_2(L)$-action extending the $\GL_2(\fro)$-action on $\frX_0=\hat{\bbP}^1_{\fro}$ and $\tsD^\dagger_{\infty,\Q}$ is equivariant with respect to this action. In this situation, the above equivalences of categories for each $\frX_n$ assemble to an equivalence of categories between admissible locally analytic $\GL_2(L)$-representations with trivial infinitesimal character and a full subcategory of equivariant $\tsD^\dagger_{\infty,\Q}$-modules.

\vskip8pt

We remark at this point that some of our constructions are related -- in the case of the {\it smooth} formal model -- to the work of K. Ardakov and S. Wadsley, cf. \cite{AW}, and the work of one of us, \cite{SchmidtDIM}. Indeed, the family of sheaves of completed differential operators $\tsD^{(0)}_{0,k,\Q}$, as defined here, is essentially the same family of sheaves as considered in these papers. Furthermore, the sheaf $\tsD^\dagger_{\infty,\Q}$ is a kind of Arens-Michael completion of the sheaf of algebraic differential operators, and as such closely related to the work of Ardakov and Wadsley, cf. \cite{ArdakovICM}. We would also like to point out that our constructions made here, and the arguments used in proofs of the main results, carry over to more general, not necessarily semistable, models, and also to general reductive groups and their flag varieties \cite{HPSS}.

\vskip8pt
In section \ref{examples} we compute the equivariant $\tsD^\dagger_{\infty,\Q}$-modules for a class of examples including
smooth representations, principal series representations and representations coming from the $p$-adic upper half plane.
\vskip8pt

In section \ref{appl} we give an application of our methods and show that representations coming from certain equivariant line bundles on Drinfeld's first \'etale covering \cite{Drinfeld76} of the $p$-adic upper half plane are admissible. We establish this result by verifying, truly in the spirit of localization theory, that the $\tsD^\dagger_{n,n,\Q}$-module associated with the representation is coherent for all $n$. The latter property is verified locally on $\frX_n$ by making use of results of Caro \cite{Caro06}, Noot-Huyghe/Trihan \cite{NH_Trihan07}, Shiho \cite{Shiho_RelativeI} and Tsuzuki \cite{TsuzukiBaseChange} on the coherence of the direct image in crystalline cohomology.
\vskip8pt

We therefore see that, in this instance, techniques and results from the theory
of crystalline cohomology and arithmetic $\sD$-modules can be successfully applied
in the context of locally analytic representation theory, and we hope that this develops into a systematic and fruitful interaction between these fields.

\vskip8pt

{\it Acknowledgements.} We thank Christine Huyghe and Elmar Grosse-Kl\"onne for some interesting discussions related to the material of this paper. Parts of this paper were written at MSRI in Fall 2014. We are grateful to MSRI and its staff for providing excellent working conditions.

\vskip8pt

{\it Notation.} $L$ is a finite extension of $\Qp$, with ring of integers $\fro$, uniformizer $\vpi$. We let $q$ denote the cardinality of the residue field $\fro/(\vpi)$ which we also denote by $\Fq$.

\section{The integral models $\bbX_n$ and their formal completions $\frX_n$}\label{models}

\subsection{Construction via blowing up}\label{subsect-con_blow_up} The integral models $\bbX_n$ have been discussed in detail in \cite{PSS2} in the case when $\fro = \Zp$. Since the generalization to the ring of integers $\fro$ of the finite extension $L/\Qp$ is straightforward, we will only briefly recall the construction. Let $\Lambda \sub L^{\oplus 2}$ be a lattice, and let $\bbX = \bbX_0 = {\bbP}{\rm roj}(\Lambda)$ be the smooth model of $\bbP^1_L = \bbP{\rm roj}(L^{\oplus 2})$ associated to $\Lambda$. While it is eventually useful to note that all constructions in sections \ref{models}-\ref{loc_n} can be done canonically for an arbitrary lattice $\Lambda$ we will henceforth consider only the case when $\Lambda = \fro \oplus \fro$ is the standard lattice.

\vskip8pt

Blowing up $\bbX_0$ in the $\Fq$-rational points of its special fiber (i.e., blowing up the corresponding ideal sheaf) gives a model $\bbX_1$ whose special fiber has $q+2$ irreducible components, all of which are isomorphic to $\bbP^1_\Fq$. Exactly one of these components intersects transversely every other irreducible component, and we call it the 'central' component, whereas the other components are called the 'outer' components. The smooth $\Fq$-rational points lie only on the outer components, and blowing up $\bbX_1$ in these smooth $\Fq$-valued points of its special fiber produces the model $\bbX_2$. In general, $\bbX_{n+1}$ is obtained by blowing up the smooth $\Fq$-rational points of the special fiber of $\bbX_n$. For $n \ge n'$ we denote the blow-up morphism from $\bbX_n$ to $\bbX_{n'}$ by ${\rm pr}_{n,n'}$. The morphism ${\rm pr}_{n,0}: \bbX_n \ra \bbX = \bbX_0$ is often denoted by ${\rm pr}$ when $n$ is clear from the context.

\vskip8pt

Let $\frX_n$ be the completion of $\bbX_n$ along its special fiber. $\frX_n$ has an open affine covering by 'dumbbell' shaped formal schemes of the form

$$\Spf\left(\fro \langle x,z \rangle \Big[\frac{1}{x^{q-1}-1},\frac{1}{z^{q-1}-1}\Big] \Big/ \Big(xz-\vpi\Big)\right) \;,$$

\vskip8pt

together with formal affine lines $\widehat{\bbA}^1_\fro = \Spf(\fro\langle t \rangle)$ (which are only needed for the 'outermost' components). As above we denote the blow-up morphisms by ${\rm pr}_{n,n'}$. Removing the smooth $\Fq$-rational points from $\frX_n$ gives an open subscheme $\frX_n^\circ \sub \frX_n$. The morphism ${\rm pr}_{n+1,n}: \frX_{n+1} \ra \frX_n$ induces then an isomorphism from the preimage ${\rm pr}_{n+1,n}^{-1}(\frX_n^\circ)$ to $\frX_n^\circ$, so that we can consider $\frX_n^\circ$ as an open subscheme of $\frX_{n+1}$, which is actually contained in $\frX_{n+1}^\circ$. The inductive limit $\varinjlim_n \frX_n^\circ$ is a formal model of the $p$-adic upper half plane over $L$, cf. section \ref{the_padic_upper_half_plane}.

\vskip12pt

\subsection{Group actions on $\bbX_n$}\label{groups}

\begin{para} {\it The group schemes $\bbG(k)$.} Put

$$\bbG(0) = \bbG = \GL_{2,\fro} = \Spec\left(\fro\left[a,b,c,d,\frac{1}{\Delta}\right]\right) \;,$$

\vskip8pt

where $\Delta = ad-bc$, and the co-multiplication is the one given by the usual formulas. For $k \ge 1$ let $a_k$, $b_k$, $c_k$, and $d_k$ denote indeterminates. Define an affine group scheme $\bbG(k)$ over $\fro$ by setting

$$\cO(\bbG(k)) = \fro\left[a_k,b_k,c_k,d_k,\frac{1}{\Delta_k}\right] \;, \;\; \mbox{where } \; \Delta_k = (1+\vpi^k a_k)(1+\vpi^k d_k)-\vpi^{2k}b_kc_k \;,$$

\vskip8pt

and let the co-multiplication

$$\cO(\bbG(k)) \lra \cO(\bbG(k))\otimes_{\fro} \cO(\bbG(k)) = \fro\left[a_k,b_k,c_k,d_k,a_k',b_k',c_k',d_k',\frac{1}{\Delta_k},\frac{1}{\Delta_k'}\right]$$

\vskip8pt

be given by the formulas

$$\begin{array}{lcccccccc}
a_k & \mapsto & a_k &+& a_k' &+& \vpi^k a_k a_k' &+& \vpi^k b_k c_k' \;,\\
b_k & \mapsto & b_k &+& b_k' &+& \vpi^k a_k b_k' &+& \vpi^k b_k d_k' \;,\\
c_k & \mapsto & c_k &+& c_k' &+& p^k c_k a_k' &+& \vpi^k d_k c_k' \;,\\
d_k & \mapsto & d_k &+& d_k' &+& \vpi^k d_k d_k' &+& p^k c_k b_k' \;.\\
\end{array}$$

\vskip8pt

These group schemes are connected by homomorphisms $\bbG(k) \ra \bbG(k-1)$ given on the level of algebras as follows:

$$a_{k-1} \mapsto \vpi a_k \;, \;\; b_{k-1} \mapsto \vpi b_k \;, \;\; c_{k-1} \mapsto \vpi c_k \;, \;\; d_{k-1} \mapsto \vpi d_k \;, $$

\vskip8pt

if $n>1$. For $n=1$ we put

$$a \mapsto 1+\vpi a_1 \;, \;\; b \mapsto \vpi b_1 \;, \;\; c \mapsto \vpi c_1 \;, \;\; d \mapsto 1+\vpi d_1 \;. $$

\vskip8pt

For a flat $\fro$-algebra $R$ the homomorphism $\bbG(k) \ra \bbG(0) = \bbG$ induces an isomorphism of $\bbG(k)(R)$ with a subgroup of $\bbG(R)$, namely

$$\bbG(k)(R)  = \left\{\left(\begin{array}{cc} a & b \\ c & d \end{array}\right) \in \bbG(R) \; \Bigg| \; a-1, b, c, d-1 \in \vpi^k R \;\right\} \;.$$

\vskip8pt

Of course, the preceding formulas defining the group schemes are derived formally from this description by setting $a=1+\vpi^k a_k$, $b = \vpi^k b_k$, $c = \vpi^k c_k$, and $d = 1+\vpi^k d_k$.
\end{para}

\begin{para}{The rigid-analytic groups $\bbG(k)^{{\rm rig}}$ and $\bbG(k)^\circ$}\label{analyticG(k)}  Let $\widehat{\bbG}(k)$ be the completion of $\bbG(k)$ along its special fiber $\bbG(k)_{\Fq}$. This is a formal group scheme over $\Spf(\fro)$. Its generic fiber in the sense of rigid geometry is an affinoid rigid-analytic group over $L$ which we denote by $\bbG(k)^{\rm rig}$. We have for any completely valued field $L'/L$ (whose valuation extends the $p$-adic valuation)

$$\bbG(k)^{\rm rig}(L')  = \left\{\left(\begin{array}{cc} a & b \\ c & d \end{array}\right) \in \bbG(\fro_{L'}) \; \Bigg| \; a-1, b, c, d-1 \in \vpi^k \fro_{L'} \;\right\} \;.$$

\vskip8pt

Furthermore, we let $\widehat{\bbG}(k)^\circ$ be the completion of $\bbG(k)$ in the closed point corresponding to the unit element in $\bbG(k)_{\Fq}$. This is a formal group scheme over $\Spf(\fro)$ (not of topologically finite type). Its generic fiber in the sense of Berthelot, cf. \cite[sec. 7.1]{deJongCrystalline}, is a so-called 'wide open' rigid-analytic group over $L$ which we denote by $\bbG(k)^\circ$. We have for any completely valued field $L'/L$ (whose valuation extends the $p$-adic valuation)

$$\bbG(k)^\circ(L')  = \left\{\left(\begin{array}{cc} a & b \\ c & d \end{array}\right) \in \bbG(\fro_{L'}) \; \Bigg| \; a-1, b, c, d-1 \in \vpi^k \frm_{\fro_{L'}} \;\right\} \;.$$

\vskip8pt
\end{para}

\begin{prop} (i) For $k \ge n$ the natural action of $\GL_{2,L}$ on $\bbP^1_L$ extends to an action of the group scheme $\bbG(k)$ on $\bbX_n$.

\vskip8pt

(ii) For $k \ge n$ the formal group scheme $\widehat{\bbG}(k)$ acts on $\frX_n$. The action of the special fiber $\widehat{\bbG}(k)_s$ on the special fiber $\frX_{n,s}$ of $\frX_n$ is trivial if $k>n$.

%\vskip8pt

%(iii) $\widehat{\bbG}(n)$ acts transitively on the smooth locus of each 'outermost' irreducible component of $\frX_n$, i.e., the irreducible components of $\frX_n \setminus \frX_{n-1}^\circ$.
\end{prop}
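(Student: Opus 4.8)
The plan is to prove (i) and the special--fibre part of (ii) simultaneously, by induction on $n$, in the following sharpened form: \emph{for every $k \ge n$ the group scheme $\bbG(k)$ acts on $\bbX_n$ compatibly with the blow-down morphisms ${\rm pr}_{n,n'}$; under the canonical identifications $\bbX_{n,L} \cong \bbP^1_L$ and $\bbG(k)_L \cong \GL_{2,L}$ this action restricts on generic fibres to the standard action of $\GL_{2,L}$ on $\bbP^1_L$; and moreover the action is trivial modulo $\vpi^{k-n}$, i.e. its base change along $\fro \to \fro/\vpi^{k-n}$ equals the second projection $\bbG(k)_{\fro/\vpi^{k-n}} \times (\bbX_n \otimes \fro/\vpi^{k-n}) \to \bbX_n \otimes \fro/\vpi^{k-n}$.} Granting this, (i) is the existence part, and for $k > n$ the action is trivial modulo $\vpi$, which --- since $\widehat{\bbG}(k)_s = \bbG(k)_{\Fq}$ and $\frX_{n,s} = \bbX_{n,s}$ --- is precisely the triviality asserted in (ii). (For $k = n$ the condition modulo $\vpi^0$ is vacuous, as it must be: $\bbG(n)_{\Fq}$ moves the exceptional curves of $\bbX_{n,s}$.)

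For the base case $n = 0$ one computes directly: $\bbG(k)$ acts on $\bbX_0 = \bbP^1_\fro$ through $\bbG(k) \to \bbG(0) = \GL_{2,\fro}$, which on coordinate rings is $a \mapsto 1 + \vpi^k a_k$, $b \mapsto \vpi^k b_k$, $c \mapsto \vpi^k c_k$, $d \mapsto 1 + \vpi^k d_k$; reducing modulo $\vpi^k$ it factors through the counit $\cO(\GL_{2,\fro/\vpi^k}) \to \fro/\vpi^k$, so $\bbG(k)_{\fro/\vpi^k} \to \GL_{2,\fro/\vpi^k}$ is the trivial homomorphism and hence so is the action on $\bbX_0 \otimes \fro/\vpi^k$; inverting $\vpi$ makes $\bbG(k)_L \to \GL_{2,L}$ an isomorphism carrying the action to the standard one. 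For the inductive step, assume the sharpened statement for $n$ and fix $k \ge n+1$. Write $Z_n \subset \bbX_{n,s}$ for the reduced subscheme of smooth $\Fq$-rational points of the special fibre, so that ${\rm pr}_{n+1,n}\colon \bbX_{n+1} \to \bbX_n$ is the blow-up along $Z_n$. Since $k-n \ge 1$, the action $a\colon \bbG(k) \times \bbX_n \to \bbX_n$ is trivial modulo $\vpi$, hence restricts to the projection on special fibres; as $Z_n$ is an $\Fq$-scheme contained in $\bbX_{n,s}$, it follows that $a$ restricts to an action $\bbG(k) \times Z_n \to Z_n$, equivalently $a^{-1}(Z_n) = {\rm pr}_2^{-1}(Z_n)$ as closed subschemes (${\rm pr}_2$ the projection to $\bbX_n$). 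Consequently $a \circ ({\rm id}_{\bbG(k)} \times {\rm pr}_{n+1,n})$ pulls the ideal sheaf of $Z_n$ back to an invertible sheaf on $\bbG(k) \times \bbX_{n+1}$, so by the universal property of the blow-up it factors uniquely through ${\rm pr}_{n+1,n}$, giving a morphism $\bbG(k) \times \bbX_{n+1} \to \bbX_{n+1}$; uniqueness of this factorization forces the group-action axioms, and since ${\rm pr}_{n+1,n}$ is an isomorphism on generic fibres the resulting action still restricts to the standard $\GL_{2,L}$-action there.

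The remaining point is that the lifted action is trivial modulo $\vpi^{k-n-1}$; nothing is needed if $k = n+1$, so assume $k \ge n+2$. Over $\bbX_{n+1} \setminus E_n \cong \bbX_n \setminus Z_n$ (with $E_n$ the exceptional locus) the action is the old one, trivial modulo $\vpi^{k-n}$ and a fortiori modulo $\vpi^{k-n-1}$, so it suffices to work over a formal neighbourhood of a point $z \in Z_n$. There $\bbX_n$ is smooth over $\fro$, so the complete local ring is $\fro[[x]]$ with $z = V(\vpi,x)$, and the inductive triviality modulo $\vpi^{k-n}$ says the coaction has the form $x \mapsto x + \vpi^{k-n}\psi$ for some $\psi$ in the appropriate completion of $\cO(\bbG(k)) \otimes_\fro \fro[[x]]$. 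The blow-up of $z$ has the two standard charts, with coordinate $x' = x/\vpi$ (so $x = \vpi x'$), respectively $v = \vpi/x$ (so $\vpi = xv$, the nodal chart being $\fro[[x,v]]/(\vpi - xv)$). In the first chart the coaction becomes $x' \mapsto x' + \vpi^{k-n-1}\psi$; in the second, writing $x + \vpi^{k-n}\psi = x\,(1 + x^{k-n-1}v^{k-n}\psi)$ with the second factor a unit $\equiv 1 \pmod{\vpi^{k-n-1}}$, one gets $x \mapsto x + \vpi^{k-n}\psi$ and $v \mapsto v\,(1 + x^{k-n-1}v^{k-n}\psi)^{-1} \equiv v \pmod{\vpi^{k-n-1}}$. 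In both charts the coordinates are sent to themselves modulo $\vpi^{k-n-1}$, and these charts together with $\bbX_{n+1}\setminus E_n$ cover $\bbX_{n+1}$; hence the action on $\bbX_{n+1}\otimes\fro/\vpi^{k-n-1}$ is trivial, which closes the induction.

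Finally, completing $a\colon \bbG(k) \times_\fro \bbX_n \to \bbX_n$ along special fibres yields the action of the formal group scheme $\widehat{\bbG}(k)$ on $\frX_n$ in (ii), since the completion of $\bbG(k)\times_\fro\bbX_n$ along $\bbG(k)_{\Fq}\times_{\Fq}\bbX_{n,s}$ is the completed fibre product of $\widehat{\bbG}(k)$ and $\frX_n$ over $\Spf(\fro)$; and the triviality of $\widehat{\bbG}(k)_s$ on $\frX_{n,s}$ for $k > n$ is the reduction modulo $\vpi$ of the sharpened claim. The step I expect to demand the most care is the chart-by-chart computation in the third paragraph --- verifying that the "depth of triviality" drops by exactly one under each blow-up, uniformly in $k$, and in particular that this persists at the nodal chart $\fro[[x,v]]/(\vpi - xv)$; by contrast the universal property of blow-ups and the compatibility of completion along special fibres with products are routine.
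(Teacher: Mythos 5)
Your proof is correct, and it reaches the conclusion by a route that differs from the paper's in two respects. Both arguments are inductions on $n$ over the tower of blow-ups, and both turn on the same pivot: once $k>n$ the $\bbG(k)$-action on $\bbX_n$ is trivial modulo $\vpi$, hence fixes the center $Z_n$ of the next blow-up. The paper first reduces to $k=n$ via $\bbG(k)\to\bbG(n)$ and then lifts the action algebraically through the Rees algebra: since $\vpi\in I$ and the coaction is trivial mod $\vpi$, the coaction preserves $I$, hence every $I^m$, hence descends to ${\rm Proj}\big(\bigoplus_m I^m\big)$; this requires choosing equivariant affine charts. You instead keep $k$ arbitrary and lift via the universal property of the blow-up, which is chart-free at the lifting step and makes the uniqueness (hence the group-action axioms and the compatibilities) automatic. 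The cost of your sharper induction hypothesis (triviality mod $\vpi^{k-n}$ rather than just mod $\vpi$) is the third paragraph, where you verify by hand that the depth of triviality drops by exactly one per blow-up, including at the nodal chart; that computation is sound as written (note that in the chart $xv=\vpi$ you never actually need to invert $1+x^{k-n-1}v^{k-n}\psi$: from $c(x)c(v)=\vpi=xv$ and $x$ a nonzerodivisor one gets $(1+u)c(v)=v$ and hence $c(v)\equiv v \pmod{\vpi^{k-n-1}}$ directly). You could in fact have obtained this step for free, as the paper implicitly does, by observing that the lifted $\bbG(k)$-action on $\bbX_{n+1}$ factors through $\bbG(n+1)$ and that the transition map $\bbG(k)\to\bbG(n+1)$ reduces, modulo $\vpi^{k-n-1}$, to the composite of the counit with the unit section (visible from the explicit formulas $a_{n+1}\mapsto\vpi^{k-n-1}a_k$, etc.). What your version buys is a genuinely quantitative statement (trivial modulo $\vpi^{k-n}$, not merely on the special fiber) and, as a byproduct, an explicit proof of the second assertion of (ii), which the paper leaves implicit in its one-line deduction "by passing to formal completions."
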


\Pf (i) Since there is a morphism of group schemes $\bbG(k) \ra \bbG(n)$ for $k\geq n$, we may restrict to $k=n$. We use induction on $n$, the case of $n=0$ being obvious.
Suppose the claim holds for $n-1$. Let $\pi:\bbX_n \ra \bbX_{n-1}$ be the blow-up morphism and let $Y \sub \bbX_{n-1}$ be an arbitrary open affine subscheme stable under $\bbG(n-1)$. Let $I \sub \cO(Y)$ be the ideal whose blow-up gives rise to $\pi^{-1}(Y)$. Then $\pi \in I$.
Let $A = \cO(\bbG(n))$ and $B = \cO(Y)$. Denote by $I_s, A_s$ and $B_s$ the reductions mod $\pi$.
Let $c: B \ra B \otimes A$ be the comorphism corresponding to the $\bbG(n-1)$-action on $Y$ and let $c'$ be its reduction mod $\pi$.
The special fibre of the natural $\fro$-morphism $\bbG(n)\ra\bbG(n-1)$ factors through the unit section
$\Spec\; \Fq\ra\bbG(n-1)_s$ of the group scheme $\bbG(n-1)_s$. This means $c'(b')=b'\otimes 1$ for all $b'\in B'$ and hence

$$c(I) \sub I \otimes A + \pi( B\otimes A) = I \otimes A \;.$$

\vskip8pt

Since $c$ is a ring homorphism, this implies inductively $c(I^m)\sub I^m \otimes A$ for all $m$. This yields an induced comorphism $c: R(I) \ra R(I)\otimes A$ where $R(I):=\oplus_{m\geq 0} I^m$ denotes the blow-up algebra. Hence, the group scheme $\bbG(n)$ acts naturally on $\pi^{-1}(Y)={\rm Proj}(R(I))$. This implies the claim.

\vskip5pt

 (ii) This follows from {\rm (i)} by passing to formal completions. \qed

\vskip12pt

\subsection{A very ample line bundle on $\bbX_n$} Let ${\rm pr}: \bbX_n \ra \bbX = \bbX_0$ be the blow-up morphism, and let $\cI_n \sub \bbX$ be the ideal sheaf that is blown up. We have $\cI_n \otimes_\Z \Q = \cO_{\bbX} \otimes_\Z \Q$ from which we deduce the existence of some $N = N(n) \in \Z_{>0}$ such that\footnote{In fact, these inclusions hold for $N(n) = n$, but we will not need this.}

\begin{numequation}\label{invert_p_1}
p^N \cO_\bbX \sub \cI_n \sub \cO_{\bbX} \;.
\end{numequation}

Put $\cS = \bigoplus_{s \ge 0} \cI_n^s$, then $\bbX_n$ is glued together from schemes ${\bf Proj}(\cS(U))$ for affine open subsets $U \sub \bbX$. On each ${\bf Proj}(\cS(U))$ there is an invertible sheaf $\cO(1)$, and these glue together to give an invertible sheaf $\cO(1)$ on $\bbX_n$ which we will denote $\cO_{\bbX_n/\bbX}(1)$ (cf. the discussion in \cite[ch. II,\S 7]{HartshorneA}.) This invertible sheaf is in fact the inverse image ideal sheaf ${\rm pr}^{-1}(\cI_n) \cdot \cO_{\bbX_n}$, cf. \cite[ch. II, 7.13]{HartshorneA}. From \ref{invert_p_1} conclude that

\begin{numequation}\label{invert_p_2}
p^N \cO_{\bbX_n} \sub \cO_{\bbX_n/\bbX}(1) \sub \cO_{\bbX_n} \hskip10pt \mbox{and} \hskip10pt \cO_{\bbX_n} \sub \cO_{\bbX_n/\bbX}(-1) \sub p^{-N}\cO_{\bbX_n}\;.
\end{numequation}

And for any $r \ge 0$ we get

\begin{numequation}\label{invert_p_3}
p^{rN} \cO_{\bbX_n} \sub \cO_{\bbX_n/\bbX}(1)^{\otimes r} \sub \cO_{\bbX_n} \hskip10pt \mbox{and} \hskip10pt \cO_{\bbX_n} \sub \cO_{\bbX_n/\bbX}(-1)^{\otimes r} \sub p^{-rN}\cO_{\bbX_n}\;.
\end{numequation}

\begin{lemma}\label{direct_im_str_sh} Let $0 \le n' \le n$. (i) $({\rm pr}_{n,n'})_*(\cO_{\bbX_n}) = \cO_{\bbX_{n'}}$.

\vskip8pt

(ii) For all $i>0$ one has $R^i ({\rm pr}_{n,n'})_*(\cO_{\bbX_n}) = 0$.
\end{lemma}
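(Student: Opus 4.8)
The plan is to reduce both statements to a local, affine computation on the base $\bbX_{n'}$, and then invoke the theory of blow-ups together with the standard cohomology of $\mathcal{O}(d)$ on projective space over a Noetherian ring. First I would observe that it suffices to treat the case $n' = n-1$, since ${\rm pr}_{n,n'}$ factors as a composition of successive blow-up morphisms ${\rm pr}_{j,j-1}$, and the Leray spectral sequence (together with part (i), applied inductively, to push forward the vanishing in part (ii)) lets one assemble the general case from consecutive steps. So from now on let $\pi = {\rm pr}_{n,n-1}\colon \bbX_n \to \bbX_{n-1}$ be a single blow-up, say of the ideal sheaf $\cI \subset \cO_{\bbX_{n-1}}$ cutting out the smooth $\Fq$-rational points of the special fibre.

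Next I would work locally: choose an affine open $U = \Spec(B) \subset \bbX_{n-1}$ on which $\cI$ is generated by finitely many sections, so that $\pi^{-1}(U) = {\bf Proj}(R)$ with $R = \bigoplus_{s \ge 0} I^s$ the Rees algebra of the ideal $I = \cI(U) \subset B$. Then $({\rm pr}_{n,n-1})_*\cO_{\bbX_n}$ restricted to $U$ is $H^0({\bf Proj}(R), \cO)$ and $R^i({\rm pr}_{n,n-1})_*\cO_{\bbX_n}$ restricted to $U$ is $H^i({\bf Proj}(R), \cO)$, since $U$ is affine. The key geometric input is that the centre of the blow-up is a disjoint union of copies of $\Spec \Fq$ sitting as Cartier-divisor-free points (reduced points on a regular surface), so locally the blown-up ideal is generated by a regular sequence of length $2$; in fact $\bbX_{n-1}$ is regular (it is obtained from the smooth $\bbP^1_\fro$ by successively blowing up smooth closed points on fibres), so $I$ is locally generated by two elements forming a regular sequence, and $R = B[It] \cong B[u_0,u_1]/(\text{relations})$ with the exceptional fibre a $\bbP^1$. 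From here the computation $H^0({\bf Proj}(R),\cO) = B$ and $H^i({\bf Proj}(R),\cO) = 0$ for $i>0$ is the classical fact that blowing up a regularly embedded smooth centre of codimension $2$ has trivial higher direct images of the structure sheaf and leaves $H^0$ unchanged --- this can be done by the local cohomology / Čech computation on the chart $\Spec B \setminus V(I)$ glued with the two standard affine charts of the exceptional $\bbP^1$, or by citing the projection formula and the fact that $R\pi_*\cO_{\bbX_n} = \cO_{\bbX_{n-1}}$ for such blow-ups (e.g.\ \cite[ch. II, exc. 7.11 and ch. III]{HartshorneA}, or the more general statement for rational singularities).

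The main obstacle I anticipate is bookkeeping rather than conceptual: one must be careful that the centre being blown up at each stage really is a regularly embedded smooth subscheme of the (regular) ambient scheme, so that the ``blow-up of a smooth codimension-two centre'' computation applies verbatim; this requires knowing that $\bbX_{n-1}$ is regular and that the smooth $\Fq$-rational points of its special fibre are regular points of $\bbX_{n-1}$ lying on a single component, i.e.\ Cartier on that component but codimension $2$ in the total space. Once that is in place, the vanishing $R^i\pi_*\cO = 0$ for $i \ge 1$ (indeed only $i=1$ can be nonzero, since the fibres are at most one-dimensional) and the identity $\pi_*\cO_{\bbX_n} = \cO_{\bbX_{n-1}}$ follow from the normality of $\bbX_{n-1}$ (giving $\pi_*\cO = \cO$ since $\pi$ is proper birational with reduced connected fibres) and from the explicit description of $R$ on each chart. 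Finally I would reassemble: for general $0 \le n' \le n$, writing ${\rm pr}_{n,n'} = {\rm pr}_{n'+1,n'} \circ \cdots \circ {\rm pr}_{n,n-1}$, the Grothendieck spectral sequence for composition of derived pushforwards, combined with the one-step results and induction on $n - n'$, yields $({\rm pr}_{n,n'})_*\cO_{\bbX_n} = \cO_{\bbX_{n'}}$ and $R^i({\rm pr}_{n,n'})_*\cO_{\bbX_n} = 0$ for $i>0$, completing the proof.
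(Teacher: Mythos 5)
Your proposal is correct and follows essentially the same route as the paper: part (i) via normality of $\bbX_{n'}$ and the Zariski Main Theorem argument for a proper birational morphism, and part (ii) by reducing to a single blow-up step ${\rm pr}_{n,n-1}$ via the Leray spectral sequence and then analyzing the situation locally at a blown-up point, where only $R^1$ can be nonzero because the fibres are at most one-dimensional. The paper carries out your "local computation" option explicitly as a two-chart \v{C}ech/Laurent-expansion calculation for the blow-up of $(x,\vpi)\subset\fro[x]$, whereas you also offer the equivalent shortcut of citing the general vanishing of $R^i\pi_*\cO$ for blow-ups along regularly embedded centres; both are valid since the centre is indeed a regular point of the regular two-dimensional scheme $\bbX_{n-1}$.
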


\Pf (i) The morphism ${\rm pr}_{n,n'}: \bbX_n \ra \bbX_{n'}$ is a birational projective morphism of noetherian integral schemes, and $\bbX_{n'}$ is normal. The assertion then follows exactly as in the proof of Zariski's Main Theorem as given in \cite[ch. III, Cor. 11.4]{HartshorneA}.

\vskip8pt

(ii) We note that $R^i ({\rm pr}_{n,n'})_*(\cO_{\bbX_n})$ vanishes when $i>1$, by \cite[ch. III, 11.2]{HartshorneA}. So we will only consider the case $i=1$ now. Using the Leray spectral sequence for the higher direct images under the composite maps ${\rm pr}_{n-1,n-2} \circ {\rm pr}_{n,n-1}$, together with (i), we see that it suffices to show that
$R^1 ({\rm pr}_{n,n-1})_*(\cO_{\bbX_n}) = 0$. The sheaf in question certainly vanishes on $\bbX_{n-1}^\circ$, because the restriction of ${\rm pr}_{n,n-1}$ is an isomorphism

$$({\rm pr}_{n,n-1})^{-1}\Big(\bbX_{n-1}^\circ\Big) \stackrel{\simeq}{\lra} \bbX_{n-1}^\circ \;. $$

\vskip8pt

The question is local, and by what we have just seen, it suffices to consider the stalk of the sheaf $R^1 ({\rm pr}_{n,n-1})_*(\cO_{\bbX_n})$ at a smooth $\Fq$-rational  point $P$ of the special fiber of $\bbX_{n-1}$. Let $x$ be a local coordinate at $P$, so that $P$ corresponds to the ideal $(x,\vpi) \sub \fro[x]$. Put

$$R = \fro[x] \;, \hskip10pt R' = \fro[x,z]/(xz-\vpi) \;, \hskip10pt R'' = \fro[x,t]/(x-\vpi t) \;.$$

\vskip8pt

Blowing up $\Spec(R)$ in the ideal $(x,\vpi)$ gives a scheme which is covered by

$$V_0 = \Spec(R') \hskip10pt \mbox{and} \hskip10pt V_1 = \Spec(R'') \;,$$

\vskip8pt

where these schemes are glued together via the relation $zt=1$. For $f \in \fro[x] \; \setminus \; (x,p)$ we consider the open affine subset $D_f = \Spec(R_f) \sub \Spec(R)$, which is an open neighborhood of $P$. The family of all $D_f$ forms a fundamental system of open neighborhoods of $P$. The preimage $({\rm pr}_{n,n-1})^{-1}\Big(D_f\Big)$ is then the same as the blow-up of $D_f$ in $P$, i.e.,

$$({\rm pr}_{n,n-1})^{-1}\Big(D_f\Big) =  \Spec(R'_f) \cup \Spec(R''_f) \;.$$

\vskip8pt

We now compute $H^1\left(({\rm pr}_{n,n-1})^{-1}\Big(D_f\Big),\cO_{\bbX_n}\right)$ as \v{C}ech cohomology. What we then have to show is that the canonical map

$$R'_f \oplus R''_f \lra R'_f\left[\frac{1}{z}\right] = R''_f\left[\frac{1}{t}\right] \;, \;\; (f_1,f_2) \mapsto f_1-f_2 \;,$$

\vskip8pt

is surjective. An element in the group on the right hand side can be written as a finite sum $\sum_{j \in \Z} a_j t^j$ with $a_j \in R_f$. Then we have $f_1 \eqdef \sum_{j <0} a_j t^j = \sum_{j>0} a_{-j}z^j \in R'_f$ and $f_2 \eqdef -\sum_{j \ge 0} a_j t^j \in R''_f$. This shows the surjectivity of the map in question and the vanishing of $H^1\left(({\rm pr}_{n,n-1})^{-1}\Big(D_f\Big),\cO_{\bbX_n}\right)$. This in turn means that the stalk of $R^1 ({\rm pr}_{n,n-1})_*(\cO_{\bbX_n})$ at $P$ vanishes, and hence $R^1 ({\rm pr}_{n,n-1})_*(\cO_{\bbX_n}) = 0$. \qed

\vskip8pt

\begin{lemma}\label{v_ample_sh_lemma} There are $a_0, r_0 \in \Z_{>0}$ such that the line bundle

\begin{numequation}\label{v_ample_sh_disp}
\cL_n = \cO_{\bbX_n/\bbX}(1) \otimes {\rm pr}^*\Big(\cO_\bbX(a_0)\Big)
\end{numequation}

on $\bbX_n$ has the following properties:

\vskip8pt

(i) $\cL_n$ is very ample over $\Spec(\fro)$, and it is very ample over $\bbX$;

\vskip8pt

(ii) for all $r \ge r_0$, all $d \in \Z$, and all $i>0$

$$R^i {\rm pr}_* \Big(\cL_n^{\otimes r} \otimes {\rm pr}^*(\cO_\bbX(d))\Big)  = 0 \;;$$

\vskip8pt

(iii) for all $r \ge r_0$, all $d \ge 0$, and all $i>0$

$$H^i\Big(\bbX_n, \cL_n^{\otimes r} \otimes {\rm pr}^*(\cO_\bbX(d))\Big)  = 0 \;.$$

\vskip8pt
\end{lemma}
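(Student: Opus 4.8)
The plan is to reduce everything to standard facts about the relative $\mathbf{Proj}$ construction ${\rm pr}\colon\bbX_n\to\bbX$ together with ampleness on $\bbX=\bbP^1_\fro$, following Huyghe's argument in the case $(n,k)=(0,0)$. Recall that $\cO_{\bbX_n/\bbX}(1)={\rm pr}^{-1}(\cI_n)\cdot\cO_{\bbX_n}$ is the tautological sheaf of the blow-up, so it is ${\rm pr}$-ample by the general theory of blow-ups (cf.\ \cite[ch.~II, 7.16]{HartshorneA}); more precisely, since $\bbX_n=\mathbf{Proj}(\cS)$ with $\cS=\bigoplus_{s\geq0}\cI_n^s$ a sheaf of graded $\cO_\bbX$-algebras generated in degree one, there is an integer $r_1$ with $R^i{\rm pr}_*\bigl(\cO_{\bbX_n/\bbX}(r)\bigr)=0$ for all $i>0$ and $r\geq r_1$, and ${\rm pr}_*\bigl(\cO_{\bbX_n/\bbX}(r)\bigr)$ is a coherent sheaf on the noetherian scheme $\bbX$.

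\textbf{Step (i).} First I would choose $a_0$ large enough that $\cO_\bbX(a_0)$ is very ample on $\bbX$ over $\Spec(\fro)$ and such that $\cL_n=\cO_{\bbX_n/\bbX}(1)\otimes{\rm pr}^*\cO_\bbX(a_0)$ is ${\rm pr}$-very ample; since the tensor product of a ${\rm pr}$-ample sheaf with the pullback of an ample sheaf on the base is very ample over $\Spec(\fro)$ for $a_0\gg0$ (the standard argument: a relatively very ample sheaf twisted by enough of an ample base bundle becomes absolutely very ample, cf.\ \cite[ch.~II, ex.~7.14(b)]{HartshorneA}), this gives both very-ampleness assertions at once. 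Here I use that $\bbX_n$ is projective over $\fro$, which holds as it is a blow-up of $\bbP^1_\fro$ along a closed subscheme.

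\textbf{Step (ii).} For the vanishing $R^i{\rm pr}_*\bigl(\cL_n^{\otimes r}\otimes{\rm pr}^*\cO_\bbX(d)\bigr)=0$, the projection formula gives
$$R^i{\rm pr}_*\Bigl(\cL_n^{\otimes r}\otimes{\rm pr}^*\cO_\bbX(d)\Bigr)\;\cong\;R^i{\rm pr}_*\Bigl(\cO_{\bbX_n/\bbX}(r)\Bigr)\otimes\cO_\bbX(ra_0+d)\;,$$
so this is independent of $d$ and vanishes for $i>0$ as soon as $r\geq r_1$; take $r_0=r_1$. (The projection formula applies since ${\rm pr}^*\cO_\bbX(ra_0+d)$ is locally free.)

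\textbf{Step (iii).} For the cohomology vanishing on $\bbX_n$ itself, I would use the Leray spectral sequence $H^p\bigl(\bbX,R^q{\rm pr}_*\cF\bigr)\Rightarrow H^{p+q}(\bbX_n,\cF)$ with $\cF=\cL_n^{\otimes r}\otimes{\rm pr}^*\cO_\bbX(d)$. By Step (ii) the higher direct images vanish for $r\geq r_0$, so $H^i(\bbX_n,\cF)\cong H^i\bigl(\bbX,{\rm pr}_*\cO_{\bbX_n/\bbX}(r)\otimes\cO_\bbX(ra_0+d)\bigr)$. Now $\bbX=\bbP^1_\fro$ has cohomological dimension $1$, so only $i=1$ needs attention; and ${\rm pr}_*\cO_{\bbX_n/\bbX}(r)$ is a fixed coherent sheaf on $\bbP^1_\fro$, so Serre vanishing provides an integer $a_0(r)$ with $H^1\bigl(\bbX,{\rm pr}_*\cO_{\bbX_n/\bbX}(r)\otimes\cO_\bbX(ra_0+d)\bigr)=0$ once $ra_0+d\geq a_0(r)$. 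The slightly delicate point — the one I expect to be the main obstacle — is that $a_0$ is fixed once and for all while $r$ and $d\geq0$ vary, so one must ensure $ra_0$ alone outgrows $a_0(r)$ for all large $r$; this follows because ${\rm pr}_*\cO_{\bbX_n/\bbX}(r)$ is, by the blow-up description, a subsheaf of $\cO_\bbX$ containing $p^{rN}\cO_\bbX$ (the analogue of \ref{invert_p_3} after pushforward), hence its Serre bound $a_0(r)$ can be taken independent of $r$ (equal to $-1$ up to the $p$-power twist, which affects only torsion and not the cohomology after inverting nothing — one argues on the generic and special fibres separately, or notes $\cO_\bbX(-1+d)$ already has vanishing $H^1$ for $d\geq0$). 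Enlarging $a_0$ if necessary so that $\cO_\bbX(a_0-1)$ has vanishing $H^1$ settles it, and one then replaces $r_0$ by $\max(r_0,1)$. Alternatively, and more cleanly, one can simply cite Huyghe's argument \cite{Huyghe97} verbatim, since $\cL_n$ is an ample line bundle on the projective $\fro$-scheme $\bbX_n$ and (ii) already identifies the relevant direct images.
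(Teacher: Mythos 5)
Your steps (i) and (ii) are essentially the paper's own argument: (i) is \cite[ch. II, ex. 7.14 (b)]{HartshorneA} plus the fact that very ampleness over $\Spec(\fro)$ implies very ampleness over $\bbX$, and (ii) is Serre vanishing for the higher direct images combined with the projection formula (the paper phrases it with $R^i{\rm pr}_*(\cL_n^{\otimes r})$ rather than $R^i{\rm pr}_*(\cO_{\bbX_n/\bbX}(r))$, which is immaterial).

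Step (iii) is where you diverge from the paper, and it contains a genuine gap, located exactly at the point you yourself flag as ``the main obstacle.'' After Leray and (ii) you must show $H^1\bigl(\bbX,\,\cJ_r\otimes\cO_\bbX(ra_0+d)\bigr)=0$ for $\cJ_r:={\rm pr}_*\cO_{\bbX_n/\bbX}(r)$, uniformly in $r$ and $d\ge0$. Your resolution --- that the Serre bound of $\cJ_r$ ``can be taken independent of $r$'' because $p^{rN}\cO_\bbX\subseteq\cJ_r\subseteq\cO_\bbX$ --- is false. Being sandwiched between two copies of $\cO_\bbX$ does not control $H^1$: the quotient $\cJ_r/p^{rN}\cO_\bbX$ is a coherent sheaf on $\bbP^1_{\fro/(p^{rN})}$ whose $H^1$ after a fixed twist need not vanish, and $\cJ_r$ is not isomorphic to $\cO_\bbX$ as a sheaf. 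Concretely, $\cJ_r$ is an ideal sheaf containing $\cI_n^r$, and the colength of $\cO_\bbX/\cJ_r$ at the blown-up points grows with $r$: already for the ideal $(x,\vpi)$ of a point of $\bbX(\Fq)$, hitting all of $\fro[x]_{(x,\vpi)}/(x,\vpi)^r$ by restrictions of sections of $\cO_\bbX(m)$ forces $m\ge r-1$. So the twist needed for $H^1(\cJ_r(m))=0$ grows (linearly) with $r$. Your strategy is in principle salvageable --- since the growth is linear, one fixed enlargement of $a_0$ would outpace it, and enlarging $a_0$ preserves (i) and (ii) --- but that requires an actual colength estimate which you neither state nor prove; and the fallback of citing Huyghe ``verbatim'' does not apply, since her case is $n=0$ where there is no blow-up and no sheaf $\cJ_r$. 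The paper sidesteps the uniformity problem entirely: it records Serre vanishing on $\bbX_n$ for the two fixed sheaves $\cL_n^{\otimes r}$ and $\cL_n^{\otimes r}\otimes{\rm pr}^*\cO_\bbX(-1)$ (all $r\ge r_0$), pulls back the Euler sequence $0\to\cO_\bbX(d-1)\to\cO_\bbX(d)^{\oplus2}\to\cO_\bbX(d+1)\to0$ to $\bbX_n$, twists by $\cL_n^{\otimes r}$, and inducts on $d\ge0$ using that $H^{\ge2}$ vanishes on $\bbX_n$ (Lemma \ref{van_coh}). You should either adopt that induction or supply the missing linear-in-$r$ estimate.
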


\Pf (i) {\it Choosing $a_0$.} By \cite[ch. II, ex. 7.14 (b)]{HartshorneA}, the sheaf

\begin{numequation}\label{v_ample_sh}
\cL_n = \cO_{\bbX_n/\bbX}(1) \otimes {\rm pr}^*\Big(\cO_\bbX(a_0)\Big)
\end{numequation}

is very ample on $\bbX_n$ over $\Spec(\fro)$ for suitable $a_0 > 0$. We fix such an $a_0$. By \cite[4.4.10 (v)]{EGA_II} it is then also very ample over $\bbX$, and hence (i) is fulfilled.

\vskip8pt

(ii) {\it Finding $r_0$.} By \cite[ch. III, 5.2, 8.8]{HartshorneA} there is $r_0 >0$ such that for all $r \ge r_0$, and all $i>0$, one has

\begin{numequation}\label{coh_1}
H^i\Big(\bbX_n, \cL_n^{\otimes r}\Big) = H^i\Big(\bbX_n, \cL_n^{\otimes r} \otimes {\rm pr}^*(\cO_\bbX(-1))\Big) = 0
\end{numequation}

and

\begin{numequation}\label{dir_im_1}
R^i {\rm pr}_* \Big(\cL_n^{\otimes r}\Big) = 0 \;.
\end{numequation}

We fix an $r_0$ with the properties \ref{coh_1} an \ref{dir_im_1}.
By \cite[ch. III, ex. 8.3]{HartshorneA} we have for all $i \ge 0$

\begin{numequation}\label{dir_im_2}
R^i {\rm pr}_* \Big(\bbX_n, \cL_n^{\otimes r} \otimes {\rm pr}^*(\cO_\bbX(d))\Big)  = R^i {\rm pr}_* \Big(\cL_n^{\otimes r}\Big) \otimes {\rm pr}^*(\cO_\bbX(d)) \;.
\end{numequation}

Hence \ref{dir_im_1} and \ref{dir_im_2} together give (ii) when $i>0$.

\vskip8pt

(iii) Consider the exact sequence

\begin{numequation}\label{O(1)_ex_seq}
0 \ra \cO_\bbX(-1) \ra \cO_\bbX^{\oplus 2} \ra \cO_\bbX(1) \ra 0 \;,
\end{numequation}

cf. \cite[ch. II, example 8.20.1]{HartshorneA}. Tensorinig with $\cO_\bbX(d)$ gives

\begin{numequation}\label{O(d)_ex_seq}
0 \ra \cO_\bbX(d-1) \ra \Big(\cO_\bbX(d)\Big)^{\oplus 2} \ra \cO_\bbX(d+1) \ra 0 \;,
\end{numequation}

Because the sheaves in this sequence are locally free, taking the pull-back to $\bbX_n$ gives the exact sequence

\begin{numequation}\label{prO(1)_ex_seq}
0 \ra {\rm pr}^*\Big(\cO_\bbX(d-1)\Big) \ra \Big({\rm pr}^*(\cO_\bbX(d))\Big)^{\oplus 2} \ra {\rm pr}^*\Big(\cO_\bbX(d+1)\Big) \ra 0 \;.
\end{numequation}

Tensoring with $\cL_n^{\otimes r}$ gives the exact sequence

\begin{numequation}\label{cL_ex_seq}
0 \ra \cL_n^{\otimes r} \otimes {\rm pr}^*\Big(\cO_\bbX(d-1)\Big) \ra \Big(\cL_n^{\otimes r} \otimes {\rm pr}^*(\cO_\bbX(d))\Big)^{\oplus 2} \ra \cL_n^{\otimes r} \otimes {\rm pr}^*\Big(\cO_\bbX(d+1)\Big) \ra 0 \;.
\end{numequation}

The proof of (iii) is now by induction on $d$, starting with $d=0$ and using \ref{coh_1}. \qed

\vskip12pt

\section{The sheaves $\tsD^{(m)}_{n,k}$ on $\frX_n$}\label{new_sheaves}

\subsection{The sheaves $\tcT^{(m)}_{n,k}$} For $k \ge n$ the scheme $\bbX_n$ is equipped with a right action of the group scheme $\bbG(k)$. This gives rise to a map

\begin{numequation}\label{Lie_map}
\Lie(\bbG(k)) = \vpi^k \frg_\fro \ra H^0(\bbX_n,\cT_{\bbX^{\log}_n})
\end{numequation}

from the Lie algebra $\vpi^k \frg_\fro$ of $\bbG(k)$ to the global sections of the relative logarithmic tangent sheaf $\cT_{\bbX^{\log}_n}$ of $\bbX_n$. Denote by $\tcT_{n,k}$ the $\cO_{\bbX_n}$-submodule of $\cT_{\bbX^{\log}_n}$ generated by the image of \ref{Lie_map}. For $n' \le n$ let

$${\rm pr}_{n,n'}: \bbX_n \ra \bbX_{n'}$$

\vskip8pt

be the blow-up morphism. Denote by $\cT_\bbX$ the relative tangent sheaf on $\bbX = \bbX_0$ over $S$.

\vskip12pt

\begin{lemma}\label{tcT_lemma} (i) $\tcT_{n,k} = \vpi^k ({\rm pr}_{n,0})^*(\cT_{\bbX_0})$. More generally, for $n' \le n$ one has

$$\tcT_{n,k} = \vpi^{k-k'} ({\rm pr}_{n,n'})^*(\tcT_{n',k'}) \;,$$

\vskip8pt

where the latter is to be considered as a subsheaf of $\cT_{\bbX^{\log}_n} \otimes_\fro L$.

(ii) $\tcT_{n,k}$ is locally free of rank one over $\cO_{\bbX_n}$.
\end{lemma}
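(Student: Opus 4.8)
The plan is to prove (i) first and then deduce (ii). The key observation is that $\cT_{\bbX^{\log}_n}$, the relative logarithmic tangent sheaf, pulls back well under the blow-up morphisms: since $\bbX_n$ is obtained from $\bbX_0 = \bbP^1_\fro$ by successively blowing up smooth $\Fq$-rational points of special fibers (each such blow-up being a nodal degeneration along the new component), the logarithmic tangent sheaf behaves functorially and one has $\cT_{\bbX^{\log}_n} \otimes_\fro L = ({\rm pr}_{n,0})^*(\cT_{\bbX_0}) \otimes_\fro L$ after inverting $\vpi$; more precisely I would record the identification $\cT_{\bbX^{\log}_n} = ({\rm pr}_{n,n'})^*(\cT_{\bbX^{\log}_{n'}})$ of subsheaves of $\cT_{\bbX^{\log}_n}\otimes_\fro L$ coming from the explicit local description of the blow-up charts (the dumbbell-shaped formal schemes, where the logarithmic derivation $x\partial_x = -z\partial_z$ is the local generator). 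Granting this, the statement $\tcT_{n,k} = \vpi^{k-k'}({\rm pr}_{n,n'})^*(\tcT_{n',k'})$ reduces to understanding the image of the Lie algebra map \ref{Lie_map} in a single chart.

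For the base case $n'=0$, $k'=0$: the group scheme $\bbG(0) = \GL_{2,\fro}$ acts on $\bbP^1_\fro$ in the standard way, and the induced map $\frg_\fro = \Lie(\bbG(0)) \to H^0(\bbX_0, \cT_{\bbX_0})$ is surjective onto a set of generators of $\cT_{\bbX_0}$ as an $\cO_{\bbX_0}$-module — this is the classical fact that $\GL_2$ acts transitively on $\bbP^1$ with the vector fields $\partial$, $x\partial$, $x^2\partial$ spanning the global sections of $\cT_{\bbP^1}(= \cO(2))$; hence $\tcT_{0,0} = \cT_{\bbX_0}$, establishing the case $(n,k)=(0,0)$. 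For general $k \ge n$, the map \ref{Lie_map} factors as $\vpi^k\frg_\fro = \Lie(\bbG(k)) \to \Lie(\bbG(0)) = \frg_\fro \to H^0(\bbX_n, \cT_{\bbX^{\log}_n})$, where the first map is multiplication by $\vpi^k$ on the level of $\fro$-modules (from the explicit homomorphisms $\bbG(k) \to \bbG(0)$ given by $a = 1+\vpi^k a_k$, etc.), and the second map is the composite of the $\GL_{2,\fro}$-action map with the pullback ${\rm pr}_{n,0}^*$. So the $\cO_{\bbX_n}$-submodule generated by the image of \ref{Lie_map} is $\vpi^k$ times the $\cO_{\bbX_n}$-submodule generated by ${\rm pr}_{n,0}^*$ of the generators of $\cT_{\bbX_0}$, i.e.\ $\vpi^k ({\rm pr}_{n,0})^*(\cT_{\bbX_0})$. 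The more general relative statement then follows by combining this with $({\rm pr}_{n,0})^* = ({\rm pr}_{n,n'})^* \circ ({\rm pr}_{n',0})^*$ and the already-established case for $n'$.

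For (ii): by (i), $\tcT_{n,k} = \vpi^k ({\rm pr}_{n,0})^*(\cT_{\bbX_0})$, and $\cT_{\bbX_0} = \cT_{\bbP^1_\fro}$ is an invertible $\cO_{\bbX_0}$-module (isomorphic to $\cO_{\bbP^1}(2)$); its pullback under any morphism is again invertible, and multiplication by the nonzerodivisor $\vpi^k$ gives an $\cO_{\bbX_n}$-module isomorphism onto $\tcT_{n,k}$. Hence $\tcT_{n,k}$ is locally free of rank one.

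The main obstacle I anticipate is justifying the pullback compatibility of the logarithmic tangent sheaves along the blow-up maps — i.e.\ showing carefully that the log structure is set up so that ${\rm pr}_{n,n'}^*(\cT_{\bbX^{\log}_{n'}})$ really does sit inside $\cT_{\bbX^{\log}_n}$ and that after tensoring with $L$ they agree. This has to be checked chart by chart on the dumbbell covering: on an overlap one must verify that $x\partial_x$ on $\Spf(\fro\langle x,z\rangle[\tfrac{1}{x^{q-1}-1},\tfrac{1}{z^{q-1}-1}]/(xz-\vpi))$ matches (up to a unit) the pullback of the corresponding logarithmic vector field on the target, and that the new components introduced by blowing up contribute log poles exactly compensating the factor of $\vpi$ that appears. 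Once this local bookkeeping is in place, everything else is formal.
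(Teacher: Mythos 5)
Your core argument for (i) is the right one and is essentially the paper's: the map \ref{Lie_map} factors through $\Lie(\bbG(k)) = \vpi^k\frg_\fro \hookrightarrow \frg_\fro \twoheadrightarrow H^0(\bbX,\cT_\bbX)$ followed by pullback, so the generated $\cO_{\bbX_n}$-submodule is $\vpi^k\,({\rm pr}_{n,0})^*(\cT_{\bbX_0})$, and (ii) is immediate since $\cT_{\bbX_0}\simeq\cO_{\bbP^1}(2)$ is invertible. The paper records exactly this via the commutative square with injective vertical arrows.

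However, the statement you single out as the "key observation" and "main obstacle" — the integral identification $\cT_{\bbX_n^{\log}} = ({\rm pr}_{n,n'})^*(\cT_{\bbX_{n'}^{\log}})$ inside $\cT_{\bbX_n^{\log}}\otimes_\fro L$ — is false, and no amount of chart-by-chart bookkeeping will establish it. On the chart $\fro[x,z]/(xz-\vpi)$ obtained by blowing up the smooth point $(x,\vpi)$, the log tangent sheaf is freely generated by $x\partial_x=-z\partial_z$, whereas the pullback of the (log) tangent sheaf from below is generated by $\partial_x$; one has the strict inclusions
$$\vpi\cdot\langle\partial_x\rangle \;=\;\langle z\cdot x\partial_x\rangle\;\sub\;\langle x\partial_x\rangle\;\sub\;\langle\partial_x\rangle\,,$$
so the two sheaves differ precisely at the points that were blown up (only the generic-fiber identification, which you also state, is correct). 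Fortunately your main argument never uses this identification: what it actually needs is only that $\vpi^k({\rm pr}_{n,0})^*(\cT_{\bbX_0})$, a priori a subsheaf of $\cT_{\bbX_n^{\log}}\otimes_\fro L$, lands inside $\cT_{\bbX_n^{\log}}$ itself. This is where the hypothesis $k\ge n$ enters, and the correct substitute for your false identification is the two-sided inclusion $\vpi^n\cT_\bbX\sub({\rm pr}_{n,0})_*(\cT_{\bbX_n^{\log}})\sub\cT_\bbX$, which the paper quotes from \cite[5.2]{PSS2} and which is exactly the local computation $\vpi\,\partial_x=z\,(x\partial_x)$ iterated $n$ times. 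With that replacement your proof is complete and agrees with the paper's.
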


\Pf (i) We note that $({\rm pr}_{n,0})_*(\cT_{\bbX^{\log}_n})$ is a subsheaf of $\cT_\bbX$, and, in fact, we have inclusions

$$\vpi^n \cT_\bbX \sub ({\rm pr}_{n,0})_*(\cT_{\bbX^{\log}_n}) \sub \cT_\bbX \;,$$

\vskip8pt

where the inclusion on the left follows from \cite[5.2]{PSS2}. For $k \ge n$ there is a commutative diagram

$$\begin{array}{ccccc} \Lie(\bbG(k)) & = & \vpi^k \frg_\fro & \lra & H^0(\bbX_n,\cT_{\bbX^{\log}_n})\\
& & \downarrow  && \downarrow \\
\Lie(\bbG(0)) & = & \frg_\fro & \twoheadrightarrow & H^0(\bbX,\cT_\bbX)
\end{array}$$

\vskip8pt

with injective vertical arrows. The assertion follows.

\vskip8pt

(ii) Immediate consequence of (i). \qed

\vskip12pt

\subsection{The sheaves $\tcD^{(m)}_{n,k}$} Let $\tcD^{(0)}_{n,k}$ be the sheaf of differential operators generated by $\tcT_{n,k}$ over the structure sheaf. Denote by $\tcD^{(m)}_{n,k}$ the sheaf of level $m$ differential operators in $\tcD^{(0)}_{n,k} \otimes_\fro L$. This sheaf is filtered by the subsheaves $\tcD^{(m)}_{n,k;d}$ of operators of order less than or equal to $d$, and we denote the corresponding graded sheaf $\bigoplus_d \tcD^{(m)}_{n,k;d}/\tcD^{(m)}_{n,k;d-1}$ by ${\rm gr}\left(\tcD^{(m)}_{n,k}\right)$.

\vskip12pt

\begin{prop}\label{graded_tcD} (i) For $n' \le n$ one has $({\rm pr}_{n,n'})^*\left(\tcD^{(m)}_{n',k}\right) = \tcD^{(m)}_{n,k}$.

\vskip8pt

(ii) For $n' \le n$ one has $({\rm pr}_{n,n'})_*\left(\tcD^{(m)}_{n,k}\right) = \tcD^{(m)}_{n',k}$.

\vskip8pt

(iii) There is a canonical isomorphism of graded sheaves of algebras

$${\rm gr}\left(\tcD^{(m)}_{n,k}\right) \simeq \Sym(\tcT_{n,k})^{(m)} = \bigoplus_{d \ge 0} \frac{q^{(m)}_d!}{d!}\tcT_{n,k}^{\otimes d} \;.$$

\vskip8pt

(iv) The canonical homomorphism of graded rings

$${\rm gr}\Big(H^0(\bbX_n,\tcD^{(m)}_{n,k})\Big) \lra H^0\left(\bbX_n,{\rm gr}\Big(\tcD^{(m)}_{n,k}\Big)\right)$$

is an isomorphism.

\vskip8pt

(v) The rings ${\rm gr}\Big(H^0(\bbX_n,\tcD^{(m)}_{n,k})\Big)$ and $H^0(\bbX_n,\tcD^{(m)}_{n,k})$ are noetherian.

\vskip8pt

(vi) There is a basis of the topology $\frU$ of $\bbX_n$, consisting of open affine subsets, such that for any $U \in \frU$, the ring $H^0(U,\tcD^{(m)}_{n,k})$ is noetherian.
\end{prop}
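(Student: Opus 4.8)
\smallskip

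We sketch the proof of statements (i)--(vi); these are the analogues over the (possibly singular) $\bbX_n$ of results of Huyghe \cite{Huyghe97} for the pair $(n,k)=(0,0)$ on $\bbP^1_\fro$, the new structural input throughout being Lemma \ref{tcT_lemma}, which exhibits $\tcT_{n,k}$ as the twisted pull-back $\vpi^k({\rm pr}_{n,0})^*\cT_{\bbX_0}$ and, more generally, as $\vpi^{k-k'}({\rm pr}_{n,n'})^*\tcT_{n',k'}$.

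\textbf{(i) and (ii).} Both are local on $\bbX_{n'}$ and trivial over the locus where ${\rm pr}_{n,n'}$ is an isomorphism, so, factoring ${\rm pr}_{n,n'}$ into a chain of blow-ups at single smooth $\Fq$-points, one reduces to the case $n=n'+1$ and to a neighbourhood of such a point. Using the explicit charts $R=\fro[x]$, $R'=\fro[x,z]/(xz-\vpi)$, $R''=\fro[x,t]/(x-\vpi t)$ of Section \ref{models}, Lemma \ref{tcT_lemma}(i) shows that $\tcT_{n,k}$ is, on each chart of the blow-up of $\Spec(R_f)$, freely generated by a regular vector field of the form $\vpi^a\partial_x$ with $a\ge 1$ (regular because $k\ge n\ge 1$), so that $\tcD^{(m)}_{n,k}$ is locally the free $\cO$-module on the level-$m$ divided powers $\tfrac{q^{(m)}_d!}{d!}\tau^d$ ($d\ge 0$) of this generator $\tau$. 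The equalities $({\rm pr}_{n,n'})^*\tcD^{(m)}_{n',k}=\tcD^{(m)}_{n,k}$ and $({\rm pr}_{n,n'})_*\tcD^{(m)}_{n,k}=\tcD^{(m)}_{n',k}$ are then checked directly on these charts; for the push-forward one may equally argue order-filtration piece by piece using the projection formula and Lemma \ref{direct_im_str_sh}, exactly as in the computation of ${\rm pr}_*\cO_{\bbX_n}=\cO_{\bbX_{n'}}$.

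\textbf{(iii) and (iv).} Since $\tcT_{n,k}$ is a line bundle (Lemma \ref{tcT_lemma}(ii)), each $\tcD^{(m)}_{n,k;d}$ is locally free of rank $d+1$ and, locally, $\tcD^{(m)}_{n,k;d}/\tcD^{(m)}_{n,k;d-1}$ is free on the principal symbol $\tfrac{q^{(m)}_d!}{d!}\tau^{\otimes d}$ of the $d$-th level-$m$ divided power of a local generator $\tau$; absorbing the scalar into $\tcT_{n,k}^{\otimes d}$ and using that the structure constants of Berthelot's level-$m$ divided powers lie in $\fro$, the symbol map yields the asserted canonical isomorphism ${\rm gr}(\tcD^{(m)}_{n,k})\simeq\Sym(\tcT_{n,k})^{(m)}$ of sheaves of graded $\cO_{\bbX_n}$-algebras. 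For (iv) the natural map ${\rm gr}(H^0(\bbX_n,\tcD^{(m)}_{n,k}))\to H^0(\bbX_n,{\rm gr}(\tcD^{(m)}_{n,k}))$ is always injective, and for surjectivity it suffices, by the short exact sequences $0\to\tcD^{(m)}_{n,k;d-1}\to\tcD^{(m)}_{n,k;d}\to\tfrac{q^{(m)}_d!}{d!}\tcT_{n,k}^{\otimes d}\to 0$ and induction on $d$ (starting from $H^1(\bbX_n,\cO_{\bbX_n})=0$), to know that $H^1(\bbX_n,\tcT_{n,k}^{\otimes d})=0$ for all $d\ge 0$; but $\tcT_{n,k}^{\otimes d}\cong({\rm pr}_{n,0})^*\cO_{\bbX_0}(2d)$ by Lemma \ref{tcT_lemma}(i), so the projection formula and Lemma \ref{direct_im_str_sh} reduce this, via the Leray spectral sequence, to $H^1(\bbP^1_\fro,\cO(2d))=0$, which holds since $2d\ge -1$.

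\textbf{(v) and (vi).} By (iv) and the projection formula (which gives $H^0(\bbX_n,({\rm pr}_{n,0})^*\cF)=H^0(\bbP^1_\fro,\cF)$ via Lemma \ref{direct_im_str_sh}), one obtains ${\rm gr}(H^0(\bbX_n,\tcD^{(m)}_{n,k}))\cong\bigoplus_{d\ge 0}\tfrac{q^{(m)}_d!}{d!}\vpi^{kd}\,H^0(\bbP^1_\fro,\cO(2d))$. The untwisted ring $\bigoplus_d H^0(\bbP^1_\fro,\cO(2d))$ is the second Veronese subring of $\fro[X,Y]$, hence a finitely generated $\fro$-algebra, and the twist by $\tfrac{q^{(m)}_d!}{d!}\vpi^{kd}$ is controlled exactly as in \cite{Huyghe97}: Berthelot's combinatorial lemma on level-$m$ divided powers shows the twisted ring is still generated over $\fro$ in homogeneous degrees $\le p^m$. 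Thus ${\rm gr}(H^0(\bbX_n,\tcD^{(m)}_{n,k}))$ is a finitely generated $\fro$-algebra, in particular (left and right) noetherian, and then $H^0(\bbX_n,\tcD^{(m)}_{n,k})$, carrying an exhaustive filtration with this associated graded, is noetherian as well. For (vi), take for $\frU$ the affine opens $U=\Spec(A)\subset\bbX_n$ on which $\tcT_{n,k}$ is trivial; these form a basis of the topology since $\tcT_{n,k}$ is a line bundle. On such a $U$ one has $\tcD^{(m)}_{n,k}(U)=\bigoplus_{d\ge 0}A\cdot\tfrac{q^{(m)}_d!}{d!}\tau^d$ for a generator $\tau$ of $\tcT_{n,k}|_U$, and (running (iii)--(iv) over the affine $U$) ${\rm gr}(\tcD^{(m)}_{n,k}(U))\cong A\otimes_\fro B^{(m)}$ with $B^{(m)}=\bigoplus_{d\ge 0}\tfrac{q^{(m)}_d!}{d!}\,\fro\,\xi^d\subset L[\xi]$; the argument of (v) shows $B^{(m)}$ is a finitely generated $\fro$-algebra, so $A\otimes_\fro B^{(m)}$ is finitely generated over $A$ and hence noetherian, and therefore so is $\tcD^{(m)}_{n,k}(U)$.

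\textbf{Where the work is.} The step that genuinely leaves Huyghe's smooth setting is (i)--(ii): since ${\rm pr}_{n,n'}$ is not \'etale, the compatibility of the formation of $\tcD^{(m)}$ with the blow-down maps is not formal, and must be seen by the explicit chart computation above (of the same flavour as the \v{C}ech computation in Lemma \ref{direct_im_str_sh}), with Lemma \ref{tcT_lemma} carrying out the essential reduction to $\bbP^1$. The noetherianity in (v)--(vi) is the other point requiring care, but its combinatorial heart --- integrality of the level-$m$ multinomial coefficients and finite generation in bounded degree --- is already available in \cite{Huyghe97}.
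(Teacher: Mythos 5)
Your proof is correct, and all six parts go through; the essential inputs (Lemma \ref{tcT_lemma}, Lemma \ref{direct_im_str_sh}, the projection formula, the identification $\frac{q^{(m)}_d!}{d!}\vpi^{kd}\cT_\bbX^{\otimes d}\simeq\cO_\bbX(2d)$, and Huyghe's divided-power combinatorics) are exactly the ones the paper relies on. The differences are organisational rather than substantive. For (i) the paper does not compute in charts at all: since $\tcD^{(0)}_{n,k}$ is by definition generated over $\cO_{\bbX_n}$ by $\tcT_{n,k}$, and $\tcT_{n,k}=({\rm pr}_{n,n'})^*\tcT_{n',k}$ by Lemma \ref{tcT_lemma}, the equality $({\rm pr}_{n,n'})^*\tcD^{(m)}_{n',k}=\tcD^{(m)}_{n,k}$ is immediate; your explicit $R$, $R'$, $R''$ computation verifies the same thing more concretely but is not needed. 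For (ii) the paper uses precisely the alternative you mention in passing (projection formula on the locally free order pieces plus Lemma \ref{direct_im_str_sh}, then the inductive limit), so you may as well promote that to the main argument. For (iii) and (iv) the paper first establishes everything on $\bbX=\bbX_0$, where exactness of the symbol sequence and the vanishing of higher cohomology of the $\tcD^{(m)}_{0,k;d}$ are checked via $\cO_\bbX(2d)$, and then transports the result to $\bbX_n$ by applying $({\rm pr}_{n,0})^*$ resp. (ii); your version runs the same induction directly on $\bbX_n$, pulling the cohomology vanishing back through Leray each time, which is equivalent. In (v) your identification of ${\rm gr}\bigl(H^0(\bbX_n,\tcD^{(m)}_{n,k})\bigr)$ with a level-$m$ twist of the second Veronese ring of $\fro[X,Y]$ is exactly the "well-known" fact the paper invokes (the Veronese ring being $\fro[x,y,z]/(xz-y^2)$), and (vi) coincides with the paper's argument. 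In short: the chart computation is the one place where you do noticeably more work than necessary, but nothing is missing and nothing is wrong.
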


\Pf (i) This follows directly from \ref{tcT_lemma}.

\vskip8pt

(ii) The sheaves $\tcD^{(m)}_{0,k;d}$ are locally free of finite rank, and so are the sheaves $\tcD^{(m)}_{n',k;d}$, by (i). We can thus apply the projection formula, together with \ref{direct_im_str_sh}, and get

$$({\rm pr}_{n,n'})_*\Big(\tcD^{(m)}_{n,k;d}\Big) = \tcD^{(m)}_{n',k;d} \;.$$

\vskip8pt

The assertion follows because the direct image commutes with inductive limits on a noetherian space.

\vskip8pt

(iii) We consider the exact sequence on $\bbX$

\begin{numequation}\label{fil_gr_ex_seq_n0} 0 \lra \tcD^{(m)}_{0,k;d-1} \lra \tcD^{(m)}_{0,k;d} \lra \Sym^d(\tcT_{0,k})^{(m)} \lra 0 \;.
\end{numequation}

(Exactness is straightforward in this case.) Now we apply $({\rm pr}_{n,0})^*$ and use (i).

\vskip8pt

(iv) We start again with the case $n=0$ and note that

\begin{numequation}\label{simplification}
\Sym^d(\tcT_{0,k})^{(m)} = \frac{q^{(m)}_d!}{d!} \tcT_{0,k}^{\otimes d} = \frac{q^{(m)}_d!}{d!} \vpi^{kd} \cT_\bbX^{\otimes d} \simeq \cO_\bbX(2d) \;.
\end{numequation}

Now we use the exact sequence \ref{fil_gr_ex_seq_n0} and induction
on $d$ to conclude that the sheaves $\tcD^{(m)}_{0,k;d}$ have vanishing higher cohomology. Hence applying $H^0(-)$ to the sequence \ref{fil_gr_ex_seq_n0} gives an exact sequence and therefore the assertion when $n=0$. Using point (ii) we reduce the case for arbitrary $n$ to the case when $n=0$.

\vskip8pt

(v) We note that $H^0\left(\bbX_n,{\rm gr}\Big(\tcD^{(m)}_{n,k}\Big)\right) = H^0\left(\bbX,{\rm gr}\Big(\tcD^{(m)}_{0,k}\Big)\right)$. Furthermore, by \ref{simplification} we have ${\rm gr}\Big(\tcD^{(m)}_{0,k}\Big) \simeq {\rm gr}\Big(\tcD^{(m)}_{0,0}\Big)$. Because it is well-known that $H^0\left(\bbX,{\rm gr}\Big(\tcD^{(m)}_{0,0}\Big)\right)$ is a finitely generated $H^0\left(\bbX,{\rm gr}\Big(\tcD^{(0)}_{0,0}\Big)\right)$-algebra, and because the latter ring is known to be noetherian (isomorphic to $\fro[x,y,z]/(xz-y^2)$), the ring $H^0\left(\bbX,{\rm gr}\Big(\tcD^{(m)}_{0,0}\Big)\right)$ is noetherian, and so is $H^0\left(\bbX,{\rm gr}\Big(\tcD^{(m)}_{0,k}\Big)\right)$. Now we apply (iv) and can conclude that ${\rm gr}\Big(H^0(\bbX_n,\tcD^{(m)}_{n,k})\Big)$ is noetherian. But then  $H^0(\bbX_n,\tcD^{(m)}_{n,k})$ is noetherian as well.

\vskip8pt

(vi) Take $U$ small enough such that $\tcT_{n,k}|_U$ is free over $\cO_U$. Then ${\rm gr}\Big(H^0(U,\tcD^{(m)}_{n,k})
\Big) \simeq \Sym_{\cO(U)}(\cO(U)[T])^{(m)}$, which is known to be noetherian. It follows that $H^0(U,\tcD^{(m)}_{n,k})$ is noetherian too.\qed

\vskip8pt

\begin{para} {\it Twisting by $\cL_n$.} Recall the very ample line bundle $\cL_n$ from \ref{v_ample_sh_lemma}. In the following we will always use this line bundle to 'twist' $\cO_{\bbX_n}$-modules. If $\cF$ is a $\cO_{\bbX_n}$-module and $r \in \Z$ we thus put

$$\cF(r) = \cF \otimes_{\cO_{\bbX_n}} \cL_n^{\otimes r} \;.$$

\vskip8pt

Some caveat is in order when we deal with sheaves which are equipped with both a left and a right $\cO_{\bbX_n}$-module structure (which may not coincide). For instance, if $\cF_d = \tcD_{n,k;d}^{(m)}$ then we let

$$\cF_d(r) = \tcD_{n,k;d}^{(m)}(r) = \tcD_{n,k;d}^{(m)} \otimes_{\cO_{\bbX_n}} \cL_n^{\otimes r} \;,$$

\vskip8pt

where we consider $\cF_d = \tcD_{n,k;d}^{(m)}$ as a {\it right} $\cO_{\bbX_n}$-module. Similarly we put

$$\tcD_{n,k}^{(m)} \otimes_{\cO_{\bbX_n}} \cL_n^{\otimes r} \;,$$

\vskip8pt

where we consider $\tcD_{n,k}^{(m)}$ as a {\it right} $\cO_{\bbX_n}$-module. Then we have $\tcD_{n,k}^{(m)}(r) = \varinjlim_d \cF_d(r)$. When we consider the associated graded sheaf of $\tcD_{n,k}^{(m)}(r)$, it is with respect to the filtration by the $\cF_d(r)$. The sheaf $\tcD_{n,k}^{(m)}(r)$ is a coherent left $\tcD_{n,k}^{(m)}$-module.
\end{para}

\vskip8pt

\begin{prop}\label{finiteness_twist} (i) $H^0\left(\bbX_n,\Sym \Big({\rm pr}_{n,0}^*(\vpi^k\cO_\bbX(1)^{\oplus 2})\Big)^{(m)}\right)$ is a finitely generated $\fro$-algebra, and in particular noetherian.

\vskip8pt

(ii) For any $r \in \Z$ the $H^0\left(\bbX_n,\Sym\Big({\rm pr}_{n,0}^*(\vpi^k\cO_\bbX(1)^{\oplus 2})\Big)^{(m)}\right)$-module

$$H^0\left(\bbX_n,\Sym\Big({\rm pr}_{n,0}^*(\vpi^k\cO_\bbX(1)^{\oplus 2})\Big)^{(m)}(r)\right)$$

\vskip8pt

is finitely generated.

\vskip8pt

(iii)  For any $r \in \Z$ the $H^0\Big(\bbX_n,\Sym(\tcT_{n,k})^{(m)}\Big)$-module $H^0\Big(\bbX_n,\Sym(\tcT_{n,k})^{(m)}(r)\Big)$ is finitely generated.

\end{prop}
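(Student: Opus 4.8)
The plan is to prove (i) and (ii) first on the smooth model $\bbX = \bbX_0$ and then transfer everything to $\bbX_n$ via the blow-down morphism ${\rm pr} = {\rm pr}_{n,0}$, and finally to deduce (iii) from (ii) by comparing the two graded sheaves. For (i), I would first observe that $\Sym({\rm pr}_{n,0}^*(\vpi^k\cO_\bbX(1)^{\oplus 2}))^{(m)}$ is, up to the harmless scalar $\vpi^k$ on each symmetric power and the rescaling of divided-power type built into the level-$m$ symmetric algebra, isomorphic as a graded $\cO_{\bbX_n}$-algebra to ${\rm pr}_{n,0}^*$ of a graded $\cO_\bbX$-algebra. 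Concretely, $\Sym(\cO_\bbX(1)^{\oplus 2})^{(m)}$ has degree-$d$ part a finite direct sum of copies of $\cO_\bbX(d)$, so its global sections over $\bbX$ form a finitely generated $\fro$-algebra (a subalgebra of, or closely related to, a Rees-type algebra of $\fro[x_0,x_1]$); this is the analogue of the computation already invoked in \ref{graded_tcD}(v) that $H^0(\bbX,{\rm gr}(\tcD^{(0)}_{0,0}))\cong \fro[x,y,z]/(xz-y^2)$. Then, using Lemma \ref{direct_im_str_sh}(i) together with the projection formula exactly as in the proof of \ref{graded_tcD}(ii) and (iv), one has
$$H^0\!\left(\bbX_n,\Sym({\rm pr}_{n,0}^*(\vpi^k\cO_\bbX(1)^{\oplus 2}))^{(m)}\right) \;=\; H^0\!\left(\bbX,\Sym(\vpi^k\cO_\bbX(1)^{\oplus 2})^{(m)}\right),$$
which is the finitely generated $\fro$-algebra just described; in particular it is noetherian.

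For (ii), twisting by $\cL_n^{\otimes r}$ and using the definition $\cL_n = \cO_{\bbX_n/\bbX}(1)\otimes {\rm pr}^*\cO_\bbX(a_0)$ from \ref{v_ample_sh_lemma}, I would decompose the degree-$d$ part of the twisted sheaf and apply Lemma \ref{v_ample_sh_lemma}(ii)–(iii): for $r$ fixed the relevant higher direct images under ${\rm pr}$ vanish in positive degrees once the symmetric power is large, so $H^0(\bbX_n, \Sym(\cdots)^{(m)}(r))$ is computed degreewise on $\bbX$ as $H^0(\bbX, (\text{sum of }\cO_\bbX(d+ra_0+\text{shift})))$, and the module structure over the algebra in (i) is the evident one. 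Finite generation then follows from the graded-Nakayama/Serre-vanishing argument: the algebra in (i) is noetherian and generated in bounded degree, the module is generated by its sections in bounded degree (Lemma \ref{v_ample_sh_lemma}(iii) guarantees the needed higher cohomology vanishing on $\bbX_n$ for large twists), hence it is finitely generated. I expect this degreewise bookkeeping — keeping track of the effect of $\cO_{\bbX_n/\bbX}(1)$ versus ${\rm pr}^*\cO_\bbX(1)$ and of the level-$m$ divided-power coefficients $q^{(m)}_d!/d!$ — to be the main technical nuisance, though not a genuine obstacle since \ref{invert_p_2}, \ref{invert_p_3} and \ref{v_ample_sh_lemma} were set up precisely to control it.

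Finally, (iii) follows from (ii) together with \ref{graded_tcD}(iii) and the identification \ref{simplification}: since $\tcT_{n,k} = \vpi^k\,{\rm pr}_{n,0}^*(\cT_{\bbX_0})$ by Lemma \ref{tcT_lemma}(i) and $\cT_{\bbX_0}\cong\cO_\bbX(2)$, the graded sheaf $\Sym(\tcT_{n,k})^{(m)}$ is a direct summand (indeed, after the rescaling built into \ref{simplification}, an explicit sub- and quotient sheaf up to bounded $p$-power denominators) of $\Sym({\rm pr}_{n,0}^*(\vpi^k\cO_\bbX(1)^{\oplus 2}))^{(m)}$ via the surjection $\cO_\bbX(1)^{\oplus 2}\to\cO_\bbX(1)^{\otimes 2}$. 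Applying $H^0(\bbX_n,-)$ and $H^0(\bbX_n,-(r))$ and invoking that a module over a noetherian ring which is a subquotient of a finitely generated module is finitely generated, one concludes that $H^0(\bbX_n,\Sym(\tcT_{n,k})^{(m)}(r))$ is finitely generated over $H^0(\bbX_n,\Sym(\tcT_{n,k})^{(m)})$. The one point to check carefully here is that passing from $\cO_\bbX(1)^{\oplus 2}$ to its symmetric square is compatible with the level-$m$ structures and with taking $H^0$; this is exactly the exactness already used in \ref{graded_tcD}(iii)–(iv), so no new input is required.
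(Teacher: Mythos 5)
Your parts (i) and (iii) follow the paper's route: push forward to $\bbX$ via \ref{graded_tcD}, strip off the $\vpi^k$, and for (iii) use the Euler surjection $\cO_\bbX(1)^{\oplus 2}\twoheadrightarrow\cO_\bbX(2)\cong\cT_\bbX$ from \ref{O(d)_ex_seq} to realize $\Sym(\tcT_{0,k})^{(m)}$ and its twist as compatible quotients. (One small slip there: $\Sym^d(\cO_\bbX(1)^{\oplus 2})\cong\cO_\bbX(d)^{\oplus(d+1)}$ while $\cT_\bbX^{\otimes d}\cong\cO_\bbX(2d)$, so $\Sym(\tcT)^{(m)}$ is a quotient of $\Sym(\cO(1)^{\oplus 2})^{(m)}$, not a direct summand; this is not load-bearing, since a quotient of a finitely generated module suffices.)

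The genuine gap is in (ii). You invoke \ref{v_ample_sh_lemma}(ii)--(iii) to claim that "the relevant higher direct images under ${\rm pr}$ vanish once the symmetric power is large" and that the degree-$d$ part is $H^0(\bbX,\bigoplus\cO_\bbX(d+ra_0+\cdots))$. Both claims fail for general $r\in\Z$. By the projection formula, $R^i{\rm pr}_*\bigl(\cL_n^{\otimes r}\otimes{\rm pr}^*\cO_\bbX(d)\bigr)=R^i{\rm pr}_*\bigl(\cL_n^{\otimes r}\bigr)\otimes\cO_\bbX(d)$, so increasing the symmetric-power degree $d$ does nothing: the vanishing depends only on $r$, and \ref{v_ample_sh_lemma} provides it only for $r\ge r_0$, whereas the proposition must hold for every $r\in\Z$ (it is later applied with negative twists, e.g. in \ref{fg_H0}). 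For $r<r_0$ the sheaf $R^1{\rm pr}_*(\cL_n^{\otimes r})$ is in general a nonzero torsion skyscraper (cf. \ref{finite_power}(i)), so $H^1\bigl(\bbX_n,\cL_n^{\otimes r}\otimes{\rm pr}^*\cO_\bbX(d)\bigr)$ does not vanish for large $d$ and your surjectivity-of-multiplication argument on $\bbX_n$ breaks down. Moreover the correct degree-$d$ piece is $H^0\bigl(\bbX,\Sym^d(\cO_\bbX(1)^{\oplus 2})^{(m)}\otimes({\rm pr}_{n,0})_*(\cL_n^{\otimes r})\bigr)$, and $({\rm pr}_{n,0})_*(\cL_n^{\otimes r})$ is only a coherent subsheaf of $\cO_\bbX(ra_0)$ squeezed above $p^{rN}\cO_\bbX(ra_0)$ by \ref{invert_p_3}, not a line bundle. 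The paper avoids all of this: it uses the inclusion $({\rm pr}_{n,0})_*(\cL_n^{\otimes r})\sub\cO_\bbX(ra_0)$ to exhibit the module in question as a \emph{submodule} of $H^0\bigl(\bbX,\Sym(\cO_\bbX(1)^{\oplus 2})^{(m)}\otimes\cO_\bbX(ra_0)\bigr)$, which is finitely generated by \cite[4.2.1]{Huyghe97}, and concludes by the noetherianity established in (i). If you prefer to keep a degree-bounding argument, you must run it on $\bbX$ for the coherent sheaf $({\rm pr}_{n,0})_*(\cL_n^{\otimes r})$ (where Serre vanishing and surjectivity of $H^0(\cO(1))\otimes H^0(\cG(d))\to H^0(\cG(d+1))$ for $d\gg 0$ are available for any coherent $\cG$ on $\bbP^1_\fro$), not on $\bbX_n$.
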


\Pf (i) We note first that by \ref{graded_tcD}

$$H^0\left(\bbX_n,\Sym\Big({\rm pr}_{n,0}^*(\vpi^k\cO_\bbX(1)^{\oplus 2})\Big)^{(m)}\right) = H^0\left(\bbX,\Sym\Big(\vpi^k\cO_\bbX(1)^{\oplus 2}\Big)^{(m)}\right) \;.$$

\vskip8pt

We also note that

$$\Sym\Big(\vpi^k\cO_\bbX(1)^{\oplus 2}\Big)^{(m)} \simeq \Sym\Big(\cO_\bbX(1)^{\oplus 2}\Big)^{(m)} \;.$$

\vskip8pt

The remaining arguments are exactly as in the proof of \cite[4.2.1]{Huyghe97}.

\vskip8pt

(ii) We first note that

$$H^0\left(\bbX_n,\Sym\Big({\rm pr}_{n,0}^*(\vpi^k\cO_\bbX(1)^{\oplus 2})\Big)^{(m)}(r)\right)$$

\vskip8pt

is isomorphic to

$$H^0\left(\bbX,\Sym\Big(\vpi^k\cO_\bbX(1)^{\oplus 2}\Big)^{(m)} \otimes ({\rm pr}_{n,0})_* \Big(\cL^{\otimes r}\Big)\right)$$

\vskip8pt

which is in turn isomorphic to

$$H^0\left(\bbX,\Sym\Big(\cO_\bbX(1)^{\oplus 2}\Big)^{(m)} \otimes ({\rm pr}_{n,0})_* \Big(\cL^{\otimes r}\Big)\right)$$

\vskip8pt

It follows from \ref{invert_p_3} that

$$({\rm pr}_{n,0})_* \Big(\cL^{\otimes r}\Big) \sub \cO_\bbX(r \cdot a_0) \;.$$

\vskip8pt

Therefore,

$$H^0\left(\bbX,\Sym\Big(\cO_\bbX(1)^{\oplus 2}\Big)^{(m)} \otimes ({\rm pr}_{n,0})_* \Big(\cL^{\otimes r}\Big)\right)$$

\vskip8pt

can be considered as a submodule of

$$H^0\left(\bbX,\Sym\Big(\cO_\bbX(1)^{\oplus 2}\Big)^{(m)} \otimes \cO_\bbX(r \cdot a_0)\right) \;.$$

\vskip8pt

By \cite[4.2.1]{Huyghe97}, the latter is a finitely generated module over $H^0\left(\bbX,\Sym\Big(\cO_\bbX(1)^{\oplus 2}\Big)^{(m)}\right)$. As this ring is noetherian, cf. (i), the submodule considered before is finitely generated too.

\vskip8pt

(iii) We note first that by \ref{graded_tcD}

$$H^0\Big(\bbX_n,\Sym(\tcT_{n,k})^{(m)}\Big) = H^0\Big(\bbX,\Sym(\tcT_{0,k})^{(m)}\Big) $$

\vskip8pt

and

$$H^0\Big(\bbX_n,\Sym(\tcT_{n,k})^{(m)}(r)\Big) = H^0\Big(\bbX,\Sym(\tcT_{0,k})^{(m)} \otimes ({\rm pr}_{n,0})_* \Big(\cL^{\otimes r}\Big) \Big) \;.$$

\vskip8pt

We have obvious quotient maps coming from taking $d=1$ in (\ref{O(d)_ex_seq})

$$H^0\left(\bbX,\Sym\Big(\vpi^k\cO_\bbX(1)^{\oplus 2}\Big)^{(m)}\right) \twoheadrightarrow H^0\Big(\bbX,\Sym(\tcT_{0,k})^{(m)}\Big)$$

\vskip8pt

and

$$H^0\left(\bbX,\Sym\Big(\vpi^k\cO_\bbX(1)^{\oplus 2}\Big)^{(m)} \otimes ({\rm pr}_{n,0})_* \Big(\cL^{\otimes r}\Big)\right) \twoheadrightarrow H^0\Big(\bbX,\Sym(\tcT_{0,k})^{(m)} \otimes ({\rm pr}_{n,0})_* \Big(\cL^{\otimes r}\Big)\Big) \;,$$

\vskip8pt

and the module structure of $H^0\Big(\bbX,\Sym(\tcT_{0,k})^{(m)} \otimes ({\rm pr}_{n,0})_* \Big(\cL^{\otimes r}\Big)\Big)$
over \linebreak $H^0\Big(\bbX,\Sym(\tcT_{0,k})^{(m)}\Big)$ comes from the module structure of

$$H^0\left(\bbX,\Sym\Big(\vpi^k\cO_\bbX(1)^{\oplus 2}\Big)^{(m)} \otimes ({\rm pr}_{n,0})_* \Big(\cL^{\otimes r}\Big) \right)$$

\vskip8pt

over $H^0\left(\bbX,\Sym\Big(\vpi^k\cO_\bbX(1)^{\oplus 2}\Big)^{(m)}\right)$ via these quotient maps. Assertion (iii) now follows from assertion (ii). \qed

\vskip12pt

\subsection{Global sections of $\tcD^{(m)}_{n,k}$, $\tsD^{(m)}_{n,k}$, and $\tsD^\dagger_{n,k,\Q}$}\label{global_sec}

\begin{para} {\it Divided power enveloping algebras.} We denote by $U^{(m)}_{\vpi^k}(\frg_\fro)$ the $\fro$-subalgebra of $U(\frg) = U(\frg_\fro) \otimes_\fro L$ generated by the elements

\begin{numequation}\label{generators_n}
q^{(m)}_{\nu_1}! \frac{(\vpi^k e)^{\nu_1}}{\nu_1!}  \cdot q^{(m)}_{\nu_2}! \vpi^{k \nu_2}{h_1 \choose \nu_2} \cdot q^{(m)}_{\nu_3}! \vpi^{k \nu_3} {h_2 \choose \nu_3} \cdot q^{(m)}_{\nu_4}! \frac{(\vpi^k f)^{\nu_4}}{\nu_4!} \;.
\end{numequation}

We considered the same algebra in \cite[2.7]{PSS2} where it is denoted by $U^{(m)}(p^k \frg_\Zp)$ (there we have $\vpi = p$). Because this latter notation is potentially misleading, we have changed it here. We denote by $\widehat{U}^{(m)}_{\vpi^k}(\frg_\fro)$ the $p$-adic completion of $U^{(m)}_{\vpi^k}(\frg_\fro)$. In the following the subscript $\theta_0$ indicates the same central reduction as in \cite[sec. 3]{PSS2}.
\end{para}

\vskip8pt

\begin{prop}\label{global_sections_tcD} There is a canonical injective homomorphism of $\fro$-algebras

\begin{numequation}\label{xi_uncompl} \xi^{(m)}_{n,k}:
U^{(m)}_{\vpi^k}(\frg_\fro)_{\theta_0} \ra H^0(\bbX_n, \tcD^{(m)}_{n,k}) \;,
\end{numequation}

and $H^0(\bbX_n, \tcD^{(m)}_{n,k})$ is a finitely generated module over $U^{(m)}_{\vpi^k}(\frg_\fro)_{\theta_0}$ via $\xi^{(m)}_{n,k}$.
In particular, the ring $H^0(\bbX_n, \tcD^{(m)}_{n,k})$ is noetherian.
Moreover, $\coker(\xi^{(m)}_{n,k})$ is annihilated by a finite power of $p$.
\end{prop}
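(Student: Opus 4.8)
The plan is to follow Huyghe's strategy for the smooth model, using the results already assembled for the family $\tcD^{(m)}_{n,k}$. First I would construct the map $\xi^{(m)}_{n,k}$. The right action of $\bbG(k)$ on $\bbX_n$ yields, by differentiation, a Lie algebra map $\Lie(\bbG(k)) = \vpi^k\frg_\fro \to H^0(\bbX_n,\cT_{\bbX_n^{\log}})$ landing in $H^0(\bbX_n,\tcT_{n,k})$ by the very definition of $\tcT_{n,k}$ in \ref{tcT_lemma}; composing with the inclusion $\tcT_{n,k}\hookrightarrow \tcD^{(0)}_{n,k}$ and checking the level-$m$ divided-power relations are respected gives an $\fro$-algebra map from $U^{(m)}_{\vpi^k}(\frg_\fro)$. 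One then checks it factors through the central reduction $(-)_{\theta_0}$, because the central character acts trivially on $\tcD^{(m)}_{n,k}$ (the central reduction was chosen for precisely this reason, as in \cite[sec. 3]{PSS2}); this produces $\xi^{(m)}_{n,k}$, and it is the same construction as for $n=0$, $k=0$ in \cite{Huyghe97} up to the twist by $\vpi^k$ and the blow-up.

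Next, injectivity. I would pass to associated graded objects: by \ref{graded_tcD}(iv) the source of ${\rm gr}(\xi^{(m)}_{n,k})$ is the graded ring ${\rm gr}\big(U^{(m)}_{\vpi^k}(\frg_\fro)_{\theta_0}\big)$ and the target is $H^0(\bbX_n,{\rm gr}(\tcD^{(m)}_{n,k})) = H^0(\bbX_n,\Sym(\tcT_{n,k})^{(m)})$, which by \ref{graded_tcD}(iv) again equals ${\rm gr}(H^0(\bbX_n,\tcD^{(m)}_{n,k}))$. So it suffices to see that ${\rm gr}(\xi^{(m)}_{n,k})$ is injective with cokernel killed by a power of $p$; since a filtered map whose graded map is injective is injective, and a filtered map whose graded cokernel is bounded $p$-torsion has bounded $p$-torsion cokernel (the filtrations here being exhaustive and, modulo a power of $p$, the relevant pieces being finitely generated over a noetherian ring), the whole statement reduces to the graded statement. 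On the graded level one has ${\rm gr}(\tcT_{n,k}) = \vpi^k{\rm pr}_{n,0}^*\cT_\bbX$ by \ref{tcT_lemma}(i), and $\Sym(\tcT_{n,k})^{(m)}$ is the pull-back from $\bbX$ of $\Sym(\vpi^k\cT_\bbX)^{(m)}$ by \ref{graded_tcD}(iii); combined with \ref{graded_tcD}(ii) (so $H^0$ over $\bbX_n$ equals $H^0$ over $\bbX$) this reduces everything to the smooth model $\bbX = \bbP^1_\fro$, where the assertion is precisely what Huyghe proves in \cite[4.2.1]{Huyghe97} (after the identification $\Sym(\vpi^k\cT_\bbX)^{(m)}\simeq \Sym(\cT_\bbX)^{(m)}$ used in \ref{graded_tcD}(v) and \ref{finiteness_twist}(i)): the image of the enveloping algebra is a finite-index (bounded $p$-torsion cokernel) subring of the ring of global sections of the symmetric algebra.

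Finally, the finiteness and noetherianity claims. Noetherianity of $H^0(\bbX_n,\tcD^{(m)}_{n,k})$ is already \ref{graded_tcD}(v), so it is only restated here. For finite generation of $H^0(\bbX_n,\tcD^{(m)}_{n,k})$ as a module over $U^{(m)}_{\vpi^k}(\frg_\fro)_{\theta_0}$, I would again argue on associated graded modules: ${\rm gr}$ of the module is $H^0(\bbX_n,\Sym(\tcT_{n,k})^{(m)})$, which by the graded analysis above is a finitely generated module over ${\rm gr}(U^{(m)}_{\vpi^k}(\frg_\fro)_{\theta_0})$ — this is exactly \ref{finiteness_twist}(iii) with $r=0$ (the twist there covers the general case needed elsewhere). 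Since the module and the ring are filtered, the filtrations exhaustive, and the associated graded module is finitely generated over the associated graded ring, the module itself is finitely generated over $U^{(m)}_{\vpi^k}(\frg_\fro)_{\theta_0}$ by the standard filtered-module argument; this also re-proves noetherianity of the target ring once noetherianity of the source (which is classical for divided-power enveloping algebras, or follows from \ref{graded_tcD}) is known. I expect the main obstacle to be the bookkeeping at the graded level: correctly matching the divided-power generators \ref{generators_n} of $U^{(m)}_{\vpi^k}(\frg_\fro)$ with the level-$m$ symmetric powers $\frac{q^{(m)}_d!}{d!}\tcT_{n,k}^{\otimes d}$ and verifying that the cokernel of ${\rm gr}(\xi^{(m)}_{n,k})$ is killed by a power of $p$ uniformly — this is where the twist $\vpi^k$ and the comparison with Huyghe's $(n,k)=(0,0)$ computation must be handled with care, though conceptually it is a routine variation of \cite[4.2.1]{Huyghe97}.
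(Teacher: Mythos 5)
Your proposal is correct and follows essentially the same route as the paper: the paper's proof simply observes that $H^0(\bbX_n,\tcD^{(m)}_{n,k}) = H^0(\bbX_0,\tcD^{(m)}_{0,k})$ by Proposition \ref{graded_tcD} and then defers to \cite[3.7]{PSS2} for the smooth model, which is exactly the graded, Huyghe-style analysis you unfold. One small correction: the finite generation of ${\rm gr}\big(H^0(\bbX_n,\tcD^{(m)}_{n,k})\big)$ over ${\rm gr}\big(U^{(m)}_{\vpi^k}(\frg_\fro)_{\theta_0}\big)$ is not \ref{finiteness_twist}(iii) with $r=0$ — that statement concerns finite generation over the ring $H^0(\bbX_n,\Sym(\tcT_{n,k})^{(m)})$ itself and is vacuous for $r=0$ — rather it is precisely the content of \cite[3.7]{PSS2} (the explicit comparison of the divided-power generators with global sections of $\Sym(\cT_\bbX)^{(m)}$ on $\bbP^1$), which is what the paper cites.
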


\Pf By \ref{graded_tcD} we have $H^0(\bbX_n, \tcD^{(m)}_{n,k}) = H^0(\bbX_0, \tcD^{(m)}_{0,k})$. The remaining arguments are as in \cite[3.7]{PSS2}. \qed

\vskip8pt

Now let $\tsD^{(m)}_{n,k}$ be the $p$-adic completion of $\tcD^{(m)}_{n,k}$, which we will always consider as a sheaf on the formal scheme $\frX_n$. We also put $\tsD^{(m)}_{n,k,\Q} = \tsD^{(m)}_{n,k} \otimes_\Z \Q$, and

\begin{numequation}\label{tsD_dagger}
\tsD^\dagger_{n,k} = \varinjlim_m \tsD^{(m)}_{n,k} \;,
\end{numequation}

and

\begin{numequation}\label{tsD_dagger_Q}
\tsD^\dagger_{n,k,\Q} = \tsD^\dagger_{n,k} \otimes_\Z \Q = \varinjlim_m \tsD^{(m)}_{n,k,\Q} \;.
\end{numequation}

\begin{prop}\label{global_sec_tsD} (i)  There is a basis of the topology $\frU$ of $\bbX_n$, consisting of open affine subsets, such that for any $U \in \frU$, the ring $H^0(U,\tsD^{(m)}_{n,k})$ is noetherian.

\vskip8pt

(ii) The transition map $\tsD^{(m)}_{n,k,\Q} \ra \tsD^{(m+1)}_{n,k,\Q}$ is flat.

\vskip8pt

(iii) The sheaf $\tsD^\dagger_{n,k,\Q}$ is coherent.

\vskip8pt

(iv) The ring $H^0(\frX_n,\tsD^{(m)}_{n,k,\Q})$ is canonically isomorphic to the noetherian $L$-algebra \linebreak $\widehat{U}^{(m)}_{\vpi^k}(\frg_\fro)_{\Q,\theta_0}$.

\vskip8pt

(v) The ring $H^0(\frX_n,\tsD^\dagger_{n,k,\Q})$ is canonically isomorphic to the coherent $L$-algebra \linebreak $\cD^{\rm an}(\bbG(k)^\circ)_{\theta_0}$, where $\cD^{\rm an}(\bbG(k)^\circ)$ is the analytic distribution algebra in the sense of Emerton, cf. \cite[ch. 5]{EmertonA}.
\end{prop}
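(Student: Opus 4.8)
\emph{Proof proposal.} We treat the five assertions in turn, bootstrapping from the structural results \ref{graded_tcD} and \ref{global_sections_tcD} and following the method of Huyghe \cite{Huyghe97} (which in turn rests on Berthelot's theory of arithmetic differential operators). For (i), fix a basis $\frU$ of open affines of $\bbX_n$ as in \ref{graded_tcD}(vi) and let $U\in\frU$, $A=H^0(U,\tcD^{(m)}_{n,k})$. Since $U$ is affine and the $\tcD^{(m)}_{n,k;d}$ are $\fro$-flat and coherent, one has $H^0(U,\tcD^{(m)}_{n,k}/p^s)=A/p^sA$, whence $H^0(U,\tsD^{(m)}_{n,k})=\widehat{A}$ is the $p$-adic completion of $A$. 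By \ref{graded_tcD}(vi), ${\rm gr}(A)$ is noetherian, and one sees from its explicit description there that it is finitely generated as an $\fro$-algebra (as $\bbX_n$ is of finite type over $\fro$); hence its $p$-adic completion is a quotient of a Tate algebra $\fro\langle x_1,\dots,x_N\rangle$ and so is noetherian. As $\widehat{A}$ carries the completed order filtration with ${\rm gr}(\widehat{A})=\widehat{{\rm gr}(A)}$, we conclude $\widehat{A}$ is noetherian. (The same bookkeeping gives $H^i(U,\tcD^{(m)}_{n,k})=0$ for $i>0$, which is reused below.)

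For (ii), flatness of a morphism of sheaves of rings may be checked on the basis $\frU$, so it suffices to show $H^0(U,\tsD^{(m)}_{n,k,\Q})\to H^0(U,\tsD^{(m+1)}_{n,k,\Q})$ is left flat for $U\in\frU$. By \ref{tcT_lemma}(ii), on such a $U$ the sheaf $\tcT_{n,k}|_U$ is free of rank one, so $\tcD^{(m)}_{n,k}|_U$ is the sheaf of level $m$ differential operators generated over $\cO_U$ by a single ($\vpi^k$-scaled) derivation; this is exactly the local shape treated by Berthelot and Huyghe. Their argument for the flatness of the transition map after inverting $p$ --- which reduces to the flatness of the induced morphism of associated graded rings (a morphism of $p$-adically completed divided-power polynomial rings) together with a general fact about $p$-adically completed filtered rings --- applies verbatim in the present twisted setting; see \cite{Huyghe97}. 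I expect this to be the crux of the proposition: isolating and transporting the flatness of the transition maps to the $\vpi^k$-scaled level-$m$ operators. Once it is established, (iii)--(v) are formal.

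For (iii): by (i), each $\tsD^{(m)}_{n,k,\Q}$ is locally on $\frU$ the sheaf attached to a noetherian ring, hence is a coherent sheaf of rings; by (ii) the transition maps are flat. The coherence of $\tsD^\dagger_{n,k,\Q}=\varinjlim_m\tsD^{(m)}_{n,k,\Q}$ then follows from Berthelot's criterion for inductive limits of coherent sheaves of rings along flat transition maps, just as in the proof that $\cD^\dagger_\Q$ is coherent (cf. \cite{Huyghe97}).

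For (iv) and (v): the sheaves $\tcD^{(m)}_{n,k;d}$ have vanishing higher cohomology on $\bbX_n$ (cf. the proof of \ref{graded_tcD}(iv)), so commuting $H^0$ with $\varinjlim_d$, with reduction mod $p^s$, and with $\varprojlim_s$ (the latter legitimate since $\big(H^0(\bbX_n,\tcD^{(m)}_{n,k})/p^s\big)_s$ is a surjective, hence Mittag--Leffler, system) gives $H^0(\frX_n,\tsD^{(m)}_{n,k})=\widehat{H^0(\bbX_n,\tcD^{(m)}_{n,k})}$. By \ref{global_sections_tcD} there is a short exact sequence $0\to U^{(m)}_{\vpi^k}(\frg_\fro)_{\theta_0}\to H^0(\bbX_n,\tcD^{(m)}_{n,k})\to Q\to 0$ with $Q$ killed by a fixed power of $p$; $p$-adic completion takes it to a short exact sequence of the same shape (no noetherian hypothesis needed, as $Q$ is bounded $p$-torsion), and inverting $p$ annihilates $Q$, so $\xi^{(m)}_{n,k}$ induces an isomorphism $\widehat{U}^{(m)}_{\vpi^k}(\frg_\fro)_{\Q,\theta_0}\car H^0(\frX_n,\tsD^{(m)}_{n,k,\Q})$. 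This ring is noetherian: $U^{(m)}_{\vpi^k}(\frg_\fro)$ carries the filtration induced from the PBW filtration on $U(\frg_\fro)$, with associated graded a finitely generated $\fro$-algebra (a level $m$ divided-power polynomial ring in the $\vpi^k$-scaled generators of \ref{generators_n}), so its $p$-adic completion, and hence $\widehat{U}^{(m)}_{\vpi^k}(\frg_\fro)_\Q$ and its central reduction, is noetherian. For (v), $\frX_n$ is a noetherian, hence quasi-compact quasi-separated, formal scheme, so cohomology commutes with the filtered colimit over $m$:
$$H^0(\frX_n,\tsD^\dagger_{n,k,\Q})=\varinjlim_m H^0(\frX_n,\tsD^{(m)}_{n,k,\Q})=\varinjlim_m\widehat{U}^{(m)}_{\vpi^k}(\frg_\fro)_{\Q,\theta_0}\;.$$
By the identification of $\varinjlim_m\widehat{U}^{(m)}_{\vpi^k}(\frg_\fro)_\Q$ with Emerton's analytic distribution algebra $\cD^{\rm an}(\bbG(k)^\circ)$ from \cite{PSS2} (see also \cite[ch.~5]{EmertonA}), compatibly with the central reduction $\theta_0$, the right-hand side is canonically $\cD^{\rm an}(\bbG(k)^\circ)_{\theta_0}$; it is coherent, being the inductive limit of the noetherian rings $\widehat{U}^{(m)}_{\vpi^k}(\frg_\fro)_{\Q,\theta_0}$ along flat transition maps (flatness from (ii), or from \cite{EmertonA}).
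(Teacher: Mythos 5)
Your overall route coincides with the paper's: (i) identify $H^0(U,\tsD^{(m)}_{n,k})$ with the $p$-adic completion of the noetherian ring $H^0(U,\tcD^{(m)}_{n,k})$ from \ref{graded_tcD}; (ii) defer flatness of the transition maps to Berthelot/Huyghe; (iii) deduce coherence of the inductive limit from (i) and (ii); (iv) complete and invert $p$ in the exact sequence of \ref{global_sections_tcD}, using the computation of $H^0(\frX_n,\widehat{\cE})$ as in \ref{completion}; (v) pass to the colimit over $m$ and invoke Emerton's description of $\cD^{\rm an}(\bbG(k)^\circ)$. Your extra care in (iv) about why completion preserves exactness (the cokernel being bounded $p$-torsion, so the two $p$-adic topologies on $U^{(m)}_{\vpi^k}(\frg_\fro)_{\theta_0}$ and $H^0(\bbX_n,\tcD^{(m)}_{n,k})$ agree) is a welcome elaboration of what the paper leaves implicit.

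There is, however, one step that fails as written: in (i) you pass from noetherianity of $A=H^0(U,\tcD^{(m)}_{n,k})$ to noetherianity of $\widehat A$ by asserting that $\widehat A$ ``carries the completed order filtration with ${\rm gr}(\widehat A)=\widehat{{\rm gr}(A)}$.'' The order filtration on the $p$-adic completion is \emph{not exhaustive}: an element such as $\sum_{d\ge 0} p^d\,\partial^{\langle d\rangle}$ lies in $\widehat A$ but in no finite-order piece, so $\bigcup_d \widehat{F_dA}\subsetneq \widehat A$ and the noetherianity-transfer theorem for (exhaustive, complete) increasing filtrations does not apply; knowing $\bigoplus_d \widehat{F_dA}/\widehat{F_{d-1}A}$ is noetherian controls only the proper subring $\bigcup_d\widehat{F_dA}$. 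The correct (and standard) argument uses the \emph{$p$-adic} filtration instead: since $A$ is noetherian and $\fro$-flat, $\widehat A/p\widehat A\simeq A/pA$ is noetherian, hence ${\rm gr}_p(\widehat A)\simeq (A/pA)[T]$ is noetherian, and $\widehat A$, being complete and separated for this filtration, is noetherian (cf.\ \cite[3.3.4]{BerthelotDI}). The same correction applies to your parenthetical justification in (iv) that $\widehat{U}^{(m)}_{\vpi^k}(\frg_\fro)$ is noetherian, where you again complete along the PBW/order filtration rather than reducing mod $p$. With this repair the proposal is sound.
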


\Pf (i) Let $U$ be open affine and such that $\tcT_{n,k}|_U$ is free over $\cO_U$. Then use that $H^0(U,\tsD^{(m)}_{n,k})$ is the $p$-adic completion of $H^0(U,\tcD^{(m)}_{n,k})$. (The proof is similar to the one given in \ref{completion}.) Because the latter is noetherian, cf. \ref{graded_tcD}, the former is noetherian as well.

\vskip8pt

(ii) Can be proved as in \cite[sec. 3.5]{BerthelotDI}.

\vskip8pt

(iii) Follows from (i) and (ii).

\vskip8pt

(iv) Follows from \ref{completion} (iii) and \ref{global_sections_tcD}. (Note that using \ref{completion} here does not introduce a circular argument.)

\vskip8pt

(v) Follows from (iv) and the description of the analytic distribution algebra as given in \cite[ch. 5]{EmertonA}. \qed

\vskip12pt

\section{Localization on $\frX_n$ via $\tsD^\dagger_{n,k,\Q}$}\label{loc_n}

The general line of arguments follows fairly closely \cite{Huyghe97}. The numbers $k \ge n \ge 0$ are fixed throughout this section, and ${\rm pr} = {\rm pr}_{n,0}: \bbX_n \ra \bbX = \bbX_0$ is the blow-up morphism.

\vskip12pt

\subsection{Cohomology of coherent $\tcD^{(m)}_{n,k}$-modules}

\begin{lemma}\label{van_coh} Let $\cE$ be a quasi-coherent $\cO_{\bbX_n}$-module on $\bbX_n$. Then, for all $i \ge 2$ one has $H^i(\bbX_n,\cE) =0$.
\end{lemma}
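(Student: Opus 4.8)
The plan is to reduce the statement to a cohomological-dimension bound for the scheme $\bbX_n$ and its structure sheaf. Since $\cE$ is quasi-coherent, by a standard limit argument its cohomology commutes with filtered colimits, so it suffices to treat coherent $\cE$; but in fact the cleanest route avoids even this and uses only that $\bbX_n$ is covered by a small number of affines. First I would invoke the affine covering of $\frX_n$ (equivalently $\bbX_n$) described in Section \ref{models}: $\bbX_n$ is glued together along its special fiber from the ``dumbbell'' affine opens of the form $\Spec\left(\fro[x,z]\big[\tfrac{1}{x^{q-1}-1},\tfrac{1}{z^{q-1}-1}\big]/(xz-\vpi)\right)$ together with finitely many affine lines $\Spec(\fro[t])$ coming from the outer components. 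The key point is that this covering can be arranged so that the intersection of any two members is again affine, and — crucially — that no point of $\bbX_n$ lies on three or more members of the covering.

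The main step is then to run the \v{C}ech-to-derived-functor spectral sequence for this covering $\mathfrak{U} = \{U_i\}$. Because each $U_i$ is affine and $\cE$ is quasi-coherent, $H^j(U_i,\cE) = 0$ for $j>0$, and likewise on all the (affine) pairwise intersections $U_i \cap U_j$; hence $H^i(\bbX_n,\cE)$ is computed by the \v{C}ech complex $\check{C}^\bullet(\mathfrak{U},\cE)$. Now the combinatorial geometry of the blow-up models enters: because every point of the special fiber lies on at most two of the chosen opens (the central component meets each outer component, but the outer components are pairwise disjoint), the nerve of $\mathfrak{U}$ has dimension $\le 1$, so $\check{C}^p(\mathfrak{U},\cE) = 0$ for $p \ge 2$. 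Therefore $H^i(\bbX_n,\cE) = \check{H}^i(\mathfrak{U},\cE) = 0$ for all $i \ge 2$, which is exactly the claim.

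The step I expect to be the main obstacle is verifying precisely that the natural affine covering has nerve of dimension at most one, i.e.\ that one really can choose the dumbbell charts and the affine-line charts so that triple intersections are empty (or at any rate so that any refinement witnessing this remains affine on all double intersections). This is essentially a bookkeeping check on the combinatorics of the iterated blow-ups: the special fiber of $\bbX_n$ is a tree of $\bbP^1$'s, and one must confirm that an adapted affine cover of such a configuration can be taken with only double overlaps. One mild subtlety is that the formal/integral model is two-dimensional, so the charts are not literally curves; but since the extra direction is the base $\Spec(\fro)$ (an affine of cohomological dimension zero for quasi-coherent sheaves), it contributes nothing, and the nerve is still governed by the combinatorics of the special fiber. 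Once that is in hand, the spectral sequence argument is immediate, and indeed one could alternatively phrase the whole thing as: $\bbX_n$ admits a covering by two affine opens up to a further affine piece, so its cohomological dimension for quasi-coherent sheaves is $\le 1$.
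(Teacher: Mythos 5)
Your reduction to a \v{C}ech computation is set up correctly in its formal aspects ($\bbX_n$ is separated, so pairwise intersections of affine opens are affine, and \v{C}ech cohomology with respect to an affine cover computes sheaf cohomology of a quasi-coherent sheaf), but the key combinatorial claim — that the natural cover can be arranged with empty triple intersections — is false, and this breaks the argument. The scheme $\bbX_n$ is integral (it is an iterated blow-up of $\bbP^1_\fro$ along nowhere-dense centers), so \emph{every} nonempty open subset contains the generic point; in particular any three nonempty opens of $\bbX_n$ have nonempty intersection, and the nerve of any cover by $m$ nonempty opens is the full $(m-1)$-simplex. The cover by dumbbells and affine lines has far more than two members once $n\ge 1$, so $\check{C}^p(\frU,\cE)\neq 0$ for $p\ge 2$. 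The problem is not cured by passing to the special fiber either: all the dumbbell charts sitting over the various double points of a single irreducible component of $\bbX_{n,s}$ each contain a dense open of that component, so already on the special fiber (which is what governs the formal scheme $\frX_n$) triple intersections are nonempty as soon as a component carries three or more nodes — which happens for the central component of $\bbX_1$ since it meets $q+1\ge 3$ outer components. Your parenthetical "the outer components are pairwise disjoint" addresses the wrong intersections. A \v{C}ech proof of cohomological dimension $\le 1$ would require exhibiting a cover of $\bbX_n$ by \emph{two} affine opens (e.g.\ as complements of two disjoint relatively ample divisors); that may well be possible, but it is a separate nontrivial construction that you have not supplied, and it is not a consequence of the tree combinatorics.

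The paper's proof avoids this entirely. One first reduces to coherent $\cE$ (cohomology commutes with filtered colimits on the noetherian space $\bbX_n$, and a quasi-coherent module is the colimit of its coherent submodules), and then applies the Leray spectral sequence for the structure morphism $f:\bbX_n\ra\Spec(\fro)$: the sheaves $R^jf_*(\cE)$ are coherent on the affine scheme $\Spec(\fro)$ and hence have no higher cohomology, and they vanish for $j\ge 2$ because the fibers of the projective morphism $f$ are one-dimensional (\cite[ch.~III, 11.2]{HartshorneA}). If you want to keep a covering-theoretic flavor, you must replace the nerve claim by an actual two-affine cover; otherwise the Leray/fiber-dimension argument is the correct route.
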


\Pf Since $\cE$ is the inductive limit of its coherent $\cO_{\bbX_n}$-submodules, and because cohomology commutes with inductive limits on a noetherian space, we may assume that $\cE$ is coherent. Let $f: \bbX_n \ra \Spec(\fro)$ be the structure morphism. The sheaves $R^j f_*(\cE)$ are coherent on $\Spec(\fro)$, cf. \cite[ch. III, 8.8]{HartshorneA}, and have thus vanishing cohomology in positive degrees. But since the fibers of $f$ are one-dimensional one has $R^j f_*(\cE) = 0$ for $j \ge 2$, cf. \cite[ch. III, 11.2]{HartshorneA}. We can now conclude by the Leray spectral sequence for $f$. \qed

\vskip8pt

\begin{prop}\label{vanishing_coh_Dnk} Let $r_0$ be as in \ref{v_ample_sh_lemma}.
Then, for all $r \ge r_0$ and all $i \ge 1$ one has

\begin{numequation}\label{vanishing_coh_Dnk_disp}
H^i\left(\bbX_n,\tcD_{n,k}^{(m)}(r)\right) = 0 \;.
\end{numequation}

\end{prop}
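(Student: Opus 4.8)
The plan is to reduce the vanishing to the graded sheaves, where we have explicit control via Proposition \ref{finiteness_twist} and Lemma \ref{v_ample_sh_lemma}. First I would use the filtration of $\tcD_{n,k}^{(m)}(r)$ by the coherent subsheaves $\cF_d(r) = \tcD_{n,k;d}^{(m)}(r)$ (twisting the right $\cO_{\bbX_n}$-module structure by $\cL_n^{\otimes r}$, as set up in the paragraph on twisting). Since cohomology on the noetherian scheme $\bbX_n$ commutes with the inductive limit $\tcD_{n,k}^{(m)}(r) = \varinjlim_d \cF_d(r)$, it suffices to show $H^i(\bbX_n, \cF_d(r)) = 0$ for all $d \ge 0$, all $i \ge 1$, and all $r \ge r_0$.

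Next I would run an induction on $d$. For the filtration step we have the short exact sequences of $\cO_{\bbX_n}$-modules
\begin{numequation}\label{fil_gr_twisted}
0 \lra \cF_{d-1}(r) \lra \cF_d(r) \lra \Sym^d(\tcT_{n,k})^{(m)}(r) \lra 0 \;,
\end{numequation}
obtained from the exact sequence in \ref{graded_tcD}(iii) by tensoring with the line bundle $\cL_n^{\otimes r}$ (the sequence stays exact since $\cL_n$ is invertible, and the twist is unambiguous because on the associated graded the left and right $\cO_{\bbX_n}$-structures agree). The base case $d=0$ is $\cF_0(r) = \cO_{\bbX_n}(r) = \cL_n^{\otimes r}$, whose higher cohomology vanishes for $r \ge r_0$ by \ref{v_ample_sh_lemma}(iii) (taking $d = 0$ there). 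For the inductive step, the long exact cohomology sequence of \ref{fil_gr_twisted} together with the inductive hypothesis on $\cF_{d-1}(r)$ reduces everything to the vanishing
\begin{numequation}\label{gr_van}
H^i\Big(\bbX_n, \Sym^d(\tcT_{n,k})^{(m)}(r)\Big) = 0 \qquad (i \ge 1) \;.
\end{numequation}
For $i \ge 2$ this is automatic from \ref{van_coh}, since $\Sym^d(\tcT_{n,k})^{(m)}(r)$ is quasi-coherent on $\bbX_n$. So only $i = 1$ remains.

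The $i=1$ case of \ref{gr_van} is the main obstacle, and here I would follow Huyghe's strategy closely. By \ref{tcT_lemma}, $\tcT_{n,k} = \vpi^k\,{\rm pr}^*(\cT_\bbX)$, so $\Sym^d(\tcT_{n,k})^{(m)} \simeq \frac{q^{(m)}_d!}{d!}\vpi^{kd}\,{\rm pr}^*(\cT_\bbX^{\otimes d})$, and using the Euler sequence \ref{O(d)_ex_seq} pulled back to $\bbX_n$ one sees that $\Sym^d(\tcT_{n,k})^{(m)}$ embeds (up to a power of $p$, which is invertible on $H^1$ once one also tracks that all the terms are $p$-torsion-free, or more simply by a direct induction on $d$ using the twisted pullback of \ref{O(d)_ex_seq}) into a suitable power of ${\rm pr}^*(\cO_\bbX(2d))$; equivalently, one works directly with the exact sequences \ref{fil_gr_ex_seq_n0} pushed to $\bbX_n$ and twisted by $\cL_n^{\otimes r}$. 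Either way the problem comes down to showing $H^1\big(\bbX_n, {\rm pr}^*(\cO_\bbX(e)) \otimes \cL_n^{\otimes r}\big) = 0$ for $r \ge r_0$ and all $e \ge 0$ (and more generally all $e \in \Z$, but $e \ge 0$ suffices for the Euler-sequence induction once combined with \ref{coh_1}, which provides the needed vanishing with a $\cO_\bbX(-1)$ twist). This is precisely \ref{v_ample_sh_lemma}(iii), writing $d = e$. Chaining these inputs — the filtration induction on $d$, \ref{van_coh} for $i \ge 2$, and \ref{v_ample_sh_lemma}(iii) together with \ref{graded_tcD}(iii) and \ref{tcT_lemma} for $i = 1$ — yields the claim. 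The only genuinely delicate bookkeeping is to make sure that the identifications $\Sym^d(\tcT_{n,k})^{(m)} \simeq$ (twist of ${\rm pr}^*\cO_\bbX(2d)$) and the pulled-back, twisted Euler sequences are compatible with the twisting conventions for the right $\cO_{\bbX_n}$-module structure on $\tcD_{n,k}^{(m)}$, which is why I would phrase the induction entirely in terms of the sequences \ref{fil_gr_gr_ex_seq_n0}–type sequences on $\bbX_n$ rather than transporting modules back to $\bbX$.
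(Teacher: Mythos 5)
Your argument is correct and coincides with the paper's own proof: both filter $\tcD_{n,k}^{(m)}(r)$ by the $\cF_d(r)$, identify the graded pieces with ${\rm pr}^*\big(\cO_\bbX(2d)\big)\otimes\cL_n^{\otimes r}$, invoke Lemma \ref{v_ample_sh_lemma}(iii) for their acyclicity, induct on $d$, and pass to the inductive limit. The only difference is that your detour through the Euler sequence is unnecessary, since $\Sym^d(\tcT_{n,k})^{(m)}\simeq{\rm pr}^*\big(\cO_\bbX(2d)\big)$ up to a unit multiple of a power of $\vpi$ by \ref{tcT_lemma} and \ref{simplification}, which is exactly the identification the paper uses.
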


\Pf Note that

\begin{numequation}\label{gr_0}
{\rm gr}_0\left(\tcD_{n,k}^{(m)}(r)\right) = \cF_0(r) = \cL_n^{\otimes r} \;.
\end{numequation}

For $d>0$ we consider the exact sequence

\begin{numequation}\label{ex_fil_seq_1}
0 \ra \cF_{d-1} \ra \cF_d  \ra {\rm gr}_d\left(\tcD_{n,k}^{(m)}\right) \ra 0
\end{numequation}

\vskip8pt

from which we deduce the exact sequence

\begin{numequation}\label{ex_fil_seq_2}
0 \ra \cF_{d-1}(r) \ra \cF_d(r)  \ra {\rm gr}_d\left(\tcD_{n,k}^{(m)}\right)(r) \ra 0
\end{numequation}

\vskip8pt

because tensoring with a line bundle is an exact functor. It thus follows that

$${\rm gr}_d\left(\tcD_{n,k}^{(m)}(r)\right) \simeq \frac{q^{(m)}_d!}{d!} \vpi^{dk} {\rm pr}^*(\cT_\bbX) \otimes \cL_n^{\otimes r} \simeq  {\rm pr}^*\Big(\cO_\bbX(2d)\Big) \otimes \cL_n^{\otimes r} \;.$$

\vskip8pt

Hence by \ref{v_ample_sh_lemma} we find that the cohomology of this sheaf vanishes for all $d \ge 0$. Using the sequence \ref{ex_fil_seq_2} we can then deduce by induction on $d$ that for all $i>0$

$$H^i(\bbX_n, \cF_d(r)) = 0 \;.$$

\vskip8pt

Because cohomology commutes with inductive limits on a noetherian scheme we obtain the asserted vanishing result. \qed

\vskip8pt

\begin{prop}\label{surjection} Let $\cE$ be a coherent $\tcD^{(m)}_{n,k}$-module.

\vskip8pt

(i) There is $r_1(\cE) \in \Z$ such that for all $r \ge r_1(\cE)$ there is $s \in \Z_{\ge 0}$ and an epimorphism of $\tcD^{(m)}_{n,k}$-modules

$$\Big(\tcD^{(m)}_{n,k}(-r)\Big)^{\oplus s} \twoheadrightarrow \cE \;.$$

\vskip8pt

(ii) There is $r_2(\cE) \in \Z$ such that for all $r \ge r_2(\cE)$ and all $i >0$

$$H^i\Big(\bbX_n, \cE(r)\Big) = 0 \;.$$

\vskip8pt
\end{prop}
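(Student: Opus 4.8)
The plan is to follow the strategy that is standard in this kind of localization theory (and that goes back to Huyghe's treatment of $\bbX=\bbX_0$, cf. \cite{Huyghe97}), exploiting the very ample line bundle $\cL_n$ from \ref{v_ample_sh_lemma} and the vanishing result \ref{vanishing_coh_Dnk}. For part (i), one first observes that since $\cL_n$ is very ample over $\Spec(\fro)$ and $\cE$ is a coherent $\cO_{\bbX_n}$-module when viewed through the forgetful functor, Serre's theorem applies: there exists $r_1(\cE)$ such that for all $r\geq r_1(\cE)$ the twisted sheaf $\cE(r)=\cE\otimes_{\cO_{\bbX_n}}\cL_n^{\otimes r}$ is generated by finitely many global sections, say by $s$ of them. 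This gives an epimorphism of $\cO_{\bbX_n}$-modules $\cO_{\bbX_n}^{\oplus s}\twoheadrightarrow\cE(r)$, hence $\cL_n^{\otimes -r}{}^{\oplus s}\twoheadrightarrow\cE$ of $\cO_{\bbX_n}$-modules after untwisting. The point is then to upgrade this to a $\tcD^{(m)}_{n,k}$-linear epimorphism: one uses that $\cO_{\bbX_n}\to\tcD^{(m)}_{n,k}$ makes $\tcD^{(m)}_{n,k}(-r)$ a $\tcD^{(m)}_{n,k}$-module whose underlying $\cO_{\bbX_n}$-module surjects onto $\cL_n^{\otimes -r}$ (taking order-zero part, $\mathrm{gr}_0$, as in \ref{gr_0}), and the chosen global sections of $\cE(r)$ determine a $\tcD^{(m)}_{n,k}$-module map $(\tcD^{(m)}_{n,k}(-r))^{\oplus s}\to\cE$ by sending the $i$-th standard generator to the $i$-th section; this map is surjective because it is already surjective as a map of $\cO_{\bbX_n}$-modules, the image containing a set of $\cO_{\bbX_n}$-generators of $\cE$.

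For part (ii), the plan is a descending induction on the cohomological dimension, which is bounded: by \ref{van_coh} we have $H^i(\bbX_n,-)=0$ for all $i\geq 2$ on any quasi-coherent sheaf, so the only cohomology to kill is $H^1$. Given a coherent $\tcD^{(m)}_{n,k}$-module $\cE$, apply part (i) to get, for $r$ large, a short exact sequence
\begin{numequation}\label{resolution_seq}
0 \ra \cK \ra \Big(\tcD^{(m)}_{n,k}(-r)\Big)^{\oplus s} \ra \cE \ra 0
\end{numequation}
of $\tcD^{(m)}_{n,k}$-modules, where $\cK$ is coherent because $\tcD^{(m)}_{n,k}$ is a coherent (indeed locally noetherian) sheaf of rings, cf. \ref{graded_tcD}(vi) and \ref{global_sec_tsD}(i). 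Twisting \ref{resolution_seq} by $\cL_n^{\otimes r'}$ for $r'\geq 0$ and taking the long exact cohomology sequence, and using that $H^1(\bbX_n,(\tcD^{(m)}_{n,k}(-r+r'))^{\oplus s})=0$ once $-r+r'\geq r_0$ by \ref{vanishing_coh_Dnk}, one obtains a surjection $H^1(\bbX_n,\cE(r'))\twoheadleftarrow$ is preceded by $H^1$ of the middle term, hence $H^1(\bbX_n,\cE(r'))=0$ for $r'\geq r+r_0$ — provided we also know $H^2(\bbX_n,\cK(r'))=0$, which holds automatically by \ref{van_coh}. One then sets $r_2(\cE)=r_1(\cE)+r_0$ and checks this works uniformly (the only subtlety being that $r_1$ and the number $s$ depend on $\cE$, but that is allowed by the statement).

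The main obstacle I expect is not in either induction itself but in making the $\tcD^{(m)}_{n,k}$-module structures bookkeeping precise — specifically, being careful about the left versus right $\cO_{\bbX_n}$-module structures on $\tcD^{(m)}_{n,k}(-r)$ (the twist is by the right structure, as emphasized in the \textit{Twisting by $\cL_n$} paragraph), and verifying that an $\cO_{\bbX_n}$-linear surjection lifts to a $\tcD^{(m)}_{n,k}$-linear one. This is the step where one must use that global sections of $\cE(r)$ correspond, by adjunction along $\cO_{\bbX_n}\to\tcD^{(m)}_{n,k}$, to $\tcD^{(m)}_{n,k}$-linear maps out of the free module $\tcD^{(m)}_{n,k}(-r)$; once that adjunction is set up cleanly, surjectivity is immediate from Nakayama-type reasoning on stalks (or simply because the cokernel is a coherent sheaf vanishing after applying the faithfully-flat-enough forgetful functor to $\cO_{\bbX_n}$-modules). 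Everything else is a direct transcription of Huyghe's argument for $(n,k)=(0,0)$, now fed by the twisting-vanishing input \ref{vanishing_coh_Dnk} and the low cohomological dimension \ref{van_coh}.
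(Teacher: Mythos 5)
There is a genuine gap at the very first step of your part (i). You assert that $\cE$, ``viewed through the forgetful functor,'' is a coherent $\cO_{\bbX_n}$-module, and then invoke Serre's theorem to generate $\cE(r)$ by finitely many global sections. But a coherent $\tcD^{(m)}_{n,k}$-module is in general only \emph{quasi-coherent} over $\cO_{\bbX_n}$, not coherent: already $\tcD^{(m)}_{n,k}$ itself is coherent over itself while being an infinite inductive limit $\varinjlim_d \tcD^{(m)}_{n,k;d}$ of coherent $\cO_{\bbX_n}$-modules, hence not $\cO$-coherent. Serre's theorem therefore does not apply to $\cE$ directly, and your later remark that ``the image contains a set of $\cO_{\bbX_n}$-generators of $\cE$'' presupposes a finite $\cO$-generating set that need not exist.

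The repair (which is what the paper does) is to insert one intermediate object: since $\bbX_n$ is noetherian, the quasi-coherent sheaf $\cE$ is the inductive limit of its coherent $\cO_{\bbX_n}$-submodules, and because $\cE$ is coherent over $\tcD^{(m)}_{n,k}$ one of these submodules $\cF \sub \cE$ already generates $\cE$, i.e.\ $\tcD^{(m)}_{n,k} \otimes_{\cO_{\bbX_n}} \cF \twoheadrightarrow \cE$ (with the right $\cO$-structure on $\tcD^{(m)}_{n,k}$). Serre's theorem is then applied to the coherent sheaf $\cF$, giving $\bigl(\cO_{\bbX_n}(-r)\bigr)^{\oplus s} \twoheadrightarrow \cF$ for $r \ge r_1(\cE)$, and tensoring this over $\cO_{\bbX_n}$ with $\tcD^{(m)}_{n,k}$ (right exactness) and composing with the surjection above yields the desired $\tcD^{(m)}_{n,k}$-linear epimorphism $\bigl(\tcD^{(m)}_{n,k}(-r)\bigr)^{\oplus s} \twoheadrightarrow \cE$; no separate adjunction or Nakayama argument is needed. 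Your part (ii) is essentially the paper's argument (short exact sequence from (i), coherence of the kernel, the vanishing \ref{vanishing_coh_Dnk} for the middle term and \ref{van_coh} in degree $\ge 2$) and goes through once (i) is fixed.
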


\Pf (i) As $\bbX_n$ is a noetherian scheme, $\cE$ is the inductive limit of its coherent subsheaves.  There is thus a coherent $\cO_{\bbX_n}$-submodule $\cF \sub \cE$ which generates $\cE$ as a $\tcD^{(m)}_{n,k}$-module, i.e., an epimorphism of sheaves

$$\tcD^{(m)}_{n,k} \otimes_{\cO_{\bbX_n}} \cF \stackrel{\alpha}{\lra} \cE \;,$$

\vskip8pt

where $\tcD^{(m)}_{n,k}$ is considered with its right $\cO_{\bbX_n}$-module structure. Next, there is $r_1(\cE)$ such that for all $r \ge r_1(\cE)$ the sheaf

$$\cF(r) = \cF \otimes_{\cO_{\bbX_n}} \cL_n^{\otimes r}$$

\vskip8pt

is generated by its global sections. Hence there is $s > 0$ and an epimorphism $\cO_{\bbX_n}^{\oplus s} \twoheadrightarrow \cF(r)$, and thus an epimorphism of $\cO_{\bbX_n}$-modules

$$\left(\cO_{\bbX_n}(-r)\right)^{\oplus s} \twoheadrightarrow \cF \;.$$

\vskip8pt

From this morphism get an epimorphism of $\tcD^{(m)}_{n,k}$-modules

$$\left(\tcD^{(m)}_{n,k}(-r)\right)^{\oplus s}  = \tcD^{(m)}_{n,k} \otimes_{\cO_{\bbX_n}} \left(\cO_{\bbX_n}(-r)\right)^{\oplus s} \twoheadrightarrow \tcD^{(m)}_{n,k} \otimes_{\cO_{\bbX_n}} \cF  \stackrel{\alpha}{\lra} \cE \;.$$

\vskip8pt

(ii) We prove this by descending induction on $i$. When $i>1$ there is nothing to show, cf. \ref{van_coh}. We will now deduce it for $i=1$. Using (i) we find an epimorphism of $\tcD^{(m)}_{n,k}$-modules

$$\beta: \left(\tcD^{(m)}_{n,k}\right)^{\oplus s}  \twoheadrightarrow \cE\Big(r_1(\cE)\Big) \;.$$

\vskip8pt

By \ref{graded_tcD}, the kernel $\cR = \ker(\beta)$ is a coherent $\tcD^{(m)}_{n,k}$-module and we have an exact sequence

$$0 \ra \cR \ra \left(\tcD^{(m)}_{n,k}\right)^{\oplus s} \ra \cE\Big(r_1(\cE)\Big) \ra 0 \;,$$

\vskip8pt

which gives for any $t \in \Z$ the exact sequence

\begin{numequation}\label{ex_seq_van_coh} 0 \ra \cR(t) \ra \left(\tcD^{(m)}_{n,k}(t)\right)^{\oplus s} \ra \cE\Big(t+r_1(\cE)\Big) \ra 0 \;.
\end{numequation}

Now choose $r_2(\cE) = \max\{r_0+r_1(\cE),r_2(\cR)\}$, where $r_0$ is as in \ref{vanishing_coh_Dnk}.
Then we can conclude that from the long exact cohomology sequence associated to \ref{ex_seq_van_coh} that for $r \ge r_2(\cE)$

$$H^1(\bbX_n,\cE(r)) = 0 \;.$$

\vskip8pt

\qed

\vskip8pt

\begin{prop}\label{finite_power} Fix $r \in \Z$.

\vskip8pt

(i) There is $c_1 = c_1(r) \in \Z_{\ge 0}$ such that for all $i>0$ the sheaf $R^i {\rm pr}_*\Big(\tcD_{n,k}^{(m)}(r)\Big)$ on $\bbX$ is annihilated by $p^{c_1}$. Furthermore, it is a skyscraper sheaf, and it is a coherent module over $\tcD^{(m)}_{0,k}$.

\vskip8pt

(ii) There is $c_2 = c_2(r) \in \Z_{\ge 0}$ such that for all $i>0$ the cohomology group $H^i(\bbX_n,\tcD_{n,k}^{(m)}(r))$ is annihilated by $p^{c_2}$. Furthermore, it is a finitely generated module over $H^0(\bbX_n,\tcD_{n,k}^{(m)})$.

\vskip8pt

(iii) Let $\cE$ be a coherent $\tcD_{n,k}^{(m)}$-module. There is $c_3 = c_3(\cE) \in \Z_{\ge 0}$ such that for all $i>0$ the cohomology group $H^i(\bbX_n,\cE)$ is annihilated by $p^{c_3}$. Furthermore, it is a finitely generated module over $H^0(\bbX_n,\tcD_{n,k}^{(m)})$.

\end{prop}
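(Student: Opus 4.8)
The plan is to establish (i)--(iii) in that order, with (i) and (iii) following essentially formally once (ii) is in hand, and the only real work being the uniform bound on the $p$-torsion. Write ${\rm pr}={\rm pr}_{n,0}$ and $\cF_d=\tcD^{(m)}_{n,k;d}$, so that $\tcD^{(m)}_{n,k}(r)=\varinjlim_d\cF_d(r)$ with all $\cF_d(r)$ coherent on $\bbX_n$. Two preliminary observations will be used throughout. First, the generic fibre of ${\rm pr}$ is an isomorphism $\bbP^1_L\to\bbP^1_L$, and under it $\tcD^{(m)}_{n,k}(r)\otimes_\Z\Q$ becomes the coherent module $\cD_{\bbP^1_L}\otimes_{\cO}\cO_{\bbP^1_L}(ra_0)$ over the sheaf $\cD_{\bbP^1_L}$ of algebraic differential operators: \ref{invert_p_1} gives $\cO_{\bbX_n/\bbX}(1)\otimes\Q=\cO_{\bbX_n}\otimes\Q$, \ref{graded_tcD} gives $\tcD^{(m)}_{n,k}={\rm pr}^*\tcD^{(m)}_{0,k}$, and inverting $p$ turns level-$m$ differential operators into all algebraic differential operators. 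Second, $\bbP^1_L$ is $\cD$-affine, so any coherent $\cD_{\bbP^1_L}$-module has vanishing higher cohomology; cf.\ \cite{BB81}.

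\emph{Part (i).} As ${\rm pr}$ is an isomorphism over the complement of the finitely many $\Fq$-rational points of the special fibre of $\bbX$ that are successively blown up, each $R^i{\rm pr}_*(\tcD^{(m)}_{n,k}(r))$ with $i>0$ is supported on that finite set, hence is a skyscraper sheaf; and since ${\rm pr}$ becomes an isomorphism on generic fibres, this sheaf vanishes after $\otimes\Q$ and is therefore $p$-power torsion. To make the exponent independent of the filtration degree, combine \ref{invert_p_3} with $\tcD^{(m)}_{n,k;d}={\rm pr}^*\tcD^{(m)}_{0,k;d}$ (\ref{graded_tcD}(i)) and the local freeness of $\tcD^{(m)}_{0,k;d}$: putting $\cN_d=\tcD^{(m)}_{0,k;d}\otimes_{\cO_\bbX}\cO_\bbX(ra_0)$ and $e=|r|N(n)$ one obtains $p^e\,{\rm pr}^*\cN_d\sub\cF_d(r)\sub p^{-e}\,{\rm pr}^*\cN_d$ with $e$ independent of $d$, and since $R^{>0}{\rm pr}_*({\rm pr}^*\cN_d)=0$ by the projection formula and \ref{direct_im_str_sh}, the long exact sequence shows $p^{2e}$ kills $R^{>0}{\rm pr}_*(\cF_d(r))$ for every $d$; passing to the colimit gives $c_1(r)=2|r|N(n)$. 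Finally $R^i{\rm pr}_*(\tcD^{(m)}_{n,k}(r))$ is a module over ${\rm pr}_*\tcD^{(m)}_{n,k}=\tcD^{(m)}_{0,k}$ (\ref{graded_tcD}(ii)), and its coherence over this sheaf of rings follows from (ii) via the Leray spectral sequence, which realizes the stalks of $R^1{\rm pr}_*(\tcD^{(m)}_{n,k}(r))$ as quotients of $H^1(\bbX_n,\tcD^{(m)}_{n,k}(r))$, hence finitely generated over $H^0(\bbX_n,\tcD^{(m)}_{n,k})=H^0(\bbX_0,\tcD^{(m)}_{0,k})$ (\ref{graded_tcD}, \ref{global_sections_tcD}).

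\emph{Part (ii).} By \ref{van_coh} only $i=1$ is at issue; put $M=H^1(\bbX_n,\tcD^{(m)}_{n,k}(r))$ and $A=H^0(\bbX_n,\tcD^{(m)}_{n,k})$, a noetherian ring (\ref{global_sections_tcD}). Cohomology commutes with $-\otimes_\Z\Q$ here, and $M\otimes\Q=H^1(\bbP^1_L,\cD_{\bbP^1_L}\otimes\cO_{\bbP^1_L}(ra_0))=0$ by $\cD$-affinity, so $M$ is $p$-power torsion. To see $M$ is finitely generated over $A$, filter it by $M_d=\im\big(H^1(\bbX_n,\cF_d(r))\to M\big)$; then $\bigoplus_d M_d/M_{d-1}$ is a graded module over ${\rm gr}(A)\simeq H^0(\bbX_n,\Sym(\tcT_{n,k})^{(m)})$ (\ref{graded_tcD}) and a quotient of $\bigoplus_d H^1(\bbX_n,\Sym^d(\tcT_{n,k})^{(m)}(r))$. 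I would show this last sum is finitely generated over ${\rm gr}(A)$ by the method of \ref{finiteness_twist} applied to $H^1$ instead of $H^0$: via \ref{graded_tcD}, the projection formula and \ref{direct_im_str_sh} one reduces to a cohomology computation on $\bbX_0$ and on $\Sym(\vpi^k\cO_\bbX(1)^{\oplus 2})^{(m)}$ as in \cite[4.2.1]{Huyghe97}; in degrees $d$ with $2d+ra_0\ge-1$ only the skyscraper contribution of $R^1{\rm pr}_*\cL_n^{\otimes r}$ survives and is generated over ${\rm gr}(A)$ in degree $0$, the remaining finitely many degrees contributing a finitely generated $\fro$-module. Since ${\rm gr}(A)$ is noetherian (\ref{graded_tcD}(v)), $\bigoplus_d M_d/M_{d-1}$ and hence $M$ are finitely generated over $A$; a finitely generated $p$-power-torsion module over the noetherian ring $A$ is killed by a fixed power of $p$, which yields $c_2(r)$.

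\emph{Part (iii), and the main obstacle.} For $\cE$ as in (iii) again only $i=1$ matters (\ref{van_coh}); choose $r$ by \ref{surjection}(i) with an epimorphism $(\tcD^{(m)}_{n,k}(-r))^{\oplus s}\twoheadrightarrow\cE$, whose kernel $\cK$ is a coherent $\tcD^{(m)}_{n,k}$-module (\ref{graded_tcD}). As $H^2(\bbX_n,\cK)=0$ (\ref{van_coh}), the long exact cohomology sequence presents $H^1(\bbX_n,\cE)$ as a quotient of $H^1(\bbX_n,\tcD^{(m)}_{n,k}(-r))^{\oplus s}$, which by (ii) is killed by a power of $p$ and finitely generated over the noetherian ring $H^0(\bbX_n,\tcD^{(m)}_{n,k})$; both properties descend to quotients, giving $c_3(\cE)$. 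The step that takes genuine work, shared by (ii) and (iii), is upgrading "$p$-power torsion" to "annihilated by a fixed power of $p$": a filtered colimit of finitely generated torsion $\fro$-modules need not have bounded exponent (as $\varinjlim_d\fro/\vpi^d$ shows), so one really needs the cohomology to be finitely generated over the noetherian global-sections ring — this is where \ref{finiteness_twist}, hence \cite[4.2.1]{Huyghe97}, together with the noetherianity statements of \ref{graded_tcD} and \ref{global_sections_tcD} are essential. For (i) the only analogous subtlety is that the exponent $e$ in \ref{invert_p_3} must not grow with the operator order, which holds since $\cO_{\bbX_n/\bbX}(1)$ does not.
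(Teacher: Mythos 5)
Your argument is correct in substance and uses the same core ingredients as the paper -- the Leray spectral sequence for ${\rm pr}$, Beilinson--Bernstein $\cD$-affinity of $\bbP^1$ to force $p$-power torsion, the filtration by order of differential operators together with the Huyghe-type finiteness of \ref{finiteness_twist}, and \ref{surjection} to bootstrap (iii) from (ii) -- but it organizes them differently, mainly in (ii). The paper splits $H^1(\bbX_n,\tcD^{(m)}_{n,k}(r))$ via the low-degree Leray sequence into $H^1\big(\bbX,\tcD^{(m)}_{0,k}\otimes{\rm pr}_*\cL_n^{\otimes r}\big)$, shown to be a finitely generated $\fro$-module by a stabilization argument on the maps $H^1(\cF_{d_0,r})\to H^1(\cF_{d,r})$ and then killed by a power of $p$ via \cite{BB81}, and $H^0\big(\bbX,\tcD^{(m)}_{0,k}\otimes R^1{\rm pr}_*\cL_n^{\otimes r}\big)$, handled by the surjectivity of the natural map from $H^0(\tcD^{(m)}_{0,k})\otimes H^0(R^1{\rm pr}_*\cL_n^{\otimes r})$; you instead get torsionness all at once from vanishing on the generic fibre and then prove finite generation over $A=H^0(\bbX_n,\tcD^{(m)}_{n,k})$ by passing to the associated graded of the image filtration $M_d$, which is a clean alternative and makes the "bounded exponent" step a one-line consequence of noetherianity. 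Two remarks. First, your explicit vanishing range "$2d+ra_0\ge-1$" for $H^1\big(\bbX,\cO(2d)\otimes{\rm pr}_*\cL_n^{\otimes r}\big)$ is not justified as stated, since ${\rm pr}_*\cL_n^{\otimes r}$ is only a coherent subsheaf of a line bundle, not a line bundle; you should simply invoke Serre vanishing for $d\gg0$, which is all you use. Second, for the coherence claim in (i) your detour through (ii) and the Leray sequence works (using \ref{van_coh} on $\bbX$ to get surjectivity onto $H^0(\bbX,R^1{\rm pr}_*(\cdot))$ and the skyscraper property to pass from global finite generation to coherence), but the paper's projection-formula identity $R^1{\rm pr}_*\big(\tcD^{(m)}_{n,k}(r)\big)=\tcD^{(m)}_{0,k}\otimes\cO_\bbX(ra_0)\otimes R^1{\rm pr}_*\big(\cO_{\bbX_n/\bbX}(1)^{\otimes r}\big)$ gives both the uniform torsion bound and the coherence in one stroke; your sandwich via \ref{invert_p_3} is an acceptable substitute for the former but not needed once you have this factorization.
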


\Pf (i) The direct image functor commutes with inductive limits, and we will thus consider the sheaves

$$\tcD_{n,k;d}^{(m)}(r) = \tcD_{n,k;d}^{(m)} \otimes_{\cO_{\bbX_n}} \cL_n^{\otimes r} \;,$$

\vskip8pt

where, as before, we consider $\tcD_{n,k;d}^{(m)}$ as a {\it right} $\cO_{\bbX_n}$-module. Because the fibers of ${\rm pr}$ are of dimension zero or one, we have

$$R^i {\rm pr}_*\Big(\tcD_{n,k;d}^{(m)}(r)\Big) = 0$$

\vskip8pt

for $i>1$, by \cite[ch. III, 11.2]{HartshorneA}. Now let $i=1$. Using the projection formula and \ref{graded_tcD} we deduce that

\begin{numequation}\label{dir_im_tcDr}
\begin{array}{rcl} R^1 {\rm pr}_*\Big(\tcD_{n,k;d}^{(m)}(r)\Big) & = & \tcD_{0,k;d}^{(m)} \otimes R^1 {\rm pr}_*\Big(\cL_n^{\otimes r}\Big)  \\
&&\\
& = & \tcD_{0,k;d}^{(m)} \otimes \cO_\bbX(r \cdot a_0) \otimes  R^1 {\rm pr}_*\Big(\cO_{\bbX_n/\bbX}(1)^{\otimes r}\Big)\;.
\end{array}
\end{numequation}

By \ref{invert_p_2} and \ref{direct_im_str_sh} we can conclude that $R^1 {\rm pr}_*\Big(\cO_{\bbX_n/\bbX}(1)^{\otimes r}\Big)$ is a torsion sheaf, and because it is coherent, it is thus annihilated by a $p^{c_1}$ for some $c_1=c_1(r) \ge 0$. Furthermore, because ${\rm pr}$ is an isomorphism from ${\rm pr}^{-1}(\bbX \setminus \bbX(\Fq))$ to $\bbX \setminus \bbX(\Fq)$, the sheaf $R^1 {\rm pr}_*\Big(\tcD_{n,k;d}^{(m)}(r)\Big)$ is supported on $\bbX(\Fq)$. It follows from \ref{dir_im_tcDr} that

$$R^1 {\rm pr}_*\Big(\tcD_{n,k}^{(m)}(r)\Big) = \tcD_{0,k}^{(m)} \otimes \cO_\bbX(r \cdot a_0) \otimes  R^1 {\rm pr}_*\Big(\cO_{\bbX_n/\bbX}(1)^{\otimes r}\Big)\;,$$

\vskip8pt

which shows that it is a coherent $\tcD_{0,k}^{(m)}$-module.

\vskip8pt

(ii) By \ref{van_coh} there is nothing to show for $i>1$. In order to compute $H^1$ we consider the exact sequence of low-degree terms for the Leray spectral sequence for the morphism ${\rm pr}$ and the sheaf $\tcD_{n,k;d}^{(m)}(r)$ on $\bbX_n$:

\begin{numequation}\label{ex_seq_low_deg_terms}
\begin{array}{ccccl}
0 & \ra & H^1\left(\bbX,\tcD^{(m)}_{0,k;d} \otimes {\rm pr}_*\Big(\cL_n^{\otimes r}\Big)\right) & \ra & H^1\Big(\bbX_n,\tcD^{(m)}_{n,k;d}(r)\Big) \\
&&&& \\
&&& \ra & H^0\left(\bbX, \tcD^{(m)}_{0,k;d} \otimes R^1 {\rm pr}_*\Big(\cL_n^{\otimes r}\Big)\right) \ra 0 \;.
\end{array}
\end{numequation}

In order to show the first assertion in (ii) we use (i) and are thus reduced to showing that

$$H^i\left(\bbX, \tcD_{0,k}^{(m)} \otimes {\rm pr}_*\Big(\cL_n^{\otimes r}\Big)\right)$$

\vskip8pt

is annihilated by a fixed power of $p$. We will first show that it is a finitely generated $\fro$-module, and then use the fact that coherent $\tcD_{0,k,\Q}^{(m)}$-modules have vanishing higher cohomology, by \cite{BB81}.
We put $\cF_d = \tcD_{0,k;d}^{(m)}$ and

$$\cF_{d,r} = \tcD_{0,k;d}^{(m)} \otimes_{\cO_\bbX} {\rm pr}_*\Big(\cL_n^{\otimes r}\Big) \;,$$

\vskip8pt

where, as before, we consider $\tcD_{0,k;d}^{(m)}$ as a {\it right} $\cO_\bbX$-module.\footnote{We note that, with the convention introduced above, the sheaf $\cF_d(r)$ is not the same as $\cF_{d,r}$.} More generally, when $\cG$ is a $\cO_\bbX$-module, we put $\cG_r = \cG \otimes {\rm pr}_*\Big(\cL_n^{\otimes r}\Big)$. We let $d,e \ge 0$ and consider the exact sequence

\begin{numequation}\label{fil_1}
0 \lra \cF_d \lra \cF_{d+e} \lra \cF_{d+e}/\cF_d \lra 0 \;.
\end{numequation}

Because the sheaves in this sequence are locally free, tensoring with ${\rm pr}_*\Big(\cL_n^{\otimes r}\Big)$ gives an exact sequence

\begin{numequation}\label{fil_1_twist}
0 \lra \cF_{d,r} \lra \cF_{d+e,r} \lra \Big(\cF_{d+e}/\cF_d\Big)_r \lra 0 \;.
\end{numequation}

and thus an exact sequence

\begin{numequation}\label{fil_2}
0 \lra \Big(\cF_{d+e}/\cF_d\Big)_r \lra \Big(\cF_{d+e+1}/\cF_d\Big)_r \lra \Big(\cF_{d+e+1}/\cF_{d+e}\Big)_r \lra 0 \;.
\end{numequation}

It follows from \ref{fil_1}, together with \ref{graded_tcD}, that the sheaf on the right of \ref{fil_2} is isomorphic to

$$\begin{array}{rcl}\frac{q^{(m)}_d!}{d!} \tcT_{0,k}^{\otimes (d+e+1)} \otimes {\rm pr}_*\Big(\cL_n^{\otimes r}\Big) & = & \frac{q^{(m)}_d!}{d!} \vpi^{k(d+e+1)} \cT_\bbX^{\otimes (d+e+1)} \otimes {\rm pr}_*\Big(\cL_n^{\otimes r}\Big) \\
&&\\
& \simeq & \cO_\bbX(2(d+e+1)) \otimes {\rm pr}_*\Big(\cL_n^{\otimes r}\Big) \;.
\end{array}$$

\vskip8pt

Let $d_0$ be such that for all $d \ge d_0$ the sheaf

$$\cO_\bbX(2(d+1)) \otimes {\rm pr}_*\Big(\cL_n^{\otimes r}\Big)$$

\vskip8pt

is acyclic. Using this fact and \ref{fil_2}, we see by induction on $e$ that all sheaves

$$\cF_{d_0+e,r}/\cF_{d_0,r} \;,$$

\vskip8pt

$e \ge 0$, are acyclic as well. The long exact cohomology sequence associated to \ref{fil_1_twist} shows that, for all $e \ge 0$, the canonical map

\begin{numequation}\label{surj_inj}
H^i(\bbX,\cF_{d_0,r}) \lra H^i(\bbX,\cF_{d_0+e,r})
\end{numequation}

is an isomorphism for $i \ge 2$ and surjective for $i=1$. This proves that

$$H^i\left(\bbX, \tcD_{0,k}^{(m)} \otimes {\rm pr}_*\Big(\cL_n^{\otimes r}\Big)\right)$$

\vskip8pt

is a finitely generated $\fro$-module, as cohomology commutes with inductive limits on a noetherian scheme, and $\cF_{d_0}$ is coherent over $\cO_\bbX$.

\vskip8pt

By \ref{graded_tcD} the sheaf $\tcD_{0,k}^{(m)} \otimes {\rm pr}_*\Big(\cL_n^{\otimes r}\Big)$ is a coherent $\tcD_{0,k}^{(m)}$-module which is quasi-coherent as $\cO_\bbX$-module.
By \cite{BB81}, coherent $\tcD_{0,k,\Q}^{(m)}$-modules which are quasi-coherent as $\cO_{\bbX,\Q}$-modules have vanishing higher cohomology. This implies that

$$H^i\left(\bbX, \tcD_{0,k}^{(m)} \otimes {\rm pr}_*\Big(\cL_n^{\otimes r}\Big)\right)$$

\vskip8pt

is annihilated by a fixed power of $p$.

\vskip8pt

Analogous to \ref{ex_seq_low_deg_terms} we have an exact sequence

$$\begin{array}{ccccl}
0 & \ra & H^1\left(\bbX,\tcD^{(m)}_{0,k} \otimes {\rm pr}_*\Big(\cL_n^{\otimes r}\Big)\right) & \ra & H^1\Big(\bbX_n,\tcD^{(m)}_{n,k}(r)\Big) \\
&&&& \\
&&& \ra & H^0\left(\bbX, \tcD^{(m)}_{0,k} \otimes R^1 {\rm pr}_*\Big(\cL_n^{\otimes r}\Big)\right) \ra 0 \;.
\end{array}$$

\vskip8pt

As we have just seen, the cohomology group on the left is a finite group of $p$-power order. Now we consider the cohomology group on the right. Because $R^1 {\rm pr}_*\Big(\cL_n^{\otimes r}\Big)$ is a skyscraper sheaf, the canonical homomorphism

$$H^0\left(\bbX, \tcD^{(m)}_{0,k}\right) \otimes_\fro H^0\left(\bbX,R^1 {\rm pr}_*\Big(\cL_n^{\otimes r}\Big)\right) \lra H^0\left(\bbX, \tcD^{(m)}_{0,k} \otimes R^1 {\rm pr}_*\Big(\cL_n^{\otimes r}\Big)\right) \;,$$

\vskip8pt

is easily seen to be surjective. This shows that $H^0\left(\bbX, \tcD^{(m)}_{0,k} \otimes R^1 {\rm pr}_*\Big(\cL_n^{\otimes r}\Big)\right)$ is a finitely generated module over $H^0(\bbX,\tcD^{(m)}_{0,k})$, but the latter is equal to $H^0(\bbX_n,\tcD^{(m)}_{n,k})$, by \ref{graded_tcD}. Therefore, $H^1\Big(\bbX_n,\tcD^{(m)}_{n,k}(r)\Big)$ is also a finitely generated $H^0(\bbX_n,\tcD^{(m)}_{n,k})$-module.

\vskip8pt

(iii) The proof proceeds by descending induction on $i$. For $i \ge 2$ there is nothing to show by \ref{van_coh}. Using \ref{surjection} we obtain a surjection of $\tcD^{(m)}_{n,k}$-modules

$$\beta: \Big(\tcD^{(m)}_{n,k}(-r)\Big)^{\oplus s} \twoheadrightarrow \cE $$

\vskip8pt

for suitable $s$ and $r$. The $\tcD^{(m)}_{n,k}$-module $\cR = \ker(\beta)$ is coherent too, by \ref{graded_tcD}. Because the statement is true for $i=2$ (for all coherent $\tcD^{(m)}_{n,k}$-modules, hence for $\cR$), we get from the exact sequence

$$H^1\left(\bbX_n,\left(\tcD^{(m)}_{n,k}(-r)\right)^{\oplus s}\right) \ra H^1(\bbX_n,\cE) \ra H^2(\bbX_n,\cR) = 0 \;,$$

\vskip8pt

together with part (ii), that the assertion true for $i=1$. \qed

\vskip8pt

\begin{prop}\label{fg_cok} Fix $r \in \Z$.

\vskip8pt

(i) The canonical injective homomorphism

$$H^0\Big(\bbX_n,\tcD^{(m)}_{n,k;d}(r)\Big) \Big/ H^0\Big(\bbX_n,\tcD^{(m)}_{n,k;d-1}(r)\Big) \lra H^0\Big(\bbX_n,{\rm gr}_d\Big(\tcD^{(m)}_{n,k}(r)\Big)\Big)$$

\vskip8pt

is an isomorphism for $d \gg 0$.

\vskip8pt

(ii) The canonical map of graded abelian groups

$${\rm gr}\Big(H^0\Big(\bbX_n,\tcD^{(m)}_{n,k}(r)\Big)\Big) \lra H^0\Big(\bbX_n,{\rm gr}\Big(\tcD^{(m)}_{n,k}(r)\Big)\Big)$$

\vskip8pt

is injective and its cokernel is finitely generated over $\fro$.
\end{prop}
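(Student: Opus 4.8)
The plan is to read off both parts from the long exact cohomology sequence attached to the twisted filtration sequence
\[
0 \to \tcD^{(m)}_{n,k;d-1}(r) \to \tcD^{(m)}_{n,k;d}(r) \to {\rm gr}_d\big(\tcD^{(m)}_{n,k}(r)\big) \to 0
\]
(exact because tensoring by the line bundle $\cL_n^{\otimes r}$ is exact), reducing everything to a single vanishing statement. Writing $\delta_d\colon H^0(\bbX_n,{\rm gr}_d(\tcD^{(m)}_{n,k}(r))) \to H^1(\bbX_n,\tcD^{(m)}_{n,k;d-1}(r))$ for the connecting map, this sequence shows that the map in (i) is injective with cokernel canonically $\im(\delta_d)$; so (i) amounts to proving $\delta_d=0$ for $d\gg0$, and (ii) will then follow formally.

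The crucial step is a reduction to the smooth model $\bbX_0=\bbP^1_\fro$ via ${\rm pr}={\rm pr}_{n,0}$. By \ref{graded_tcD}(i) the displayed sequence on $\bbX_n$ is obtained by tensoring $\cL_n^{\otimes r}$ into the ${\rm pr}$-pullback of the locally free sequence $0\to\tcD^{(m)}_{0,k;d-1}\to\tcD^{(m)}_{0,k;d}\to\Sym^d(\tcT_{0,k})^{(m)}\to0$ on $\bbX_0$. Hence, by the projection formula, $R^i{\rm pr}_*$ carries it to the sequence obtained by tensoring $R^i{\rm pr}_*(\cL_n^{\otimes r})$ into that locally free sequence, which is again short exact because $\Sym^d(\tcT_{0,k})^{(m)}$ is locally free; so $R^i{\rm pr}_*$ of our sequence is short exact for every $i$, and in particular the connecting map ${\rm pr}_*({\rm gr}_d)\to R^1{\rm pr}_*(\tcD^{(m)}_{n,k;d-1}(r))$ vanishes. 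By functoriality of the Leray spectral sequence $\delta_d$ then factors through the edge inclusion $H^1(\bbX_0,{\rm pr}_*\tcD^{(m)}_{n,k;d-1}(r))\hookrightarrow H^1(\bbX_n,\tcD^{(m)}_{n,k;d-1}(r))$ and is identified with the connecting map $\bar\delta_d$ of the pushed-forward sequence
\[
0 \to \tcD^{(m)}_{0,k;d-1}\otimes\cE \to \tcD^{(m)}_{0,k;d}\otimes\cE \to \Sym^d(\tcT_{0,k})^{(m)}\otimes\cE \to 0
\]
on $\bbX_0=\bbP^1_\fro$, where $\cE:={\rm pr}_*(\cL_n^{\otimes r})$ is a coherent $\cO_{\bbX_0}$-module. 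So it suffices to show $\bar\delta_d=0$ for $d\gg0$. (The passage to the smooth model is genuinely needed: on $\bbX_n$ the groups $H^1(\bbX_n,{\rm gr}_d)$ do not in general vanish for large $d$.)

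On $\bbP^1_\fro$ this is then straightforward. By \ref{simplification} one has $\Sym^e(\tcT_{0,k})^{(m)}\simeq\cO_{\bbX_0}(2e)$, so $\tcD^{(m)}_{0,k;d}\otimes\cE$ is filtered with successive quotients isomorphic to $\cE(2e)$, $0\le e\le d$. Serre's vanishing theorem on the $\fro$-projective scheme $\bbX_0$ gives $H^1(\bbX_0,\cE(2d))=0$ for $d\gg0$; since moreover $H^2(\bbX_0,-)$ vanishes on quasi-coherent sheaves, running up the filtration shows that the transition maps $H^1(\bbX_0,\tcD^{(m)}_{0,k;d-1}\otimes\cE)\to H^1(\bbX_0,\tcD^{(m)}_{0,k;d}\otimes\cE)$ are surjective for $d\gg0$. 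These are finitely generated $\fro$-modules, so this eventually decreasing chain of quotients of a fixed finitely generated $\fro$-module stabilizes; once it does, the (still surjective) transition maps are isomorphisms, and their kernels — which by the long exact sequence equal $\im(\bar\delta_d)$ — vanish. Thus $\bar\delta_d=0$, hence $\delta_d=0$, for $d\gg0$, which proves (i).

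Part (ii) is then formal: injectivity of the graded map is the direct sum over $d$ of the injections above, and its cokernel is $\bigoplus_d\im(\delta_d)$; each $\im(\delta_d)$ is a submodule of $H^1(\bbX_n,\tcD^{(m)}_{n,k;d-1}(r))$, which is finitely generated over $\fro$ since $\tcD^{(m)}_{n,k;d-1}(r)$ is coherent on the $\fro$-proper scheme $\bbX_n$, and by (i) all but finitely many of these summands are zero, so the cokernel is finitely generated over $\fro$. I expect the main obstacle to be the reduction step — checking that $\delta_d$ really descends to $\bbX_0$, i.e. that $R^1{\rm pr}_*$ keeps the filtration sequence short exact and that the Leray spectral sequence is functorial enough to identify $\delta_d$ with $\bar\delta_d$ — after which the vanishing on $\bbP^1_\fro$ is essentially Serre's theorem together with a Noetherian stabilization argument.
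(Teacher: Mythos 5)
Your proof is correct and follows essentially the same route as the paper: push everything forward to $\bbX_0$ via the projection formula, use Serre vanishing on the graded pieces $\cE(2d)$, and then a noetherian stabilization argument on the increasing chain of kernels in $H^1$ to force the connecting maps to vanish for $d\gg0$. The only difference is organizational: the paper sets $\cF_{d,r}=\tcD^{(m)}_{0,k;d}\otimes{\rm pr}_*(\cL_n^{\otimes r})$ and works with the (still exact, by local freeness) pushed-forward filtration on $\bbX_0$ from the outset, so the Leray-spectral-sequence identification of $\delta_d$ with $\bar\delta_d$ that you flag as the main obstacle is not actually needed --- since the statement only concerns $H^0$, the cokernel in (i) can be computed directly from the $\bbX_0$ sequence.
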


\Pf The arguments are similar to those used in the proof of \ref{finite_power} (ii). As there, put

$$\cF_{d,r} = \tcD^{(m)}_{0,k;d} \otimes_{\cO_\bbX} {\rm pr}_*\Big(\cL_n^{\otimes r}\Big) \;.$$

\vskip8pt

Note that $H^0\Big(\bbX_n,\tcD^{(m)}_{n,k;d}(r)\Big) = H^0(\bbX,\cF_{d,r})$, by \ref{graded_tcD}. From \ref{fil_1_twist} we see that there is a map of exact sequences

$$\begin{array}{ccccccccc}
0 & \ra & \cF_{d-1,r} & \ra & \cF_{d,r} & \ra & \Big(\cF_d/\cF_{d-1}\Big)_r & \ra & 0 \\
&& \downarrow && \downarrow && \downarrow && \\
0 & \ra & \cF_{d,r} & \ra & \cF_{d+1,r} & \ra & \Big(\cF_{d+1}/\cF_d\Big)_r & \ra & 0 \\
\end{array}$$

\vskip8pt

where the vertical arrow on the right is the zero map. From this we get a map of long exact sequences

\begin{numequation}\label{map_les}
\xymatrix{
0 \ar[r] & H^0(\cF_{d-1,r}) \ar[r] \ar[d] & H^0(\cF_{d,r}) \ar[r] \ar[d] & H^0\Big((\cF_d/\cF_{d-1})_r\Big) \ar[r] \ar[d]^{\sigma_d} & H^1(\cF_{d-1,r}) \ar[d]^{\tau_d}\\
0 \ar[r] & H^0(\cF_{d,r}) \ar[r] & H^0(\cF_{d+1,r}) \ar[r] & H^0\Big((\cF_{d+1}/\cF_d)_r\Big) \ar[r] & H^1(\cF_{d,r})\\
}
\end{numequation}

(We write $H^i(-)$ for $H^i(\bbX,-)$ here.) There is $d_0$ such that for all $d \ge d_0$ one has

$$H^1\Big(\bbX,(\cF_d/\cF_{d-1})_r\Big) = 0 \;.$$

\vskip8pt

and the map $H^1(\cF_{d-1,r}) \ra H^1(\cF_{d,r})$ is thus surjective for $d \ge d_0$. Hence $H^1(\cF_{d_0-1,r}) \ra H^1(\cF_{d,r})$ is surjective for $d \ge d_0$. For $d \ge d_0$ consider the submodules

$$N'_d = \ker\Big(H^1(\cF_{d_0-1,r}) \ra H^1(\cF_{d,r})\Big)$$

\vskip8pt

and

$$N_d = \ker\Big(H^1(\cF_{d,r}) \ra H^1(\cF_{d+1,r})\Big) \;.$$

\vskip8pt

The submodules $N'_d$ form an increasing sequence, and since $H^1\Big(\cF_{d_0-1,r}\Big)$ is noetherian, this sequence must be stationary, i.e., there exists $d_1 \ge d_0$ such that $N'_d = N'_{d+1}$ for all $d \ge d_1$. But then $N_d = 0$ for all $d \ge d_1$, and the maps $H^1(\cF_{d,r}) \ra H^1(\cF_{d+1,r})$ are isomorphisms for $d \ge d_1$. Now we consider the map $\tau_d$ in \ref{map_les}. For $d \ge d_1+1$ the map $\tau_d$ is an isomorphism, by what we have just seen. Since the map $\sigma_d$ in \ref{map_les} is the zero map, we conclude that the map

$$H^0\Big((\cF_d/\cF_{d-1})_r\Big) \ra H^1(\cF_{d,r})$$

\vskip8pt

is the zero map for $d \ge d_1+1$, and the sequence

$$0 \lra H^0(\cF_{d-1,r})  \lra H^0(\cF_{d,r}) \lra  H^0\Big((\cF_d/\cF_{d-1})_r\Big) \lra 0$$

\vskip8pt

is therefore exact. This proves (i), and (ii) follows immediately from (i). \qed

\begin{cor}\label{fg_H0} Let $\cE$ be a coherent $\tcD_{n,k}^{(m)}$-module. Then $H^0(\bbX_n,\cE)$ is a finitely generated module over $H^0(\bbX_n,\tcD_{n,k}^{(m)})$.
\end{cor}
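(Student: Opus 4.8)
The plan is to reduce, via the presentation result \ref{surjection}(i), to the twisted sheaves $\tcD^{(m)}_{n,k}(r)$ and to treat those by an associated-graded argument, then to bootstrap to an arbitrary coherent $\cE$ using a presentation and the cohomology vanishing/finiteness already established. Throughout write $A = H^0(\bbX_n,\tcD^{(m)}_{n,k})$; by \ref{graded_tcD}(iv),(v) both $A$ and ${\rm gr}(A) = H^0(\bbX_n,{\rm gr}(\tcD^{(m)}_{n,k})) = H^0(\bbX_n,\Sym(\tcT_{n,k})^{(m)})$ are noetherian.

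\textbf{Step 1: the case $\cE = \tcD^{(m)}_{n,k}(r)$.} Fix $r\in\Z$ and set $M = H^0(\bbX_n,\tcD^{(m)}_{n,k}(r))$, filtered by $M_d = H^0(\bbX_n,\tcD^{(m)}_{n,k;d}(r))$. This is a filtration by $A$-submodules compatible with the order filtration of $A$, and it is exhaustive because cohomology on the noetherian scheme $\bbX_n$ commutes with the inductive limit $\tcD^{(m)}_{n,k}(r) = \varinjlim_d \tcD^{(m)}_{n,k;d}(r)$. As computed in the proof of \ref{vanishing_coh_Dnk}, ${\rm gr}_d\big(\tcD^{(m)}_{n,k}(r)\big) \simeq \Sym^d(\tcT_{n,k})^{(m)}\otimes\cL_n^{\otimes r}$, so $H^0\big(\bbX_n,{\rm gr}(\tcD^{(m)}_{n,k}(r))\big) = H^0\big(\bbX_n,\Sym(\tcT_{n,k})^{(m)}(r)\big)$, which is finitely generated over ${\rm gr}(A)$ by \ref{finiteness_twist}(iii). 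By \ref{fg_cok}(ii) the natural graded ${\rm gr}(A)$-linear map ${\rm gr}(M) \to H^0\big(\bbX_n,{\rm gr}(\tcD^{(m)}_{n,k}(r))\big)$ is injective with cokernel finitely generated over $\fro$; since ${\rm gr}(A)$ is noetherian, ${\rm gr}(M)$ is then finitely generated over ${\rm gr}(A)$. The standard lifting argument — lift homogeneous ${\rm gr}(A)$-generators of ${\rm gr}(M)$ to elements of $M$ and induct on filtration degree, using that the filtration is positive and exhaustive — then shows that $M$ is finitely generated over $A$.

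\textbf{Step 2: general $\cE$.} By \ref{surjection}(i) there are $r\in\Z$ and $s\ge 0$ with an epimorphism $\beta:\big(\tcD^{(m)}_{n,k}(-r)\big)^{\oplus s}\twoheadrightarrow\cE$, and its kernel $\cR$ is again coherent because $\tcD^{(m)}_{n,k}$ is a coherent sheaf of rings (\ref{graded_tcD}). The long exact cohomology sequence of $0\to\cR\to\big(\tcD^{(m)}_{n,k}(-r)\big)^{\oplus s}\to\cE\to 0$ gives an exact sequence of $A$-modules
$$H^0\big(\bbX_n,\tcD^{(m)}_{n,k}(-r)\big)^{\oplus s}\lra H^0(\bbX_n,\cE)\lra H^1(\bbX_n,\cR)\,.$$
The left term is finitely generated over $A$ by Step 1, hence so is the image of the first map; by \ref{finite_power}(iii) the term $H^1(\bbX_n,\cR)$ is finitely generated over $A$. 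As $A$ is noetherian, $H^0(\bbX_n,\cE)$ is an extension of a submodule of $H^1(\bbX_n,\cR)$ by a quotient of $H^0\big(\bbX_n,\tcD^{(m)}_{n,k}(-r)\big)^{\oplus s}$, both finitely generated over $A$, and is therefore finitely generated over $A$.

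The only nonroutine points I anticipate are the exhaustiveness of the filtration in Step 1 (which rests on cohomology commuting with inductive limits on the noetherian space $\bbX_n$) and the filtered-to-graded lifting lemma; once these are in place the argument is a direct assembly of \ref{finiteness_twist}(iii), \ref{fg_cok}(ii), \ref{surjection}(i) and \ref{finite_power}(iii).
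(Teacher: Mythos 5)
Your proof is correct and follows essentially the same route as the paper: a presentation $\big(\tcD^{(m)}_{n,k}(-r)\big)^{\oplus s}\twoheadrightarrow\cE$ from \ref{surjection}, the finiteness of $H^1(\bbX_n,\cR)$ from \ref{finite_power}, and a reduction of the twisted-sheaf case to the associated graded via \ref{graded_tcD}, \ref{finiteness_twist} and the injectivity statement of \ref{fg_cok}. The only cosmetic difference is that you isolate the case $\cE=\tcD^{(m)}_{n,k}(r)$ as a separate first step and spell out the filtered-to-graded lifting, which the paper leaves implicit.
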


\Pf Using \ref{surjection} we get an exact sequence of coherent $\tcD_{n,k}^{(m)}$-modules

$$0 \lra \cR \lra \Big(\tcD^{(m)}_{n,k}(-r)\Big)^{\oplus s} \lra \cE \lra 0 \;.$$

\vskip8pt

passing to the long exact cohomology sequence gives

$$0 \ra H^0(\bbX_n,\cR) \ra H^0\Big(\bbX_n,\Big(\tcD^{(m)}_{n,k}(-r)\Big)^{\oplus s}\Big) \ra H^0(\bbX_n,\cE) \stackrel{\delta}{\lra} H^1(\bbX_n,\cR) \;.$$

\vskip8pt

By \ref{finite_power} the group on the right is a finitely generated $H^0(\bbX_n,\tcD^{(m)}_{n,k})$-module. Because $H^0(\bbX_n,\tcD^{(m)}_{n,k})$ is noetherian, cf. \ref{graded_tcD}, it follows that $\im(\delta)$ is also finitely generated over this ring. We are therefore reduced to showing that $H^0\Big(\bbX_n,\tcD^{(m)}_{n,k}(-r)\Big)$ is a finitely generated $H^0(\bbX_n,\tcD_{n,k})$-module. This is the case if

$${\rm gr}\Big(H^0\Big(\bbX_n,\tcD^{(m)}_{n,k}(-r)\Big)\Big)$$

\vskip8pt

is a finitely generated ${\rm gr}\Big(H^0\Big(\bbX_n,\tcD^{(m)}_{n,k}\Big)\Big)$-module. We know that

$$H^0\Big(\bbX_n,{\rm gr}\Big(\tcD^{(m)}_{n,k}\Big)(-r)\Big)$$

\vskip8pt

is a finitely generated $H^0\Big(\bbX_n,{\rm gr}\Big(\tcD^{(m)}_{n,k}\Big)\Big)$-module, by \ref{finiteness_twist} and \ref{graded_tcD}. By \ref{graded_tcD} we have
$H^0\Big(\bbX_n,{\rm gr}\Big(\tcD^{(m)}_{n,k}\Big)\Big) = {\rm gr}\Big(H^0(\bbX_n,\tcD^{(m)}_{n,k})\Big)$, and the injective canonical map

$${\rm gr}\Big(H^0\Big(\bbX_n,\tcD^{(m)}_{n,k}(-r)\Big)\Big) \lra
H^0\Big(\bbX_n,{\rm gr}\Big(\tcD^{(m)}_{n,k}\Big)(-r)\Big)$$

\vskip8pt

is a homomorphism of ${\rm gr}\Big(H^0(\bbX_n,\tcD^{(m)}_{n,k})\Big)$-modules. Because the latter ring is noetherian, we conclude that ${\rm gr}\Big(H^0\Big(\bbX_n,\tcD^{(m)}_{n,k}(-r)\Big)\Big)$ is a finitely generated ${\rm gr}\Big(H^0\Big(\bbX_n,\tcD^{(m)}_{n,k}\Big)\Big)$-module. \qed

\vskip12pt

\subsection{Cohomology of coherent $\tsD^{(m)}_{n,k,\Q}$-modules}\label{coh_coh_tsD_mod}

We denote by $\bbX_{n,j}$ the reduction of $\bbX_n$ modulo $p^{j+1}$.

\vskip8pt

\begin{prop}\label{completion}\hskip-4pt\footnote{Statement and proof are as in \cite[3.2]{Huyghe97}.} Let $\cE$ be a coherent $\tcD^{(m)}_{n,k}$-module on $\bbX_n$ and $\widehat{\cE} = \varprojlim_j \cE/p^{j+1}\cE$ its $p$-adic completion, which we consider as a sheaf on $\frX_n$.

\vskip8pt

(i) For all $i \ge 0$ one has $H^i(\frX_n,\widehat{\cE}) = \varprojlim_j H^i\left(\bbX_{n,j},\cE/p^{j+1}\cE \right)$.

\vskip8pt

(ii) For all $i>0$ one has $H^i(\frX_n,\widehat{\cE}) = H^i(\bbX_n,\cE)$.

\vskip8pt

(iii) $H^0(\frX_n,\widehat{\cE}) = \varprojlim_j H^0(\bbX_n,\cE)/p^{j+1}H^0(\bbX_n,\cE)$.
\end{prop}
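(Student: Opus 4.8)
\emph{Proof (sketch).} The plan is to run the argument for the theorem on formal functions in the $\tcD^{(m)}_{n,k}$-equivariant setting, as in \cite[3.2]{Huyghe97}. All the schemes $\bbX_{n,j}$ and the formal scheme $\frX_n$ have the same noetherian underlying topological space, and $\widehat{\cE}=\varprojlim_j \cE/p^{j+1}\cE$ is a limit along surjective transition maps; hence, exactly as in the proof of formal functions, for every $i$ there is a short exact sequence
$$0 \lra R^1\varprojlim_j H^{i-1}\!\left(\bbX_{n,j},\cE/p^{j+1}\cE\right) \lra H^i(\frX_n,\widehat{\cE}) \lra \varprojlim_j H^i\!\left(\bbX_{n,j},\cE/p^{j+1}\cE\right) \lra 0 \;.$$
Since $\cE/p^{j+1}\cE$ is quasi-coherent as an $\cO_{\bbX_n}$-module, \ref{van_coh} gives $H^q(\bbX_{n,j},\cE/p^{j+1}\cE)=0$ for $q\ge 2$; so (i) is equivalent to the Mittag--Leffler property of the systems $\big(H^q(\bbX_{n,j},\cE/p^{j+1}\cE)\big)_j$ for $q=0,1$, and (ii), (iii) will follow once (i) and the relevant $\varprojlim_j$'s are in hand.

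First I would split off torsion. Let $\cE_{\rm tors}\sub\cE$ be the subsheaf of $p$-power torsion sections; it is a coherent $\tcD^{(m)}_{n,k}$-submodule killed by some $p^{c_0}$, and $\cQ\eqdef\cE/\cE_{\rm tors}$ is $p$-torsion free. As $\mathrm{Tor}_1^{\cO_{\bbX_n}}(\cQ,\cO_{\bbX_n}/p^{j+1})=0$, for $j\ge c_0$ one gets exact sequences $0\to\cE_{\rm tors}\to\cE/p^{j+1}\cE\to\cQ/p^{j+1}\cQ\to 0$ and, in the limit, $0\to\cE_{\rm tors}\to\widehat{\cE}\to\widehat{\cQ}\to 0$ on $\frX_n$. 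It therefore suffices to establish (i)--(iii) for $\cE_{\rm tors}$ and for $\cQ$ and then compare the long exact cohomology sequences on $\frX_n$, on $\bbX_n$, and on the $\bbX_{n,j}$ (they are compatible, and $\varprojlim_j$ is exact on the systems involved). For $\cE=\cE_{\rm tors}$ everything is immediate, the reductions being eventually constant and $H^0(\bbX_n,\cE_{\rm tors})$ being $p^{c_0}$-torsion.

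The essential case is $\cE=\cQ$, where $0\to\cQ\stackrel{p^{j+1}}{\lra}\cQ\to\cQ/p^{j+1}\cQ\to 0$ is exact. Its long exact cohomology sequence, together with \ref{van_coh} and \ref{finite_power} (which gives $H^1(\bbX_n,\cQ)$ finitely generated over $H^0(\bbX_n,\tcD^{(m)}_{n,k})$ and killed by a fixed power $p^{c_1}$), yields for $j\ge c_1$ a natural isomorphism $H^1(\bbX_n,\cQ/p^{j+1}\cQ)\cong H^1(\bbX_n,\cQ)$ with identity transition maps, together with a short exact sequence
$$0 \lra H^0(\bbX_n,\cQ)/p^{j+1}H^0(\bbX_n,\cQ) \lra H^0(\bbX_n,\cQ/p^{j+1}\cQ) \lra H^1(\bbX_n,\cQ)[p^{j+1}] \lra 0$$
whose transition maps on the right-hand term are multiplication by $p^{j'-j}$, hence pro-zero. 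So the $q=1$ system has surjective transitions, and the $q=0$ system is an extension of a pro-zero system by the Mittag--Leffler system $\big(H^0(\bbX_n,\cQ)/p^{j+1}H^0(\bbX_n,\cQ)\big)_j$; both are Mittag--Leffler and every $R^1\varprojlim_j$ in the sequence above vanishes. Passing to $\varprojlim_j$ then gives $H^i(\frX_n,\widehat{\cQ})=\varprojlim_j H^i(\bbX_{n,j},\cQ/p^{j+1}\cQ)$, which is (i); this limit is eventually constant equal to $H^i(\bbX_n,\cQ)$ for $i>0$, which is (ii); and it equals $\varprojlim_j H^0(\bbX_n,\cQ)/p^{j+1}H^0(\bbX_n,\cQ)$ for $i=0$, which is (iii). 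Recombining via the torsion sequence finishes the proof.

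The only point requiring genuine care is the bookkeeping: verifying that the displayed short exact sequence has the transition maps claimed (so that the torsion obstruction is indeed pro-zero), checking that the splitting $\cE_{\rm tors},\cQ$ is compatible with reduction mod $p^{j+1}$ and with $p$-adic completion, and that $\varprojlim_j$ stays exact throughout — all of which ultimately rests on the finiteness assertion \ref{finite_power}. Beyond this I do not anticipate any substantial difficulty. \qed
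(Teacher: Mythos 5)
Your argument is correct and draws on the same toolkit as the paper's proof (which is Huyghe's): split off the $p$-power torsion subsheaf, use \ref{finite_power} to annihilate the higher cohomology by a fixed power of $p$, verify Mittag--Leffler, and invoke \cite[ch. 0, 13.3.1]{EGA_III}. The one genuine organizational difference is that the paper never ``recombines'': instead of passing to the torsion-free quotient $\cQ$ (the paper's $\cG=\cE/\cE_t$) and treating $\cQ/p^{j+1}\cQ$ via the multiplication-by-$p^{j+1}$ sequence, it writes $\cE/p^{\ell+1}\cE$ directly as the cokernel of the map $\nu_\ell\colon\cG\to\cE$ induced by multiplication by $p^{\ell+1}$, so all three assertions are proved for $\cE$ in a single pass. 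This matters most for part (iii), which is precisely the step you defer to ``bookkeeping'': in the paper the obstruction to $\varprojlim_\ell H^0(\bbX_n,\cE)/p^{\ell+1}H^0(\bbX_n,\cE)\to\varprojlim_\ell H^0(\bbX_{n,\ell},\cE_\ell)$ being an isomorphism is identified concretely as the system $\ker(\sigma_\ell)\simeq\coker(H^0(u))$, a fixed group embedding in the $p^c$-torsion group $H^1(\bbX_n,\cG)$ with transition maps given by multiplication by $p^j$, hence pro-zero. In your setup the corresponding point is that the four-term exact sequence $0\to H^0(\cE_t)\to H^0(\bbX_n,\cE)\to H^0(\bbX_n,\cQ)\to H^1(\bbX_n,\cE_t)$ has to be compared with the limit of the sequences for $\cE_j$ and with its own $p$-adic completion; this does close up --- for instance because every term is finitely generated over the noetherian ring $H^0(\bbX_n,\tcD^{(m)}_{n,k})$ by \ref{graded_tcD}, \ref{fg_H0} and \ref{finite_power}, so that $p$-adic completion is exact on the relevant sequences --- but it is not automatic, and it is exactly where the paper's proof of (iii) spends its effort, so it should be written out rather than asserted.
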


\Pf (i) Put $\cE_j = \cE/p^{j+1}\cE$. Let $\cE_t$ be the subsheaf defined by

$$\cE_t(U) = \cE(U)_{\rm tor} \;,$$

\vskip8pt

where the right hand side denotes the group of torsion elements in $\cE(U)$. This is indeed a sheaf (and not only a presheaf) because $\bbX_n$ is a noetherian space. Furthermore, $\cE_t$ is a $\tcD^{(m)}_{n,k}$-submodule of $\cE$. Because the sheaf $\tcD^{(m)}_{n,k}$ has noetherian rings of sections over open affine subsets of $\bbX_n$, cf. \ref{graded_tcD}, the submodule $\cE_t$ is a coherent $\tcD^{(m)}_{n,k}$-module. $\cE_t$ is thus generated by a coherent $\cO_{\bbX_n}$-submodule $\cF$ of $\cE_t$. The submodule $\cF$ is annihilated by a fixed power $p^c$ of $p$, and so is $\cE_t$. Put $\cG = \cE/\cE_t$, which is again a coherent $\tcD^{(m)}_{n,k}$-module. Using \ref{finite_power}, we can then assume, after possibly replacing $c$ by a larger number, that

\vskip8pt

$$\begin{array}{rl}
(a) & p^c\cE_t = 0 \;,\\
(b) & \mbox{for all } i>0: p^cH^i(\bbX_n,\cE) = 0 \;,\\
(c) & \mbox{for all } i>0: p^cH^i(\bbX_n,\cG) = 0 \;.\\
\end{array}$$

\vskip8pt

Let $j,\ell$ be integers which are greater or equal to $c$. Denote by $\nu_\ell: \cG \ra \cE$ the map induced by multiplication by $p^{\ell+1}$, so that we have an exact sequence

\begin{numequation}\label{ex_seq_mult_p_power}
0 \lra \cG \stackrel{\nu_\ell}{\lra} \cE \lra \cE_\ell \lra 0 \;,
\end{numequation}

where $\cE_j = \cE/p^{j+1}\cE$. Consider the morphism of two such sequences

\begin{numequation}\label{morph_ex_seq}
\xymatrixcolsep{3pc}\xymatrix{
0 \ar[r] & \cG \ar[r]^{\nu_{\ell+j}} \ar[d]^{p^j} & \cE \ar[r] \ar[d]^{{\rm id}} & \cE_{\ell+j} \ar[r] \ar[d] & 0\\
0 \ar[r] & \cG \ar[r]^{\nu_{\ell}} & \cE \ar[r] & \cE_{\ell} \ar[r] & 0
}
\end{numequation}

We get hence a morphism of long exact sequences

\begin{numequation}\label{coh_ex_seq}
\xymatrixcolsep{3.5pc}\xymatrix{
H^i(\bbX_n,\cG) \ar[r]^{H^i(\nu_{\ell+j})} \ar[d]^{p^j} & H^i(\bbX_n,\cE) \ar[r]^-{\alpha_{\ell+j}} \ar[d]^{{\rm id}} & H^i(\bbX_{n,\ell+j},\cE_{\ell+j}) \ar[r] \ar[d]^{\lambda_{\ell+j,\ell}} & H^{i+1}(\bbX_n,\cG) \ar[d]^{p^j}\\
H^i(\bbX_n,\cG) \ar[r]^{H^i(\nu_\ell)} & H^i(\bbX_n,\cE) \ar[r]^-{\alpha_\ell} & H^i(\bbX_{n,\ell},\cE_{\ell}) \ar[r]^-{\beta_\ell} & H^{i+1}(\bbX_n,\cG)
}
\end{numequation}

Because we assume $j \ge c$ the vertical map on the right is the zero map, and hence $\beta_\ell \circ \lambda_{\ell+j,\ell} = 0$, which means that $\im(\lambda_{\ell+j,\ell})$ is contained in $\im(\alpha_\ell)$. Since $\lambda_{\ell+j,\ell} \circ \alpha_{\ell+j} = \alpha_\ell$, we find that $\im(\lambda_{\ell+j,\ell}) = \im(\alpha_\ell)$ for all $j \ge c$. Therefore, the projective system $(H^i(\bbX_{n,j},\cE_j))_j$, with the transition maps given by the $\lambda_{j',j}$ with $j' \ge j$, satisfies the Mittag-Leffler condition for any $i \ge 0$.

\vskip8pt

We now want to apply the proposition \cite[ch. 0, 13.3.1]{EGA_III}. The transition maps of the system $(\cE_j)_j$ are obviously surjective, which gives the third condition of that proposition. Moreover, if $U \sub \bbX_n$ is an open affine subset, then we have $H^i(U,\cE_j) = 0$ for $i>0$, because $\cE_j$ is quasi-coherent over $\cO_{\bbX_n}$. This implies the second condition of that proposition. It follows that the exact sequence

$$0 \lra \cE_{\ell} \stackrel{p^j}{\lra} \cE_{\ell+j} \lra \cE_j \lra 0$$

\vskip8pt

stays exact after taking sections over $U$. Hence the system $(H^0(U,\cE_j))_j$ satisfies the Mittag-Leffler condition, which, together with the vanishing of higher cohomology on $U$ observed before, means that the first condition of \cite[ch. 0, 13.3.1]{EGA_III} is fulfilled. Because we have proven above that the system $(H^i(\bbX_{n,j},\cE_j))_j$ satisfies the Mittag-Leffler condition for any $i \ge 0$, it now follows from \cite[ch. 0, 13.3.1]{EGA_III} that for all $i \ge 0$

$$H^i(\frX_n,\widehat{\cE}) = \varprojlim_j H^i\left(\bbX_{n,j},\cE/p^{j+1}\cE \right) \;.$$

\vskip8pt

Hence we have proved the first assertion.

\vskip8pt

(ii) For the second assertion
we consider again the diagram \ref{coh_ex_seq} and notice that the maps $H^i(\nu_\ell)$ are zero maps for $i>0$ and $\ell \ge c$, and thus $\alpha_\ell$ is an isomorphism onto its image, for these $i$ and $\ell$. Therefore, the projective limit of the $(H^i(\bbX_{n,j},\cE_j))_j$ is equal to $H^i(\bbX_n,\cE)$ when $i>0$.

\vskip8pt

(iii) As above, we let $\ell,j \ge c$ be integers, and we consider the exact sequence

$$0 \ra \cE_t \ra \cE \stackrel{p^{\ell+1}}{\lra} \cE_\ell \ra 0 \;.$$

\vskip8pt

This sequence splits into two exact sequences

$$0 \ra \cE_t \ra \cE \stackrel{u}{\lra} \cG \ra 0 \hskip16pt \mbox{and} \hskip16pt 0 \ra \cG \stackrel{\nu_\ell}{\lra} \cE \ra \cE_\ell \ra 0 \;.$$

\vskip8pt

The long exact cohomology sequences to these sequences begin as follows:

$$\begin{array}{ccccccccc}

0 & \ra & H^0(\bbX_n,\cE_t) & \ra & H^0(\bbX_n,\cE) & \stackrel{u}{\lra} & H^0(\bbX_n,\cG) & \ra & H^1(\bbX_n,\cE_t)\\
\\
0 & \ra & H^0(\bbX_n,\cG) & \stackrel{\nu_\ell}{\lra} & H^0(\bbX_n,\cE) & \ra & H^0(\bbX_n,\cE_\ell) & \ra & H^1(\bbX_n,\cG) \;.
\end{array}$$

\vskip8pt

From the second of these exact sequences and the morphism of exact sequences \ref{morph_ex_seq} we obtain the following morphism of exact sequences

\begin{numequation}\label{H0_ex_seq}
\xymatrixcolsep{2.7pc}\xymatrix{
0 \ar[r] & H^0(\bbX_n,\cG) \ar[r]^{\nu_{\ell+j}} \ar[d]^{p^j} & H^0(\bbX_n,\cE) \ar[r]^-{\alpha_{\ell+j}} \ar[d]^{{\rm id}} & H^0(\bbX_{n,\ell+j},\cE_{\ell+j}) \ar[r] \ar[d]^{\lambda_{\ell+j,\ell}} & H^1(\bbX_n,\cG) \ar[d]^{p^j}\\
0 \ar[r] & H^0(\bbX_n,\cG) \ar[r]^{\nu_\ell} & H^0(\bbX_n,\cE) \ar[r]^-{\alpha_\ell} & H^0(\bbX_{n,\ell},\cE_{\ell}) \ar[r]^-{\beta_\ell} & H^1(\bbX_n,\cG)
}
\end{numequation}

The composition $\nu_\ell \circ u$ is equal to the multiplication by $p^{\ell+1}$, and we get therefore a canonical surjection

$$\sigma_\ell: H^0(\bbX_n,\cE)/p^{\ell+1} H^0(\bbX_n,\cE) \twoheadrightarrow H^0(\bbX_n,\cE) \Big/ \nu_\ell\Big(H^0(\bbX_n,\cG)\Big) \;.$$

\vskip8pt

The homomorphisms $\sigma_\ell$ form a morphism of projective systems. Because $\nu_\ell$ is injective, we have a canonical isomorphism

$$\begin{array}{rcl}
\ker(\sigma_\ell) & = & \nu_\ell\Big(H^0(\bbX_n,\cG)\Big)\Big/p^{\ell+1} H^0(\bbX_n,\cE) \\
&&\\
& = &\nu_\ell\Big(H^0(\bbX_n,\cG)\Big)\Big/\nu_\ell\Big(u\Big( H^0(\bbX_n,\cE)\Big)\Big) \\
&&\\
& \simeq & H^0(\bbX_n,\cG)\Big/u\Big( H^0(\bbX_n,\cE)\Big) = \coker(H^0(u)) \;,
\end{array}$$

\vskip8pt

and $\coker(H^0(u))$ embeds into $H^1(\bbX_n,\cG)$ which is annihilated by $p^c$. Furthermore, the morphism of exact sequences

$$\xymatrixcolsep{3pc}\xymatrix{
0 \ar[r] & \cE_t \ar[r] \ar[d]^{p^j} & \cE \ar[r]^{p^{\ell+j+1}} \ar[d]^{p^j} & \cE \ar[r] \ar[d]^{\rm id} & \cE_{\ell+j} \ar[r] \ar[d] & 0\\
0 \ar[r] & \cE_t \ar[r] & \cE \ar[r]^{p^{\ell+1}} & \cE \ar[r] & \cE_\ell \ar[r] & 0
}
$$

\vskip8pt

induces a morphism of short exact sequences

$$\xymatrixcolsep{1pc}\xymatrix{
0 \ar[r] & \coker(H^0(u)) \ar[r] \ar[d]^{p^j} & H^0(\bbX_n,\cE)/p^{\ell+j+1}H^0(\bbX_n,\cE) \ar[r] \ar[d] & H^0(\bbX_n,\cE)\Big/\nu_{\ell+j}\Big(H^0(\bbX_n,\cG \Big) \ar[r] \ar[d] & 0\\
0 \ar[r] & \coker(H^0(u)) \ar[r] & H^0(\bbX_n,\cE)/p^{\ell+1} H^0(\bbX_n,\cE) \ar[r] & H^0(\bbX_n,\cE)\Big/\nu_\ell \Big(H^0(\bbX_n,\cG \Big) \ar[r] & 0
}
$$

\vskip8pt

The projective limit $\varprojlim_\ell \ker(\sigma_\ell)$ vanishes thus, and the system $(\sigma_\ell)_\ell$ induces an isomorphism

$$\varprojlim_\ell H^0(\bbX_n,\cE)/p^{\ell+1} H^0(\bbX_n,\cE) \stackrel{\simeq}{\lra} \varprojlim_\ell H^0(\bbX_n,\cE)\Big/\nu_\ell \Big(H^0(\bbX_n,\cG \Big)$$

\vskip8pt

and, by looking at \ref{H0_ex_seq}, we see that the right hand side is in turn canonically isomorphic to $\varprojlim_\ell H^0(\bbX_{n,\ell},\cE_\ell) = H^0(\frX_n,\widehat{\cE})$, by assertion (i). \qed

\vskip8pt

\begin{cor}\label{noetherian_tsD}
The ring $H^0(\frX_n,\tsD^{(m)}_{n,k})$ is noetherian.
\end{cor}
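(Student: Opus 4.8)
The plan is to identify the ring $H^0(\frX_n,\tsD^{(m)}_{n,k})$ with the $p$-adic completion of the noetherian ring $A := H^0(\bbX_n,\tcD^{(m)}_{n,k})$, and then to invoke the standard fact that the $p$-adic completion of a noetherian ring in which $p$ is central is again noetherian.

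First I would apply Proposition \ref{completion}(iii) to the coherent $\tcD^{(m)}_{n,k}$-module $\cE = \tcD^{(m)}_{n,k}$ itself; this module is coherent over itself because, by \ref{graded_tcD}(vi), the sheaf of rings $\tcD^{(m)}_{n,k}$ admits a basis of affine opens with noetherian rings of sections. Since $\tsD^{(m)}_{n,k}$ is by definition the $p$-adic completion of $\tcD^{(m)}_{n,k}$, this gives
$$H^0(\frX_n,\tsD^{(m)}_{n,k}) \;=\; \varprojlim_j\, H^0(\bbX_n,\tcD^{(m)}_{n,k})\big/p^{j+1}H^0(\bbX_n,\tcD^{(m)}_{n,k}) \;=\; \widehat{A},$$
the $p$-adic completion of $A$. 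By \ref{graded_tcD}(v) (equivalently \ref{global_sections_tcD}), the ring $A$ is noetherian.

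It then remains to check that $\widehat{A}$ is noetherian. Here one uses that $p$ lies in the centre of $A$: then $\widehat{A}$ is $p$-adically complete and separated, $\widehat{A}/p^{j}\widehat{A} = A/p^{j}A$, and the associated graded ring ${\rm gr}(\widehat{A}) = \bigoplus_{i\ge 0} p^i\widehat{A}/p^{i+1}\widehat{A}$ is a quotient of the polynomial ring $(A/pA)[t]$ via $\bar a\, t^i \mapsto \overline{p^i a}$; hence ${\rm gr}(\widehat{A})$ is noetherian by the Hilbert basis theorem. Since a ring that is complete and separated for a positive filtration with noetherian associated graded ring is itself noetherian, $\widehat{A}$ is noetherian. (Alternatively one may deduce this from faithful flatness of $\widehat{A}$ over $A$, in the spirit of the flatness assertion cited from \cite[3.5]{BerthelotDI}.)

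I do not expect a genuine obstacle: the substance is already packaged in Proposition \ref{completion}(iii) and in the noetherianity of $A$ proved earlier. The only points requiring a word of care are recording that $\tcD^{(m)}_{n,k}$ is a coherent module over itself so that \ref{completion} applies, and observing that $p$ is central so that the general completion lemma is available.
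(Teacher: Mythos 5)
Your proposal is correct and follows the same route as the paper: the paper's proof of this corollary is precisely "apply Proposition \ref{completion}(iii) to $\cE=\tcD^{(m)}_{n,k}$ to identify $H^0(\frX_n,\tsD^{(m)}_{n,k})$ with the $p$-adic completion of the noetherian ring $H^0(\bbX_n,\tcD^{(m)}_{n,k})$ from \ref{global_sections_tcD}." The only difference is that you spell out the (standard) fact that the $p$-adic completion of a noetherian ring with $p$ central is noetherian, which the paper leaves implicit.
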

\begin{proof}
This follows from (iii) of the preceding proposition together with \ref{global_sections_tcD}.
\end{proof}

\vskip8pt

\begin{prop}\label{surj_compl} Let $\sE$ be a coherent $\tsD^{(m)}_{n,k}$-module.

\vskip8pt

(i) There is $r_1(\sE) \in \Z$ such that for all $r \ge r_1(\sE)$ there is $s \in \Z_{\ge 0}$ and an epimorphism of $\tsD^{(m)}_{n,k}$-modules

$$\Big(\tsD^{(m)}_{n,k}(-r)\Big)^{\oplus s} \twoheadrightarrow \sE \;.$$

\vskip8pt

(ii) There is $r_2(\sE) \in \Z$ such that for all $r \ge r_2(\sE)$ and all $i >0$

$$H^i\Big(\frX_n, \sE(r)\Big) = 0 \;.$$

\vskip8pt
\end{prop}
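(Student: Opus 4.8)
The plan is to deduce both assertions from their algebraic counterparts on the finite levels, established in Proposition \ref{surjection} and Proposition \ref{finite_power}, by passing to the $p$-adic completion and invoking Proposition \ref{completion}. The point is that a coherent $\tsD^{(m)}_{n,k}$-module $\sE$ on $\frX_n$ arises, up to the subtleties that $\tsD^{(m)}_{n,k}$ is only $p$-adically complete, from a coherent $\tcD^{(m)}_{n,k}$-module on $\bbX_n$; more precisely, the general formalism of coherent modules over $p$-adically complete sheaves of rings with noetherian sections (cf. \ref{global_sec} and the argument in \cite[3.4]{Huyghe97}) shows that $\sE$ is the $p$-adic completion $\widehat{\cE}$ of some coherent $\tcD^{(m)}_{n,k}$-module $\cE$, and that $H^0(\frX_n,\sE) = \varprojlim_j H^0(\bbX_n,\cE)/p^{j+1}$ is a finitely generated module over the noetherian ring $H^0(\frX_n,\tsD^{(m)}_{n,k})$, cf. \ref{noetherian_tsD} and \ref{completion}(iii).

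For (i), I would start from the epimorphism of $\tcD^{(m)}_{n,k}$-modules
$$\big(\tcD^{(m)}_{n,k}(-r)\big)^{\oplus s} \twoheadrightarrow \cE$$
furnished by \ref{surjection}(i) for $r \ge r_1(\cE)$, and take $p$-adic completions. Since completion is exact on coherent modules over these noetherian sheaves of rings (the relevant kernels being again coherent, by \ref{graded_tcD}), one obtains an epimorphism $\big(\tsD^{(m)}_{n,k}(-r)\big)^{\oplus s} \twoheadrightarrow \sE$; here one uses that the completion of $\tcD^{(m)}_{n,k}(-r)$ is $\tsD^{(m)}_{n,k}(-r)$, which follows from the fact that $\cL_n^{\otimes r}$ is a line bundle and completion commutes with the relevant tensor product. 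Thus $r_1(\sE) \eqdef r_1(\cE)$ works.

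For (ii), I would again reduce to the finite level: by \ref{completion}(ii), for $i>0$ one has $H^i(\frX_n,\widehat{\cF}) = H^i(\bbX_n,\cF)$ for any coherent $\tcD^{(m)}_{n,k}$-module $\cF$. Applying this to $\cF = \cE(r)$ and using that $\widehat{\cE(r)} = \sE(r)$, the desired vanishing $H^i(\frX_n,\sE(r)) = 0$ for $r \gg 0$ follows directly from \ref{surjection}(ii), so one may set $r_2(\sE) \eqdef r_2(\cE)$. The one point requiring care — and the main obstacle — is the very first reduction: identifying a coherent $\tsD^{(m)}_{n,k}$-module with the completion of a coherent $\tcD^{(m)}_{n,k}$-module, and checking that formation of $\widehat{(-)}$ is compatible with the twist by $\cL_n$ and is exact in this setting. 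This is where one genuinely needs the noetherianity statements of \ref{graded_tcD} and \ref{global_sec} together with the Mittag--Leffler arguments already deployed in the proof of \ref{completion}; once that bookkeeping is in place, both (i) and (ii) are immediate transcriptions of the corresponding algebraic facts on $\bbX_n$. \qed
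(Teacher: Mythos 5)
Your reduction rests on the claim that every coherent $\tsD^{(m)}_{n,k}$-module $\sE$ is the $p$-adic completion $\widehat{\cE}$ of a coherent $\tcD^{(m)}_{n,k}$-module $\cE$ on $\bbX_n$. That algebraization statement is not established anywhere in the paper, nor in \cite{Huyghe97} or \cite{BerthelotDI}, and it is exactly the point you cannot wave away. What the paper proves is the converse direction: \emph{if} $\sE=\widehat{\cE}$ for a coherent $\tcD^{(m)}_{n,k}$-module $\cE$, \emph{then} the cohomology comparison of \ref{completion} holds. The cited reference \cite[3.2.3 (v)]{BerthelotDI} only says a coherent $\tsD^{(m)}_{n,k}$-module equals its own completion, and \ref{integral_models} only descends coherent $\tsD^{(m)}_{n,k,\Q}$-modules to coherent $\tsD^{(m)}_{n,k}$-modules — neither gives you a coherent module over the uncompleted sheaf $\tcD^{(m)}_{n,k}$. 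The Mittag--Leffler arguments of \ref{completion}, which you invoke as the missing "bookkeeping", compute cohomology of a \emph{given} completion; they do not produce an algebraic model. So as written, both (i) and (ii) rest on an unproved premise.

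The actual proof circumvents algebraization entirely by descending to finite level rather than ascending from it. One first isolates the torsion $\sE_t$ (coherent, killed by $p^c$) and sets $\sG=\sE/\sE_t$; the reductions $\sE_j=\sE/p^{j+1}\sE$ and $\sG_0=\sG/p\sG$ \emph{are} coherent $\tcD^{(m)}_{n,k}$-modules, because $\tcD^{(m)}_{n,k}/p^j \simeq \tsD^{(m)}_{n,k}/p^j$. Applying \ref{surjection} to $\sG_0$ makes the transition maps $H^0(\frX_n,\sE_{j+1}(r))\to H^0(\frX_n,\sE_j(r))$ surjective for $r\gg 0$; applying \ref{surjection} to $\sE_c$ produces generators mod $p^c$, which one then lifts through $\varprojlim_j H^0(\frX_n,\sE_j(r)) \simeq H^0(\frX_n,\sE(r))$ (using \ref{completion}(i) and \cite[3.2.3 (v)]{BerthelotDI}); surjectivity of the resulting map $(\tsD^{(m)}_{n,k})^{\oplus s}\to\sE(r)$ is checked mod $p^c$ via \cite[3.2.2 (ii)]{BerthelotDI}. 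Part (ii) then follows by descending induction on $i$ from part (i), using only that $H^i(\frX_n,\tsD^{(m)}_{n,k}(r)) = H^i(\bbX_n,\tcD^{(m)}_{n,k}(r)) = 0$ for $r\ge r_0$ — here \ref{completion} is legitimately applicable because $\tsD^{(m)}_{n,k}(r)$ visibly is the completion of the coherent module $\tcD^{(m)}_{n,k}(r)$. If you want to salvage your outline, you must either prove the algebraization you assume (unlikely to be true in general) or restructure along these lines.
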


\Pf (i) Because $\sE$ is a coherent $\tsD^{(m)}_{n,k}$-module, and because $H^0(U,\tsD^{(m)}_{n,k})$ is a noetherian ring for all open affine subsets $U \sub \frX_n$, cf. \ref{global_sec_tsD}, the torsion submodule $\sE_t \sub \sE$ is again a coherent $\tsD^{(m)}_{n,k}$-module. As $\frX_n$ is quasi-compact, there is $c \in \Z_{\ge 0}$ such that $p^c \sE_t = 0$.
Put $\sG = \sE/\sE_t$ and $\sG_0 = \sG/p\sG$. For $j \ge c$ one has an exact sequence

$$0 \ra \sG_0 \stackrel{p^{j+1}}{\lra} \sE_{j+1} \ra \sE_j \ra 0 \;.$$

\vskip8pt

We note that the sheaf $\sG_0$ is a coherent module over
$\tsD^{(m)}_{n,k}/p\tsD^{(m)}_{n,k}$. Because the canonical map of sheaves of rings

\begin{numequation}\label{iso_uncompl_compl}
\tcD^{(m)}_{n,k}/p\tcD^{(m)}_{n,k} \lra \tsD^{(m)}_{n,k}/p\tsD^{(m)}_{n,k}
\end{numequation}

is an isomorphism, $\sG_0$ can be considered as a module over the sheaf on the left hand side of \ref{iso_uncompl_compl}, and thus as a $\tcD^{(m)}_{n,k}$-module. It is then automatically a coherent $\tcD^{(m)}_{n,k}$-module. Hence we can apply \ref{surjection} and deduce that there is $r_2(\sG_0)$ such that for all $r \ge r_2(\sG_0)$ one has $H^1(\frX_n,\sG_0(r)) = 0$. The canonical maps

\begin{numequation}\label{surj_H0}
H^0(\frX_n,\sE_{j+1}(r)) \lra H^0(\frX_n,\sE_j(r))
\end{numequation}

are thus surjective for $r \ge r_2(\sG_0)$ and $j \ge c$. Similarly, $\sE_c$ is a coherent module over $\tcD^{(m)}_{n,k}/p^c\tcD^{(m)}_{n,k}$-module, in particular a coherent $\tcD^{(m)}_{n,k}$-module. By \ref{surjection} there is $r_1(\sE_c)$ such that for every $r \ge r_1(\sE_c)$ there is $s \in \Z_{\ge 0}$ and a surjection

$$\lambda: \Big(\tcD^{(m)}_{n,k}/p^c\tcD^{(m)}_{n,k}\Big)^{\oplus s} \twoheadrightarrow \sE_c(r) \;.$$

\vskip8pt

Let $r_1(\sE) = \max\{r_2(\sG_0),r_1(\sE_c)\}$, and assume from now on that $r \ge r_1(\sE)$. Let $e_1, \ldots, e_s$ be the standard basis of the domain of $\lambda$, and use \ref{surj_H0} to lift each $\lambda(e_t)$, $1 \le t \le s$, to an element of

$$\varprojlim_j H^0(\frX_n,\sE_j(r)) \simeq H^0(\frX_n,\widehat{\sE(r)}) \;,$$

\vskip8pt

by \ref{completion} (i). But $\widehat{\sE(r)} = \widehat{\sE}(r)$, and $\widehat{\sE} = \sE$, as follows from \cite[3.2.3 (v)]{BerthelotDI}. This defines a morphism

$$\Big(\tsD^{(m)}_{n,k}\Big)^{\oplus s} \lra \sE(r)$$

\vskip8pt

which is surjective because, modulo $p^c$, it is a surjective morphism of sheaves coming from coherent $\tsD^{(m)}_{n,k}$-modules by reduction modulo $p^c$, cf. \cite[3.2.2 (ii)]{BerthelotDI}.

\vskip8pt

(ii) We deduce from \ref{vanishing_coh_Dnk} and \ref{completion} that for all $i>0$

$$H^i\Big(\frX_n,\tsD^{(m)}_{n,k}(r)\Big) = 0 \;,$$

\vskip8pt

whenever $r \ge r_0$, where $r_0$ is as in \ref{v_ample_sh_lemma}. We prove the statement in (ii) by descending induction on $i$. When $i \ge 2$ there is nothing to show because $\frX_n$ is a one-dimensional noetherian space. We will now deduce it for $i=1$. Using (i) we find an epimorphism of $\tsD^{(m)}_{n,k}$-modules

$$\beta: \left(\tsD^{(m)}_{n,k}\right)^{\oplus s}  \twoheadrightarrow \sE\Big(r_1(\sE)\Big) \;.$$

\vskip8pt

The kernel $\sR = \ker(\beta)$ is a coherent $\tsD^{(m)}_{n,k}$-module and we have an exact sequence

$$0 \ra \sR \ra \left(\tsD^{(m)}_{n,k}\right)^{\oplus s} \ra \sE\Big(r_1(\sE)\Big) \ra 0 \;,$$

\vskip8pt

which gives for any $t \in \Z$ the exact sequence

\begin{numequation}\label{ex_seq_van_coh_compl} 0 \ra \sR(t) \ra \left(\tsD^{(m)}_{n,k}(t)\right)^{\oplus s} \ra \sE\Big(t+r_1(\sE)\Big) \ra 0 \;.
\end{numequation}

Now choose $r_2(\sE) = \max\{r_0+r_1(\sE),r_2(\sR)\}$, where $r_0$ is as before. Then we can conclude that from the long exact cohomology sequence associated to \ref{ex_seq_van_coh_compl} that for $r \ge r_2(\sE)$

$$H^1(\frX_n,\sE(r)) = 0 \;.$$

\vskip8pt

\qed

\begin{prop}\label{compl_finite_power_torsion} Let $\sE$ be a coherent $\tsD^{(m)}_{n,k}$-module.

\vskip8pt

(i) There is $c=c(\sE) \in \Z_{\ge 0}$ such that for all $i>0$
the cohomology group $H^i(\frX_n,\sE)$ is annihilated by $p^c$. Furthermore, it is a finitely generated module over $H^0(\frX_n,\tsD^{(m)}_{n,k})$.

\vskip8pt

(ii) $H^0(\frX_n,\sE) = \varprojlim_j H^0(\frX_n,\sE)/p^jH^0(\frX_n,\sE)$.

\end{prop}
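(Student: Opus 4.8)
The plan is to reduce both assertions to the corresponding facts on the algebraic models $\bbX_n$, using a presentation of $\sE$ furnished by \ref{surj_compl}. Write $A := H^0(\frX_n,\tsD^{(m)}_{n,k})$. I would begin with a preliminary step, established once and for all: since $\cL_n$ is a line bundle, $\tsD^{(m)}_{n,k}(-r)$ is the $p$-adic completion of the coherent $\tcD^{(m)}_{n,k}$-module $\tcD^{(m)}_{n,k}(-r)$, so \ref{completion}(ii) gives $H^i\big(\frX_n,\tsD^{(m)}_{n,k}(-r)\big)=H^i\big(\bbX_n,\tcD^{(m)}_{n,k}(-r)\big)$ for $i>0$, and \ref{completion}(iii) identifies $A$, resp. $H^0\big(\frX_n,\tsD^{(m)}_{n,k}(-r)\big)$, with the $p$-adic completion of the noetherian ring $H^0(\bbX_n,\tcD^{(m)}_{n,k})$, resp. of the module $H^0\big(\bbX_n,\tcD^{(m)}_{n,k}(-r)\big)$. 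In particular $A$ is noetherian (\ref{noetherian_tsD}) and $p$-adically complete and separated. Feeding in \ref{finite_power}(ii) (with $-r$ in place of $r$) and \ref{fg_H0}, and using that a module over $H^0(\bbX_n,\tcD^{(m)}_{n,k})$ killed by a power of $p$ is finitely generated over it iff it is finitely generated over the corresponding quotient of $A$, and that the $p$-adic completion of a finitely generated module over a noetherian ring is finitely generated over the completion, I obtain: $H^0\big(\frX_n,(\tsD^{(m)}_{n,k}(-r))^{\oplus s}\big)$ is finitely generated over $A$, and for $i>0$ the group $H^i\big(\frX_n,(\tsD^{(m)}_{n,k}(-r))^{\oplus s}\big)$ is finitely generated over $A$ and annihilated by a fixed power of $p$.

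For part (i): by \ref{surj_compl}(i) choose an exact sequence of coherent $\tsD^{(m)}_{n,k}$-modules $0\to\sR\to\sP\to\sE\to 0$ with $\sP=(\tsD^{(m)}_{n,k}(-r))^{\oplus s}$. Since $\frX_n$ is a one-dimensional noetherian space, $H^i(\frX_n,-)=0$ for $i\geq 2$, so it suffices to treat $i=1$; the long exact cohomology sequence then produces a surjection $H^1(\frX_n,\sP)\twoheadrightarrow H^1(\frX_n,\sE)$, the next term $H^2(\frX_n,\sR)$ being zero. By the preliminary step $H^1(\frX_n,\sP)$ is finitely generated over the noetherian ring $A$ and killed by some power of $p$; hence so is its quotient $H^1(\frX_n,\sE)$.

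For part (ii): applying $H^0(\frX_n,-)$ to the same short exact sequence gives an exact sequence of $A$-modules
$$0\to H^0(\frX_n,\sP)/H^0(\frX_n,\sR)\to H^0(\frX_n,\sE)\to H^1(\frX_n,\sR)\,.$$
The first term is a quotient of the finitely generated $A$-module $H^0(\frX_n,\sP)$, hence finitely generated over $A$; the image of the last map is a submodule of $H^1(\frX_n,\sR)$, which is finitely generated over the noetherian ring $A$ by part (i) applied to $\sR$. Therefore $H^0(\frX_n,\sE)$ is finitely generated over $A$. As $A$ is noetherian and $p$-adically complete and separated, every finitely generated $A$-module is $p$-adically complete and separated (a standard consequence of the Artin--Rees lemma), which is exactly the assertion $H^0(\frX_n,\sE)=\varprojlim_j H^0(\frX_n,\sE)/p^jH^0(\frX_n,\sE)$.

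No genuinely new idea beyond \ref{surj_compl}, \ref{finite_power}, \ref{fg_H0} and \ref{completion} seems to be needed. The only delicate point is the bookkeeping in the preliminary step: one must pass repeatedly between module structures over $H^0(\bbX_n,\tcD^{(m)}_{n,k})$ and over its completion $A$, using that these rings coincide modulo every power of $p$ and that $H^0(\bbX_n,\tcD^{(m)}_{n,k})$ is noetherian; everything else is a formal diagram chase with the long exact cohomology sequence on $\frX_n$ together with the vanishing of $H^{\geq 2}$.
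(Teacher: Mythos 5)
Your part (i) is exactly the paper's argument: present $\sE$ as a quotient of $(\tsD^{(m)}_{n,k}(-r))^{\oplus s}$ via \ref{surj_compl}, use $H^{\ge 2}=0$ on the one-dimensional space $\frX_n$, and transport the torsion and finite-generation statements from \ref{finite_power} through \ref{completion}. For part (ii), however, you take a genuinely different route. The paper decomposes $\sE$ into its torsion subsheaf $\sE_t$ and the quotient $\sG=\sE/\sE_t$, bounds the torsion and the $H^1$'s by a fixed power of $p$ using part (i), and then reruns the Mittag--Leffler projective-limit argument of \ref{completion}(iii) at the level of the formal scheme. You instead first prove that $H^0(\frX_n,\sE)$ is finitely generated over $A=H^0(\frX_n,\tsD^{(m)}_{n,k})$ --- which is precisely the content of the subsequent Prop.~\ref{finiteness}, and your derivation of it is the same as the paper's, so there is no circularity since \ref{finiteness} only invokes part (i) of the present proposition --- and then conclude completeness and separatedness formally from the fact that $A$ is noetherian and $p$-adically complete. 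This is a clean shortcut that merges (ii) with \ref{finiteness}; its only cost is that the ``standard consequence of the Artin--Rees lemma'' must be taken in its noncommutative form, for the ideal generated by the central element $p$: Artin--Rees holds for centrally generated ideals in noetherian rings, surjectivity of $M\to\varprojlim_j M/p^jM$ is standard, and separatedness follows because $p$ lies in the Jacobson radical of the complete ring $A$, so Nakayama kills $\bigcap_j p^jM$. You should make that noncommutative caveat explicit, but with it the argument is correct; the paper's more hands-on route avoids invoking this general principle at the price of repeating the diagram chase of \ref{completion}(iii).
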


\Pf (i) There is nothing to show for $i \ge 2$. We now deduce the case $i=1$ from the case $i=2$. Use \ref{surj_compl} (i) to find a surjection of $\tsD^{(m)}_{n,k}$-modules

$$\beta: \Big(\tsD^{(m)}_{n,k}(-r)\Big)^{\oplus s} \twoheadrightarrow \sE \;.$$

\vskip8pt

Let $\sR = \ker(\beta)$ which is again a coherent $\tsD^{(m)}_{n,k}$-module. Applying the long exact cohomology sequence to the exact sequence

$$0 \lra \sR \lra  \Big(\tsD^{(m)}_{n,k}(-r)\Big)^{\oplus s} \lra \sE \lra 0$$

\vskip8pt

we find that $H^1\Big(\frX_n,\tsD^{(m)}_{n,k}(-r)\Big)^{\oplus s}$ surjects onto $H^1(\frX_n,\sE)$. By \ref{completion} we have

$$H^1(\frX_n,\tsD^{(m)}_{n,k}(-r)) =  H^1(\bbX_n,\tcD^{(m)}_{n,k}(-r)) \;,$$

\vskip8pt

and this is annihilated by a finite power of $p$, by \ref{finite_power}. Furthermore, $H^1(\bbX_n,\tcD^{(m)}_{n,k}(-r))$ is a finitely generated $H^0(\bbX_n,\tcD^{(m)}_{n,k})$-module, by \ref{finite_power}, and hence a finitely generated $H^0(\bbX_n,\tsD^{(m)}_{n,k})$-module.

\vskip8pt

(ii) Let $\sE_t \sub \sE$ be the subsheaf of torsion elements and $\sG = \sE/\sE_t$. Then the discussion in the beginning of the proof of \ref{completion} shows that there is $c \in \Z_{\ge 0}$ such that $p^c\sE_t = 0$. Part (i) gives that $p^cH^1(\frX_n,\sE) = p^cH^1(\frX_n,\sG) = 0$, after possibly increasing $c$. Now we can apply the same reasoning as in the proof of \ref{completion} (iii) to conclude that assertion (ii) is true. \qed

\vskip8pt

\begin{prop}\label{finiteness} Let $\sE$ be a coherent $\tsD^{(m)}_{n,k}$-module. Then $H^0(\frX_n,\sE)$ is a finitely generated $H^0(\frX_n,\tsD^{(m)}_{n,k})$-module.
\end{prop}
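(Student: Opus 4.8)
The plan is to transcribe the proof of Corollary \ref{fg_H0}, replacing $\bbX_n$ by $\frX_n$ and the cited results by their $p$-adically complete counterparts established above. First I would apply \ref{surj_compl}(i) to obtain, for a suitable $r \in \Z$ and some $s \in \Z_{\ge 0}$, an epimorphism of $\tsD^{(m)}_{n,k}$-modules $\beta\colon \big(\tsD^{(m)}_{n,k}(-r)\big)^{\oplus s} \twoheadrightarrow \sE$. Its kernel $\sR = \ker(\beta)$ is again a coherent $\tsD^{(m)}_{n,k}$-module, and the long exact cohomology sequence yields
$$H^0\Big(\frX_n,\big(\tsD^{(m)}_{n,k}(-r)\big)^{\oplus s}\Big) \lra H^0(\frX_n,\sE) \stackrel{\delta}{\lra} H^1(\frX_n,\sR) \;.$$
By \ref{compl_finite_power_torsion}(i) the group $H^1(\frX_n,\sR)$ is finitely generated over $H^0(\frX_n,\tsD^{(m)}_{n,k})$, and this ring is noetherian by \ref{noetherian_tsD}; hence $\im(\delta)$ is finitely generated too. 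It therefore suffices to prove that $H^0\big(\frX_n,\tsD^{(m)}_{n,k}(-r)\big)$ is a finitely generated $H^0(\frX_n,\tsD^{(m)}_{n,k})$-module.

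For this I would descend to $\bbX_n$. Since $\cL_n^{\otimes(-r)}$ is locally free of rank one, twisting by it commutes with $p$-adic completion, so $\tsD^{(m)}_{n,k}(-r)$ is the completion of the coherent $\tcD^{(m)}_{n,k}$-module $\tcD^{(m)}_{n,k}(-r)$. Then \ref{completion}(iii) identifies $H^0\big(\frX_n,\tsD^{(m)}_{n,k}(-r)\big)$ with the $p$-adic completion of the $H^0(\bbX_n,\tcD^{(m)}_{n,k})$-module $H^0\big(\bbX_n,\tcD^{(m)}_{n,k}(-r)\big)$, and likewise identifies $H^0(\frX_n,\tsD^{(m)}_{n,k})$ with the $p$-adic completion of $H^0(\bbX_n,\tcD^{(m)}_{n,k})$, compatibly with the module structures. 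By Corollary \ref{fg_H0}, $H^0\big(\bbX_n,\tcD^{(m)}_{n,k}(-r)\big)$ is finitely generated over $H^0(\bbX_n,\tcD^{(m)}_{n,k})$, and the latter ring is noetherian by \ref{graded_tcD}(v). Since the $p$-adic completion of a finitely generated module $M$ over a noetherian ring $A$ equals $\widehat{A}\otimes_A M$, hence is finitely generated over $\widehat{A}$, applying this with $A = H^0(\bbX_n,\tcD^{(m)}_{n,k})$ and $M = H^0\big(\bbX_n,\tcD^{(m)}_{n,k}(-r)\big)$ gives the required finite generation, and with it the proposition.

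The only step requiring genuine care is the bookkeeping of module structures under completion in the second paragraph: one must know that the $A$-module structure on $M$ completes to precisely the $\widehat{A}$-action on $H^0\big(\frX_n,\tsD^{(m)}_{n,k}(-r)\big)$ coming from sections of $\tsD^{(m)}_{n,k}$ over $\frX_n$. This follows from the naturality of the identifications in \ref{completion}(iii), applied simultaneously to $\tcD^{(m)}_{n,k}$ and to its twist $\tcD^{(m)}_{n,k}(-r)$; alternatively one could rerun the torsion-versus-torsion-free analysis from the proof of \ref{completion}(iii) directly for the module in question, but invoking that proposition as a black box avoids repeating it. Everything else — forming the surjection, writing down the long exact sequence, and extracting finite generation of a submodule of a finitely generated module over a noetherian ring — is formal and parallel to the non-complete case treated in \ref{fg_H0}.
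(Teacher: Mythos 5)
Your proof is correct and follows the same route as the paper's: the same surjection from \ref{surj_compl}, the same long exact sequence combined with \ref{compl_finite_power_torsion} and \ref{noetherian_tsD}, and the same reduction of the remaining finite-generation claim to \ref{fg_H0} via \ref{completion}. The paper compresses your second paragraph into the single sentence "It follows from \ref{fg_H0} and \ref{completion}"; your elaboration (twisting commutes with completion, and $\widehat{M}=\widehat{A}\otimes_A M$ for finitely generated modules over a noetherian ring) is exactly the intended justification.
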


\Pf By \ref{surj_compl} there is an exact sequence of coherent $\tsD^{(m)}_{n,k}$-modules

$$0 \lra \sR \lra \Big( \tsD^{(m)}_{n,k}(-r)\Big)^{\oplus s} \lra \sE \lra 0 \;.$$

\vskip8pt

The long exact cohomology sequence begins as follows

$$0 \ra H^0(\bbX_n,\sR) \ra H^0\left(\bbX_n,\Big( \tsD^{(m)}_{n,k}(-r)\Big)^{\oplus s}\right) \ra H^0(\bbX_n,\sE) \ra H^1(\bbX_n,\sR) \;.$$

\vskip8pt

By \ref{compl_finite_power_torsion} the group on the right is finitely generated over $H^0(\frX_n,\tsD^{(m)}_{n,k})$. Because \linebreak $H^0(\frX_n,\tsD^{(m)}_{n,k})$ is noetherian, cf. \ref{noetherian_tsD},

$$\im\Big(H^0(\bbX_n,\sE) \ra H^1(\bbX_n,\sR)\Big)$$

\vskip8pt

is finitely generated over $H^0(\frX_n,\tsD^{(m)}_{n,k})$ as well.
It follows from \ref{fg_H0} and \ref{completion} that the group

$$H^0\left(\bbX_n,\Big( \tsD^{(m)}_{n,k}(-r)\Big)^{\oplus s}\right)$$

\vskip8pt

is finitely generated over $H^0(\frX_n,\tsD^{(m)}_{n,k})$, and so is then $H^0(\bbX_n,\sE)$. \qed

\vskip8pt

\begin{para} Let $\Coh(\tsD^{(m)}_{n,k})$ (resp.  $\Coh(\tsD^{(m)}_{n,k,\Q})$) be the category of coherent $\tsD^{(m)}_{n,k}$-modules (resp. $\tsD^{(m)}_{n,k,\Q}$-modules). Let $\Coh(\tsD^{(m)}_{n,k})_\Q$ be the category of coherent $\tsD^{(m)}_{n,k}$-modules up to isogeny. We recall that this means that $\Coh(\tsD^{(m)}_{n,k})_\Q$ has the same class of objects as $\Coh(\tsD^{(m)}_{n,k})$, and for any two objects $\cM$ and $\cN$ one has

$$Hom_{\Coh(\tsD^{(m)}_{n,k})_\Q}(\cM,\cN) = Hom_{\Coh(\tsD^{(m)}_{n,k})}(\cM,\cN) \otimes_\Z \Q \;.$$

\vskip8pt
\end{para}

\vskip8pt

\begin{prop}\label{integral_models} (i) The functor $\cM \mapsto \cM_\Q = \cM \otimes_\Z \Q$ induces an equivalence between $\Coh(\tsD^{(m)}_{n,k})_\Q$ and $\Coh(\tsD^{(m)}_{n,k,\Q})$.

\vskip8pt

(ii) For every coherent $\tsD^\dagger_{n,k,\Q}$-module $\sM$ there is $m \ge 0$ and a coherent $\tsD^{(m)}_{n,k,\Q}$-module $\sM_m$ and an isomorphism of $\tsD^\dagger_{n,k,\Q}$-modules

$$\vep: \tsD^\dagger_{n,k,\Q} \otimes_{\tsD^{(m)}_{n,k,\Q}} \sM_m \stackrel{\simeq}{\lra} \sM \;.$$

\vskip8pt

If $(m', \sM_{m'},\vep')$ is another such triple, then there is $\ell \ge \max\{m,m'\}$ and an isomorphism of $\tsD^{(\ell)}_{n,k,\Q}$-modules

$$\vep_\ell: \tsD^{(\ell)}_{n,k,\Q} \otimes_{\tcD^{(m)}_{n,k,\Q}} \sM_m \stackrel{\simeq}{\lra} \tsD^{(\ell)}_{n,k,\Q} \otimes_{\tcD^{(m')}_{n,k,\Q}} \sM_{m'}$$

\vskip8pt

such that $\vep' \circ \Big({\rm id}_{\tcD^\dagger_{n,k,\Q}} \otimes \vep_\ell\Big) = \vep$.
\end{prop}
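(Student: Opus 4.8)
The plan is to treat both parts as essentially sheaf‑theoretic facts, following the pattern of Berthelot's passage from $\widehat{\sD}^{(m)}$ to $\widehat{\sD}^{(m)}_\Q$ and then to $\sD^\dagger$ in \cite{BerthelotDI}, as is done in \cite{Huyghe97}. The only inputs I would need for (i) are that $\tsD^{(m)}_{n,k}$ has noetherian rings of sections over the affine opens in a basis $\frU$ of $\frX_n$ (\ref{global_sec_tsD}), that $\frX_n$ is quasi‑compact, and the consequence that a coherent $\tsD^{(m)}_{n,k}$‑module which is $p$‑power torsion is annihilated by a single power of $p$ (cover by finitely many affines of $\frU$, over each one has a finitely generated torsion module over a noetherian ring, take the maximum of the exponents). \emph{Faithfulness} of $\cM\mapsto\cM_\Q$: if $f\colon\cM\to\cN$ has $f_\Q=0$ then $\im(f)$ is coherent and $p$‑power torsion, hence killed by some $p^c$, so $f=0$ in $\Hom\otimes\Q$. \emph{Fullness}: given $g\colon\cM_\Q\to\cN_\Q$, cover $\frX_n$ by finitely many $U_i\in\frU$; since $\cN(U_i)$ is finitely generated over the noetherian ring $\tsD^{(m)}_{n,k}(U_i)$ one has $\cN_\Q(U_i)=\cN(U_i)\otimes_\Z\Q$, so $g$ clears denominators on each $U_i$; taking $c$ uniform and enlarging it to kill the (coherent, torsion, hence $p$‑bounded) discrepancy of these local representatives of $p^c g$ on the overlaps, $p^c g$ is induced by a genuine morphism $\cM\to\cN$, so $g$ lies in the image of $\Hom\otimes\Q$.

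The crux of (i) is \emph{essential surjectivity}. Given a coherent $\tsD^{(m)}_{n,k,\Q}$‑module $\sN$, choose a finite cover $\{U_i\}_{i=1}^N$ of $\frX_n$ by opens of $\frU$ over which $\sN$ admits a presentation; clearing denominators in these presentations produces coherent $\tsD^{(m)}_{n,k}$‑submodules $\cM_i\subset\sN|_{U_i}$ with $\cM_i\otimes_\Z\Q=\sN|_{U_i}$. Finiteness of the cover gives a single $c\ge 0$ with $p^c\cM_i\subset\cM_j$ on every $U_i\cap U_j$. Now let $\cM$ be the subsheaf of $\sN$ consisting of the sections $s$ with $s|_{U_i}\in\cM_i$ for all $i$: it is a $\tsD^{(m)}_{n,k}$‑submodule of $\sN$, satisfies $p^c\cM_j\subset\cM|_{U_j}\subset\cM_j$ on each $U_j$, and — being squeezed between two coherent modules, with sections over the affines of $\frU$ lying inside finitely generated modules over noetherian rings — is itself coherent. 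Then $\cM_\Q\cong\sN$, which proves (i). This lattice construction is the main point of part (i).

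For (ii) write $\tsD^\dagger=\tsD^\dagger_{n,k,\Q}=\varinjlim_m\tsD^{(m)}_{n,k,\Q}$; by \ref{global_sec_tsD} the transition maps are flat, so $\tsD^\dagger$ is flat over each $\tsD^{(m)}_{n,k,\Q}$, and $\tsD^\dagger$ is coherent. The mechanism I would rely on is that for a finitely presented $\tsD^{(m)}_{n,k,\Q}$‑module $A$ the sheaf‑Hom functor out of $A$ is a finite limit of copies of the identity functor, hence commutes with the filtered colimit $\varinjlim_{m'}$; combined with base‑change adjunction and quasi‑compactness of $\frX_n$ and its opens this yields, for finitely presented $\tsD^{(m)}_{n,k,\Q}$‑modules $A,B$,
$$\Hom_{\tsD^\dagger}\bigl(\tsD^\dagger\otimes A,\ \tsD^\dagger\otimes B\bigr)\ =\ \varinjlim_{m'\ge m}\Hom_{\tsD^{(m')}_{n,k,\Q}}\bigl(\tsD^{(m')}\otimes A,\ \tsD^{(m')}\otimes B\bigr),$$
and likewise over each open. \emph{Existence}: cover $\frX_n$ by finitely many $U_i\in\frU$ over which $\sM$ has a presentation $(\tsD^\dagger|_{U_i})^{b_i}\xrightarrow{P_i}(\tsD^\dagger|_{U_i})^{a_i}\to\sM|_{U_i}\to 0$; the finitely many entries of all the $P_i$ lie in a single $\tsD^{(m)}_{n,k,\Q}$, so $\sM_{m,i}:=\coker(P_i)$ is a coherent $\tsD^{(m)}_{n,k,\Q}|_{U_i}$‑module with $\tsD^\dagger\otimes\sM_{m,i}\cong\sM|_{U_i}$ by flatness. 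The gluing isomorphisms of $\sM$ on the overlaps and the cocycle relations on the triple overlaps are finitely many sections of $\Hom$‑sheaves of the above shape, so after enlarging $m$ to some $m'$ they — together with the inverses and the identities witnessing that they are isomorphisms — all descend to level $m'$; the descent data then glue $\tsD^{(m')}\otimes\sM_{m,i}$ to a coherent $\tsD^{(m')}_{n,k,\Q}$‑module $\sM_{m'}$ equipped with $\vep\colon\tsD^\dagger\otimes_{\tsD^{(m')}_{n,k,\Q}}\sM_{m'}\xrightarrow{\sim}\sM$. \emph{Uniqueness}: given $(m,\sM_m,\vep)$ and $(m',\sM_{m'},\vep')$, apply the displayed identity to $\vep'^{-1}\circ\vep\colon\tsD^\dagger\otimes\sM_m\to\tsD^\dagger\otimes\sM_{m'}$: this morphism, its inverse, and the two compositions equal to identities all descend to a common level $\ell\ge\max\{m,m'\}$, producing an isomorphism $\vep_\ell$ with $\vep'\circ(\mathrm{id}_{\tsD^\dagger}\otimes\vep_\ell)=\vep$ by construction.

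The main obstacle is organizational rather than conceptual: one must verify that the finitely many conditions that hold "after $\otimes\,\tsD^\dagger$" — descent of each gluing map, of its inverse, of the identities exhibiting it as an isomorphism, and of the cocycle relations — can all be realized simultaneously at one finite level $m'$ (resp. $\ell$). This succeeds precisely because the chosen covers of $\frX_n$ and of their overlaps are finite, because of the flatness and coherence statements in \ref{global_sec_tsD}, and because sheaf‑Hom out of a finitely presented module commutes with $\varinjlim_m$; no geometric input beyond \ref{global_sec_tsD} and the lattice construction of part (i) is needed, and the argument runs parallel to Berthelot's treatment of $\widehat{\sD}^{(m)}$ and $\sD^\dagger$ in \cite{BerthelotDI}.
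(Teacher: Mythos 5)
Your proof is correct in substance, but it takes a different route from the paper: where you reprove everything from scratch, the paper disposes of both parts in two lines by citing Berthelot, namely \cite[3.4.5]{BerthelotDI} for (i) and \cite[3.6.2]{BerthelotDI} for (ii), merely checking that the hypotheses there are met --- that $\tsD^{(m)}_{n,k}$ satisfies the conditions of \cite[3.4.1]{BerthelotDI} thanks to \ref{global_sec_tsD}, and that Berthelot's arguments require the formal scheme only to be locally noetherian (resp.\ noetherian and quasi-separated), not smooth over a discrete valuation ring, so that they apply to $\frX_n$. What you have written out is essentially the content of those two results of Berthelot specialized to the present situation: the torsion-bounding and denominator-clearing arguments for faithfulness and fullness, the lattice construction $\cM=\{s: s|_{U_i}\in\cM_i\}$ for essential surjectivity, and the finite-cover descent of presentations and gluing data using commutation of $\Hom$ out of finitely presented modules with the filtered colimit for (ii). The trade-off is clear: the citation is shorter and shifts the burden to the literature, while your version makes explicit exactly which properties of the sheaves are consumed (noetherian sections over a basis of affines, quasi-compactness, flatness of the transition maps, coherence of $\tsD^\dagger_{n,k,\Q}$). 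One point where you are slightly brisk and where the cited hypothesis \cite[3.4.1]{BerthelotDI} genuinely enters: the coherence of your lattice $\cM$ is not automatic from being ``squeezed'' between coherent modules; one needs that a $\tsD^{(m)}_{n,k}$-submodule of a coherent module whose sections over the affines of $\frU$ are finitely generated is itself coherent, which is exactly what the noetherianity of $H^0(U,\tsD^{(m)}_{n,k})$ for $U\in\frU$ together with the coherence of the sheaf of rings buys you. This is a presentational gap rather than a mathematical one.
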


\Pf (i) This is \cite[3.4.5]{BerthelotDI}. Note that the sheaf $\tsD^{(m)}_{n,k}$ satisfies the conditions in \cite[3.4.1]{BerthelotDI}, by \ref{global_sec_tsD}. We point out that the formal scheme $\cX$ in \cite[sec. 3.4]{BerthelotDI} is not supposed to be smooth over a discrete valuation ring, but only locally noetherian, cf. \cite[sec. 3.3]{BerthelotDI}.
\vskip8pt

(ii) This is \cite[3.6.2]{BerthelotDI}. In this reference the formal scheme is supposed to be noetherian and quasi-separated, but not necessarily smooth over a discrete valuation ring. \qed

\vskip8pt

\begin{cor}\label{finiteness_Q} Let $\sE$ be a coherent $\tsD^{(m)}_{n,k,\Q}$-module. Then $H^0(\frX_n,\sE)$ is a finitely generated $H^0(\frX_n,\tsD^{(m)}_{n,k,\Q})$-module.
\end{cor}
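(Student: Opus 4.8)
The plan is to reduce the assertion to its integral counterpart, Proposition \ref{finiteness}, by means of the isogeny equivalence of Proposition \ref{integral_models}(i), together with the observation that on the noetherian formal scheme $\frX_n$ the functor $H^0(\frX_n,-)$ commutes with inverting $p$.

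First I would use Proposition \ref{integral_models}(i) to choose a coherent $\tsD^{(m)}_{n,k}$-module $\sF$ together with an isomorphism $\sF\otimes_\Z\Q\stackrel{\simeq}{\lra}\sE$ of $\tsD^{(m)}_{n,k,\Q}$-modules. Next I would note that since $\frX_n$ is a noetherian (formal) scheme, sheaf cohomology on $\frX_n$ commutes with filtered inductive limits of abelian sheaves; as $\sF\otimes_\Z\Q=\varinjlim(\sF\stackrel{p}{\lra}\sF\stackrel{p}{\lra}\cdots)$, this gives a natural isomorphism $H^0(\frX_n,\sE)\cong H^0(\frX_n,\sF)\otimes_\Z\Q$. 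The very same argument applied to the sheaf of rings $\tsD^{(m)}_{n,k}$ yields $H^0(\frX_n,\tsD^{(m)}_{n,k,\Q})\cong H^0(\frX_n,\tsD^{(m)}_{n,k})\otimes_\Z\Q$ (this identification is also recorded, via the distribution algebra, in Proposition \ref{global_sec_tsD}(iv)), and these isomorphisms are compatible with the respective module structures.

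Then I would invoke Proposition \ref{finiteness}, which gives that $H^0(\frX_n,\sF)$ is a finitely generated module over $H^0(\frX_n,\tsD^{(m)}_{n,k})$; choosing finitely many generators, their images generate $H^0(\frX_n,\sF)\otimes_\Z\Q$ over $H^0(\frX_n,\tsD^{(m)}_{n,k})\otimes_\Z\Q$. Transporting along the two isomorphisms of the previous paragraph, $H^0(\frX_n,\sE)$ is finitely generated over $H^0(\frX_n,\tsD^{(m)}_{n,k,\Q})$, which is the claim.

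There is essentially no serious obstacle here: the single point that deserves a line of justification is the interchange of $H^0(\frX_n,-)$ with $-\otimes_\Z\Q$, which rests on the noetherianity of $\frX_n$ (so that cohomology commutes with filtered colimits). Alternatively, one may bypass this by remarking that $\tsD^{(m)}_{n,k}$ is a sheaf on the noetherian scheme $\frX_n$ satisfying the hypotheses of \cite[3.4.1]{BerthelotDI}, so that the comparison of the integral and rational finiteness statements is precisely the passage to isogeny categories carried out in \cite[sec. 3.4]{BerthelotDI}.
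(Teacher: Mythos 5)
Your proposal is correct and follows the same route as the paper: choose an integral model $\sF$ via Proposition \ref{integral_models}(i) and apply Proposition \ref{finiteness}. The only difference is that you spell out the compatibility $H^0(\frX_n,\sF\otimes_\Z\Q)\cong H^0(\frX_n,\sF)\otimes_\Z\Q$ (via commutation of cohomology with filtered colimits on the noetherian space $\frX_n$), which the paper leaves implicit; this is a worthwhile clarification but not a different argument.
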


\Pf By \ref{integral_models} there is a coherent $\tsD^{(m)}_{n,k}$-module $\sF$ such that $\sF \otimes_\Z \Q \simeq \sE$. Now we can apply \ref{finiteness} to $\cF$. \qed

\vskip8pt

\begin{thm}\label{acycl_tsD} Let
 $\sE$ be a coherent $\tsD^{(m)}_{n,k,\Q}$-module (resp. $\tsD^\dagger_{n,k,\Q}$-module).

\vskip8pt

(i) There is $r(\sE) \in \Z$ such that for all $r \ge r(\sE)$ there is $s \in \Z_{\ge 0}$ and an epimorphism of $\tsD^{(m)}_{n,k,\Q}$-modules (resp. $\tsD^\dagger_{n,k,\Q}$-modules)

$$\Big(\tsD^{(m)}_{n,k,\Q}(-r)\Big)^{\oplus s} \twoheadrightarrow \sE \; \hskip16pt (\; \mbox{resp.} \; \Big(\tsD^\dagger_{n,k,\Q}(-r)\Big)^{\oplus s} \twoheadrightarrow \sE \;) \;.
$$

\vskip8pt

(ii) For all $i >0$ one has $H^i(\frX_n, \sE) = 0$.

\vskip8pt
\end{thm}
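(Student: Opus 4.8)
The plan is to first treat the case of a coherent $\tsD^{(m)}_{n,k,\Q}$-module and then deduce the $\tsD^\dagger_{n,k,\Q}$-case by passing to the inductive limit over $m$. For part (i) in the $\tsD^{(m)}_{n,k,\Q}$-case, I would invoke \ref{integral_models}(i) to find a coherent $\tsD^{(m)}_{n,k}$-module $\sF$ with $\sF_\Q \simeq \sE$, apply \ref{surj_compl}(i) to $\sF$ to obtain an epimorphism $(\tsD^{(m)}_{n,k}(-r))^{\oplus s} \twoheadrightarrow \sF$ for $r \ge r_1(\sF)$, and then tensor with $\Q$; since $(-)\otimes_\Z\Q$ is exact this yields the desired epimorphism $(\tsD^{(m)}_{n,k,\Q}(-r))^{\oplus s} \twoheadrightarrow \sE$. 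For the $\tsD^\dagger_{n,k,\Q}$-case, I would use \ref{integral_models}(ii) to write $\sE \simeq \tsD^\dagger_{n,k,\Q} \otimes_{\tsD^{(m)}_{n,k,\Q}} \sM_m$ for some $m$ and some coherent $\tsD^{(m)}_{n,k,\Q}$-module $\sM_m$, apply the case just proved to $\sM_m$, and apply the (right exact) base change functor $\tsD^\dagger_{n,k,\Q}\otimes_{\tsD^{(m)}_{n,k,\Q}}(-)$; one must also note $\tsD^\dagger_{n,k,\Q}\otimes_{\tsD^{(m)}_{n,k,\Q}}\tsD^{(m)}_{n,k,\Q}(-r) = \tsD^\dagger_{n,k,\Q}(-r)$, which follows since twisting by the line bundle $\cL_n^{\otimes(-r)}$ commutes with the base change.

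For part (ii), in the $\tsD^{(m)}_{n,k,\Q}$-case I would argue by descending induction on $i$: for $i \ge 2$ there is nothing to prove since $\frX_n$ is a one-dimensional noetherian space (using \ref{van_coh} and \ref{completion}(ii), or directly). For $i = 1$, I would use part (i) to choose an epimorphism $\beta\colon (\tsD^{(m)}_{n,k,\Q}(-r))^{\oplus s} \twoheadrightarrow \sE$; since the sheaves $\tsD^{(m)}_{n,k}$ have noetherian sections over a basis of affine opens (\ref{global_sec_tsD}(i)), the kernel $\sR = \ker(\beta)$ is again coherent. The long exact cohomology sequence gives a surjection $H^1(\frX_n, (\tsD^{(m)}_{n,k,\Q}(-r))^{\oplus s}) \twoheadrightarrow H^1(\frX_n, \sE)$ modulo $H^2(\frX_n,\sR) = 0$, so it suffices to show $H^1(\frX_n, \tsD^{(m)}_{n,k,\Q}(-r)) = 0$ for the chosen $r$. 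By \ref{completion}(ii) this group equals $H^1(\bbX_n, \tcD^{(m)}_{n,k}(-r))\otimes_\Z\Q$; but by \ref{finite_power}(ii) the group $H^1(\bbX_n, \tcD^{(m)}_{n,k}(-r))$ is annihilated by a power of $p$, hence vanishes after $\otimes\Q$. This is the crux of the argument, and the reason the statement is stated for $\sE(r)$ in \ref{surj_compl} but unconditionally here: tensoring with $\Q$ kills the $p$-power torsion that obstructed vanishing at integral level. The $\tsD^\dagger_{n,k,\Q}$-case then follows because $H^i(\frX_n,-)$ on the one-dimensional noetherian space $\frX_n$ commutes with the inductive limit $\tsD^\dagger_{n,k,\Q} = \varinjlim_m \tsD^{(m)}_{n,k,\Q}$, combined with \ref{integral_models}(ii) which presents $\sE$ as such a limit of coherent $\tsD^{(m')}_{n,k,\Q}$-modules, each of which has vanishing higher cohomology by the case already treated.

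The main obstacle I anticipate is the bookkeeping in the $\tsD^\dagger_{n,k,\Q}$-case of part (ii): one must ensure that the presentation $\sE \simeq \tsD^\dagger_{n,k,\Q}\otimes_{\tsD^{(m)}_{n,k,\Q}}\sM_m$ from \ref{integral_models}(ii) is compatible with cohomology, i.e. that $H^i(\frX_n, \tsD^\dagger_{n,k,\Q}\otimes_{\tsD^{(m)}_{n,k,\Q}}\sM_m) = \varinjlim_{m'\ge m} H^i(\frX_n, \tsD^{(m')}_{n,k,\Q}\otimes_{\tsD^{(m)}_{n,k,\Q}}\sM_m)$, which rests on flatness of the transition maps (\ref{global_sec_tsD}(ii)) so that each term in the limit is coherent over $\tsD^{(m')}_{n,k,\Q}$, together with the commutation of sheaf cohomology with filtered colimits on the noetherian space $\frX_n$. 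Everything else is a routine assembly of the cited propositions.
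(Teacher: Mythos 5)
Your proposal is correct and follows essentially the same route as the paper: reduce to an integral model via \ref{integral_models}, obtain the surjection from \ref{surj_compl} and tensor with $\Q$, and handle the $\tsD^\dagger_{n,k,\Q}$-case by writing $\sE$ as a filtered colimit of coherent $\tsD^{(\ell)}_{n,k,\Q}$-modules and commuting cohomology with the colimit. The only difference is in part (ii) for the level-$m$ case, where the paper simply quotes \ref{compl_finite_power_torsion} (higher cohomology of a coherent integral model is $p$-power torsion, hence dies after $\otimes\,\Q$), whereas you re-run the descending induction at the rational level; this is the same argument inlined rather than cited, so no substantive divergence.
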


\Pf (a) We first show both assertions (i) and (ii) for a coherent $\tsD^{(m)}_{n,k,\Q}$-module $\sE$. By \ref{integral_models} (i) there is a coherent  $\tsD^{(m)}_{n,k}$-module $\sF$ such that $\sF \otimes_\Z \Q  = \sE$. We use \ref{surj_compl} to find for every $r \ge r_1(\sF)$ a surjection

$$\Big(\tsD^{(m)}_{n,k}(-r)\Big)^{\oplus s} \twoheadrightarrow \sF \;,$$

\vskip8pt

for some $s$ (depending on $r$).  Tensoring with $\Q$ gives then the desired surjection onto $\sE$. Hence assertion (i). Furthermore, for $i>0$

$$H^i(\frX_n,\sE) = H^i(\frX_n,\sF) \otimes_\Z \Q = 0 \;,$$

\vskip8pt

by \ref{compl_finite_power_torsion}, and this proves (ii).

\vskip8pt

(b) Now suppose $\sE$ is a coherent $\tcD^\dagger_{n,k,\Q}$-module.
By \ref{integral_models} (ii) there is $m \ge 0$ and a coherent module $\sE_m$ over $\tsD^{(m)}_{n,k,\Q}$ and an isomorphism of $\tsD^\dagger_{n,k,\Q}$-modules

$$\tsD^\dagger_{n,k,\Q} \otimes_{\tsD^{(m)}_{n,k,\Q}} \sE_m \stackrel{\simeq}{\lra} \sE \;.$$

\vskip8pt

Now use what we have just shown for $\sE_m$ in (a) and get the sought for surjection after tensoring with $\tsD^\dagger_{n,k,\Q}$. This proves the first assertion. We have

$$\sE = \tsD^\dagger_{n,k,\Q} \otimes_{\tsD^{(m)}_{n,k,\Q}} \sE_m = \varinjlim_{\ell \ge m} \tsD^{(\ell)}_{n,k,\Q} \otimes_{\tsD^{(m)}_{n,k,\Q}} \sE_m  = \varinjlim_{\ell \ge m} \sE_\ell$$

\vskip8pt

where $\sE_\ell = \tsD^{(\ell)}_{n,k,\Q} \otimes_{\tsD^{(m)}_{n,k,\Q}} \sE_m$ is a coherent $\tsD^{(\ell)}_{n,k,\Q}$-module. Then we have for $i>0$

$$H^i(\frX_n,\sE) = \varinjlim_{\ell \ge m} H^i(\frX_n,\sE_\ell) = 0 \;,$$

\vskip8pt

by part (a). And this proves assertion (ii). \qed

\vskip12pt

\subsection{$\frX_n$ is $\tsD^{(m)}_{n,k,\Q}$-affine and $\tsD^\dagger_{n,k,\Q}$-affine}

\begin{prop}\label{prop-genglobal} (i) Let $\sE$ be a coherent $\tsD^{(m)}_{n,k,\Q}$-module. Then $\sE$ is generated by its global sections as $\tsD^{(m)}_{n,k,\Q}$-module. Furthermore, $\sE$ has a resolution by finite free $\tsD^{(m)}_{n,k,\Q}$-modules.

\vskip8pt

(ii) Let $\sE$ be a coherent $\tsD^\dagger_{n,k,\Q}$-module. Then $\sE$ is generated by its global sections as $\tsD^\dagger_{n,k,\Q}$-module. $H^0(\frX_n,\sE)$ is a $H^0(\frX_n,\tsD^\dagger_{n,k,\Q})$-module of finite presentation.
Furthermore, $\sE$ has a resolution by finite free $\tsD^{\dagger}_{n,k,\Q}$-modules.
\end{prop}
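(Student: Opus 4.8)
The plan is to obtain $\tsD^{(m)}_{n,k,\Q}$-affinity (and then $\tsD^\dagger_{n,k,\Q}$-affinity) through the Beilinson--Bernstein localization formalism, feeding in the two facts already available: $\Gamma:=H^0(\frX_n,-)$ is exact on coherent $\tsD^{(m)}_{n,k,\Q}$-modules (Theorem \ref{acycl_tsD}(ii)), and it takes values in finitely generated modules over the noetherian ring $A:=H^0(\frX_n,\tsD^{(m)}_{n,k,\Q})$ (Corollaries \ref{finiteness_Q} and \ref{noetherian_tsD}). Let $\mathrm{Loc}(M)=\tsD^{(m)}_{n,k,\Q}\otimes_A M$ denote the left adjoint of $\Gamma$; as $\tsD^{(m)}_{n,k,\Q}$ is noetherian on a basis of affine opens (Proposition \ref{global_sec_tsD}(i)), hence a coherent sheaf of rings, $\mathrm{Loc}$ sends finitely generated $A$-modules to coherent $\tsD^{(m)}_{n,k,\Q}$-modules. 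For coherent $\sE$ the counit $\epsilon\colon\mathrm{Loc}(\Gamma(\sE))\to\sE$ has coherent cokernel $\sC$, and since the triangle identity makes $\Gamma(\epsilon)$ split surjective, exactness of $\Gamma$ gives $\Gamma(\sC)=0$. Thus, modulo the bookkeeping carried out in the last two paragraphs, everything comes down to a single point: that $\Gamma$ is \emph{faithful} on coherent $\tsD^{(m)}_{n,k,\Q}$-modules, i.e. $\Gamma(\sE)=0$ forces $\sE=0$.

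Faithfulness is the real content, and here I would follow \cite{Huyghe97}. It is equivalent to the statement that the twisted modules $\tsD^{(m)}_{n,k,\Q}(r)$ are generated by their global sections for \emph{every} $r\in\Z$: given this, Theorem \ref{acycl_tsD}(i) presents an arbitrary coherent $\sE$ as a quotient of $(\tsD^{(m)}_{n,k,\Q}(-r))^{\oplus s}$, so $\sE$ is generated by global sections, and this in turn yields faithfulness. For $r\gg 0$ the statement is immediate: $\cL_n^{\otimes r}$ is globally generated (Lemma \ref{v_ample_sh_lemma}), and global generators $\sigma_1,\dots,\sigma_t$ of $\cL_n^{\otimes r}$ give global sections $1\otimes\sigma_i$ that generate $\tsD^{(m)}_{n,k,\Q}(r)=\tsD^{(m)}_{n,k,\Q}\otimes_{\cO_{\frX_n}}\cL_n^{\otimes r}$ over $\tsD^{(m)}_{n,k,\Q}$. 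For general $r$ one reduces along the blow-up $\mathrm{pr}=\mathrm{pr}_{n,0}$ to $\frX_0=\hat{\bbP}^1_\fro$, using $\mathrm{pr}^*\tcD^{(m)}_{0,k}=\tcD^{(m)}_{n,k}$ and $\mathrm{pr}_*\tcD^{(m)}_{n,k}=\tcD^{(m)}_{0,k}$ (Proposition \ref{graded_tcD}) together with the rational vanishing of the higher direct images (Lemma \ref{direct_im_str_sh}, Proposition \ref{finite_power}(i)), and then computes $H^0(\frX_0,\tsD^{(m)}_{0,k,\Q}(r))$ by means of the filtration --- whose graded pieces are line bundles of the form $\cO_{\bbP^1}(2d+ra_0)$ (Proposition \ref{graded_tcD}) --- and the action of $\frg_\fro$ coming from $\xi^{(m)}_{n,k}$ (Proposition \ref{global_sections_tcD}), exactly as in the $\cD^{(m)}$-affinity argument of \cite{Huyghe97}.

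Granting faithfulness, $\Gamma$ and $\mathrm{Loc}$ are mutually quasi-inverse equivalences between coherent $\tsD^{(m)}_{n,k,\Q}$-modules and finitely generated $A$-modules; in particular the counit $\epsilon$ above is an isomorphism, so every coherent $\sE$ is generated by its global sections. A finite free resolution is then produced in the standard way: choose a surjection $A^{\oplus b_0}\twoheadrightarrow\Gamma(\sE)$ ($A$ noetherian, $\Gamma(\sE)$ finitely generated), apply $\mathrm{Loc}$ to get $(\tsD^{(m)}_{n,k,\Q})^{\oplus b_0}\twoheadrightarrow\sE$ with coherent kernel, and iterate. This proves (i). For (ii): by Proposition \ref{integral_models}(ii) a coherent $\tsD^\dagger_{n,k,\Q}$-module $\sE$ equals $\tsD^\dagger_{n,k,\Q}\otimes_{\tsD^{(m)}_{n,k,\Q}}\sE_m$ for some coherent $\tsD^{(m)}_{n,k,\Q}$-module $\sE_m$; tensoring a finite free $\tsD^{(m)}_{n,k,\Q}$-resolution of $\sE_m$ up over the flat ring extension $\tsD^\dagger_{n,k,\Q}$ (Proposition \ref{global_sec_tsD}(ii)) yields a finite free $\tsD^\dagger_{n,k,\Q}$-resolution of $\sE$, so in particular $\sE$ is generated by global sections; and applying the exact functor $\Gamma$ (Theorem \ref{acycl_tsD}(ii)) to its first two terms exhibits $H^0(\frX_n,\sE)$ as a cokernel of finite free $H^0(\frX_n,\tsD^\dagger_{n,k,\Q})$-modules, i.e. as a module of finite presentation over $\cD^{\mathrm{an}}(\bbG(k)^\circ)_{\theta_0}$ (Proposition \ref{global_sec_tsD}(v)).

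The one real obstacle is the faithfulness step of the second paragraph: it is the only place requiring genuine geometric input on the projective line, and it is precisely the analogue for $\frX_0=\hat{\bbP}^1_\fro$ of the $\cD^{(m)}$-affinity computations of \cite{Huyghe97}; all the rest is formal manipulation of the results of the preceding sections.
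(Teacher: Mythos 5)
Your proposal is correct and follows essentially the same route as the paper, whose proof consists of invoking Theorem \ref{acycl_tsD} and then arguing "exactly as in \cite{Huyghe97}": the formal part (exactness and finiteness of $H^0$, the counit argument, passage from level $m$ to $\dagger$ via \ref{integral_models} and flatness) is what the paper extracts from \ref{acycl_tsD}, and the one non-formal ingredient you isolate — global generation of the negatively twisted sheaves $\tsD^{(m)}_{n,k,\Q}(-r)$, reduced rationally along ${\rm pr}_{n,0}$ to the explicit filtration/Weyl-algebra computation on $\hat{\bbP}^1_\fro$ — is precisely the step the paper delegates to Huyghe. The only caveat worth recording is that this deferred computation must be checked to survive the replacement of $\frg_\fro$ by $\vpi^k\frg_\fro$ in the completed level-$m$ algebras; since the relevant statements are all after tensoring with $\Q$ and the integral global sections still contain $U^{(m)}_{\vpi^k}(\frg_\fro)$ by \ref{global_sections_tcD}, this is the "easy variation" the authors allude to in the introduction.
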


\Pf Using \ref{acycl_tsD} we can argue exactly as in \cite{Huyghe97}. \qed

\begin{para} {\it The functors $\Loc^{(m)}_{n,k}$ and $\Loc^\dagger_{n,k}$.} Let $E$ be a finitely generated $H^0(\frX_n,\tsD^{(m)}_{n,k,\Q})$-module (resp. a finitely presented $H^0(\frX_n,\tsD^\dagger_{n,k,\Q})$-module). Then we let $\Loc^{(m)}_{n,k}(E)$ (resp. $\Loc^\dagger_{n,k}(E)$) be the sheaf on $\bbX_n$ associated to the presheaf

$$U \mapsto \tsD^{(m)}_{n,k,\Q}(U) \otimes_{H^0(\frX_n,\tsD^{(m)}_{n,k,\Q})} E \hskip16pt ({\rm resp.} \;\; U \mapsto \tsD^\dagger_{n,k,\Q}(U) \otimes_{H^0(\frX_n,\tsD^\dagger_{n,k,\Q})} E \;) \;.$$

\vskip8pt

It is obvious that $\Loc^{(m)}_{n,k}$ (resp. $\Loc^\dagger_{n,k}$) is a functor from the category of finitely generated $H^0(\frX_n,\tsD^{(m)}_{n,k,\Q})$-modules (resp. finitely presented $H^0(\frX_n,\tsD^\dagger_{n,k,\Q})$-modules) to the category of sheaves of modules over $\tsD^{(m)}_{n,k,\Q}$ (resp. $\tsD^\dagger_{n,k,\Q}$).
\end{para}

\vskip8pt

\begin{thm}\label{thm-equivalence} (i) The functors $\Loc^{(m)}_{n,k}$ and $H^0$ (resp. $\Loc^\dagger_{n,k}$ and $H^0$) are quasi-inverse equivalences between the categories of finitely generated $H^0(\frX_n,\tsD^{(m)}_{n,k,\Q})$-modules and coherent $\tsD^{(m)}_{n,k,\Q}$-modules (resp. finitely presented $H^0(\frX_n,\tsD^\dagger_{n,k,\Q})$-modules and coherent $\tsD^\dagger_{n,k,\Q}$-modules).

\vskip8pt

(ii) The functor $\Loc^{(m)}_{n,k}$ (resp. $\Loc^\dagger_{n,k}$) is an exact functor.
\end{thm}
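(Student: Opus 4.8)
The plan is to deduce the theorem formally from the $\tsD$-affinity already established in Theorem \ref{acycl_tsD} and Proposition \ref{prop-genglobal}, following the pattern of Beilinson--Bernstein and of Huyghe \cite{Huyghe97}. Write $A$ for $H^0(\frX_n,\tsD^{(m)}_{n,k,\Q})$ (noetherian, by \ref{global_sec_tsD}(iv)) resp. $H^0(\frX_n,\tsD^\dagger_{n,k,\Q})$ (coherent, by \ref{global_sec_tsD}(v)), write $\tsD$ for $\tsD^{(m)}_{n,k,\Q}$ resp. $\tsD^\dagger_{n,k,\Q}$, and let ``finite'' mean finitely generated resp. finitely presented as an $A$-module; I treat both cases in parallel. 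The first step is to check that $\Loc(E)$ is a coherent $\tsD$-module for finite $E$: choosing a presentation $A^{\oplus s}\to A^{\oplus t}\to E\to 0$ and using that $E\mapsto\Loc(E)$ is the sheafification of a tensor product, hence right exact with $\Loc(A)=\tsD$, one gets an exact sequence $\tsD^{\oplus s}\to\tsD^{\oplus t}\to\Loc(E)\to 0$, exhibiting $\Loc(E)$ as a cokernel of a map of finite free $\tsD$-modules; since $\tsD$ is a coherent sheaf (\ref{global_sec_tsD}) this gives coherence of $\Loc(E)$. In particular $\Loc$ is right exact and lands in $\Coh(\tsD)$.

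Next I would show that the unit morphism $\eta_E\colon E\to H^0(\frX_n,\Loc(E))$ is an isomorphism. For $E$ finite free this is immediate from $\Loc(A)=\tsD$, $H^0(\frX_n,\tsD)=A$, and the fact that $H^0$ commutes with finite direct sums. For general $E$, let $\cK\subset\tsD^{\oplus t}$ be the image of $\tsD^{\oplus s}$, a coherent $\tsD$-module fitting into $0\to\cK\to\tsD^{\oplus t}\to\Loc(E)\to 0$. By Theorem \ref{acycl_tsD}(ii) one has $H^{i}(\frX_n,\cK)=0$ and $H^{i}(\frX_n,\tsD^{\oplus t})=0$ for $i>0$, so applying $H^0$ gives a short exact sequence, and likewise $\tsD^{\oplus s}\twoheadrightarrow\cK$ yields $A^{\oplus s}\twoheadrightarrow H^0(\frX_n,\cK)$. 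Naturality of the unit (which is the identity on the free terms) identifies the map $A^{\oplus s}\to A^{\oplus t}$ obtained this way with the original presentation map, whence $H^0(\frX_n,\Loc(E))=\coker(A^{\oplus s}\to A^{\oplus t})=E$ and $\eta_E$ is the resulting isomorphism.

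Then I would show the counit $\Loc(H^0(\frX_n,\sE))\to\sE$ is an isomorphism for coherent $\sE$. By Proposition \ref{prop-genglobal}, $\sE$ admits a presentation $\tsD^{\oplus s}\to\tsD^{\oplus t}\to\sE\to 0$ by finite free $\tsD$-modules, and $H^0(\frX_n,\sE)$ is finite over $A$. Applying $H^0$, using acyclicity of the kernel exactly as in the previous step, yields an exact $A^{\oplus s}\to A^{\oplus t}\to H^0(\frX_n,\sE)\to 0$; applying the right-exact functor $\Loc$ and comparing with the original presentation through the counit (an isomorphism on the free terms) gives the isomorphism on cokernels. This proves part (i): $\Loc$ and $H^0$ are quasi-inverse equivalences. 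For part (ii), right exactness of $\Loc$ was noted in the first step, so only left exactness remains: given an injection $E'\hookrightarrow E''$ of finite $A$-modules, set $\cN=\ker(\Loc(E')\to\Loc(E''))$, a coherent $\tsD$-module; left exactness of $H^0$ together with the isomorphisms $H^0\Loc\cong\mathrm{id}$ gives $H^0(\frX_n,\cN)=\ker(E'\to E'')=0$, and since $\cN$ is coherent it is generated by its global sections (\ref{prop-genglobal}), so $\cN=0$.

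I do not expect a genuine obstacle here: once Theorem \ref{acycl_tsD} and Proposition \ref{prop-genglobal} are available the argument is essentially formal, and Huyghe's treatment in \cite{Huyghe97} can be followed almost verbatim. The only points requiring care are bookkeeping ones in the $\dagger$-case --- one must consistently use ``finitely presented'' rather than rely on a noetherian hypothesis, invoke coherence of $A^\dagger=H^0(\frX_n,\tsD^\dagger_{n,k,\Q})$ and of the sheaf $\tsD^\dagger_{n,k,\Q}$ so that both categories in play are abelian, and track finite presentations through Proposition \ref{integral_models}(ii) when reducing $\dagger$-statements to level-$m$ statements --- together with verifying that the natural unit and counit morphisms are genuinely natural so that the five-lemma comparisons go through.
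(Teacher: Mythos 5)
Your argument is correct and is essentially the same as the paper's: the paper's proof of this theorem is simply the citation "the proofs of [Huyghe, 5.2.1, 5.2.3] carry over word for word," and what you have written out is precisely that standard Beilinson--Bernstein-style deduction of the equivalence from Theorem \ref{acycl_tsD} and Proposition \ref{prop-genglobal} (unit and counit checked on finite free presentations via acyclicity, exactness via vanishing of global sections of a coherent kernel). No gaps; you have merely made explicit the details the paper outsources to \cite{Huyghe97}.
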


\Pf The proofs of \cite[5.2.1, 5.2.3]{Huyghe97} for the first and the second assertion, respectively, carry over word for word. \qed

\vskip12pt

\section{Localization of representations of $\GL_2(L)$}\label{loc}

\subsection{Finitely generated modules over compact type algebras} A space of {\it compact type} is an $L$-vector space $V=\varinjlim_{m\in\bbN} V_m$ equal to a locally convex inductive limit over a countable system of Banach spaces $V_m$ where the transition maps $V_m \ra V_{m+1}$ are injective compact linear maps \cite[Def. 1.1.16]{EmertonA}. Any space of compact type is Hausdorff. A {\it compact type topological algebra} (for short, a compact type algebra) is a topological $L$-algebra $A = \varinjlim_m A_m$ equal to a locally convex inductive limit of Banach algebras $A_m$ with injective, continuous, compact linear and multiplicative transition maps $A_m \rightarrow A_{m+1}$ \cite[Def. 2.3.1]{EmertonJI}. We have the following generalization of \cite[Prop. 2.1]{ST5}. All modules are {\it left} modules.

\begin{prop}\label{prop-cantop} Let $A=\varinjlim_m A_m$ be a compact type algebra with noetherian Banach algebras $A_m$.

\vskip8pt

(i) Each finitely generated $A$-module $M$ carries a unique compact type topology (called its canonical topology) $M=\varinjlim_m M_m$ such that the $A$-module structure map $A\times M\rightarrow M$ becomes continuous; each $M_m$ can be chosen to be a finitely generated $A_m$-submodule of $M$.

\vskip8pt

(ii) every finitely generated $A$-submodule of $M$ is closed in the canonical topology;

\vskip8pt

(iii) any linear map between finitely generated $A$-modules is continuous and strict with closed image for the canonical topologies;

\vskip8pt

(iv) if $M$ admits a finite presentation $A^{\oplus s}\rightarrow A^{\oplus r}\stackrel{f}{\rightarrow} M\rightarrow 0$, then the quotient topology on $M\simeq A^{\oplus r}/\ker(f)$ coincides with the canonical topology.
\end{prop}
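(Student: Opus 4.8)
The plan is to deduce the proposition from two sets of standard facts. First, the analogues of (i)--(iv) at \emph{finite level}: over a noetherian Banach algebra $R$, every finitely generated (left) $R$-module $N$ carries a canonical Banach topology -- the quotient topology of any finite free presentation $R^{\oplus s}\to R^{\oplus r}\to N\to 0$, which is independent of the presentation; every $R$-submodule of a finitely generated $R$-module is closed; and every $R$-linear map between finitely generated $R$-modules is continuous and strict. Second, the formal properties of spaces of \emph{compact type}: they are Hausdorff and, being countable locally convex inductive limits of Banach spaces, ultrabornological and webbed, so that the open mapping theorem holds for them (a continuous linear surjection onto a space of compact type is open, and a continuous linear bijection between spaces of compact type is a topological isomorphism); moreover the class of such spaces is stable under closed subspaces and Hausdorff quotients, and a countable locally convex inductive limit of spaces of compact type is again of compact type and commutes with finite products and with quotients by closed subspaces (cf.\ \cite[ch. 1]{EmertonA}). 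The whole argument then consists in transporting the finite-level facts through the inductive limit.

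For existence in (i), fix a surjection $\pi\colon A^{\oplus r}\twoheadrightarrow M$ and put $M_m=\pi(A_m^{\oplus r})$; by noetherianity of $A_m$ this is a finitely generated $A_m$-submodule of $M$, which we give the quotient (equivalently, canonical) Banach topology of $A_m^{\oplus r}$. Since $A_m^{\oplus r}\to A_{m+1}^{\oplus r}$ is compact and $A_m^{\oplus r}\to M_m$ is a strict surjection, $M_m\to M_{m+1}$ is compact, so $M=\varinjlim_m M_m$ is of compact type; the $A$-action is continuous because at each level $A_m\times M_m\to M_m$ is the descent of the continuous map $A_m\times A_m^{\oplus r}\to A_m^{\oplus r}$ along the open map $\mathrm{id}\times(A_m^{\oplus r}\twoheadrightarrow M_m)$, and finite products commute with the limits. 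Finally $\pi$ is a continuous linear surjection of spaces of compact type, hence open, so $M$ carries the quotient topology of $A^{\oplus r}/\ker\pi$, and each $M_m$ is finitely generated over $A_m$ by construction.

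For uniqueness in (i), and for (iv): if $\tau$ is any compact-type topology on $M$ making $A\times M\to M$ continuous, then $\pi$ is $\tau$-continuous since $\pi(a_1,\dots,a_r)=\sum_i a_i x_i$, so $\ker\pi$ is $\tau$-closed ($\tau$ being Hausdorff), whence the quotient topology on $A^{\oplus r}/\ker\pi$ is a compact-type topology on $M$ finer than $\tau$; a continuous bijection of spaces of compact type being an isomorphism, $\tau$ equals the canonical topology. For (iv), in a presentation $A^{\oplus s}\to A^{\oplus r}\stackrel{f}{\to}M\to0$ the submodule $\ker f$ is finitely generated, hence closed in $A^{\oplus r}$ by (ii); thus $A^{\oplus r}/\ker f$ with the quotient topology is of compact type with continuous $A$-action, so equals the canonical topology of $M$ by uniqueness.

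For (ii): if $N\subseteq M$ is a finitely generated $A$-submodule, then $M/N$ is finitely generated, hence carries its (Hausdorff) canonical topology; composing $\pi$ with $M\to M/N$ realizes that topology as a quotient of $A^{\oplus r}$, hence -- as $\pi$ is a quotient map -- as a quotient of $M$, so $M\to M/N$ is continuous and $N$, being the preimage of $0$, is closed. For (iii): an $A$-linear $f\colon M\to M'$ satisfies $f\circ\pi_M(a_1,\dots,a_r)=\sum_i a_i f(x_i)$, so $f$ is continuous; $f(M)$ is a finitely generated $A$-submodule of $M'$, closed by (ii); and the canonical topology of $f(M)$ coincides both with the quotient topology under $M\twoheadrightarrow f(M)$ (argue as for (iv)) and with the subspace topology from $M'$ (the inclusion is continuous, and a continuous bijection between two compact-type topologies is an isomorphism), so $f$ is strict with closed image. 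The point demanding care throughout is that $A$ itself need not be noetherian, so one cannot argue at finite level -- a submodule of $M$ need not be finitely generated, and for a finitely generated $N\subseteq M$ there is no reason that $N\cap M_m$ be a ``layer'' of $N$ -- which is why (ii) is obtained from Hausdorffness of the canonical topology on $M/N$ rather than from closedness of pieces in $M_m$, and why the argument is organized around the uniqueness statement and the open mapping theorem rather than around the explicit exhaustion $M=\varinjlim_m M_m$.
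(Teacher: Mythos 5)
Your proof is correct. For (i) and (iv) it is essentially the paper's argument: the same exhaustion $M_m=\sum_i A_m x_i=\pi(A_m^{\oplus r})$ by finitely generated $A_m$-submodules, the same finite-level input over the noetherian Banach algebras $A_m$ (unique Banach topology, closed submodules, automatic continuity, i.e. \cite[Prop. 2.1]{ST5}), and the same appeal to the open mapping theorem for spaces of compact type to obtain uniqueness and hence (iv). The genuine divergence is in (ii) and in the organizing principle. The paper proves closedness of a submodule $N\subseteq M$ levelwise: $N_m:=N\cap M_m$ is a closed submodule of the Banach module $M_m$, and the closed-embedding statement for inductive limits \cite[Prop. 1.1.41]{EmertonA} then shows that $N=\bigcup_m N_m$ is closed in $M=\varinjlim_m M_m$; continuity in (iii) is likewise obtained from the levelwise maps $f_m:M_m\rightarrow N_m$. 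You instead establish, already in (i), that the canonical topology is the quotient topology of $A^{\oplus r}$ along $\pi$, and then derive (ii) by applying (i) to the finitely generated module $M/N$: the map $M\rightarrow M/N$ is continuous because $\pi$ is a topological quotient map, and $N=\ker(M\rightarrow M/N)$ is closed since $M/N$ is Hausdorff. Both routes are sound; yours trades the limit lemma \cite[Prop. 1.1.41]{EmertonA} for the quotient-map identification, which you then reuse uniformly for the continuity and strictness claims in (iii) and for (iv), and it makes explicit two points the paper leaves terse, namely the compactness of the induced maps $M_m\rightarrow M_{m+1}$ and the agreement of the canonical topology on $\mathrm{im}(f)$ with the subspace topology from the target. (Incidentally, both arguments actually prove that \emph{every} submodule of $M$ is closed: in the paper's version because $N\cap M_m$ is automatically finitely generated over the noetherian $A_m$ and hence closed, and in yours because $M/N$ is finitely generated for arbitrary $N$.)
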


\Pf (i) Let $x_1,...,x_r$ be generators for $M$ and put $M_m:= \sum_i A_m x_i\sub M$. According to \cite[Prop. 2.1]{ST5} the $A_m$-module $M_m$ has a unique structure as Banach module and the natural injection $M_m\rightarrow M_{m+1}$ is continuous. Moreover, the latter injection is a compact linear map as follows from the compactness of $A_m\rightarrow A_{m+1}$ along the lines of the proof of \cite[Lem. 6.1]{ST5}. Hence, the inductive limit topology on $M=\varinjlim_{m} M_m$ is as claimed and it remains to show uniqueness. However, if $M$ has a second such topology, say $\tau$, then each inclusion $M_m\hookrightarrow M^{\tau}$ is continuous. The identity map $M^{can}\rightarrow M^{\tau}$ is therefore a continuous bijection. By the open mapping theorem, in the version of \cite[Thm. 1.1.17]{EmertonA}, it is therefore strict. Thus, it is a topological isomorphism. This shows (i).

\vskip8pt

(ii) If $N\subseteq M$ is any submodule of $M$, then $N_m:=M_m\cap N$ is a submodule of the $A_m$-module $M_m$ and hence closed by \cite[Prop.2.1]{ST5}. The inductive limit topology on $N'=\varinjlim_m N_m$ makes the natural map $N' \rightarrow M$ a closed embedding \cite[Prop. 1.1.41]{EmertonA}.
However, if $N$ is finitely generated, then $N=N'$ which proves (ii).

\vskip8pt

(iii) Given a linear map $f: M\rightarrow N$ between finitely generated modules, we have $f(M_m)\subseteq N_m$ for all large enough $m$. For such $m$, the induced map $f_m: M_n\rightarrow N_m$ is continuous, cf. \cite[Prop. 2.1]{ST5}, and hence, so is $f=\varinjlim_m f_m$. The image $\im(f)\subseteq N$ is closed by (ii). Moreover, $f: M\rightarrow\im(f)$ is a continuous surjection and therefore strict by \cite[Thm. 1.1.7]{EmertonA}. This proves (iii).

\vskip8pt

(iv) According to (ii) the submodule $\ker(f)\subseteq A^{\oplus r}$ is closed and the quotient topology on $M\simeq A^{\oplus r}/\ker(f)$ is therefore of compact type, cf. \cite[Prop. 1.2]{ST4}. Since it makes the $A$-module structure map $A\times M\rightarrow M$ continuous, it must coincide with the canonical topology by the uniqueness part in (i). \qed

\begin{rem} Suppose there is a central ideal $I_m\sub A_m$ for each $m$ with $I_{m+1}=A_{m+1}I_m$ and $I_{m+1}\cap A_m=I_m$. This yields a central ideal $I\sub A$ with $A/I=\varinjlim_m A_m/I_m.$ The algebra $A/I$ is then a compact type algebra.
\end{rem}

\vskip8pt

We will need the following lemma which justifies the replacement of completed tensor products which occur in certain situations when working with weak Frechet-Stein structures, cf. \cite[Def. 1.2.6]{EmertonA}, by ordinary tensor products. The key point here is that we consider finitely presented modules over compact type algebras $A=\varinjlim_m A_m$ with noetherian defining Banach algebras $A_m$. Recall that, if $A$ is a locally convex topological algebra and $M$ is a locally convex topological module, then $M$ is said to be a {\it finitely generated topological module}, if there is an $A$-module map $A^{\oplus r}\rightarrow M$, which is a continuous and strict surjection \cite[Def. 1.2.1]{EmertonA}. In this situation, the module is called {\it a finitely generated free topological module} if the map $A^{\oplus r}\rightarrow M$ is additionally injective, i.e. a topological isomorphism. Finally, given a continuous homomorphism between locally convex topological $L$-algebras $
 A\rightarrow B
 $ and a locally convex topological $A$-module $M$, the completed tensor product $B\hat{\otimes}_A M$ will always be understood as in \cite[Lem. 1.2.3]{EmertonA}, i.e. by regarding $B\otimes_A M$ as a quotient of $B\otimes_{K,\pi} M$\footnote{The symbol $\otimes_{K,\pi}$ refers to the projective tensor product, cf. \cite[(0.9)]{EmertonA}.}, giving it the quotient topology and then forming the Hausdorff completion.

\begin{lemma}\label{lem-decomplete} Let $A'$ and $B'$ be compact type algebras with noetherian defining Banach algebras. Let further $A'\subseteq A$ (resp. $B'\subseteq B$) be a topological algebra which is a finitely generated free topological module over $A'$ (resp. $B'$). Let $A\rightarrow B$ be a continuous homomorphism. Suppose $M$ is a finitely presented (abstract) $A$-module. Giving $M$ its canonical topology as finitely generated $A'$-module, the natural map $B\otimes_A M\car B\hat{\otimes}_A M$ is bijective.
\end{lemma}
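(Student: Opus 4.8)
The plan is to reduce to a finite presentation of $M$ and then to invoke Proposition~\ref{prop-cantop}; the key point will be that the submodule one has to divide out is automatically closed, so that forming the Hausdorff completion changes nothing.

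First I would observe that, since $A$ is a finitely generated free module over $A'$, the finitely presented $A$-module $M$ is in particular finitely generated over $A'$, so that the canonical topology of Proposition~\ref{prop-cantop}, with respect to which $B\otimes_{K,\pi}M$ is formed, is available. Fix a finite presentation $A^{\oplus s}\stackrel{g}{\lra} A^{\oplus r}\stackrel{q}{\lra} M\ra 0$ of $A$-modules; applying $B\otimes_A(-)$ gives an exact sequence $B^{\oplus s}\stackrel{g_B}{\lra} B^{\oplus r}\lra B\otimes_A M\ra 0$, so that $B\otimes_A M\cong B^{\oplus r}/N$ with $N=\im(g_B)$ a finitely generated $B$-submodule --- hence a finitely generated $B'$-submodule --- of $B^{\oplus r}$. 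Note that $B$, being a finite direct sum of copies of $B'$, is of compact type, that $B^{\oplus r}\cong (B')^{\oplus rb}$ is of compact type, and that its topology is the canonical one as a finitely generated $B'$-module (by the uniqueness in Proposition~\ref{prop-cantop}(i)).

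Next I would identify the topology that Emerton's recipe puts on $B\otimes_A M$, namely the quotient topology via $B\otimes_{K,\pi}M\twoheadrightarrow B\otimes_A M$, with the quotient topology via $B^{\oplus r}\twoheadrightarrow B\otimes_A M$. Both of these surjections factor compatibly through $B\otimes_{K,\pi}A^{\oplus r}$. The map $B\otimes_{K,\pi}A^{\oplus r}\ra B^{\oplus r}$ admits the continuous section $(b_i)_i\mapsto\sum_i b_i\otimes e_i$, hence is a topological quotient map; the map $B\otimes_{K,\pi}A^{\oplus r}\ra B\otimes_{K,\pi}M$ is $\mathrm{id}_B\otimes q$, and since $q$ is a strict surjection for the canonical $A'$-module topologies (Proposition~\ref{prop-cantop}(iii)) and the projective tensor product carries strict surjections to strict surjections, this map too is a topological quotient map. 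As a quotient of a quotient is again a quotient, both recipes endow $B\otimes_A M$ with the quotient topology of $B\otimes_{K,\pi}A^{\oplus r}$, hence with one and the same topology.

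Finally, $N$ is closed in $B^{\oplus r}$ by Proposition~\ref{prop-cantop}(ii), so $B\otimes_A M$, with the topology just identified, is a quotient of a space of compact type by a closed subspace and therefore itself of compact type --- in particular Hausdorff and complete, cf.~\cite[Prop.~1.2]{ST4}. Its Hausdorff completion $B\hat{\otimes}_A M$ then coincides with $B\otimes_A M$, which is the claim. I expect the only genuine obstacle to lie in the third paragraph: one must know that the projective tensor product preserves strict surjections and that the auxiliary surjection out of $B\otimes_{K,\pi}A^{\oplus r}$ is a true topological quotient map; with that in hand, closedness of $N$ from Proposition~\ref{prop-cantop}(ii) completes the argument.
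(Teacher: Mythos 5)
Your argument is correct, and it follows the same overall strategy as the paper's proof: fix a finite presentation and identify the topology that Emerton's construction places on $B\otimes_A M$ with the canonical topology of Proposition \ref{prop-cantop}, so that the relevant submodule is closed. The routes differ in how this is executed. The paper invokes \cite[Prop. 1.2.5]{EmertonA} as a black box to obtain surjectivity of $B\otimes_A M\rightarrow B\hat{\otimes}_A M$ together with strictness of $B\otimes f$, and is then left only with checking that $B\otimes_A M$ is Hausdorff, which it does by matching $B\otimes f$ against the canonical topology via the uniqueness statement in Proposition \ref{prop-cantop}(i). You instead perform the topological identification by hand, factoring both quotient maps through $B\otimes_{K,\pi}A^{\oplus r}$, and then obtain injectivity and surjectivity in one stroke from the fact that a space of compact type is complete and Hausdorff and therefore coincides with its own Hausdorff completion; this bypasses \cite[Prop. 1.2.5]{EmertonA} and the notion of finitely generated topological module entirely. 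The one ingredient you use without proof is that $\mathrm{id}_B\otimes_{K,\pi} q$ is again a strict surjection when $q$ is; this is a standard property of the projective tensor product (and is essentially the content of the step of \cite[Prop. 1.2.5]{EmertonA} that the paper quotes), but you should supply a reference for it, e.g. to the corresponding statement in \cite{NFA} or \cite{EmertonA}. With that citation added, your proof is complete and arguably more self-contained than the one in the paper.
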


\Pf We choose a finite presentation of the $A$-module $M$

$$(\cR)\hskip30pt A^{\oplus s}\rightarrow A^{\oplus r}\stackrel{f}{\rightarrow}M \rightarrow 0 \;.$$

\vskip8pt

This is a finite presentation of $M$ as $A'$-module, so that the map $f$ is in fact strict \ref{prop-cantop}. This shows that $M$ is a finitely generated topological $A$-module. According to \cite[Prop. 1.2.5]{EmertonA}, the natural map
$B\otimes_A M\rightarrow B\hat{\otimes}_A M$ is surjective and the map $B\otimes f$ occurring in

$$(B\otimes\cR)\hskip30pt B^{\oplus s}\rightarrow B^{\oplus r}\stackrel{B\otimes f}{\rightarrow}B\otimes_A M \rightarrow 0$$

\vskip8pt

is strict. It remains to see that $B\otimes_A M$ is Hausdorff. But $B\otimes_A M$ is a finitely presented $B$-module and therefore has its canonical topology as finitely generated $B'$-module. Since $B\otimes\cR$ is a finite presentation of $B\otimes_A M$ as $B'$-module, the map $B\otimes f$ is a strict surjection for the canonical topology, again by \ref{prop-cantop}. Hence the original topology on $B\otimes_A M$ coincides with the canonical topology and hence, is Hausdorff. \qed

\vskip12pt

\subsection{Modules over  $\cD^{\rm an}(\bbG(n)^\circ)_{\theta_0}$} We will apply these considerations to the algebra of global sections $H^0(\frX_n,\tsD^\dagger_{n,n,\Q})$. Recall that it is canonically isomorphic to the coherent $L$-algebra $\cD^{\rm an}(\bbG(n)^\circ)_{\theta_0}$, cf. Prop. \ref{global_sec_tsD}.

\begin{lemma}\label{lem-compacttype} The algebras $\cD^{\rm an}(\bbG(n)^\circ)$ and $\cD^{\rm an}(\bbG(n)^\circ)_{\theta_0}$ are compact type algebras with noetherian defining Banach algebras.
\end{lemma}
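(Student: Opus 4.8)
The plan is to work with the description of $\cD^{\rm an}(\bbG(n)^\circ)$ as a locally convex inductive limit of completed divided-power enveloping algebras, to verify the two requirements in the definition of a compact type algebra with noetherian defining Banach algebras — noetherianity of the constituents, and injectivity/continuity/compactness of the transition maps — on that model, and then to deduce the statement for the central reduction formally. By Emerton's construction \cite[ch. 5]{EmertonA} (compare the proof of \ref{global_sec_tsD}, which identifies $H^0(\frX_n,\tsD^\dagger_{n,n,\Q})$ with $\cD^{\rm an}(\bbG(n)^\circ)_{\theta_0}$ by passing to the limit in \ref{global_sections_tcD}) one has a topological isomorphism of $L$-algebras
$$\cD^{\rm an}(\bbG(n)^\circ) \;=\; \varinjlim_m \; \widehat{U}^{(m)}_{\vpi^n}(\frg_\fro)_\Q \;,$$
the limit being taken over the natural continuous $L$-algebra homomorphisms $\iota_m \colon \widehat{U}^{(m)}_{\vpi^n}(\frg_\fro)_\Q \ra \widehat{U}^{(m+1)}_{\vpi^n}(\frg_\fro)_\Q$ induced by the inclusions $U^{(m)}_{\vpi^n}(\frg_\fro) \sub U^{(m+1)}_{\vpi^n}(\frg_\fro)$ inside $U(\frg)$. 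It therefore suffices to show that each $\widehat{U}^{(m)}_{\vpi^n}(\frg_\fro)_\Q$ is a noetherian Banach $L$-algebra and that each $\iota_m$ is injective, continuous, multiplicative and compact.

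For noetherianity: the standard filtration on $U(\frg_\fro)$ induces a filtration on $U^{(m)}_{\vpi^n}(\frg_\fro)$ whose associated graded ring is a level-$m$ divided-power (symmetric) algebra on $\vpi^n\frg_\fro$ over $\fro$, which is noetherian — the same fact is used in the proof of \ref{graded_tcD}, see also \cite[sec. 3.5]{BerthelotDI}. Hence $U^{(m)}_{\vpi^n}(\frg_\fro)$ is noetherian, its $\vpi$-adic completion $\widehat{U}^{(m)}_{\vpi^n}(\frg_\fro)$ is noetherian (completion of a noetherian ring along an ideal), and inverting $\vpi$ yields the noetherian $L$-algebra $\widehat{U}^{(m)}_{\vpi^n}(\frg_\fro)_\Q$; it is a Banach algebra for the gauge norm having $\widehat{U}^{(m)}_{\vpi^n}(\frg_\fro)$ as unit ball.

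The key point is the compactness of $\iota_m$. By the PBW theorem for the level-$m$ enveloping algebra, $\widehat{U}^{(m)}_{\vpi^n}(\frg_\fro)_\Q$ has an orthonormal Schauder basis $\{b^{(m)}_\nu\}_{\nu \in \bbN^4}$ given by the monomials
$$b^{(m)}_\nu \;=\; q^{(m)}_{\nu_1}! \frac{(\vpi^n e)^{\nu_1}}{\nu_1!} \cdot q^{(m)}_{\nu_2}! \, \vpi^{n \nu_2}{h_1 \choose \nu_2} \cdot q^{(m)}_{\nu_3}! \, \vpi^{n \nu_3} {h_2 \choose \nu_3} \cdot q^{(m)}_{\nu_4}! \frac{(\vpi^n f)^{\nu_4}}{\nu_4!} \;,$$
and likewise $\{b^{(m+1)}_\nu\}_\nu$ at level $m+1$. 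Comparing the two normalizations inside $U(\frg)$ one gets $\iota_m(b^{(m)}_\nu) = c_\nu\, b^{(m+1)}_\nu$ with $c_\nu = \prod_{i=1}^{4} q^{(m)}_{\nu_i}!/q^{(m+1)}_{\nu_i}! \in L^\times$, and a direct computation of $p$-adic valuations gives $v_p(c_\nu) = |\nu|/p^{m+1} + O(\log|\nu|)$, which tends to $\infty$ as $|\nu| \ra \infty$. An $L$-linear map between $p$-adic Banach spaces which in orthonormal bases is diagonal with entries of absolute value tending to $0$ is injective, bounded (with norm $\sup_\nu |c_\nu|$) and compact, being the operator-norm limit of its finite-rank truncations; hence $\iota_m$ is compact. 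This proves that $\cD^{\rm an}(\bbG(n)^\circ) = \varinjlim_m \widehat{U}^{(m)}_{\vpi^n}(\frg_\fro)_\Q$ is a compact type algebra with noetherian defining Banach algebras.

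Finally, for the central reduction: the character $\theta_0$ determines a closed two-sided central ideal $I_m = \ker\big(\widehat{U}^{(m)}_{\vpi^n}(\frg_\fro)_\Q \twoheadrightarrow \widehat{U}^{(m)}_{\vpi^n}(\frg_\fro)_{\Q,\theta_0}\big)$, and since the central reductions are taken compatibly in $m$ (as in \cite[sec. 3]{PSS2}) one has $I_{m+1} = \widehat{U}^{(m+1)}_{\vpi^n}(\frg_\fro)_\Q\, I_m$ and $I_{m+1} \cap \widehat{U}^{(m)}_{\vpi^n}(\frg_\fro)_\Q = I_m$. By the Remark following \ref{prop-cantop} the quotient $\cD^{\rm an}(\bbG(n)^\circ)_{\theta_0} = \varinjlim_m \widehat{U}^{(m)}_{\vpi^n}(\frg_\fro)_\Q / I_m$ is again of compact type, its defining Banach algebras $\widehat{U}^{(m)}_{\vpi^n}(\frg_\fro)_{\Q,\theta_0}$ being noetherian as quotients of the noetherian rings $\widehat{U}^{(m)}_{\vpi^n}(\frg_\fro)_\Q$ (cf. also \ref{global_sec_tsD}(iv)). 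I expect the main obstacle to be precisely the valuation estimate $v_p(c_\nu) \ra \infty$ underlying compactness of the transition maps; the rest is either formal or already contained in the cited results.
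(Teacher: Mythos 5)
Your proof is correct, but it follows a different (much more explicit) route than the paper: the paper's own proof is essentially a two-line citation, deferring the statement for $\cD^{\rm an}(\bbG(n)^\circ)$ to the discussion in \cite[5.3]{HS13} for the smooth affine group scheme $\bbG(n)$, and then passing to the central reduction via the Remark after Prop. \ref{prop-cantop}. What you have written is, in effect, an unpacking of the content of that reference: you take as input the identification $\cD^{\rm an}(\bbG(n)^\circ)\simeq\varinjlim_m \widehat{U}^{(m)}_{\vpi^n}(\frg_\fro)_\Q$ (which is consistent with how the paper uses \cite[ch. 5]{EmertonA} in Prop. \ref{global_sec_tsD}(iv),(v), but is itself the substantive point supplied by \cite[5.3]{HS13} — if you only know $\cD^{\rm an}(\bbG(n)^\circ)$ as the strong dual of the nuclear Fr\'echet space of rigid functions, compact type is automatic but the existence of a presentation by noetherian Banach \emph{algebras} with multiplicative transition maps is not), and then you verify noetherianity via the associated graded and compactness via the diagonal coefficients $c_\nu$ with $v_p(c_\nu)=|\nu|/p^{m+1}+O(\log|\nu|)\to\infty$; both computations are correct (note $q^{(m+1)}_{\nu_i}!$ divides $q^{(m)}_{\nu_i}!$, so the maps are indeed norm-decreasing, injective and compact). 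Your approach buys a self-contained argument at the cost of invoking the PBW basis for $U^{(m)}_{\vpi^n}(\frg_\fro)$ without proof; the paper's approach buys brevity at the cost of opacity. For the central reduction you, like the paper, do not actually verify the two hypotheses of the Remark (notably $I_{m+1}\cap A_m=I_m$, which requires something like faithful flatness of $A_m\to A_{m+1}$, cf. Prop. \ref{global_sec_tsD}(ii)); this is a shared, minor omission rather than a defect specific to your write-up.
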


\Pf The discussion in \cite[5.3]{HS13} applied to the smooth and affine $\fro$-group scheme $\bbG(n)$ shows the property for $\cD^{\rm an}(\bbG(n)^\circ)$. According to the remark after Prop. \ref{prop-cantop} the property then passes to the central reduction $\cD^{\rm an}(\bbG(n)^\circ)_{\theta_0}$. \qed

\vskip8pt

We consider the locally $L$-analytic compact group $\GN=\GL_2(\fro)$ with its series of congruence subgroups $G_{n+1}=\bbG(n)^\circ(L)$. The group $\GN$ acts by translations on the space $C^{\rm cts}(\GN,K)$ of continuous $K$-valued functions. Following \cite[(5.3)]{EmertonA} let $\Dgnn$ be the strong dual of the space of $\bbG(n)^\circ$-analytic vectors

$$\Dgnn:= (C^{\rm cts}(\GN,K)_{\bbG(n)^\circ-\rm an})'_b \;.$$

\vskip8pt

It is a locally convex topological $L$-algebra naturally isomorphic to the crossed product of the ring $\cD^{\rm an}(\bbG(n)^\circ)$ with the finite group $\GN/G_{n+1}$. In particular,

\begin{numequation}\label{equ-finitefree}\Dgnn=\oplus_{g\in \GN/G_{n+1}} \cD^{\rm an}(\bbG(n)^\circ)*\delta_g\end{numequation}

is a finitely generated free topological module over $\cD^{\rm an}(\bbG(n)^\circ)$. Denoting by $C^{\rm la}(\GN,K)$ the space of $K$-valued locally analytic functions and dualizing the isomorphism

$$  \varinjlim_n C^{\rm cts}(\GN,K)_{\bbG(n)^\circ-\rm an}\car C^{\rm la}(\GN,K)$$

\vskip8pt

yields an isomorphism of topological algebras

$$ D(\GN)\car \varprojlim_n \Dgnn \;.$$

\vskip8pt

This is the weak Fr\'echet-Stein structure on the locally analytic distribution algebra $D(\GN)$ as introduced by Emerton in \cite[Prop. 5.3.1]{EmertonA}. In an obviously similar manner, we may construct the ring $\Dgnnt$ and obtain an isomorphism $\DGNt\car \varprojlim_n \Dgnnt.$

\vskip8pt

%Being crossed products with a finite group of invertible order over a coherent ring, the rings $\Dgnn$ and $\Dgnnt$ are itself coherent,
   %\cite[Cor. 11]{Dingguo}.
   %\begin{lemma} The rings $\Dgnn$ and $\Dgnnt$ are compact type algebras.
   %\end{lemma}
   %\Pf Consider first the ring $\Dgnn$. The crossed product structure comes from an
   %action of the full group $\GN$ on the ring $D^{an}(\bbG(n)^\circ)$. According to \cite[5.3]{HS13} the algebra
   %$D^{an}(\bbG(n)^\circ)$ is a compact type algebra where the corresponding noetherian Banach algebras arise as convolution algebras $(A_r)'$
   %associated to rings of holomorphic functions $A_r$ on affinoid subspaces of $\bbG(n)^\circ$. In turn, these convolution algebras arise as %completions of certain $m$-PD envelopes of the universal enveloping algebra of $Lie(\bbG(n))$. From this angle, the $\GN$-action on %$D^{an}(\bbG(n)^\circ)$ is induced from the adjoint action on $Lie(\bbG(n))$ and thus stabilizes each ring $(A_r)'$. In particular, $\Dgn$ equals %the inductive limit of crossed product rings of type $(A_r)'\# (\GN/G_{n(r)})$ where $n(r)$ is an increasing series of natural numbers depending %on $r\uparrow 1$. It is now easy to verify that this structure makes $\Dgn$ a compact type algebra. The argument for $\Dgnnt$ is similar. \qed

\vskip8pt

We consider an admissible locally analytic $\GN$-representation $V$, its coadmissible module $M:=V'_b$ and its subspace of $\Gnc$-analytic vectors $V_{\Gnc-\rm an}\subseteq V$.
The latter subspace is naturally a nuclear Fr\'echet space \cite[Lem. 6.1.6]{EmertonA} and we let $(V_{\Gnc-\rm an})'_b$ be its strong dual. It is a space of compact type and a topological $\Dgnn$-module which is finitely generated \cite[Lem. 6.1.13]{EmertonA}. According to \cite[Thm. 6.1.20]{EmertonA} the modules $M_n:= (V_{\Gnc-\rm an})'$ form a $(\Dgnn)_{n\in\bbN}$-sequence, in the sense of \cite[Def. 1.3.8]{EmertonA}, for the coadmissible module $M$ relative to the weak Fr\'echet-Stein structure on $D(\GN).$ This implies that one has

\begin{numequation}\label{equ-weakfamily}
M_n=\Dgnn\hat{\otimes}_{D(\GN)} M
\end{numequation}

as $\Dgnn$-modules for any $n$. Furthermore, there are natural isomorphisms

$$D(\bbG(n-1)^\circ,\GN)\hat{\otimes}_{\Dgnn} M_n\car M_{n-1} \;,$$

\vskip8pt

of $D(\bbG(n-1)^\circ,\GN)$-modules for any $n$. Here, the completed tensor product are understood in the sense of \cite[Lem. 1.2.3]{EmertonA} as we have explained above. We have the following slight refinements of these results.

\begin{lemma}\label{lem-refine} (i) The $\Dgnn$-module $M_n$ is finitely presented.

\vskip8pt

(ii) The natural map

$$ D(\bbG(n-1)^\circ,\GN)\otimes_{\Dgnn} M_n \car D(\bbG(n-1)^\circ,\GN)\hat{\otimes}_{\Dgnn} M_n$$

\vskip8pt

is bijective.
\end{lemma}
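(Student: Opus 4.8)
The plan is to prove (i) first, by exhibiting $M_n$ as a filtered union of finitely generated modules over the noetherian Banach ``layers'' of $\Dgnn$, and then to deduce (ii) formally from Lemma \ref{lem-decomplete}.

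For (i), I would first recall two ingredients. On the algebra side: by \ref{equ-finitefree}, $\Dgnn$ is the crossed product of $\cD^{\rm an}(\bbG(n)^\circ)$ with the finite group $\GN/G_{n+1}$, and by the proof of Lemma \ref{lem-compacttype} (via \cite[5.3]{HS13}) one has $\cD^{\rm an}(\bbG(n)^\circ)=\varinjlim_h \cD_h$ as a locally convex inductive limit of noetherian Banach algebras $\cD_h$ attached to the affinoid subdomains exhausting the wide open group $\bbG(n)^\circ$; hence $\Dgnn=\varinjlim_h D_{n,h}$ with $D_{n,h}:=\cD_h*(\GN/G_{n+1})$, each again a noetherian Banach algebra, being module-finite over $\cD_h$. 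On the representation side: Emerton's analytic-vectors machinery, in particular \cite[Thm. 6.1.20]{EmertonA} together with the notion of a $(\Dgnn)$-sequence \cite[Def. 1.3.8]{EmertonA}, presents the nuclear Fr\'echet space $V_{\Gnc-\rm an}$ as a projective limit of admissible Banach-space representations $W_h$, so that $M_n=\varinjlim_h M_{n,h}$ with $M_{n,h}:=(W_h)'_b$ finitely generated over $D_{n,h}$ and with transition isomorphisms $M_{n,h+1}\cong D_{n,h+1}\,\hat\otimes_{D_{n,h}}\, M_{n,h}$.

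Given this, the rest of (i) is formal. Since $M_{n,h}$ is finitely generated over the noetherian Banach algebra $D_{n,h}$, the completed tensor product above coincides with the algebraic one, so $M_{n,h'}\cong D_{n,h'}\otimes_{D_{n,h_0}} M_{n,h_0}$ for all $h'\ge h_0$. I would fix an $h_0$, take a finite presentation $D_{n,h_0}^{\oplus s}\to D_{n,h_0}^{\oplus r}\to M_{n,h_0}\to 0$ (possible since $D_{n,h_0}$ is noetherian), apply $D_{n,h}\otimes_{D_{n,h_0}}(-)$ for $h\ge h_0$ — using right exactness of the tensor product together with the base-change isomorphism to keep the sequence exact — and then pass to the exact filtered colimit over $h$ to obtain a finite presentation $\Dgnn^{\oplus s}\to\Dgnn^{\oplus r}\to M_n\to 0$. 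It is worth noting that finite generation over $\Dgnn$ alone would not suffice: $\Dgnn$ is not noetherian, so a general finitely generated $\Dgnn$-module need not be finitely presented, and what is genuinely used is the layered structure of $M_n$ coming from the representation.

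For (ii), I would apply Lemma \ref{lem-decomplete} with $A':=\cD^{\rm an}(\bbG(n)^\circ)$, $A:=\Dgnn$, $B':=\cD^{\rm an}(\bbG(n-1)^\circ)$, $B:=D(\bbG(n-1)^\circ,\GN)$ and $M:=M_n$. Its hypotheses hold: $A'$ and $B'$ are compact type algebras with noetherian defining Banach algebras by Lemma \ref{lem-compacttype}; $A$ (resp. $B$) is a finitely generated free topological module over $A'$ (resp. $B'$) by \ref{equ-finitefree}; the transition map $A\to B$ of Emerton's weak Fr\'echet--Stein structure on $D(\GN)$ is a continuous homomorphism of $L$-algebras; and $M_n$ is finitely generated over $A'$ (as $A$ is module-finite over $A'$) and finitely presented over $A$ by (i). Lemma \ref{lem-decomplete} then gives that $B\otimes_A M_n\to B\hat\otimes_A M_n$ is bijective, which is (ii). The hard part is (i): the crux is importing from Emerton's theory that $M_n$ is the union of finitely generated modules over the noetherian layers $D_{n,h}$ with the ``going up'' base-change property $M_{n,h+1}\cong D_{n,h+1}\otimes_{D_{n,h}}M_{n,h}$ — this is where admissibility of $V$, hence of the Banach pieces $W_h$, really enters — while the remaining manipulations (passing between completed and algebraic tensor products over noetherian Banach algebras, right exactness of tensor products, exactness of filtered colimits) are routine.
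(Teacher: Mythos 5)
Part (ii) of your argument is fine and is essentially the paper's: once (i) is known, one applies Lemma \ref{lem-decomplete} with $A'=\cD^{\rm an}(\bbG(n)^\circ)$, $A=\Dgnn$, $B'=\cD^{\rm an}(\bbG(n-1)^\circ)$, $B=D(\bbG(n-1)^\circ,\GN)$. (One small point you pass over: the completed tensor product in the statement is formed using the topology $M_n$ carries as strong dual of $V_{\Gnc-\rm an}$, whereas Lemma \ref{lem-decomplete} is about the canonical topology of $M_n$ as a finitely generated module; the paper identifies the two via the uniqueness part of Prop.~\ref{prop-cantop}. This is easy to repair.)

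The problem is in (i), at exactly the step you flag as the crux. You assert that Emerton's \cite[Thm.~6.1.20]{EmertonA} and \cite[Def.~1.3.8]{EmertonA} present $V_{\Gnc-\rm an}$ as $\varprojlim_h W_h$ with $W_h$ admissible Banach representations, so that $M_n=\varinjlim_h M_{n,h}$ with $M_{n,h}$ finitely generated over the noetherian Banach layers $D_{n,h}$ of $\Dgnn$ and with base-change transition isomorphisms $M_{n,h+1}\simeq D_{n,h+1}\hat\otimes_{D_{n,h}}M_{n,h}$. Those results do not say this: they produce the sequence $(M_n)_n$ indexed by the congruence level $n$, i.e.\ a $(\Dgnn)_{n}$-sequence for the \emph{weak} Fr\'echet--Stein structure $D(\GN)=\varprojlim_n\Dgnn$; they give no internal decomposition of a fixed $M_n$ along the Banach layers of the compact type algebra $\Dgnn$. (Nor is $V_{\bbH_h-\rm an}$, for $\bbH_h$ an affinoid subgroup exhausting $\bbG(n)^\circ$, a Banach space for $V$ of compact type.) Moreover, such a layered presentation with base change along the whole tower is, by your own subsequent formal argument, equivalent to finite presentation of $M_n$ over $\Dgnn$ --- so the assertion is essentially the statement to be proved. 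The missing input is the \emph{genuine} Fr\'echet--Stein structure on $D(\GN)$ from \cite[Cor.~5.3.19]{EmertonA}: there is a single noetherian Banach algebra $A$ (the crossed product of the level-$m_n$ part $\cD^{\rm an}(\bbG(n)^\circ)^{(m_n)}$ with $\GN/G_{n+1}$), contained in $B=\Dgnn$, such that $M'_n:=A\hat\otimes_{D(\GN)}M$ is finitely generated --- hence finitely presented --- over $A$, and $M_n=B\hat\otimes_A M'_n$. One then applies Lemma \ref{lem-decomplete} once (with $A'=A$, $B'=\Dgn$) to replace $\hat\otimes$ by $\otimes$, and finite presentation of $M_n$ over $B$ follows. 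With this substitution your colimit manipulations become unnecessary; without it, the argument for (i) does not close.
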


\Pf (i) Let $(M'_n)_{n\in\bbN}$ be any $(\Dgnn)_{n\in\bbN}$-sequence for the Fr\'echet-Stein structure of $D(\GN)$ exhibited
in \cite[Cor. 5.3.19]{EmertonA}. Fix a number $n$. The space $M'_n$ is a finitely generated module over the noetherian Banach algebra $A$ equal to the crossed product of $\cD^{\rm an}(\bbG(n)^\circ)^{(m_n)}$, the level-$m_n$ part of $\Dgn$, with the finite group $\GN/G_{n+1}$. Here, $m_n$ is a sequence of natural numbers with $m_n\rightarrow\infty$ for $n\rightarrow \infty$. In particular,
\begin{numequation}\label{equ-aux} M'_n= A \otimes_{D(\GN)} M = A \hat{\otimes}_{D(\GN)} M.\end{numequation}
The algebra $A$ is naturally contained in $B:=\Dgnn$ as shown in equation (5.3.20) of loc.cit. By \ref{equ-weakfamily} and \ref{equ-aux} we have $M_n=B \hat{\otimes}_{A} M'_n$. However, $M'_n$ is a finitely presented $A$-module and the topological algebras $A$ and $B$ satisfy the assumptions of Lem. \ref{lem-decomplete}: take $A'=A$ and $B'=\Dgn$, cf. \ref{equ-finitefree}. Hence, the natural map $B\otimes_{A} M'_n\car B \hat{\otimes}_{A} M'_n$ is bijective. This proves (i).

\vskip8pt

(ii) The $\Dgnn$-module $M_n$ is finitely presented according to (i). As we have indicated above, $M_n$ is a space of compact type and a topological $\Dgnn$-module. By the uniqueness part in \ref{prop-cantop} its topology therefore coincides with the canonical topology as finitely generated $\Dgn$-module. The assertion follows therefore from Lem. \ref{lem-decomplete}. \qed

\vskip8pt

Remark: These results have obvious analogues when the character $\theta_0$ is involved.

\subsection{The functor $\Loc^\dagger$}
For every $n \ge 0$ we put $\tsD^\dagger_{n,\Q} := \tsD^\dagger_{n,n,\Q}$, and we denote the functor $\Loc^\dagger_{n,n}$ by $\Loc^\dagger_n$ in the following. The group $\GN=\GL_2(\fro)$ acts on the formal scheme $\frX_n$, and there is an induced (left) action of $\GN$ on the sheaf $\tsD^\dagger_{n,\Q}$. Given $g \in \GN$ and a local section $s$ of $\tsD^\dagger_{n,\Q}$, there is thus a local section $g.s$ of $\tsD^\dagger_{n,\Q}$. We can then consider the abelian category ${\rm Coh}(\tsD^\dagger_{n,\Q},\GN)$ of (left) $\GN$-equivariant coherent $\tsD^\dagger_{n,\Q}$-modules. Furthermore, the group $G_{n+1}$ is contained in $\cD^{\rm an}(\bbG(n)^\circ)$ as a set of delta distributions, and for $h \in G_{n+1}$ we write $\delta_h$ for its image in $H^0(\frX_n, \tsD^\dagger_{n,\Q}) = \cD^{\rm an}(\bbG(n)^\circ)_{\theta_0}$.
For $g \in \GN$, $h \in G_{n+1}$, we have $g.\delta_h = \delta_{ghg^{-1}}$, and for a local section $s$ of $\tsD^\dagger_{n,\Q}$ we have then the relation

\begin{numequation}\label{non_commuting} g.(s\delta_h) = (g.s)(g.\delta_h) = (g.s) \delta_{ghg^{-1}} \;.
\end{numequation}

We denote by ${\rm pr}_n:={\rm pr}_{n,n-1}$ the blow-up morphism $\frX_n \ra \frX_{n-1}$. There is a natural morphism of sheaves of rings

\begin{numequation}\label{equ-transit_sheaf}({\rm pr}_n)_*\tsD^\dagger_{n,\Q}\rightarrow \tsD^\dagger_{n-1,\Q}
\end{numequation}

which is $\GN$-equivariant. Given a coherent $\tsD^\dagger_{n,\Q}$-module $\sM_n$ the $\tsD^\dagger_{n-1,\Q}$-module

$$\tsD^\dagger_{n-1,\Q} \otimes_{({\rm pr}_n)_* \tsD^\dagger_{n,\Q}} ({\rm pr}_n)_*\sM_n$$

\vskip8pt

is $\GN$-equivariant via $g.(s\otimes m)=(g.s) \otimes (g.m)$ for local sections $s,m$ and $g \in \GN$. Consider its submodule $\cR_{n-1}$ locally generated by all elements $s \delta_h \otimes m - s\otimes (h.m)$ for $h \in G_n$. Because of \ref{non_commuting} the submodule $\cR_{n-1}$ is $\GN$-stable. We put

$$\tsD^\dagger_{n-1,\Q} \otimes_{({\rm pr}_n)_* \tsD^\dagger_{n,\Q},G_n} ({\rm pr}_n)_*\sM_n:=
\left(\tsD^\dagger_{n-1,\Q} \otimes_{({\rm pr}_n)_* \tsD^\dagger_{n,\Q}} ({\rm pr}_n)_*\sM_n\right)/\;\cR_{n-1} \;.$$

\vskip8pt

\begin{dfn} A {\it $\GN$-equivariant coadmissible module} on $\frX_\infty:=\varprojlim_n \frX_n$ consists of a family $\sM:=(\sM_n)_{n\in\bbN}$ of objects $\sM_n\in {\rm Coh}(\tsD^\dagger_{n,\Q},\GN)$ together with isomorphisms

\begin{numequation}\label{equ-transit_mod}
\tsD^\dagger_{n-1,\Q} \otimes_{({\rm pr}_n)_* \tsD^\dagger_{n,\Q},G_n} ({\rm pr}_n)_*\sM_n\car \sM_{n-1}
\end{numequation}

of $\GN$-equivariant $\tsD^\dagger_{n-1,\Q}$-modules for all $n\geq 1$.
\end{dfn}

A morphism $\sM\rightarrow\sN$ between two such modules consists of morphisms $\sM_n\rightarrow\sN_n$ in ${\rm Coh}(\tsD^\dagger_{n,\Q},\GN)$ compatible with the isomorphisms above.

\vskip8pt

Let $\sM$ be a $\GN$-equivariant coadmissible module on $\frX_\infty$. The isomorphisms \ref{equ-transit_mod} induce morphisms $({\rm pr}_n)_*\sM_n\rightarrow \sM_{n-1}$ having global sections $H^0(\frX_n,\sM_n)\rightarrow H^0(\frX_{n-1},\sM_{n-1})$. We let

$$ H^0(\frX_\infty,\sM):=\varprojlim_n H^0(\frX_n,\sM_n) \;.$$

\vskip8pt

On the other hand, we consider the category of coadmissible $\DGNt$-modules. Given such a module $M$ we have its associated admissible locally analytic $\GN$-representation $V=M'_b$ together with its subspace of $\Gnc$-analytic vectors $V_{\Gnc-\rm an}$. The latter is stable under the $\GN$-action and its dual $M_n:= (V_{\Gnc-\rm an})'_b$ is a finitely presented $\Dgnnt$-module, cf. \ref{lem-refine}.

\vskip8pt

According to Thm. \ref{thm-equivalence} we have the coherent $\tsD^\dagger_{n,\Q}$-module

$$\Loc^\dagger_n(M_n)= \tsD^\dagger_{n,\Q}\otimes_{\Dgnt} M_n$$

\vskip8pt

on $\frX_n$. Using the contragredient $\GN$-action on the dual space $M_n$, we put

$$g.(s \otimes m) := (g.s) \otimes (g.m)$$

\vskip8pt

for $g\in G_0, m\in M_n$ and a local section $s$. In this way, $\Loc^\dagger_n(M_n)$ becomes an object of ${\rm Coh}(\tsD^\dagger_{n,\Q},\GN)$.

\begin{prop}\label{prop-equivalenceII} (i) The family $\Loc^\dagger_n(M_n), n\in\bbN$, forms a $\GN$-equivariant coadmissible module on $\frX_\infty$. Call it
 $\Loc^\dagger(M)$. The formation of $\Loc^\dagger(M)$ is functorial in $M$.

\vskip8pt

(ii) The functors $\Loc^\dagger$ and $H^0(\frX_\infty,\cdot)$ are quasi-inverse equivalences between the categories of coadmissible $\DGNt$-modules and $\GN$-equivariant coadmissible modules on $\frX_\infty$.

\end{prop}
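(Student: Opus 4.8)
The plan is to reduce the statement, one level at a time, to the non‑equivariant equivalence of Theorem~\ref{thm-equivalence}, and then to glue along the transition maps. The extra ingredient I would need at each finite level is an equivariant refinement of Theorem~\ref{thm-equivalence}: that $\Loc^\dagger_n$ and $H^0(\frX_n,\cdot)$ are quasi‑inverse equivalences between finitely presented $\Dgnnt$-modules and ${\rm Coh}(\tsD^\dagger_{n,\Q},\GN)$. To prove this I would argue that, because the subgroup $G_{n+1}=\bbG(n)^\circ(L)$ acts on $\tsD^\dagger_{n,\Q}$ through the delta distributions already contained in $H^0(\frX_n,\tsD^\dagger_{n,\Q})=\cD^{\rm an}(\bbG(n)^\circ)_{\theta_0}$ (relation \ref{non_commuting}), a $\GN$-equivariant structure on a coherent $\tsD^\dagger_{n,\Q}$-module amounts to an action of the \emph{finite} quotient $\GN/G_{n+1}$ compatible with the $\cD^{\rm an}(\bbG(n)^\circ)_{\theta_0}$-module structure on global sections; passing to global sections and using the crossed‑product decomposition \ref{equ-finitefree} then identifies ${\rm Coh}(\tsD^\dagger_{n,\Q},\GN)$ with finitely presented $\Dgnnt$-modules, with $\Loc^\dagger_n$ as inverse. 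Granting this, $\Loc^\dagger_n(M_n)$ lies in ${\rm Coh}(\tsD^\dagger_{n,\Q},\GN)$ and $H^0(\frX_n,\Loc^\dagger_n(M_n))=M_n$, since $M_n=(V_{\Gnc-\rm an})'$ is finitely presented over $\Dgnnt$ by \ref{lem-refine}(i).

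For (i) the remaining task is to construct the transition isomorphisms \ref{equ-transit_mod} for the family $\big(\Loc^\dagger_n(M_n)\big)_n$. First I would note that, by \ref{graded_tcD}(ii), after $p$-adic completion and passage to the limit over $m$ and to $\otimes\Q$, one has $({\rm pr}_n)_*\tsD^\dagger_{n,\Q}=\tsD^\dagger_{n-1,n,\Q}$, and that \ref{equ-transit_sheaf} is then the natural map $\tsD^\dagger_{n-1,n,\Q}\to\tsD^\dagger_{n-1,n-1,\Q}=\tsD^\dagger_{n-1,\Q}$ arising from $\tcT_{n-1,n}=\vpi\,\tcT_{n-1,n-1}$, inducing on global sections the canonical map $\cD^{\rm an}(\bbG(n)^\circ)_{\theta_0}\to\cD^{\rm an}(\bbG(n-1)^\circ)_{\theta_0}$ of distribution algebras (\ref{global_sec_tsD}). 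Using that $R^i({\rm pr}_n)_*$ vanishes on $\tsD^\dagger_{n,\Q}$-coherent sheaves for $i>0$ (combine \ref{finite_power} with inverting $p$ and taking the limit) and that $({\rm pr}_n)_*$ preserves coherence, $({\rm pr}_n)_*\Loc^\dagger_n(M_n)$ is a coherent $\tsD^\dagger_{n-1,n,\Q}$-module with global sections $M_n$; as $\frX_{n-1}$ is $\tsD^\dagger_{n-1,n,\Q}$-affine (Theorem~\ref{thm-equivalence} applied to the pair $(n-1,n)$), it must be $\Loc^\dagger_{n-1,n}(M_n)$. Base‑changing along $\tsD^\dagger_{n-1,n,\Q}\to\tsD^\dagger_{n-1,\Q}$ and dividing by $\cR_{n-1}$ is exact and commutes with $H^0$ (Theorem~\ref{thm-equivalence}), and on global sections it should carry $M_n$ to
$$\cD^{\rm an}(\bbG(n-1)^\circ)_{\theta_0}\otimes_{\cD^{\rm an}(\bbG(n)^\circ)_{\theta_0}}M_n\Big/\big(\delta_h\otimes m-1\otimes h.m:h\in G_n\big)=D(\bbG(n-1)^\circ,\GN)_{\theta_0}\otimes_{\Dgnnt}M_n,$$
the last equality being a direct check with the crossed‑product description \ref{equ-finitefree} (the submodule $\cR_{n-1}$ is defined precisely to realize the change of base ring along $\Dgnnt\to D(\bbG(n-1)^\circ,\GN)_{\theta_0}$), which in turn equals $M_{n-1}$ by \ref{lem-refine}(ii) and its $\theta_0$-variant. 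Both sides of \ref{equ-transit_mod} then being coherent $\GN$-equivariant $\tsD^\dagger_{n-1,\Q}$-modules with the same global sections $M_{n-1}$, the equivariant equivalence produces the desired isomorphism; functoriality in $M$ is visible throughout.

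For (ii), in one direction I would compute, for a coadmissible $\DGNt$-module $M$ with its $(\Dgnnt)_n$-sequence $(M_n)_n$,
$$H^0(\frX_\infty,\Loc^\dagger(M))=\varprojlim_n H^0(\frX_n,\Loc^\dagger_n(M_n))=\varprojlim_n M_n=M,$$
the last step being coadmissibility relative to $\DGNt\car\varprojlim_n\Dgnnt$. In the other direction, starting from a $\GN$-equivariant coadmissible module $\sM=(\sM_n)_n$ on $\frX_\infty$, I would set $N_n:=H^0(\frX_n,\sM_n)$, which is finitely presented over $\Dgnnt$ by the equivariant equivalence; applying $H^0$ to \ref{equ-transit_mod} and re‑using the global‑sections computation of part (i) (now for $\sM_n$ in place of $\Loc^\dagger_n(M_n)$) yields $D(\bbG(n-1)^\circ,\GN)_{\theta_0}\otimes_{\Dgnnt}N_n\car N_{n-1}$, hence by \ref{lem-refine} also $D(\bbG(n-1)^\circ,\GN)_{\theta_0}\hat{\otimes}_{\Dgnnt}N_n\car N_{n-1}$. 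This exhibits $(N_n)_n$ as a $(\Dgnnt)_n$-sequence, so $M:=H^0(\frX_\infty,\sM)=\varprojlim_n N_n$ is a coadmissible $\DGNt$-module and $N_n$ is canonically the module $M_n=(V_{\Gnc-\rm an})'$ for $V=M'_b$; applying $\Loc^\dagger_n$ and the equivariant equivalence once more gives $\Loc^\dagger_n(M_n)\simeq\sM_n$, compatibly with the transition maps, i.e. $\Loc^\dagger(H^0(\frX_\infty,\sM))\simeq\sM$. Naturality of both compositions is clear from the constructions.

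The step I expect to be the main obstacle is the transition compatibility in (i): identifying $({\rm pr}_n)_*\Loc^\dagger_n(M_n)$ as a localization on $\frX_{n-1}$, and then matching the geometric operation $\tsD^\dagger_{n-1,\Q}\otimes_{({\rm pr}_n)_*\tsD^\dagger_{n,\Q},G_n}({\rm pr}_n)_*(\cdot)$ with the representation‑theoretic transition $D(\bbG(n-1)^\circ,\GN)_{\theta_0}\hat{\otimes}_{\Dgnnt}(\cdot)$. This is exactly where the decompletion of \ref{lem-refine}/\ref{lem-decomplete} and the precise definition of $\cR_{n-1}$ are needed; the remainder is bookkeeping with crossed products and Emerton's weak Fr\'echet--Stein formalism.
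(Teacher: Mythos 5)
Your overall architecture — reduce to the non-equivariant equivalence of Theorem~\ref{thm-equivalence} at each level, identify $({\rm pr}_n)_*\tsD^\dagger_{n,\Q}$ with $\tsD^\dagger_{n-1,n,\Q}$ via an $R^1({\rm pr}_n)_*$-vanishing, and match the quotient by $\cR_{n-1}$ with the change of rings $\Dgnnt\to D(\bbG(n-1)^\circ,\GN)_{\theta_0}$ using \ref{equ-finitefree} and \ref{lem-refine} — is the same as the paper's, and you correctly flag the two technical inputs (the paper's Claims~1 and~2). But the foundation you build everything on is false: there is no ``equivariant refinement of Theorem~\ref{thm-equivalence}'' identifying ${\rm Coh}(\tsD^\dagger_{n,\Q},\GN)$ with finitely presented $\Dgnnt$-modules at a fixed level $n$. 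A $\GN$-equivariant structure on a coherent $\tsD^\dagger_{n,\Q}$-module is \emph{extra data}; the relation \ref{non_commuting} only says that $G_{n+1}$ acts on the sheaf of rings through conjugation by delta distributions, not that the $G_{n+1}$-part of the equivariant structure on a module is realized by those delta distributions. Concretely, twisting $\cO_{\frX_n,\Q}$ by a smooth character of $\GN$ that is nontrivial on $G_{n+1}$ (e.g.\ $\psi\circ\det$ with $\psi$ of large conductor) gives an object of ${\rm Coh}(\tsD^\dagger_{n,\Q},\GN)$ whose $G_{n+1}$-action is not $\bbG(n)^\circ$-analytic, hence does not come from any finitely presented $\Dgnnt$-module. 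This is exactly why the paper never asserts a finite-level equivariant equivalence: the compatibility between the abstract $G_n$-action and the distribution-algebra action is imposed only \emph{across} levels, by the quotient by $\cR_{n-1}$ in \ref{equ-transit_mod}.

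The gap is load-bearing. In (i) you produce the transition isomorphisms by saying ``both sides are coherent $\GN$-equivariant modules with the same global sections, so the equivariant equivalence produces the isomorphism''; without the finite-level equivalence you only get an isomorphism of $\tsD^\dagger_{n-1,\Q}$-modules from Theorem~\ref{thm-equivalence}, and you must still check it is $\GN$-equivariant and canonical. In (ii) you use the same equivalence to promote $N_n=H^0(\frX_n,\sM_n)$ to a $\Dgnnt$-module and to recover $\sM_n$ from $N_n$ equivariantly. The paper circumvents both issues by working with an explicit functorial presentation: the sequence $D(n-1)^{\oplus M\times H_n}\to D(n-1)\otimes_{D(n)}M\to D(n-1,G_0)\otimes_{D(n,G_0)}M\to 0$ is shown to be exact for finitely presented $D(n,G_0)$-modules (Claim~1, by reduction to free modules), is localized using the exactness of $\Loc^\dagger_{n-1}$ together with the right-exactness of $({\rm pr}_n)_*$ on coherent modules (Claim~2), and its cokernel is \emph{by definition} the quotient by $\cR_{n-1}$; the $\GN$-equivariance and naturality are then visible from the construction. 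You should replace your appeal to a finite-level equivariant equivalence by this (or an equivalent) explicit argument.
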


\vskip8pt

\Pf We let $H_n:=G_{n}/G_{n+1}$ and we denote a system of representatives in $G_{n}$ for the cosets in $H_n$ by the same symbol. For simplicity, we abbreviate in this proof

$$D(n):=\Dgnt \hskip10pt {\rm and}\hskip10pt D(n,G_0):=\Dgnnt \;.$$

\vskip8pt

We have the natural inclusion $D(n)\hookrightarrow D(n,G_0)$ from \ref{equ-finitefree} which is compatible with variation in $n$. Now suppose $M$ is a $D(n,G_0)$-module. We then have the free $D(n-1)$-module

$$D(n-1)^{\oplus M\times H_n}$$

\vskip8pt

on a basis $e_{m,h}$ indexed by the elements $(m,h)$ of the set $M\times H_n$. Its formation is functorial in $M$: if $M'$ is another module and $f: M\rightarrow M'$ a linear map, then $e_{m,h}\rightarrow e_{f(m),h}$ induces a linear map between the corresponding free modules. The free module comes with a linear map

$$f_M: D(n-1)^{\oplus M\times H_n}\rightarrow D(n-1)\otimes_{D(n)} M$$

\vskip8pt

given by

$$\oplus_{(m,h)}\lambda_{m,h}e_{m,h}\mapsto (\lambda_{m,h} \delta_h) \otimes m - \lambda_{m,h} \otimes (h \cdot m)$$

\vskip8pt

for $\lambda_{m,h}\in D(n-1)$ where we consider $M$ a $D(n)$-module via the inclusion $D(n)\hookrightarrow D(n,G_0)$.
The map is visibly functorial in $M$ and gives rise to the sequence of linear maps

$$ D(n-1)^{\oplus M\times H_n}\stackrel{f_M}{\longrightarrow} D(n-1)\otimes_{D(n)} M \stackrel{can_M}{\longrightarrow}
D(n-1,G_0)\otimes_{D(n,G_0)} M\longrightarrow 0$$

\vskip8pt

where the second map is induced from the inclusion $D(n)\hookrightarrow D(n,G_0)$.
The sequence is functorial in $M$, since so are both occuring maps.

\vskip8pt

{\it Claim 1: If $M$ is a finitely presented $D(n,G_0)$-module, then the above sequence is exact.}

\vskip8pt

\Pf We show $\ker(can_M)\subseteq {\rm im}(f_M)$, the rest is clear. Choose a finite presentation of the $D(n,G_0)$-module $M$. Since the sequence is functorial in $M$, we obtain a commutative diagram with exact columns. The $3\times 3$-lemma reduces us to the case of a finite free module $M=D(n,G_0)^{\oplus r}$. Since it suffices to see that $\ker(can_M)$ lies in the submodule generated by the images of the elements $e_{m_i,h}$, for generators $m_1, \ldots ,m_r$ of $M$ and $h\in H_n$, we are further reduced to the case $r=1$. In this case the statement follows directly from \ref{equ-finitefree}. \qed

\vskip8pt

{\it Claim 2: Suppose $M$ is a finitely presented $D(n)$-module and let $\sM:=  \Loc^\dagger_n(M)$. The natural morphism

$$\Loc^\dagger_{n-1}(D(n-1)\otimes_{D(n)} M)\car \tsD^\dagger_{n-1,\Q} \otimes_{({\rm pr}_n)_*\tsD^\dagger_{n,\Q}} ({\rm pr}_n)_*\sM $$

\vskip8pt

is bijective.}

\vskip8pt

\Pf We have the equality of sheaves on $\bbX_{n-1}$

\begin{numequation}\label{eq_R1}
R^1({\rm pr}_n)_*\tsD^{(m)}_{n,n}=R^1({\rm pr}_n)_*\tcD^{(m)}_{n,n}=0
\end{numequation}

for any $m$. Indeed, it suffices to verify this over affine open neighborhoods $U$ of points $P \in \bbX_{n-1}^{\rm sm}(\bbF_q)$. A \v{C}ech cohomology argument as in the proof of Lem.\ref{direct_im_str_sh} shows that

$$H^1({\rm pr}_n^{-1}(U), \widetilde{\cT}^{\otimes d}_{n,n})=0$$

\vskip8pt

for any $d\geq 0$. The cohomology sequence
associated to the exact sequence of sheaves over ${\rm pr}_n^{-1}(U)$

$$ 0\rightarrow \tcD^{(m)}_{n,n;d-1}\rightarrow \tcD^{(m)}_{n,n;d}\rightarrow \frac{q_d^{(m)}!}{d!}\widetilde{\cT}^{\otimes d}_{n,n}\rightarrow 0 \;,$$

\vskip8pt

cf. Prop. \ref{graded_tcD}, shows by induction on $d$ that

$$H^1({\rm pr}_n^{-1}(U), \widetilde{\cD}^{(m)}_{n,n;d})=0$$

\vskip8pt

for any $d\geq 0$. Passing to the inductive limit over $d$ gives

\begin{numequation}\label{ML} H^1({\rm pr}_n^{-1}(U), \widetilde{\cD}^{(m)}_{n,n})=0 \;.\end{numequation}

In particular, $R^1({\rm pr}_n)_*\tcD^{(m)}_{n,n}=0$. Let $V={\rm pr}_n^{-1}(U)$ and let $V_j$ be the reduction of $U$ modulo $p^{j+1}$. We may now argue as in the proof of (i) and (ii) of Prop. \ref{completion}: using \ref{ML} together with a commutative diagram similar to \ref{coh_ex_seq} shows that the projective limit of the system

$$(H^1(V_j, \widetilde{\cD}^{(m)}_{n,n}/p^{j+1} \widetilde{\cD}^{(m)}_{n,n}))_{j\geq 0}$$

\vskip8pt

is equal to $H^1(V, \widetilde{\cD}^{(m)}_{n,n})$ and that the system

$$(H^0(V_j, \widetilde{\cD}^{(m)}_{n,n}/p^{j+1} \widetilde{\cD}^{(m)}_{n,n}))_{j\geq 0}$$

\vskip8pt

satisfies the Mittag-Leffler condition. The latter condition allows us to apply \cite[ch. 0, 13.3.1]{EGA_III} where we use a base for the topology of $V$ consisting of open affines. This yields a bijection between the projective limit of the $H^1(V_j, \widetilde{\cD}^{(m)}_{n,n}/p^{j+1} \widetilde{\cD}^{(m)}_{n,n})$ and $H^1(V, \widetilde{\sD}^{(m)}_{n,n})$. This completes the proof of \ref{eq_R1}. We conclude by inverting $p$ and
passing to the inductive limit over $m$ that
$R^1({\rm pr}_n)_*\tsD^\dagger_{n,\Q}=0$. Moreover, since $\frX_n$ is a noetherian space of dimension one, we have $R^2({\rm pr}_n)_*\sE=0$ for any abelian sheaf $\sE$ on $\frX_{n}$ by Grothendieck's vanishing theorem. According to \ref{thm-equivalence}, we therefore have  $R^1({\rm pr}_n)_*\sE=0$ for any coherent $\tsD^\dagger_{n,\Q}$-module $\sE$, in other words, the functor $({\rm pr}_n)_*$ is right-exact on such modules. Choosing a finite presentation of $M$ reduces us now to the case $M=D(n)$ which is obvious. \qed

\vskip8pt

Now let $M$ be a finitely presented $D(n,G_0)$-module. Let $m_1, \ldots ,m_r$ be generators for $M$ as a $D(n)$-module. We have a sequence of $D(n-1)$-modules

$$\bigoplus_{i,h} D(n-1)e_{m_i,h}\stackrel{f'_M}{\longrightarrow} D(n-1)\otimes_{D(n)} M \stackrel{can_M}{\longrightarrow}
D(n-1,G_0)\otimes_{D(n,G_0)} M\longrightarrow 0$$

\vskip8pt

where $f'_M$ denotes the restriction of the map $f_M$ to the free submodule of $D(n-1)^{\oplus M\times H_n}$
generated by the finitely many vectors $e_{m_i,h}, i=1,\ldots,r$, $h \in H_n$. Since ${\rm im}(f'_M)={\rm im}(f_M)$ the sequence is exact by the first claim. Since it consists of finitely presented $D(n-1)$-modules, we may apply the exact functor $\Loc^\dagger_{n-1}$ to it. By the second claim, we get an exact sequence

$$ (\tsD^{\dagger}_{n-1,\Q})^{\oplus r|H_n|} \ra \tsD^\dagger_{n-1,\Q} \otimes_{({\rm pr}_n)_*\tsD^\dagger_{n,\Q}} ({\rm pr}_n)_*\sM \ra
\Loc^\dagger_{n-1}(D(n-1,G_0)\otimes_{D(n,G_0)} M) \ra 0$$

\vskip8pt

where $\sM = \Loc^\dagger_{n}(M)$. The cokernel of the first map in this sequence equals by definition

$$\tsD^\dagger_{n-1,\Q} \otimes_{({\rm pr}_n)_* \tsD^\dagger_{n,\Q},G_n} ({\rm pr}_n)_*\sM \;,$$

\vskip8pt

whence an isomorphism

$$ \tsD^\dagger_{n-1,\Q} \otimes_{({\rm pr}_n)_* \tsD^\dagger_{n,\Q},G_n} ({\rm pr}_n)_*\sM \car \Loc^\dagger_{n-1}(D(n-1,G_0)\otimes_{D(n,G_0)} M) \;.$$

\vskip8pt

This implies both parts of the proposition. \qed

\vskip8pt

\subsection{The functor $\Loc_\infty^\dagger$ and $\GL_2(L)$-equivariance} Let $G:=\GL_2(L)$. Denote the canonical projection map $\frX_\infty\rightarrow\frX_n$ by ${\rm sp}_n$ for each $n$. We define the following sheaf of rings on $\frX_\infty$. Assume $V\subseteq\frX_\infty$ is an open subset of the form ${\rm sp}_N^{-1}(U)$ with an open subset $U\subseteq\frX_N$ for some $N$. We have that

$${\rm sp}_n(V)={\rm pr}_{n,N}^{-1}(U)$$

\vskip8pt

for any $n\geq N$ and so, in particular, ${\rm sp}_n(V)\subseteq\frX_n$ is an open subset for these $n$.
Moreover,

$${\rm pr}_{n,n-1}^{-1}({\rm sp}_{n-1}(V))={\rm sp}_n(V)$$

\vskip8pt

for all $n>N$. The morphism \ref{equ-transit_sheaf} induces the ring homomorphism

\begin{numequation}\label{equ-transit_hom} \tsD^{\dagger}_{n,\Q}({\rm sp}_n(V))=({\rm pr}_{n,n-1})_*\tsD^{\dagger}_{n,\Q}(({\rm sp}_{n-1}(V)))\rightarrow
\tsD^{\dagger}_{n-1,\Q}({\rm sp}_{n-1}(V))
\end{numequation}

for all $n>N$ and we form the projective limit

$$\tsD^\dagger_{\infty,\Q}(V):=\varprojlim_{n>N} \tsD^{\dagger}_{n,\Q}({\rm sp}_n(V))$$

\vskip8pt

over all these maps. The open subsets of the form $V$ form a
basis for the topology on $\frX_\infty$ and $\tsD^\dagger_{\infty,\Q}$ is a presheaf on this basis. We denote the associated sheaf
on $\frX_\infty$ by the symbol $\tsD^\dagger_{\infty,\Q}$ as well. It is a $G_0$-equivariant sheaf of rings on $\frX_\infty$.

\vskip8pt

Suppose $\sM:=(\sM_n)_n$ is a $G_0$-equivariant coadmissible module on $\frX_\infty$ as defined in the previous subsection.
The isomorphisms \ref{equ-transit_mod} induce $G_0$-equivariant maps $({\rm pr}_n)_*\sM_n\rightarrow\sM_{n-1}$ which are linear relative to the morphism \ref{equ-transit_sheaf}. In a completely analogous manner as above, we obtain a sheaf $\sM_\infty$ on $\frX_\infty$.
It is a $G_0$-equivariant (left) $\tsD^\dagger_{\infty,\Q}$-module on $\frX_\infty$ whose formation is functorial in $\sM$.

\vskip8pt

\begin{prop}
The functor $\sM\rightarrow\sM_\infty$ from $G_0$-equivariant coadmissible modules on $\frX_\infty$ to $G_0$-equivariant $\tsD^\dagger_{\infty,\Q}$-modules is a
fully faithful embedding.
\end{prop}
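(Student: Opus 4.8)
The plan is to produce a quasi-inverse to $\sM\mapsto\sM_\infty$ on its essential image: I want to reconstruct the whole coadmissibility datum $(\sM_n)_n$ out of the single sheaf $\sM_\infty$ on $\frX_\infty$ together with its $\tsD^\dagger_{\infty,\Q}$-module structure and its $G_0$-action. Concretely, the projective limits defining $\tsD^\dagger_{\infty,\Q}$ and $\sM_\infty$ come with canonical maps down to their level-$n$ terms, giving a homomorphism of sheaves of rings $({\rm sp}_n)_*\tsD^\dagger_{\infty,\Q}\to\tsD^\dagger_{n,\Q}$ on $\frX_n$ and a map $({\rm sp}_n)_*\sM_\infty\to\sM_n$ of modules over it; base change then yields a $G_0$-equivariant morphism of $\tsD^\dagger_{n,\Q}$-modules
$$\kappa_n\colon\ \tsD^\dagger_{n,\Q}\otimes_{({\rm sp}_n)_*\tsD^\dagger_{\infty,\Q}}({\rm sp}_n)_*\sM_\infty\ \lra\ \sM_n$$
compatible with the transition isomorphisms \ref{equ-transit_mod}. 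The heart of the matter is the claim that each $\kappa_n$ is an isomorphism, and everything else is formal once this is granted.

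First, faithfulness. A morphism $(\phi_n)\colon\sM\to\sN$ restricts on the basis of opens $V={\rm sp}_N^{-1}(U)$ to $\varprojlim_{n>N}\phi_n$ — on such $V$ the presheaf defining $\sM_\infty$ is literally $\varprojlim_n({\rm pr}_{n,N})_*\sM_n$, already a sheaf there since the sheaf axioms are limits and commute with $\varprojlim_n$. Hence $(\phi_n)_\infty=0$ forces, taking $V=\frX_\infty$, the induced map $M:=\varprojlim_nH^0(\frX_n,\sM_n)\to N:=\varprojlim_nH^0(\frX_n,\sN_n)$ to vanish; since the structure maps $M\to M_n=H^0(\frX_n,\sM_n)$ of the weak Fréchet–Stein structure have dense image (\ref{equ-weakfamily}) and each $H^0(\frX_n,\phi_n)$ is continuous for the canonical topology of these finitely presented $\Dgnnt$-modules (Lem.~\ref{lem-refine}, Prop.~\ref{prop-cantop}(iii)), we get $H^0(\frX_n,\phi_n)=0$ for all $n$, hence $\phi_n=0$ by Thm.~\ref{thm-equivalence}. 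Next, fullness. Write $G\colon\sN\mapsto\big(\tsD^\dagger_{n,\Q}\otimes_{({\rm sp}_n)_*\tsD^\dagger_{\infty,\Q}}({\rm sp}_n)_*\sN\big)_n$; the isomorphisms $\kappa_n$ say precisely that $G(\sM_\infty)\cong\sM$ naturally, so on the set $\Hom_{\tsD^\dagger_{\infty,\Q},G_0}(\sM_\infty,\sN_\infty)$ the composite $(-)_\infty\circ G$ is the identity — because on a basis open $V$ the $\kappa_n$ identify $\sM_\infty(V)$ with $\varprojlim_n\sM_n({\rm sp}_nV)$ compatibly with any given morphism. Having a right inverse, $(-)_\infty$ is surjective on this Hom-set, and together with faithfulness this gives bijectivity, i.e. full faithfulness.

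It remains to prove $\kappa_n$ is an isomorphism, which I would check over the basis opens $V={\rm sp}_N^{-1}(U)$. Using Thm.~\ref{thm-equivalence} to pass between sheaves and modules over sections of the $\tsD^\dagger_{m,\Q}$, and Lem.~\ref{lem-refine} to know these are finitely presented over the crossed products $\Dgnnt$, one reduces to the statement that for a coadmissible $D(G_0)_{\theta_0}$-module $M$ with family $(M_m)_m$ the natural map
$$\cD^{\rm an}(\bbG(n)^\circ)_{\theta_0}\otimes_{\varprojlim_{m\ge n}\cD^{\rm an}(\bbG(m)^\circ)_{\theta_0}}\big(\varprojlim_{m\ge n}M_m\big)\ \lra\ M_n$$
is bijective, plus its sheafified form over the affine opens of $U$ — the latter additionally invoking $R^1({\rm pr}_{m,n})_*=0$ on coherent $\tsD^\dagger$-modules (established in the proof of Prop.~\ref{prop-equivalenceII}) and $\dim\frX_m=1$, to descend the statement from global sections. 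Surjectivity is the crucial input: $\mathrm{im}(\varprojlim_mM_m\to M_n)$ is dense, while any $\cD^{\rm an}(\bbG(n)^\circ)_{\theta_0}$-submodule of the finitely generated module $M_n$ over the noetherian Banach algebra $\cD^{\rm an}(\bbG(n)^\circ)_{\theta_0}$ is closed (Prop.~\ref{prop-cantop}(ii)); hence $\cD^{\rm an}(\bbG(n)^\circ)_{\theta_0}\cdot\mathrm{im}(M)$ is both dense and closed, so equals $M_n$ (here $G_0$-equivariance is used to absorb the $\delta_g$-components of the crossed product). Injectivity I would obtain as in Claim~1 of the proof of Prop.~\ref{prop-equivalenceII}: choose a finite presentation of $M$ as $\Dgnnt$-module, use functoriality and the $3\times3$-lemma to reduce to a finite free module, where the assertion is immediate. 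I expect this reconstruction step to be the genuine obstacle — specifically, controlling the passage between the non-noetherian Fréchet limit $\varprojlim_m\cD^{\rm an}(\bbG(m)^\circ)_{\theta_0}$ and the noetherian level-$n$ algebra, and checking that base change along it commutes with the projective limits over $m$; this is where one must lean on the finite presentation of each $M_m$ and on the right-exactness of $({\rm pr}_{m,n})_*$ on coherent $\tsD^\dagger$-modules.
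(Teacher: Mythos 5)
Your route is genuinely different from, and much longer than, the paper's. The paper's entire proof is the computation $\Gamma(\frX_\infty,\sM_\infty)=\varprojlim_n\Gamma(\frX_n,\sM_n)=H^0(\frX_\infty,\sM)$ (using ${\rm sp}_n(\frX_\infty)=\frX_n$), after which Prop.~\ref{prop-equivalenceII} says that $\Loc^\dagger\circ\Gamma(\frX_\infty,-)$ is a left quasi-inverse of $(\cdot)_\infty$; all the hard work (recovering the whole family $(\sM_n)_n$ from $M=\varprojlim_n H^0(\frX_n,\sM_n)$) has already been done there. You instead reconstruct each level separately via the base-change maps $\kappa_n$, which forces you to prove an extra statement that the paper avoids. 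Note that the module-level bijection you reduce to is, in its crossed-product form, exactly \ref{equ-weakfamily} combined with Lemma~\ref{lem-refine}(ii): $M_n=\Dgnnt\otimes_{\DGNt}M$. So the cleanest version of your argument would simply quote that identity rather than re-derive it. What your approach buys is a more explicit treatment of fullness: strictly speaking, a left quasi-inverse only yields faithfulness, and the paper leaves implicit the observation that a morphism $\sM_\infty\to\sN_\infty$ of $\tsD^\dagger_{\infty,\Q}$-modules is determined by its level-$n$ shadows; your $\kappa_n$ make that step visible.

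One step as written does not go through. For surjectivity of the module-level map you argue that the $\cD^{\rm an}(\bbG(n)^\circ)_{\theta_0}$-submodule of $M_n$ generated by the (dense) image of $M$ is closed by Prop.~\ref{prop-cantop}(ii), hence equals $M_n$. But Prop.~\ref{prop-cantop}(ii) only asserts closedness of \emph{finitely generated} submodules, and the submodule generated by a dense subset of $M_n$ is not a priori finitely generated over the (non-noetherian, only coherent) algebra $\cD^{\rm an}(\bbG(n)^\circ)_{\theta_0}$; this is precisely the point where naive Fr\'echet--Stein arguments break for weak Fr\'echet--Stein structures. The fix is to not argue by density at all and instead invoke \ref{equ-weakfamily} together with Lemma~\ref{lem-refine}, which give the bijectivity of $\Dgnnt\otimes_{\DGNt}M\to M_n$ directly; with that substitution, and with the sheaf-level descent you outline via the right-exactness of $({\rm pr}_{m,n})_*$ from the proof of Prop.~\ref{prop-equivalenceII}, your argument closes up.
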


\Pf We have ${\rm sp}_n(\frX_\infty)=\frX_n$ for all $n$.
The global sections of $M_\infty$ are therefore equal to

$$\Gamma(\frX_\infty,\sM_\infty)=\varprojlim_n\Gamma(\frX_n,\sM_n)=H^0(\frX,\sM)$$

\vskip8pt

in the notation of the previous section. Thus, the functor $\Loc^\dagger \circ \Gamma(\frX_\infty,-)$ is a left quasi-inverse according to Prop. \ref{prop-equivalenceII}.\qed

\vskip8pt

We tentatively call the abelian subcategory of all $G_0$-equivariant (left) $\tsD^\dagger_{\infty,\Q}$-modules equal to the essential image of the functor $(\cdot)_\infty$ the {\it coadmissible} $G_0$-equivariant $\tsD^\dagger_{\infty,\Q}$-modules. We denote by $\Loc^\dagger_\infty$ the composite of the functor $\Loc^\dagger$ with $(\cdot)_\infty$.

%Spelling out all the definitions, this means for a coadmissible $D(G_0)_{\theta_0}$-module $M$ with dual space $V=M'$
%$$ {\mathscr Loc}^\dagger(M)=\varprojlim_{n}  {\rm sp}_n^{-1} {\mathscr Loc}^\dagger_n((V_{\bbG(n)^\circ-{\rm an}})').$$

\vskip8pt

\begin{para}
The $G_0$-action on the space $\frX_\infty$ admits a natural extension to an action of the full group $G$. Indeed, let $\cT$ be the Bruhat-Tits tree for the group ${\rm PGL}_2(L)$ with its natural $G$-action. Its vertices correspond to the homothety classes of lattices $\Lambda \sub L^{\oplus 2}$. Given a vertex $v\in\cT$ we denote by $|\cT(v)|_{\le n} \sub \cT$ the set of points whose distance to $v$ is less or equal to $n$. There is a unique maximal compact subgroup $G_v\subseteq G$ fixing the vertex $v$. We denote by $v_0$ the vertex corresponding to the class of the standard lattice $\Lambda_0 = \fro \oplus \fro$. In this case $G_{v_0}=G_0$. As mentioned in \ref{subsect-con_blow_up}, all constructions in sections \ref{models}-\ref{loc_n} are associated to the standard lattice $\Lambda_0$ but can be done canonically for (the homothety class of) an arbitrary lattice $\Lambda \sub L^{\oplus 2}$. We distinguish the various constructions from each other by adding the correspon
 ding vertex $v=[\Lambda]$ to them, i.e. we write $\bbX_n(v), \frX_n(v), \tsD^{(m)}_{n,k}(v)$ and so on.

\vskip8pt

An element $g\in G$ induces a morphism

$$\bbX_0(v)={\rm Proj}(\Lambda)\stackrel{g.}{\longrightarrow}{\rm Proj}(g\Lambda)=\bbX_0(gv)$$

\vskip8pt

which satisfies $(gh).=(g.)\circ(h.)$ and $1.={\rm id}$ for $g,h\in G$. By the universal property of blow-up, this induces a morphism $\bbX_n(v)\stackrel{g.}{\longrightarrow}\bbX_n(gv)$ compatible with variation in $n$. These morphisms pass to the formal completions $\frX_n(v)$. Given an inclusion

$$|\cT(v)|_{\le n}\subseteq |\cT(v')|_{\le n'}$$

\vskip8pt

the space $\bbX_{n'}(v')$ equals the blow-up of $\bbX_n(v)$ in the $\bbF_q$-valued points of $\bbX_n(v)^{\rm sm}$ which correspond to the vertices in the complement of $|\cT(v)|_{\le n}$ in $|\cT(v')|_{\le n'}$. The action of $G$ is compatible with the blow-up morphism. The same holds for the formal completions. We obtain thus a $G$-action on the projective limit over all $\frX_n(v)$. Since a given set $|\cT(v)|_{\le n}$ is contained in $|\cT(v_0)|_{\le n'}$ for large enough $n'$, this projective limit equals $\frX_\infty$ by cofinality.
We thus have indeed a natural $G$-action on $\frX_\infty$.

\vskip8pt

We note that, by definition of the action, there is an equality

\begin{numequation}\label{equivariant}{\rm sp}_{n,gv}(g.V)=g.{\rm sp}_{n,v}(V)
\end{numequation}

in $\frX_n(g.v)$ for any subset $V\subseteq\frX_\infty$ and the specialization map ${\rm sp}_{n,v}: \frX_\infty\rightarrow\frX_n(v)$.
\end{para}

\vskip8pt

\begin{prop}
The $G_0$-equivariant structure on the sheaf $\tsD^\dagger_{\infty,\Q}$ extends to a $G$-equivariant structure.
\end{prop}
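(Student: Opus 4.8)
\Pf The plan is to observe that every construction feeding into $\tsD^\dagger_{\infty,\Q}$ is canonical in the homothety class of the underlying lattice, so that $G$ acts by transporting these constructions along the isomorphisms $\frX_n(v)\stackrel{g.}{\longrightarrow}\frX_n(gv)$; the transported data will glue into a $G$-equivariant structure on $\tsD^\dagger_{\infty,\Q}$, and by construction it will restrict to the given $G_0$-equivariant structure on the stabilizer $G_{v_0}=G_0$ of $v_0$.

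\emph{Step 1: functoriality in the vertex.} For a vertex $v=[\Lambda]$ one has the group scheme $\bbG(k)(v)$, the integral model $\bbX_n(v)$, and the sheaves $\tcD^{(m)}_{n,k}(v),\tsD^{(m)}_{n,k}(v),\tsD^\dagger_{n,k,\Q}(v)$ of sections \ref{new_sheaves}--\ref{loc_n}, all built from $\Lambda$. An element $g\in G$ sends $\bbX_0(v)={\rm Proj}(\Lambda)$ isomorphically onto $\bbX_0(gv)={\rm Proj}(g\Lambda)$ and conjugates $\bbG(k)(v)$ onto $\bbG(k)(gv)$; since $\tcT_{n,k}$ is by definition generated by the image of $\Lie\bbG(k)$ in the relative logarithmic tangent sheaf, and since blow-up is functorial, $g.$ lifts to compatible isomorphisms $\bbX_n(v)\to\bbX_n(gv)$, $\frX_n(v)\to\frX_n(gv)$ and, for every open $W$, to ring isomorphisms
$$g.\colon\ \tsD^\dagger_{n,\Q}(v)(W)\ \stackrel{\sim}{\longrightarrow}\ \tsD^\dagger_{n,\Q}(gv)(g.W)$$
which are compatible with all blow-up morphisms ${\rm pr}_{n,n'}$ and with the transition maps \ref{equ-transit_sheaf}, and which satisfy $(gh).=(g.)\circ(h.)$ and $1.={\rm id}$ because these identities hold on the $\bbX_n(v)$.

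\emph{Step 2: vertex-independence of $\tsD^\dagger_{\infty,\Q}$.} Recall that $\frX_\infty=\varprojlim_{(v,n)}\frX_n(v)$, the limit running over the directed set of pairs ordered by $|\cT(v)|_{\le n}\subseteq|\cT(v')|_{\le n'}$, and that the subsystem $(v_0,n)_n$ is cofinal. For such an inclusion the associated blow-up morphism $\frX_{n'}(v')\to\frX_n(v)$ induces, via Prop.~\ref{graded_tcD}(ii) and the ensuing inclusions of sheaves of level-$m$ operators, a ring homomorphism generalizing \ref{equ-transit_sheaf}; these homomorphisms are associative, so for every basic open $V={\rm sp}_{N,v}^{-1}(U)$ the rings $\tsD^\dagger_{n,\Q}(v)({\rm sp}_{n,v}(V))$ form a projective system over the full directed set, and
$$\tsD^\dagger_{\infty,\Q}(V)\ =\ \varprojlim_{n\ge N}\tsD^\dagger_{n,\Q}(v)\big({\rm sp}_{n,v}(V)\big).$$
By cofinality this limit is canonically independent of the presenting vertex $v$ and agrees with the sheaf built from the $v_0$-tower in the previous subsection; in particular it is a $G_v$-equivariant sheaf of rings for every $v$, for the same reason that it is $G_{v_0}=G_0$-equivariant.

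\emph{Step 3: assembling the $G$-action.} For $g\in G$ and a basic open $V\subseteq\frX_\infty$, equation \ref{equivariant} gives $g.V={\rm sp}_{N,gv}^{-1}(g.U)$, and I would define
$$g.\colon\ \tsD^\dagger_{\infty,\Q}(V)=\varprojlim_{n\ge N}\tsD^\dagger_{n,\Q}(v)\big({\rm sp}_{n,v}(V)\big)\ \longrightarrow\ \varprojlim_{n\ge N}\tsD^\dagger_{n,\Q}(gv)\big({\rm sp}_{n,gv}(g.V)\big)=\tsD^\dagger_{\infty,\Q}(g.V)$$
as the inverse limit of the termwise isomorphisms from Step 1. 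By Step 2 this is independent of the presentation of $V$, it is a ring isomorphism, and $(gh).=(g.)\circ(h.)$, $1.={\rm id}$ are inherited from Step 1, so one obtains a $G$-equivariant structure; when $g\in G_0$ one has $gv_0=v_0$ and $g.$ is precisely the isomorphism defining the original $G_0$-equivariant structure, which is therefore recovered. The main obstacle is Step 2: one must verify that the transition homomorphisms between the $\tsD^\dagger_{n,\Q}(v)$ attached to iterated blow-ups are genuinely associative on the full tree-indexed system, so that the displayed limit description is meaningful, and then invoke cofinality of $(v_0,n)_n$ to identify it with the sheaf of the previous subsection; the parallel compatibilities on the level of $G_0$-equivariant coadmissible modules, which will be needed to upgrade the present statement to an equivalence of module categories, additionally require the vanishing $R^1({\rm pr}_n)_*\tsD^\dagger_{n,\Q}=0$ established in the proof of Prop.~\ref{prop-equivalenceII}. \qed
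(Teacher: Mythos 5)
Your proposal follows essentially the same route as the paper: transport the lattice-canonical constructions along the isomorphisms $\frX_n(v)\stackrel{g.}{\longrightarrow}\frX_n(gv)$, present $\tsD^\dagger_{\infty,\Q}(V)$ as a projective limit over the full tree-indexed system of pairs $(n,v)$, identify it with the $v_0$-tower by cofinality, and define $g.$ termwise. The one place where your justification does not quite carry the weight is the cross-vertex transition morphism $({\rm pr}_{(n',v'),(n,v)})_*\tsD^\dagger_{n',\Q}(v')\to\tsD^\dagger_{n,\Q}(v)$: Prop.~\ref{graded_tcD}(ii) only handles blow-ups within the tower attached to a single lattice, whereas here one needs the inclusion of scaled Lie lattices $\vpi^{n'}\frg_{v'}\subseteq\vpi^{n}\frg_{v}$ attached to the containment $|\cT(v)|_{\le n}\subseteq|\cT(v')|_{\le n'}$ — this is exactly the commutative diagram of Bruhat--Tits group schemes that the paper writes down to produce $({\rm pr}_{(n',v'),(n,v)})_*\tcT^{(m)}_{n',n'}(v')\to\tcT^{(m)}_{n,n}(v)$ before passing to differential operators. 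You correctly flag this as the crux, so supplying that diagram closes the argument.
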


\Pf Let $g\in G$. The isomorphism $\frX_n(v)\stackrel{g.}{\longrightarrow}\frX_n(gv)$ induces a ring isomorphism

\begin{numequation}\label{equ-ringiso}\tsD^\dagger_{n,\Q}(v)(U)\stackrel{g.}{\longrightarrow}\tsD^\dagger_{n,\Q}(gv)(g.U)\end{numequation}

for any open subset $U\subseteq\frX_n(v)$. In particular, for an open subset $V\subseteq\frX_\infty$ of the form $V={\rm sp}_{N,v}^{-1}(U)$ with $U\subseteq\frX_N(v)$ open, this gives a ring homomorphism for $n>N$

\begin{numequation}\label{equ-action} \tsD^\dagger_{n,\Q}(v)({\rm sp}_{n,v}(V))\stackrel{g.}{\longrightarrow}\tsD^\dagger_{n,\Q}(gv)(g.{\rm sp}_{n,v}(V))
=\tsD^\dagger_{n,\Q}(gv)({\rm sp}_{n,gv}(gV)) \;.
\end{numequation}

where we have used \ref{equivariant}. Given an inclusion
$|\cT(v)|_{\le n}\subseteq |\cT(v')|_{\le n'}$ the blow-up morphism

$${\rm pr}_{(n',v'),(n,v)}:\frX_{n'}(v')\longrightarrow \frX_n(v)$$

\vskip8pt

induces a morphism of sheaves of rings

\begin{numequation}\label{equ-transit_sheafII} ({\rm pr}_{(n',v'),(n,v)})_* \tsD^\dagger_{n',\Q}(v')\longrightarrow \tsD^\dagger_{n,\Q}(v)\end{numequation}

in a manner analogous to \ref{equ-transit_sheaf}. Indeed, let $\bbG_v$ denote the smooth affine Bruhat-Tits group scheme over $\fro$ associated with the vertex $v$ and let $\frg_v$ be its Lie-algebra. From the commutative diagram

$$\begin{array}{ccccc} \Lie(\bbG_{v'}(n')) & = & \vpi^{n'} \frg_{v'} & \lra & H^0(\bbX_{n'}(v'),\cT_{\bbX_{n'}(v')^{\log}})\\
& & \downarrow  && \downarrow \\
\Lie(\bbG_v(n)) & = & \vpi^n\frg_v & \lra & H^0(\bbX_{n}(v),\cT_{\bbX_{n}(v)^{\log}})
\end{array}$$

\vskip8pt

we deduce a morphism

\vskip8pt

$$({\rm pr}_{(n',v'),(n,v)})_*\tcT^{(m)}_{n',n'}(v') \ra \tcT^{(m)}_{n,n}(v)$$

\vskip8pt

for any $m$. Passing to differential operators yields the claimed morphism.

\vskip8pt

Given $V \sub \frX_\infty$ of the form $V={\rm sp}_{\tilde{n},\tilde{v}}^{-1}(U)$ with an open set $U\subseteq\frX_{\tilde{n}}(\tilde{v})$, the morphism \ref{equ-transit_sheafII} induces a ring homomorphism

\begin{numequation}\label{equ-transit_homII} \tsD^{\dagger}_{n',\Q}(v')({\rm sp}_{n',v'}(V))
=({\rm pr}_{(n',v'),(n,v)})_*\tsD^{\dagger}_{n',\Q}(v')({\rm sp}_{n,v}(V))\rightarrow  \tsD^{\dagger}_{n,\Q}(v)({\rm sp}_{n,v}(V))
\end{numequation}

whenever there are inclusions

$$|\cT(\tilde{v})|_{\le \tilde{n}}\subseteq |\cT(v)|_{\le n}\subseteq |\cT(v')|_{\le n'}$$

\vskip8pt

and we may form the projective limit

$$ \varprojlim_{(n,v)}\tsD^{\dagger}_{n,\Q}(v)({\rm sp}_{n,v}(V)) \;.$$

\vskip8pt

However, again by cofinality, this projective limit equals $\tsD^\dagger_{\infty,\Q}(V)$.
Since the homomorphism \ref{equ-action} is compatible with variation in $n$ we deduce for a given $g\in G$ a ring homomorphism

\vskip8pt

$$\tsD^\dagger_{\infty,\Q}(V)=\varprojlim_{(n,v)}\tsD^{\dagger}_{n,\Q}(v)({\rm sp}_{n,v}(V))\stackrel{g.}{\rightarrow}
\varprojlim_{(n,gv)}\tsD^{\dagger}_{n,\Q}(gv)({\rm sp}_{n,gv}(gV))=\tsD^\dagger_{\infty,\Q}(gV) \;.$$

\vskip8pt

It implies that the sheaf $\tsD^{\dagger}_{\infty,\Q}$ is $G$-equivariant. It is clear from the construction that the $G$-equivariant structure extends the $G_0$-structure.\qed

\vskip8pt

A coadmissible $G_0$-equivariant $\tsD^\dagger_{\infty,\Q}$-module whose equivariant structure extends to the full group $G$, will simply be called a coadmissible $G$-equivariant $\tsD^\dagger_{\infty,\Q}$-module.

\begin{thm}\label{thm-equivariance}
The functors $\Loc^\dagger_\infty$ and $\Gamma(\frX_\infty,\cdot)$ are quasi-inverse equivalences between the categories of coadmissible $D(G_0)_{\theta_0}$-modules and coadmissible $G_0$-equivariant $\tsD^\dagger_{\infty,\Q}$-modules. The subcategories
of coadmissible $D(G)_{\theta_0}$-modules and coadmissible $G$-equivariant $\tsD^\dagger_{\infty,\Q}$-modules correspond to each other.
\end{thm}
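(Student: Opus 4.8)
The plan is to bootstrap from the $G_0$-equivariant case, which is already available: combining Proposition \ref{prop-equivalenceII} (the equivalence between coadmissible $\DGNt$-modules and $G_0$-equivariant coadmissible modules on $\frX_\infty$) with the fully faithful embedding $(\cdot)_\infty$ into $G_0$-equivariant $\tsD^\dagger_{\infty,\Q}$-modules gives the first assertion almost immediately, once one notes that a coadmissible $D(G_0)_{\theta_0}$-module is the same as a coadmissible $\DGNt$-module (by definition of $\DGNt$ and the isomorphism $\DGNt \car \varprojlim_n \Dgnnt$). So the first sentence of the theorem is essentially a restatement, and I would dispatch it in one or two lines.

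The substance is the matching of subcategories. First I would recall that, by definition, a coadmissible $G_0$-equivariant $\tsD^\dagger_{\infty,\Q}$-module is called $G$-equivariant precisely when its $G_0$-equivariant structure extends to $G$; dually, a coadmissible $D(G)_{\theta_0}$-module is a coadmissible $D(G_0)_{\theta_0}$-module whose module structure extends along $D(G_0)_{\theta_0} \hookrightarrow D(G)_{\theta_0}$ (using that $D(G)_{\theta_0}$ is itself coadmissible over itself, and $D(G_0)_{\theta_0} \to D(G)_{\theta_0}$ realizes $D(G)_{\theta_0}$ as a $\varprojlim$ via the Fr\'echet--Stein structure). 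The key point is then compatibility of the equivalence with the $G$-action. Given a coadmissible $D(G_0)_{\theta_0}$-module $M$ whose structure extends to $D(G)_{\theta_0}$, I would check that for each $g \in G$ the map $\frX_\infty \xrightarrow{g.} \frX_\infty$ (via the identifications $\frX_\infty = \varprojlim_{(n,v)} \frX_n(v)$ used in the construction of the $G$-equivariant sheaf $\tsD^\dagger_{\infty,\Q}$) carries $\Loc^\dagger_\infty(M)$ to itself: concretely, $g_*\,\Loc^\dagger_\infty(M)$ is computed using the translated data $\tsD^\dagger_{n,\Q}(gv)$ and the vertex-shifted modules $M_n(gv) := (V_{\bbG_{gv}(n)^\circ-\mathrm{an}})'_b$, and the action of $g$ on $V = M'_b$ supplies canonical isomorphisms $M_n(v) \car M_n(gv)$ intertwining the $\Loc^\dagger$ construction. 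This uses the equality \ref{equivariant} relating $\mathrm{sp}_{n,gv}(g.V)$ and $g.\mathrm{sp}_{n,v}(V)$, together with the ring isomorphisms \ref{equ-ringiso} and the transition maps \ref{equ-transit_homII}, exactly the ingredients assembled in the construction of the $G$-equivariant structure on $\tsD^\dagger_{\infty,\Q}$. Conversely, given a coadmissible $G$-equivariant $\tsD^\dagger_{\infty,\Q}$-module $\sM_\infty$, the cocycle condition $(gh). = (g.)\circ(h.)$, $1. = \mathrm{id}$ on the transition isomorphisms means the induced action on $\Gamma(\frX_\infty,\sM_\infty) = H^0(\frX_\infty,\sM)$ is by a compatible family of operators, and one checks these assemble into an action of the (abstract) group $G$ by delta distributions; admissibility of the resulting locally analytic representation and coadmissibility of its dual over $D(G)_{\theta_0}$ then follow from coadmissibility over each $D(G_v(n)^\circ, G_v)_{\theta_0}$ and the weak Fr\'echet--Stein formalism of Emerton \cite[ch.~1,~5]{EmertonA}, just as in the $G_0$ case treated in Prop. \ref{prop-equivalenceII}.

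The remaining technical item is that the two functors, already known to be quasi-inverse at the level of $G_0$-equivariant objects, respect the $G$-structures on the nose (not merely up to isomorphism); this is a bookkeeping check that the natural isomorphisms $\Loc^\dagger_\infty \circ \Gamma \simeq \mathrm{id}$ and $\Gamma \circ \Loc^\dagger_\infty \simeq \mathrm{id}$ are $G$-equivariant, which reduces to functoriality in $M$ of all the constructions in Prop. \ref{prop-equivalenceII} — and this functoriality was explicitly built into the proof there (the maps $f_M$, $can_M$, and the exact sequences are all functorial in $M$).

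\textbf{Main obstacle.} I expect the hard part to be the verification that translation by an arbitrary $g \in G$ — which genuinely moves the base vertex $v_0$ and hence the whole tower $(\frX_n)_n$ to a different tower $(\frX_n(gv_0))_n$ — carries the sheaf-theoretic localization $\Loc^\dagger_\infty(M)$ to itself compatibly with the coadmissible-module transition isomorphisms \ref{equ-transit_mod}. This requires knowing that the vertex-dependent constructions $\tsD^\dagger_{n,\Q}(v)$ are canonically identified under $g.$ (the content of \ref{equ-ringiso}), that the $\bbG_v(n)^\circ$-analytic vectors for the various vertices fit together correctly under the $G$-action on $V$, and that the resulting diagram of localizations commutes — all of which is morally clear from the construction of the $G$-equivariant sheaf but needs to be spelled out carefully to avoid a circular appeal to the very equivalence being proved. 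Everything else is a routine combination of Prop. \ref{prop-equivalenceII}, the definition of the subcategories, and Emerton's coadmissibility criteria.
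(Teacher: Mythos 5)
Your proposal is correct and follows essentially the same route as the paper: the first assertion is dispatched via Prop.~\ref{prop-equivalenceII} and the embedding $(\cdot)_\infty$, the direction from $G$-equivariant sheaves to $D(G)_{\theta_0}$-modules is noted as immediate, and the substantive converse is handled exactly as in the paper by forming the vertex-indexed family $\sM_{n,v}=\Loc^\dagger_{n,\Q}(v)(M_{n,v})$, using the ring isomorphisms \ref{equ-ringiso}/\ref{equ-ringisoII}, the relation \ref{equivariant}, and the compatibility of the dual maps $M_{n',v'}\to M_{n,v}$ with the transition morphisms of sheaves. The "main obstacle" you single out is precisely the part the paper spells out in detail (the morphisms \ref{equ-mor} and their compatibility with \ref{equ-transit_sheafII}), so no gap remains.
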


\Pf We only need to show the second statement. It is clear that a coadmissible $D(G_0)_{\theta_0}$-module which comes from a
coadmissible $G$-equivariant $\tsD^\dagger_{\infty,\Q}$-module is a $D(G)_{\theta_0}$-module. For the converse, we consider a vertex $v\in\cT$ and the corresponding localisation functor $\Loc^\dagger_{n,\Q}(v)$ which is an equivalence between finitely presented $\cD^{an}(\bbG_v(n)^\circ)_{\theta_0}$-modules and coherent $\tsD^\dagger_{n,\Q}(v)$-modules on $\frX_n(v)$. Here, $\bbG_v$ denotes the smooth affine Bruhat-Tits group scheme over $\fro$ associated with the vertex $v$. The adjoint action of $G$ on its Lie algebra induces a ring isomorphism

\begin{numequation}\label{equ-ringisoII}
\cD^{an}(\bbG_{v}(n)^\circ)\stackrel{g.}{\rightarrow}\cD^{an}(\bbG_{gv}(n)^\circ)\end{numequation}

for any $g\in G$. Now consider a coadmissible $D(G)_{\theta_0}$-module $M$ with dual space $V=M'$. We have the family $(\sM_{n,v})_{(n,v)}$ where

$$\sM_{n,v} = \Loc^\dagger_{n,\Q}(v)(M_{n,v}) = \tsD^\dagger_{n,\Q}(v)\otimes_{\cD^{an}(\bbG_v(n)^\circ)_{\theta_0}} M_{n,v}$$

\vskip8pt

and $M_{n,v}=(V_{\bbG_v(n)^\circ-\rm an})'$.
Let $g\in G$. The map $m \mapsto gm$ on $M$ induces a map $M_{n,v}\rightarrow M_{n,gv}$ which is linear relative to \ref{equ-ringisoII}.  We therefore have for any open subset $U\subseteq\frX_n(v)$ a homomorphism

$$\sM_{n,v}(U)\stackrel{g.}{\longrightarrow}\sM_{n,gv}(g.U)$$

\vskip8pt

which is induced by the map

$$s \otimes m \mapsto (g.s) \otimes gm \;.$$

\vskip8pt

for $s\in \tsD^\dagger_{n,\Q}(v)(U), m\in M_{n,v}$ and where
$g.$ is the ring isomorphism \ref{equ-ringiso}. In particular, for an open subset $V\subseteq\frX_\infty$ of the form $V={\rm sp}_{N,v}^{-1}(U)$ with $U\subseteq\frX_N(v)$ open, this gives a homomorphism for $n>N$

\begin{numequation}\label{equ-actionII}
\sM_{n,v}({\rm sp}_{n,v}(V)) \stackrel{g.}{\longrightarrow} \sM_{n,gv}(g.{\rm sp}_{n,v}(V)) = \sM_{n,gv}({\rm sp}_{n,gv}(gV))
\end{numequation}

which is linear relative to the ring homomorphism \ref{equ-action}.

\vskip8pt

Now for every inclusion

$$|\cT(v)|_{\le n}\subseteq |\cT(v')|_{\le n'} \;,$$

\vskip8pt

we have a morphism

\begin{numequation}\label{equ-mor}({\rm pr}_{(n',v'),(n,v)})_*\sM_{n',v'}\longrightarrow \sM_{n,v}\end{numequation}

compatible with the morphism of rings \ref{equ-transit_sheafII} as follows. First of all, one has

$$({\rm pr}_{(n',v'),(n,v)})_*\left(\Loc^\dagger_{n',\Q}(v')(M_{n',v'})\right)=
\left(({\rm pr}_{(n',v'),(n,v)})_*(\tsD^\dagger_{n',\Q}(v')\right)\otimes_{\cD^{an}(\bbG_{v'}(n')^\circ)_{\theta_0}} M_{n',v'}$$

\vskip8pt

which can be proved along the lines of claim 2 in the proof of Prop. \ref{prop-equivalenceII}. Moreover, we have inclusions $\bbG_{v'}(n')\subseteq \bbG_{v}(n)$ and thus

$$V_{\bbG_{v}(n)^\circ-\rm an}\subseteq V_{\bbG_{v'}(n')^\circ-\rm an} \;.$$

\vskip8pt

The dual map $M_{n',v'}\rightarrow M_{n,v}$ is linear relative to the natural inclusion

$$\cD^{an}(\bbG_{v'}(n')^\circ)\rightarrow \cD^{an}(\bbG_{v}(n)^\circ) \;.$$

\vskip8pt

The latter inclusion is compatible with the morphism of rings \ref{equ-transit_sheafII} via taking global sections. Hence, we have a morphism \ref{equ-mor} as claimed. We now have everything at hand to follow the arguments in the proof of the preceding proposition word for word and to conclude
that the projective limit $\sM_\infty$ has a $G$-action which extends its $G_0$-action and which makes it a $G$-equivariant $\tsD^\dagger_{\infty,\Q}$-module. This completes the proof of the theorem. \qed

\vskip12pt

Remark: Let $L\subset K$ be a complete and discretely valued extension field. If we consider the $K$-algebras $\cD^{an}(\bbG(k)^\circ)\hat{\otimes}_L K, D(G_0)\hat{\otimes}_L K$ etc.
as well as the sheaves of $K$-algebras $\tsD^{(m)}_{n,k,\bbQ}\hat{\otimes}_L K, \tsD^{\dagger}_{n,k,\bbQ}\hat{\otimes}_L K$, then there are obvious
versions 'over $K$' of the theorems \ref{thm-equivalence} and \ref{thm-equivariance}.

\section{Examples of localizations}\label{examples}

\subsection{Smooth representations}\label{smooth_reps}
\setcounter{para}{0}

If $V$ is a smooth $G$-representation (i.e. the stabilizer of each vector $v\in V$ is an open subgroup of $G$), then
$V_{\bbG(n)^\circ-an}$ equals the space of fixed vectors $V^{G_{n+1}}$ in $V$ under the action of the compact subgroup $G_{n+1}$.
Suppose $V$ is admissible (i.e. the vector space $V^{G_{n+1}}$ is finite-dimensional). Since $\frg V=0$, one finds

\begin{numequation}\label{smooth_loc}
\Loc^\dagger_n ((V^{G_{n+1}})')= \cO_{\frX_n,\bbQ}\otimes_L (V^{G_{n+1}})' \;,
\end{numequation}

\vskip8pt

where $G_0$ acts diagonally and $\tsD^\dagger_{n,\bbQ}$ acts through its natural action on $\cO_{\frX_n,\bbQ}$.

\vskip8pt

\subsection{Representations attached to $U(\frg)$-modules}\label{reps_att_to_Lie_reps}

In this section, we will compute the $G_0$-equivariant coadmissible module $\Loc^\dagger(\bM)$ on $\frX_\infty$ for a class of coadmissible $D(G)$-modules $\bM$ related to the pair $(\frg,B)$. Here $\bbB\subset\GL_{2,L}$ denotes the Borel subgroup of upper triangular matrices and $B$ denotes its group of $L$-rational points. Let $\frb$ be its Lie-algebra. Let $T\sub B$ be the subgroup of diagonal matrices with Lie algebra $\frt$.

Let $L\subset K$ be a complete and discretely valued extension field. To simplify notation, we make the convention that, when dealing with universal enveloping algebras, distribution algebras, differential operators etc. we write $U(\frg)$, $D(G_0)$, $\tsD^{\dagger}_{n,k,\bbQ}$ etc. to denote the corresponding objects {\it after base change to $K$}, i.e., what is precisely $U(\frg_K)$, $D(G_0)\hat{\otimes}_L K$, $\tsD^{\dagger}_{n,k,\bbQ}\hat{\otimes}_L K$ and so on (compare also final remark in sect. $5$).

\vskip8pt
The group $G$ and its subgroup $B$ act via the adjoint representation on $U(\frg)$ and we denote by \begin{numequation}\label{equ-DF} D(\frg,B):=D(B)\otimes_{U(\frb)} U(\frg)\end{numequation}

the corresponding skew-product ring. The skew-multiplication here is induced by
$$(\delta_{b'} \otimes x')\cdot (\delta_{b}\otimes x)=\delta_{b'b}\otimes \delta_{b^{-1}}(x')x$$
for $b,b'\in B$ and $x,x'\in U(\frg)$. If $\lambda$ denotes a locally analytic character of $B$, then we have a canonical algebra isomorphism

\begin{numequation}\label{equ-reduction} D(\frg,B)/D(\frg,B)I(\lambda)\simeq U(\frg)/U(\frg)I(d\lambda)
\end{numequation}

where $I(\lambda)$ resp. $I(d\lambda)$ denote the ideals equal to the kernel of $D(B)\stackrel{\lambda}{\longrightarrow}K$ resp. $\frb\stackrel{d\lambda}{\longrightarrow}K$. Replacing $B$ by $B_0=B\cap G_0$, we obtain a skew-product ring $D(\frg,B_0)$ with similar properties. Given a ${D(\frg,B)}$-module $M$ one has

\begin{numequation}\label{equ-G0}D(G)\otimes_{D(\frg,B)} M= D(G_0)\otimes_{D(\frg,B_0)} M\end{numequation}

as $D(G_0)$-modules \cite[4.2]{SchmSt}. We consider the functor

$$M\mapsto \bM:= D(G)\otimes_{D(\frg,B)} M$$

\vskip8pt

from ${D(\frg,B)}$-modules to $D(G)$-modules \cite{OrlikStrauchJH}. If $M$ is finitely generated as $U(\frg)$-module, then $\bM$ is coadmissible by \cite[4.3]{SchmSt}. From now on we assume that $M$ is a finitely generated $U(\frg)$-module. We let $V := \bM'_b$ be the locally analytic $G$-representation corresponding to $\bM$ and denote by

$$ \bM_n:=(V_{\bbG(n)^\circ-\rm an})'_b$$

\vskip8pt

the dual of the subspace of its $\bbG(n)^\circ$-analytic vectors. According to \ref{lem-refine} the $\Dgnn$-module $\bM_n$ is finitely presented and has its canonical topology.

\begin{lemma}
The canonical map

$$\Dgnn\otimes_{D(\GN)} \bM\car \bM_n$$

\vskip8pt

is an isomorphism.
\end{lemma}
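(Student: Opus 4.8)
\Pf The plan is to reuse the proof of Lemma~\ref{lem-refine}, from which slightly more can be read off than is stated there, and then to conclude by transitivity of tensor products. First I would record that $\bM$ is a coadmissible module over $D(\GN)$: it is coadmissible over $D(G)$ by \cite[4.3]{SchmSt} (as $M$ is finitely generated over $U(\frg)$), and coadmissibility is preserved under restriction along the compact open inclusion $\GN\hookrightarrow G$, so Lemma~\ref{lem-refine} applies to $V=\bM'_b$. Fixing $n$, write $B:=\Dgnn$ and let $A$ be the noetherian Banach algebra equal to the crossed product of the level-$m_n$ part of $\Dgn$ with the finite group $\GN/G_{n+1}$; there is then a chain of continuous algebra homomorphisms $D(\GN)\to A\hookrightarrow B$ whose composite is the canonical map $D(\GN)\to\Dgnn$, this being the chain underlying Emerton's Fr\'echet--Stein structure \cite[Cor.~5.3.19 and (5.3.20)]{EmertonA}. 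Put $M'_n:=A\otimes_{D(\GN)}\bM$. The proof of \ref{lem-refine} shows that $M'_n$ is finitely generated (hence finitely presented, $A$ being noetherian) over $A$, that $M'_n=A\otimes_{D(\GN)}\bM=A\hat{\otimes}_{D(\GN)}\bM$, and --- via Lemma~\ref{lem-decomplete}, applied with $A'=A$ and $B'=\Dgn$ --- that $\bM_n=B\otimes_A M'_n=B\hat{\otimes}_A M'_n$.

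Granting this, the lemma follows from the chain
$$\Dgnn\otimes_{D(\GN)}\bM \;=\; B\otimes_{D(\GN)}\bM \;\car\; B\otimes_A\bigl(A\otimes_{D(\GN)}\bM\bigr) \;=\; B\otimes_A M'_n \;\car\; \bM_n \;,$$
where the first $\car$ is the standard transitivity isomorphism $B\otimes_A(A\otimes_{D(\GN)}\bM)\simeq B\otimes_{D(\GN)}\bM$ for the ring maps $D(\GN)\to A\to B$, and the second is the identification recalled above. It then remains to check that this composite is the canonical arrow of the statement, equivalently that it is compatible with the weak Fr\'echet--Stein isomorphism $\bM_n\simeq\Dgnn\hat{\otimes}_{D(\GN)}\bM$ of \ref{equ-weakfamily}; this is a routine diagram chase, as all arrows involved are induced by the structural homomorphisms $D(\GN)\to A\to B$.

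The only point that needs care --- and the closest thing here to an obstacle --- is making sure that the factorization $D(\GN)\to A\hookrightarrow\Dgnn$ employed above is precisely the one implicit in \cite[Cor.~5.3.19]{EmertonA}, so that the transitivity isomorphism for the ordinary tensor product matches the transitivity for the completed tensor product already used in the proof of \ref{lem-refine}. Once this is pinned down the argument is purely formal, the substantive content residing entirely in Lemmas~\ref{lem-refine} and~\ref{lem-decomplete}. \qed
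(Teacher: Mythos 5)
Your proof is correct, but it takes a genuinely different route from the paper's. The paper argues directly that the ordinary tensor product $\Dgnn\otimes_{D(\GN)} \bM$ is already complete: by \ref{equ-G0} it equals $\Dgnn\otimes_{D(\frg,B_0)} M$, and since $B_0$ is topologically finitely generated this is the quotient of the finitely presented module $\Dgnn\otimes_{U(\frg)} M$ by a finitely generated --- hence, by \ref{prop-cantop}(ii), closed --- submodule, so it is of compact type and the completion map onto $\Dgnn\hat{\otimes}_{D(\GN)}\bM=\bM_n$ is bijective. You instead re-run the internals of Lemma \ref{lem-refine}: passing through the intermediate noetherian Banach algebra $A$ of Emerton's strong Fr\'echet--Stein structure, you obtain $\bM_n=\Dgnn\otimes_A\bigl(A\otimes_{D(\GN)}\bM\bigr)$ and conclude by transitivity of the tensor product. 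Your argument never uses the specific shape of $\bM$, only its coadmissibility over $D(\GN)$, so it in fact proves the more general statement that $\Dgnn\otimes_{D(\GN)}N\car N_n$ for every coadmissible $D(\GN)$-module $N$; the price is the bookkeeping you flag at the end, namely checking that the composite of the transitivity isomorphism with the identifications from \ref{lem-refine} is the canonical arrow (this is indeed routine, since every map involved is induced by the structural homomorphisms $D(\GN)\rightarrow A\rightarrow\Dgnn$). The paper's argument is shorter, stays within the subsection, and has the side benefit of exhibiting $\bM_n$ as $\Dgnn\otimes_{D(\frg,B_0)}M$, which is exactly the form exploited immediately afterwards in the decomposition $\bM_n=\oplus_{g}\,\delta_g M^{an}_n$.
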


\Pf It suffices to show that the left-hand side is complete. We have

$$\Dgnn\otimes_{D(\GN)} \bM = \Dgnn \otimes_{D(\frg,B_0)} M \;,$$

\vskip8pt

according to \ref{equ-G0}. The group $B_0$ is topologically finitely generated. As in the proof of \cite[4.3]{SchmSt} we may write the module $\Dgnn\otimes_{D(\GN)} \bM$ as the quotient of the finitely presented module
 $\Dgnn\otimes_{U(\frg)} M$ by a finitely generated submodule. By (ii) of \ref{prop-cantop}, this quotient is complete. \qed

\vskip8pt

Recall the congruence subgroup $G_{n+1}=\bbG(n)^\circ(L)$ of $G_0$. Put $B_{n+1}:=G_{n+1}\cap B_0$. The corresponding skew-product ring $D(\frg,B_{n+1})$ is contained in $\cD^{an}(\bbG(n)^\circ)$ according to \ref{equ-finitefree}. Let $C(n)$ be a system of representatives in $G_0$ containing $1$
for the residue classes in $G_0/G_{n+1}$ modulo the subgroup $B_0/B_{n+1}$. Note that for an element $g\in G_0$ and a $\cD^{an}(\bbG(n)^\circ)$-submodule $N$ of $D(G_0)$, the abelian group $\delta_gN$ is again a $\cD^{an}(\bbG(n)^\circ)$-submodule because of the formula $x \delta_g = \delta_g {\rm Ad}(g^{-1})(x)$  for any $x\in\cD^{an}(\bbG(n)^\circ)$.

\begin{lemma}
The natural map of $(\cD^{an}(\bbG(n)^\circ),D(\frg,B_0))$-bimodules

$$\sum: \bigoplus_{g\in C(n)} \delta_g\Big(\cD^{an}(\bbG(n)^\circ)\otimes_{D(\frg,B_{n+1})} D(\frg,B_0)\Big)\car \Dgnn$$

\vskip8pt

is an isomorphism.
\end{lemma}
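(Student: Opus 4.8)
The plan is to establish the isomorphism by a double-coset decomposition argument, reducing to the finite-free description of $\Dgnn$ over $\cD^{\rm an}(\bbG(n)^\circ)$ recorded in \ref{equ-finitefree}. First I would recall that, by \ref{equ-finitefree}, $\Dgnn = \bigoplus_{\bar g \in \GN/G_{n+1}} \cD^{\rm an}(\bbG(n)^\circ) * \delta_{\bar g}$ as a left $\cD^{\rm an}(\bbG(n)^\circ)$-module, where we may (and do) choose the coset representatives to be compatible with the further decomposition $\GN/G_{n+1}$ into $B_0/B_{n+1}$-orbits: every class in $\GN/G_{n+1}$ is uniquely of the form $g b G_{n+1}$ with $g \in C(n)$ and $b$ running over representatives of $B_0/B_{n+1}$. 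Thus as a left $\cD^{\rm an}(\bbG(n)^\circ)$-module
$$
\Dgnn = \bigoplus_{g \in C(n)} \ \bigoplus_{b \in B_0/B_{n+1}} \cD^{\rm an}(\bbG(n)^\circ) * \delta_g * \delta_b
= \bigoplus_{g \in C(n)} \delta_g \Big( \bigoplus_{b \in B_0/B_{n+1}} \cD^{\rm an}(\bbG(n)^\circ)*\delta_b \Big),
$$
where in the last step I use that left multiplication by $\delta_g$ is a bijection on $D(G_0)$ intertwining the $\cD^{\rm an}(\bbG(n)^\circ)$-action with the one twisted by ${\rm Ad}(g^{-1})$, as noted just before the lemma, so $\delta_g N$ is again a $\cD^{\rm an}(\bbG(n)^\circ)$-submodule for any such $N$.

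Next I would identify the inner sum with the induced module $\cD^{\rm an}(\bbG(n)^\circ)\otimes_{D(\frg,B_{n+1})} D(\frg,B_0)$. Here the key point is the analogue of \ref{equ-finitefree} \emph{for the pair} $(\frg,B)$: the skew-product ring $D(\frg,B_0) = D(B_0)\otimes_{U(\frb)}U(\frg)$ decomposes, as a left $D(\frg,B_{n+1})$-module, as $\bigoplus_{b\in B_0/B_{n+1}} D(\frg,B_{n+1})*\delta_b$, because $D(B_0) = \bigoplus_{b \in B_0/B_{n+1}} D(B_{n+1})*\delta_b$ is finite free over $D(B_{n+1})$ (same crossed-product argument as for $\GN/G_{n+1}$) and $-\otimes_{U(\frb)}U(\frg)$ is compatible with this. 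Applying the base-change functor $\cD^{\rm an}(\bbG(n)^\circ)\otimes_{D(\frg,B_{n+1})} -$, which makes sense since $D(\frg,B_{n+1}) \subseteq \cD^{\rm an}(\bbG(n)^\circ)$ by \ref{equ-finitefree}, yields
$$
\cD^{\rm an}(\bbG(n)^\circ)\otimes_{D(\frg,B_{n+1})} D(\frg,B_0) \ \cong\ \bigoplus_{b \in B_0/B_{n+1}} \cD^{\rm an}(\bbG(n)^\circ)*\delta_b .
$$
Combining with the displayed decomposition of $\Dgnn$ above gives the asserted isomorphism $\sum$ after summing over $g \in C(n)$; one checks directly from the formulas that the map $\delta_g(x \otimes \delta_b) \mapsto x*\delta_g*\delta_b$ is exactly the natural multiplication map of the lemma, and that it is a morphism of $(\cD^{\rm an}(\bbG(n)^\circ),D(\frg,B_0))$-bimodules (left linearity is clear; right $D(\frg,B_0)$-linearity follows because the right $D(\frg,B_0)$-action on the source is by right multiplication on the tensor factor $D(\frg,B_0)$ and this matches right multiplication inside $D(G_0)$).

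The main obstacle I anticipate is the bookkeeping around the twisted module structures: one must be careful that the $\cD^{\rm an}(\bbG(n)^\circ)$-module structure on $\delta_g(\cD^{\rm an}(\bbG(n)^\circ)\otimes_{D(\frg,B_{n+1})} D(\frg,B_0))$ appearing in the statement of the lemma is the honest left-multiplication structure inside $D(G_0)$ (i.e., with $\delta_g$ treated as part of the element, using $x\delta_g = \delta_g\,{\rm Ad}(g^{-1})(x)$), and that this coincides under $\sum$ with the structure on the target $\Dgnn$. Once the identification of representatives is fixed so that the coset decomposition $\GN/G_{n+1}$ refines through $B_0/B_{n+1}$, this is a formal verification; the only genuine input is the finite-freeness of $D(B_0)$ over $D(B_{n+1})$ and of $\cD^{\rm an}(\bbG(n)^\circ)$ over $D(\frg,B_{n+1})*$(finite group), both of which are crossed-product statements already implicit in \ref{equ-finitefree}. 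I would also remark that injectivity and surjectivity of $\sum$ can alternatively be checked after the faithfully flat (indeed free) base change to $D(G_0)$, or simply by exhibiting the inverse on basis elements as above, which is the cleanest route.
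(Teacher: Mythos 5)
Your proof is correct and follows the same route as the paper's: the paper also observes that, since $\Lie(B_{n+1})=\Lie(B_0)=\frb$, the definition \ref{equ-DF} gives $D(\frg,B_0)=\bigoplus_{b\in B_0/B_{n+1}}\delta_b D(\frg,B_{n+1})$, and then combines this with the crossed-product decomposition \ref{equ-finitefree} and the choice of $C(n)$. You have merely spelled out the bookkeeping (refining the coset decomposition of $G_0/G_{n+1}$ through $B_0/B_{n+1}$ and checking bimodule linearity) that the paper leaves implicit.
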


\Pf Since $Lie(B_{n+1})=Lie(B_0)=\frb$, the definition \ref{equ-DF} implies that $D(\frg,B_0)$ equals the direct sum over $B_0/B_{n+1}$ copies of
$D(\frg,B_{n+1}).$ Hence, \ref{equ-finitefree} implies the claim.
\qed

\vskip8pt

The two lemmas allow us to write

$$ \bM_n = \oplus_{g\in C(n)} \delta_g \Big(\cD^{an}(\bbG(n)^\circ) \otimes_{D(\frg,B_{n+1})} M\Big) = \oplus_{g\in C(n)}\delta_gM^{an}_n$$

\vskip8pt

as modules over $\cD^{an}(\bbG(n)^\circ).$
Here $$M^{an}_n:= \cD^{an}(\bbG(n)^\circ) \otimes_{D(\frg,B_{n+1})} M,$$

a finitely presented $\cD^{an}(\bbG(n)^\circ)$-module. If $M$ has character $\theta_0$, so has $M^{an}_n$. As explained above, the 'twisted' module
$\delta_gM^{an}_n$ can and will be viewed as having the same underlying group as $M^{an}_n$ but with an action of $\cD^{an}(\bbG(n)^\circ)$ pulled-back by the
automorphism ${\rm Ad}(g^{-1})$. Since the adjoint action of $G_0$ fixes the center in $U(\frg)$, the character of the module $\delta_gM^{an}_n$ (if existing) does not depend on $g$.

 \vskip8pt

If $M$ has character $\theta_0$, then the $\tsD^\dagger_{n,\bbQ}$-module $\Loc^\dagger_n(\delta_gM^{an}_n)$ on $\frX_n$ can be described as follows. For any $g\in G_0$ let $g_*$ denote the direct image functor coming from the automorphism $g$ of $\frX_n$. If $N$ denotes a (coherent) $\tsD^\dagger_{n,\bbQ}$-module, then $g_*N$ is a (coherent) $\tsD^\dagger_{n,\bbQ}$-module via
the isomorphism $\tsD^\dagger_{n,\bbQ}\car g_*\tsD^\dagger_{n,\bbQ}$.

\begin{lemma}
One has

$$ \Loc^\dagger_n(\delta_gM^{an}_n) = g_* \Loc_n^\dagger (M^{an}_n) = g_* \Big( \tsD^\dagger_{n,\bbQ}\otimes_{D(\frg,B_{n+1})} M\Big) \;.$$

\vskip8pt

\end{lemma}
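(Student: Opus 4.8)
The plan is to reduce everything, via the localization equivalence of Theorem \ref{thm-equivalence}, to a computation of global sections. First I would dispose of the second equality, which should be routine: since $M$ has infinitesimal character $\theta_0$, the module $M^{an}_n$ is naturally a module over the central reduction $A := H^0(\frX_n,\tsD^\dagger_{n,\bbQ}) = \cD^{an}(\bbG(n)^\circ)_{\theta_0}$, namely $M^{an}_n = A\otimes_{D(\frg,B_{n+1})}M$, so by associativity of the tensor product $\Loc^\dagger_n(M^{an}_n) = \tsD^\dagger_{n,\bbQ}\otimes_A M^{an}_n = \tsD^\dagger_{n,\bbQ}\otimes_{D(\frg,B_{n+1})}M$. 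Hence the real content is the first equality $\Loc^\dagger_n(\delta_gM^{an}_n) = g_*\Loc^\dagger_n(M^{an}_n)$.

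For this I would note that both sides are coherent $\tsD^\dagger_{n,\bbQ}$-modules: $\delta_gM^{an}_n$ is finitely presented over $A$ (being a twist of $M^{an}_n$ by a ring automorphism of $A$), and $g_*$ carries coherent $\tsD^\dagger_{n,\bbQ}$-modules to coherent ones since $g$ is an automorphism of $\frX_n$. As $\Loc^\dagger_n$ and $H^0(\frX_n,-)$ are mutually quasi-inverse equivalences on these categories, it then suffices to produce an isomorphism of $A$-modules $H^0(\frX_n, g_*\Loc^\dagger_n(M^{an}_n)) \simeq \delta_g M^{an}_n$ and apply $\Loc^\dagger_n$.

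To compute the left-hand side I would use the $G_0$-equivariant structure on $\tsD^\dagger_{n,\bbQ}$, which for each $g$ furnishes the ring isomorphism $\theta_g\colon \tsD^\dagger_{n,\bbQ}\car g_*\tsD^\dagger_{n,\bbQ}$ through which $g_*N$ is regarded as a $\tsD^\dagger_{n,\bbQ}$-module. Since $g$ is an automorphism of $\frX_n$, taking sections over $\frX_n$ gives $H^0(\frX_n, g_*N) = H^0(\frX_n, N)$ as abelian groups, with $A$-action precomposed by the ring automorphism $\alpha_g := H^0(\frX_n,\theta_g)$ of $A$; applied to $N = \Loc^\dagger_n(M^{an}_n)$, whose global sections are $M^{an}_n$, this produces $M^{an}_n$ with $A$-action $x\cdot m = \alpha_g(x)m$. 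The remaining task is to identify $\alpha_g$. Unwinding the construction of the equivariant structure (the $G_0$-action on $\bbX_n$ together with the Lie algebra map $\Lie(\bbG(n))\to H^0(\bbX_n,\cT_{\bbX^{\log}_n})$), and keeping track of the inversion that distinguishes $\theta_g$ from the map $s\mapsto g.s$, I expect $\alpha_g$ to act by $\delta_h\mapsto \delta_{g^{-1}hg}$ on delta distributions and by $\mathrm{Ad}(g^{-1})$ on the image of $U(\frg)$, and to descend to the central reduction $\cD^{an}(\bbG(n)^\circ)_{\theta_0}$ because $\mathrm{Ad}(g^{-1})$ fixes the centre of $U(\frg)$.

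Granting this, $H^0(\frX_n, g_*\Loc^\dagger_n(M^{an}_n))$ is $M^{an}_n$ with $\cD^{an}(\bbG(n)^\circ)$-action pulled back along $\mathrm{Ad}(g^{-1})$ — which is precisely the definition of $\delta_gM^{an}_n$ recorded above, from the decomposition $\bM_n = \bigoplus_g\delta_gM^{an}_n$ and the relation $x\delta_g = \delta_g\,\mathrm{Ad}(g^{-1})(x)$. Applying $\Loc^\dagger_n$ and using $H^0(\frX_n,\Loc^\dagger_n(\delta_gM^{an}_n)) = \delta_gM^{an}_n$ yields the first equality, and hence the lemma. The hard part is the bookkeeping in identifying $\alpha_g$: one must push the construction of the $G_0$-equivariant structure on $\tsD^\dagger_{n,\bbQ}$ far enough to see that its effect on $H^0$ is the adjoint action, and get the single inverse right so that the twist matches $\delta_g M^{an}_n$ and not $\delta_{g^{-1}}M^{an}_n$; the rest is formal.
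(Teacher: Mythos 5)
Your proposal is correct and follows essentially the same route as the paper: the second equality by contracting tensor products, and the first by noting that both sides are coherent $\tsD^\dagger_{n,\bbQ}$-modules, that the equivariant automorphism $\tsD^\dagger_{n,\bbQ}\car g_*\tsD^\dagger_{n,\bbQ}$ acts as ${\rm Ad}(g^{-1})$ on global sections (so that the identity map on $M^{an}_n$ is a $\cD^{an}(\bbG(n)^\circ)$-linear isomorphism between the global sections of the two sheaves), and then invoking the equivalence of Theorem \ref{thm-equivalence} to extend this to the sheaves. The only difference is cosmetic: the paper simply asserts the identification of the global-section automorphism with ${\rm Ad}(g^{-1})$, which you flag as the bookkeeping step still to be verified.
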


\Pf The second identity follows from the definition of $\Loc^{\dagger}_n$ by contracting tensor products.
The automorphism $\tsD^\dagger_{n,\bbQ}\car g_*\tsD^\dagger_{n,\bbQ}$
equals ${\rm Ad}(g^{-1})$ on global sections. The identity map is a $\cD^{an}(\bbG(n)^\circ)$-linear isomorphism between the global sections of $\Loc^\dagger_n(\delta_gM^{an}_n)$ and
$g_* \Loc_n^\dagger (M^{an}_n)$. Since both sheaves are coherent $\tsD^\dagger_{n,\bbQ}$-modules, the isomorphism extends to the sheaves by \ref{thm-equivalence}.
\qed

\vskip8pt

Since $\Loc^\dagger_n$ commutes with direct sums the lemma implies

$$ \Loc^\dagger_n (\bM_n)= \oplus_{g\in C(n)}  \;g_*  \Big( \tsD^\dagger_{n,\bbQ}\otimes_{D(\frg,B_{n+1})} M\Big)$$
as $\tsD^\dagger_{n,\bbQ}$-modules. Of course,  $\Loc^\dagger_n (\bM_n)$ is $G_0$-equivariant and the collection of the $\Loc^\dagger_n (\bM_n)$ for all $n$, is equal to the
$G_0$-equivariant coadmissible module on $\frX_\infty$ corresponding to $\bM$ by the equivalence \ref{prop-equivalenceII}.

\vskip8pt

\subsection{Principal series representations}\label{princ_series}

\begin{para} {\it A general formula.} Let $\lambda$ be a locally analytic $K$-valued character of $T$, viewed as a character of $B$. Let

$$M(\lambda)=U(\frg)\otimes_{U(\frb)} K_{d\lambda}$$

\vskip8pt

be the induced module. Then $M(\lambda)$ is a $D(\frg,B)$-module \cite{SchmSt}. Let $\bM(\lambda)$ be the coadmissible $D(G)$-module associated with $M(\lambda)$. Its dual equals the locally analytic principal series representation

$$V:={\rm Ind}_B^G(\lambda^{-1})=\{ f \in C^{\rm la}(G,K): f(gb)=\lambda(b)f(g) {\rm ~for~all~}g\in G, b \in B\}$$
with $G$ acting by left translations. Here, $C^{\rm la}(\cdot,K)$ denotes $K$-valued locally analytic functions. We now choose $n$ large
enough such that the restriction of $\lambda$ to $T\cap G_{n+1}$ is $\bbT(n)^\circ$-analytic. Denote by $\bbU^-$ the unipotent radical of the Borel subgroup scheme opposite to $\bbB$. Restriction of functions induces an isomorphism

$$V_{\bbG(n)^\circ-\rm an} \simeq \oplus_{g \in G_0/G_{n+1}B_0} C^{an}(\bbU^{-}(n)^{\circ,g},K)$$
where $\bbU^{-}(n)^{\circ,g}=g\bbU^{-}(n)^{\circ}g^{-1}$ and where $C^{\rm an}(\cdot,K)$ denotes $K$-valued rigid analytic functions.
Indeed, it is not difficult to see that, as $G_{n+1}$-representations, one has

$$V|_{G_{n+1}}=\oplus_{g \in G_0/G_{n+1}B_0} {\rm Ind}_{G_{n+1}\cap B_0^g}^{G_{n+1}}( \lambda^{-1,g})$$

where $B_0^g=gB_0g^{-1}$ and where $\lambda^{g}$ denotes the character of $B_0^g$ defined by $\lambda^{g}(h)=\lambda(g^{-1}hg).$
Now the $\bbG(n)^\circ$-analytic vectors in $V_g:={\rm Ind}_{G_{n+1}\cap B_0^g}^{G_{n+1}}(\lambda^{-1,g})$
are the preimage of $V_g$ under the (injective) restriction map

$$C^{\rm an}(\bbG(n)^\circ,K)\ra C^{\rm la}(G_{n+1},K),$$

so that

$$(V_g)_{\bbG(n)^\circ-\rm an}= \{f \in C^{\rm an}(\bbG(n)^\circ,K): f(hb)=\lambda^g(b)f(h) {\rm ~for~all~}h\in G_{n+1}, b \in G_{n+1}\cap B_0^g\}.$$
Our assertion follows now from the rigid-analytic (Iwahori) decomposition

$$\bbG(n)^\circ=\bbU^{-}(n)^{\circ,g} \times \bbT(n)^{\circ,g} \times \bbU(n)^{\circ,g}$$
together with the fact that $\lambda^{g}$ is $\bbT(n)^{\circ,g}$-analytic for all $g$.

\vskip8pt

We have

$$M(\lambda) = D(\frg,B_{n+1})/D(\frg,B_{n+1})I_{n+1}(\lambda)$$

\vskip8pt

as a $D(\frg,B_{n+1})$-module where $I_{n+1}(\lambda)$ denotes the kernel of $D(B_{n+1}) \stackrel{\lambda}{\longrightarrow} K$, cf. \ref{equ-reduction}. Let $\cD^{an}(\bbB(n)^\circ)$ be the analytic distribution algebra of $\bbB(n)^\circ$. By our assumption on $n$, the character $d\lambda$ extends to a character of $\cD^{an}(\bbB(n)^\circ)$ whose kernel is generated by $I(d\lambda)\sub U(\frb)$. It follows

$$M(\lambda)^{an}_n = \cD^{an}(\bbG(n)^\circ)/\cD^{an}(\bbG(n)^\circ) I_{n+1}(\lambda) = \cD^{an}(\bbG(n)^\circ)\otimes_{U(\frg)} M(\lambda) \;.$$

\vskip8pt

The Beilinson-Bernstein localization \cite{BB81} of a finitely generated $U(\frg)$-module $M$ with character $\theta_0$ is a coherent $D$-module ${\rm Loc}(M)$ on the algebraic flag variety $X$ of $\GL_{2,L}$. Let $X^{\rm rig}$ be the associated rigid-analytic space with its canonical morphism $\rho: X^{\rm rig}\rightarrow X$. Then $({\rm sp}_{n})_*\rho^* {\rm Loc}(M)$ is an $\cO_{\frX_n,\bbQ}$-module with an action of the sheaf of algebraic differential operators $\pi_n^*\cD_{\frX_0,\bbQ}$. We denote its base change along the natural morphism

$$\pi_n^*\cD_{\frX_0,\bbQ}\longrightarrow \tsD^\dagger_{n,\bbQ}$$

\vskip8pt

by ${\rm Loc}(M)^\dagger_n$, a coherent $\tsD^\dagger_{n,\bbQ}$-module. Suppose now that $d\lambda$ is associated by the Harish-Chandra isomorphism to the central character $\theta_0$. We then have

$${\rm Loc}(M(\lambda))^\dagger_n=\Loc^\dagger_n (M(\lambda)^{an}_n)$$

\vskip8pt

by the above. We can thus conclude

\begin{numequation}\label{general_formula} \Loc^\dagger_n (\bM(\lambda)_n)= \oplus_{g\in C(n)}  \;g_*{\rm Loc}(M(\lambda))^\dagger_n \;.
\end{numequation}

\end{para}

\begin{para} {\it The cases when $\lambda = {\bf 1}$ or $\lambda = \rho^{-2}$.} Here we consider the character $\rho^{-2}: T = \bT(L) \ra L^\ast$ defined by

$$\rho^{-2}\Big(\left(\begin{array}{cc} t_1 & 0 \\ 0 & t_2 \end{array}\right)\Big) = \frac{t_2}{t_1} \;,$$

\vskip8pt

i.e., the negative root. Then $d\rho^{-2}$ and $0 = d{\bf 1}$ are associated to $\theta_0$ by the Harish-Chandra homomorphism. Next we consider the well-known exact sequence of $U(\frg)$-modules

\begin{numequation}\label{BGG_res} 0 \lra M(\rho^{-2}) \lra M({\bf 1}) \lra L_0 \lra 0 \;,
\end{numequation}

where $L_0$ denotes the one-dimensional trivial $U(\frg)$-module. The functor $\cF^G_B$ of \cite{OrlikStrauchJH} associates to the sequence \ref{BGG_res} the exact sequence of locally analytic representations

\begin{numequation}\label{FGB_on_BGG_res} 0 \lra {\rm \ind}^G_B({\bf 1}) \lra {\rm Ind}^G_B({\bf 1}) \lra {\rm Ind}^G_B(\rho^2) \lra 0 \;,
\end{numequation}

where ${\rm \ind}^G_B({\bf 1})$ denotes the smooth induction of the trivial character ${\bf 1}$ of $B$. Passing in \ref{FGB_on_BGG_res} to $\bbG(n)^\circ$-analytic vectors, and then to the continuous duals of the spaces of $\bbG(n)^\circ$-analytic vectors, furnishes an exact sequence of $D(\bbG(n)^\circ,G_0)$-modules

\begin{numequation}\label{BGG_res_analytic} 0 \lra \bM(\rho^{-2})_n \lra \bM({\bf 1})_n \lra \bigoplus_{g \in C(n)} g_* L_0 \lra 0 \;,
\end{numequation}

where here $L_0$ denotes the $\cD^{\rm an}(\bbG(n)^\circ)$-module corresponding to the trivial one-dimensional representation of $\bbG(n)^\circ$. The Beilinson-Bernstein localization of $M(\rho^{-2})$ is a skyscraper sheaf supported at the fixed point of $B$ on $\bbP^1_L$, cf. \cite[sec. 12.3]{Hotta}, whereas the Beilinson-Bernstein localization of the trivial one-dimensional representation is the structure sheaf of $\bbP^1_L$. Applying $\Loc^\dagger_n$ and \ref{general_formula} to \ref{BGG_res_analytic} gives then the following exact sequence of sheaves of $\tsD^\dagger_{n,\Q}$-modules on $\frX_n$:

\begin{numequation}\label{BB_loc_of_BGG_res_analytic} 0 \ra \oplus_{g\in C(n)}  \; g_*{\rm Loc}(M(\rho^{-2}))^\dagger_n \ra \oplus_{g\in C(n)}  \; g_*{\rm Loc}(M({\bf 1}))^\dagger_n \ra \oplus_{g \in C(n)} g_* \cO_{\frX_n,\Q} \ra 0 \;.
\end{numequation}

The sheaf on the left is a skyscraper sheaf whose support is the set $\frX_n^{\rm sm}(\bbF_q)$ of smooth $\bbF_q$-rational points of $\frX_n$, a set which is naturally in bijection with $\bbP^1(\fro_L/(\varpi^{n+1}))$.
\end{para}

\subsection{Two representations furnished by the $p$-adic upper half plane} In this section we consider the $\GL_2(L)$-equivariant line bundles $\cO$, the structure sheaf, and $\Omega$, the sheaf of rigid-analytic differential forms, on $\bbP^{1,{\rm rig}}_L$ and their restrictions to the $p$-adic upper half plane $\Sigma_{0,L}$. Their global sections over $\Sigma_{0,L}$ furnish coadmissible $D(\GL_2(L),L)$-modules, which are closely related to the coadmissible module $\bM(\rho^{-2})$ considered in the previous section. As this material is not used later on, we only give indications of the proofs.
We also note that one can treat all line bundles $\cO(k)$ in a similar way, but only when $k \in \{-2,0\}$ do the corresponding coadmissible modules have central character $\theta_0$, which is why we restrict to these cases.

\begin{para}\label{the_padic_upper_half_plane} {\it The $p$-adic upper half plane.} Let $\bbP^{1,{\rm rig}}_L$ be the rigid-analytic space attached
to the scheme $\bbP^1_L$ over $L$. It is well-known and not difficult to prove that the subset

$$\Sigma_{0,L} = \bbP^{1,{\rm rig}}_L \; \setminus \; \bbP^1_L(L)$$

\vskip8pt

is an admissible open subset of $\bbP^{1,{\rm rig}}_L$, and is hence a rigid-analytic space in its own right, called the {\it $p$-adic upper half plane over $L$}.
The inductive limit of the formal schemes $\frX_n^\circ$ furnishes a formal model $\widehat{\Sigma}_{0,L}$ for $\Sigma_{0,L}$:

$$\widehat{\Sigma}_{0,L} = \varinjlim_n \frX_n^\circ \;,$$

\vskip8pt

cf. \cite{BoutotCarayol}.
\end{para}

\begin{prop}\label{analytic_St_etc} (i) There is a canonical exact sequence of locally analytic $G$-representations

$$0 \lra \Ind^G_B(\rho^2) \lra \cO(\Sigma_{0,L})' \lra {\bf 1} \lra 0 \;,$$

\vskip8pt

and hence an exact sequence of coadmissible $D(G,L)$-modules

$$0 \lra L_0 \lra \cO(\Sigma_{0,L}) \lra \bM(\rho^{-2}) \lra 0 \;.$$

\vskip8pt

(ii) There is a canonical exact sequence of locally analytic $G$-representations

$$0 \lra {\rm St} \lra \Omega(\Sigma_{0,L})' \lra \Ind^G_B(\rho^2) \lra 0 \;,$$

\vskip8pt

where ${\rm St}$ denotes the smooth Steinberg representation (realized on an $L$-vector space). Dually, there is an exact sequence of coadmissible $D(G,L)$-modules

$$0 \lra \bM(\rho^{-2})  \lra \Omega(\Sigma_{0,L}) \lra {\rm St}' \lra 0 \;.$$

\vskip8pt

\end{prop}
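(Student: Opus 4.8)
The plan is to establish both exact sequences on the level of locally analytic $G$-representations, and then dualize; the passage to coadmissible $D(G,L)$-modules is formal once the continuous duals are identified, since the duality functor between admissible locally analytic representations and coadmissible modules is exact. For part (i), I would start from the well-known $G$-equivariant short exact sequence of sheaves on $\bbP^{1,{\rm rig}}_L$ relating $\cO_{\bbP^1}$ to $\cO_{\Sigma_{0,L}}$. Concretely, the inclusion $\Sigma_{0,L}\hra \bbP^{1,{\rm rig}}_L$ of admissible opens gives a restriction map $\cO(\bbP^{1,{\rm rig}}_L)\ra\cO(\Sigma_{0,L})$; the source is the trivial representation $\triv$ (constant functions), and I would identify the cokernel with $\Ind^G_B(\rho^2)$ by analyzing the residue/boundary behaviour along $\bbP^1(L)$, or equivalently by invoking the computation of $\cO(\Sigma_{0,L})$ due to Morita and Schneider--Teitelbaum: the dual of the ``holomorphic discrete series at weight $0$'' is exactly $\Ind^G_B(\rho^2)$ modulo constants. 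Dualizing the sequence $0\to\Ind^G_B(\rho^2)'{}'\to\cdots$ and using reflexivity of the relevant Fréchet/compact-type spaces yields the stated exact sequence of coadmissible modules, with $L_0$ appearing as the dual of $\triv$ and $\bM(\rho^{-2})$ as the dual of $\Ind^G_B(\rho^2)$ (the latter identification is precisely the content of the discussion in section \ref{reps_att_to_Lie_reps} together with \ref{equ-reduction}).

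For part (ii), I would use the residue exact sequence for differential forms on the upper half plane, which is the Morita--Schneider--Teitelbaum description of $\Omega(\Sigma_{0,L})$: there is a $G$-equivariant exact sequence $0\to\cO_{\bbP^1}\to\cO_{\Sigma_{0,L}}\to\Omega(\Sigma_{0,L})'\text{-type data}$, but more directly one has the classical statement that $\Omega(\Sigma_{0,L})'$ sits in an extension of $\Ind^G_B(\rho^2)$ by the smooth Steinberg representation $\St$. The cleanest route is to recall the Schneider--Teitelbaum/Morita result that the map ``integration of a $1$-form against boundary distributions'' identifies $\Omega(\Sigma_{0,L})'$ with the kernel of the natural surjection from the space of distributions on $\bbP^1(L)$ onto the constants, together with the fact that the smooth Steinberg representation is exactly the space of locally constant functions on $\bbP^1(L)$ modulo constants (hence its dual sits inside the distributions). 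Concretely I would exhibit the subrepresentation $\St\hra\Omega(\Sigma_{0,L})'$ as the image of the logarithmic differentials coming from $\bbP^1(L)$ (the ``$d\log$'' of rational functions with divisor supported on $L$-rational points), and identify the quotient with $\Ind^G_B(\rho^2)$ by the same boundary analysis as in (i). Dualizing then gives $0\to\bM(\rho^{-2})\to\Omega(\Sigma_{0,L})\to\St'\to 0$.

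The main obstacle I anticipate is the precise identification of the boundary maps and of the cokernels/kernels with the named representations $\Ind^G_B(\rho^2)$, $\St$, and $\bM(\rho^{-2})$ in a $G$-equivariant way; this is where one must actually invoke the Morita--Schneider--Teitelbaum computations of the cohomology of $\Sigma_{0,L}$ with coefficients in $\cO$ and $\Omega$, rather than reprove them. A secondary technical point is checking that these sequences remain exact after taking continuous duals: this uses that all the spaces involved are of compact type (or nuclear Fréchet) and that the maps are strict, which follows from the open mapping theorem as recorded in \ref{prop-cantop}. Since the proposition explicitly says this material is not used later and only indications of proofs are expected, I would keep the write-up at the level of: (1) cite the relevant cohomology computation for $\Sigma_{0,L}$; (2) extract the two short exact sequences of representations; (3) note that strictness plus reflexivity gives the dual sequences; and (4) identify $\cO(\Sigma_{0,L})$, resp.\ $\Omega(\Sigma_{0,L})$, as an extension involving $\bM(\rho^{-2})$ using \ref{equ-reduction} and the definition of $\bM(\lambda)$ from section \ref{princ_series}.
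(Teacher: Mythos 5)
Your proposal is correct and, at bottom, does the same thing the paper does: quote the known structural description of $\cO(\Sigma_{0,L})$ and $\Omega(\Sigma_{0,L})$ as $G$-representations, extract the two short exact sequences, and dualize (using that the maps are strict between reflexive spaces, and that $\Ind^G_B(\rho^2)=\bM(\rho^{-2})'$ by the discussion of $\bM(\lambda)$ in the principal series section). The difference is in how the input is packaged. The paper invokes a single uniform exact sequence from Orlik's work on equivariant vector bundles on period domains (and Orlik--Strauch): for any homogeneous line bundle $\cL$ one has
$$0 \lra \St \otimes_L H^1(\bbP^1_L,\cL)' \lra \Big(\cL(\Sigma_{0,L})/H^0(\bbP^1_L,\cL)\Big)' \lra \Ind^G_B(\rho^2) \lra 0 \;,$$
so that (i) and (ii) differ only through the elementary computation of $H^0(\bbP^1_L,\cL)$ and $H^1(\bbP^1_L,\cL)$ for $\cL=\cO$ resp. $\cL=\Omega$ (the Steinberg term vanishes in the first case, the subrepresentation $H^0(\bbP^1_L,\cL)$ vanishes in the second). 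You instead reassemble the two cases separately from the older Morita/Schneider--Teitelbaum picture, via restriction from $\bbP^1$ for $\cO$ and via residues/boundary distributions and $d\log$ for $\Omega$. Both routes are legitimate citations of essentially equivalent results; the Orlik sequence is tidier because it isolates exactly where the two cases diverge, whereas your case-by-case treatment requires separately pinning down the $G$-equivariant identifications of the sub and quotient in each case, which is where you correctly flag the real work lies. One phrasing in your part (i) should be cleaned up: it is the dual of $\cO(\Sigma_{0,L})$ \emph{modulo constants} that is $\Ind^G_B(\rho^2)$, not ``$\Ind^G_B(\rho^2)$ modulo constants''; your displayed sequences are nevertheless the correct ones.
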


\Pf We will deduce the exact sequences of locally analytic representations from the description given in \cite{OrlikVectorbundles} and \cite[sec. 7]{OrlikStrauchJH} (see also p. 4 of the introduction of \cite{OrlikStrauchJH}). The exact sequences of $D(G,L)$-modules follow immediately from these, by duality. Let $\cL = \cO$ or $\cL = \Omega$. Put $\cL(\Sigma_{0,L})^0 = H^0(\Sigma_{0,L},\cL)$ and $\cL(\Sigma_{0,L})^1 = H^0(\bbP^1_L,\cL)$. Then there is a canonical exact sequence of locally analytic $G$-representations

\begin{numequation}\label{Orlik_seq} 0 \lra {\rm St} \otimes_L H^1(\bbP^1_L,\cL)' \lra \Big(\cL(\Sigma_{0,L})^0\Big/\cL(\Sigma_{0,L})^1\Big)' \lra \Ind^G_B(\rho^2) \lra 0 \;.
\end{numequation}

For $\cL = \cO$ (resp. $\cL = \Omega$) this is equivalent to the exact sequence in (i) (resp. (ii)). \qed

\vskip8pt

For $n \in \bbZ_{\ge 0}$ put

$$\cO(\Sigma_{0,L})_n = \Big(\left[\cO(\Sigma_{0,L})'\right]_{\bbG(n)^\circ-{\rm an}}\Big)' \;, \;\; \Omega(\Sigma_{0,L})_n = \Big(\left[\Omega(\Sigma_{0,L})'\right]_{\bbG(n)^\circ-{\rm an}}\Big)' \;,$$

\vskip8pt

and denote by ${\rm St}_n$ the space of $G_{n+1}$-invariant vectors in ${\rm St}$.

\vskip8pt

\begin{cor}\label{localizing_analytic_Steinberg_etc} Let $n$ be a non-negative integer.

\vskip8pt

(i) There is a canonical exact sequence of $G_0$-equivariant sheaves of $\tsD^\dagger_{n,\Q}$-modules on $\frX_n$

$$0 \lra \cO_{\frX_n,\Q} \lra \Loc^\dagger_n\Big(\cO(\Sigma_{0,L})_n\Big) \lra \oplus_{g\in C(n)}  \; g_*{\rm Loc}(M(\rho^{-2}))^\dagger_n \lra 0 \;.$$

\vskip8pt

(ii) There is a canonical exact sequence of $G_0$-equivariant sheaves of $\tsD^\dagger_{n,\Q}$-modules on $\frX_n$

$$0 \lra  \oplus_{g\in C(n)} \; g_*{\rm Loc}(M(\rho^{-2}))^\dagger_n \lra \Loc^\dagger_n\Big(\Omega(\Sigma_{0,L})_n\Big) \lra {\rm St}_n' \otimes_L \cO_{\frX_n,\Q} \lra 0 \;.$$

\vskip8pt

\end{cor}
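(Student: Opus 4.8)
The plan is to deduce both exact sequences from the short exact sequences of coadmissible $D(G,L)_{\theta_0}$-modules provided by Proposition~\ref{analytic_St_etc}, namely
$$0 \lra L_0 \lra \cO(\Sigma_{0,L}) \lra \bM(\rho^{-2}) \lra 0$$
and
$$0 \lra \bM(\rho^{-2}) \lra \Omega(\Sigma_{0,L}) \lra {\rm St}' \lra 0 \;,$$
by applying to them the functor $M \mapsto M_n$ (passing to the $\bbG(n)^\circ$-analytic vectors of the dual representation and then dualizing), followed by the localization functor $\Loc^\dagger_n$, and finally identifying the three terms of each resulting sequence. All modules occurring have central character $\theta_0$: for $\cO(\Sigma_{0,L})$ and $\Omega(\Sigma_{0,L})$ this is the reason for restricting to $\cO$ and $\Omega$ pointed out at the beginning of this subsection, for $\bM(\rho^{-2})$ it holds by construction in~\ref{princ_series}, and it is trivial for $L_0$ and for the smooth representation ${\rm St}$.

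First I would pass to $\bbG(n)^\circ$-analytic vectors in the admissible locally analytic $G$-representations dual to these modules (these are the sequences of representations already written down in Proposition~\ref{analytic_St_etc}), and dualize. Exactly as in the passage from~\ref{FGB_on_BGG_res} to~\ref{BGG_res_analytic} in Section~\ref{princ_series}, the resulting sequences of $\Dgnnt$-modules
$$0 \lra (L_0)_n \lra \cO(\Sigma_{0,L})_n \lra \bM(\rho^{-2})_n \lra 0$$
and
$$0 \lra \bM(\rho^{-2})_n \lra \Omega(\Sigma_{0,L})_n \lra {\rm St}'_n \lra 0$$
are again exact. Each term is finitely presented over $\cD^{\rm an}(\bbG(n)^\circ)_{\theta_0} = H^0(\frX_n,\tsD^\dagger_{n,\Q})$: the finite-dimensional terms visibly so, and $\bM(\rho^{-2})_n$, $\cO(\Sigma_{0,L})_n$, $\Omega(\Sigma_{0,L})_n$ by Lemma~\ref{lem-refine}(i). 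Moreover $(L_0)_n = L_0$, since the trivial representation consists entirely of $\bbG(n)^\circ$-analytic vectors, and ${\rm St}'_n$ is the continuous dual of ${\rm St}_n = {\rm St}^{G_{n+1}}$ because ${\rm St}$ is smooth.

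Next I would apply $\Loc^\dagger_n = \Loc^\dagger_{n,n}$, which by Theorem~\ref{thm-equivalence}(ii) is an exact functor on finitely presented $H^0(\frX_n,\tsD^\dagger_{n,\Q})$-modules, obtaining short exact sequences of coherent $\tsD^\dagger_{n,\Q}$-modules. Identifying the outer terms then concludes: by~\ref{smooth_loc} one has $\Loc^\dagger_n(L_0) = \cO_{\frX_n,\Q}$ and $\Loc^\dagger_n({\rm St}'_n) = {\rm St}'_n \otimes_L \cO_{\frX_n,\Q}$, while $\Loc^\dagger_n(\bM(\rho^{-2})_n) = \oplus_{g \in C(n)} g_* {\rm Loc}(M(\rho^{-2}))^\dagger_n$ by formula~\ref{general_formula} with $\lambda = \rho^{-2}$. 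The $G_0$-equivariance of both sequences is automatic: the underlying objects are $G$-representations, $G_0$ normalizes $G_{n+1}$ so the passage to $\bbG(n)^\circ$-analytic vectors and its dual are $G_0$-equivariant, and $\Loc^\dagger_n$ sends $G_0$-equivariant $\cD^{\rm an}(\bbG(n)^\circ)_{\theta_0}$-modules to $G_0$-equivariant sheaves as in Section~\ref{princ_series}; all the maps in sight are $G_0$-equivariant. The one step needing genuine care — and the main obstacle, such as it is — is the exactness of $M \mapsto M_n$, i.e.\ that surjectivity survives passage to analytic vectors and continuous duals; this follows because $D(G_0)_{\theta_0}$ carries a true Fréchet–Stein structure (\cite[Cor.~5.3.19]{EmertonA}), for which the level functors are exact by Schneider–Teitelbaum, together with the flatness of the transition maps out of a defining Banach algebra of that structure into $\cD^{\rm an}(\bbG(n)^\circ)_{\theta_0}$, which reduces to~\ref{global_sec_tsD}(ii).
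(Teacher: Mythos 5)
Your proposal is correct and follows essentially the same route as the paper, whose proof is simply the citation of Proposition~\ref{analytic_St_etc} together with formula~\ref{general_formula}: one passes to $\bbG(n)^\circ$-analytic vectors and duals in the exact sequences of~\ref{analytic_St_etc}, applies the exact functor $\Loc^\dagger_n$, and identifies the outer terms via~\ref{smooth_loc} and the middle/inner terms via~\ref{general_formula}, exactly as the paper already carried out for the BGG sequence in passing from~\ref{FGB_on_BGG_res} to~\ref{BB_loc_of_BGG_res_analytic}. Your additional justification of the exactness of $M\mapsto M_n$ is a detail the paper leaves implicit, and your identifications of all the terms (in particular that the sub in (i) is a single copy of $\cO_{\frX_n,\Q}$ rather than a sum over $C(n)$) agree with the statement.
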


\Pf This follows from \ref{analytic_St_etc} together with \ref{general_formula}. \qed

\vskip8pt

\begin{rem} As was pointed out in the previous section, the sheaf on the right (resp. left) of the first (resp. second) exact sequence in \ref{localizing_analytic_Steinberg_etc} is a skyscraper sheaf whose support is the set $Z_{0,n} := \frX_n^{\rm sm}(\bbF_q)$ of smooth $\bbF_q$-rational points of $\frX_n$. Using \cite{ChiarellottoDuality}, one can show that

$$\cO(\Sigma_{0,L})_n = \cO(\frX_n^{\circ,{\rm rig}})^\dagger$$

\vskip8pt

is the space of overconvergent rigid-analytic functions, i.e., the space of rigid-analytic functions on

$$\frX_n^{\circ,{\rm rig}} = {\rm sp}_{\frX_n}^{-1}\Big(\frX_n^\circ\Big)$$

\vskip8pt

which converge (to an unspecified extent) into the residue discs of the points in $Z_n$. It follows from this that the sheaf
$\Loc^\dagger_n\Big(\cO(\Sigma_{0,L})_n\Big)$ is equal to the sheaf $\cO(^\dagger Z_{0,n})$ of functions on $\frX_n$ which are regular on $\frX_n^\circ$ and have overconvergent singularities along the closed subset $Z_{0,n}$. This is an overconvergent isocrystal on $\frX_n$. Similarly, one can show that

$$\Omega(\Sigma_{0,L})_n = \Omega(\frX_n^{\circ,{\rm rig}})^\dagger$$

\vskip8pt

is the space of overconvergent rigid-analytic differential forms on $\frX_n^{\circ,{\rm rig}}$. The sheaf of differential forms $\Omega(^\dagger Z_{0,n})$ on $\frX_n$ which are regular on $\frX_n^\circ$ and have overconvergent singularities along $Z_{0,n}$ is not an isocrystal, however. Otherwise the localization of the space of global sections (on $\frX_n$) of this sheaf, namely $\Loc^\dagger_n(\Omega(\frX_n^{\rm rig})^\dagger)$, would have to be equal to $\Omega(^\dagger Z_{0,n})$. But it follows from the second exact sequence in \ref{localizing_analytic_Steinberg_etc} that the restriction of $\Loc^\dagger_n(\Omega(\frX_n^{\rm rig})^\dagger)$ to $\frX_n^\circ$ is locally free of rank $(q+1)q^n-1$, and not locally free of rank one.
\end{rem}

\vskip12pt

\section{Representations furnished by an \'etale covering of the $p$-adic upper half plane}\label{appl}

The theory in section \ref{loc_n} might be useful when one tries to decide whether certain modules $M$ over $D(\GL_2(\fro))$ are coadmissible, especially if $M$ 'comes from geometry' in a way that makes it accessible to the techniques used here. In this section we will show how the techniques from the theory of arithmetic differential operators can be used to prove that some particular locally analytic representations are admissible. The representations considered here are those furnished by the structure sheaf of the {\it first Drinfeld covering} of the $p$-adic upper half plane.

\vskip8pt

\subsection{The first Drinfeld covering}

\begin{para} Denote by $\Lnr$ the completion of the maximal unramified extension of $L$, and consider the base change $\Sigma_{0,L} \times_{\Sp(L)} \Sp(\Lnr)$ which we will from now on denote by $\Sigma_0$. This rigid space has a natural semistable formal model

$$\widehat{\Sigma}_0 = \varinjlim_n \frX_n^\circ \times_{\Spf(\fro)} \Spf(\fronr) \;,$$

\vskip8pt

where $\fronr$ is the ring of integers of $\Lnr$, cf. section \ref{the_padic_upper_half_plane}. By \cite{Drinfeld76} (cf. also \cite{BoutotCarayol}) the formal scheme $\widehat{\Sigma}_0$ represents a deformation functor of a particular two-dimensional formal group $F$ over $\fronr/(\vpi)$ which is equipped with an action of the maximal compact subring $O_D$ of a quaternion division algebra $D$ over $L$. Let $\vpi_D \sub O_D$ be a uniformizer. Denote by $\frF \ra \widehat{\Sigma}_0$ the universal deformation of $F$, and let $\frF[\vpi_D^j] \sub \frF$ be the finite flat group scheme of $\vpi_D^j$-torsion points. Denote by $\frF[\vpi_D^j]^{\rm rig}$ the associated rigid analytic space. Note that $\frF[1]$ is just the zero section of $\frF$, which is isomorphic to $\widehat{\Sigma}_0$. Then

$$\Sigma_1 = \frF[\vpi_D]^{\rm rig} \; \setminus \; \frF[1]^{\rm rig} = \frF[\vpi_D]^{\rm rig} \; \setminus \; \Sigma_0 \;.$$

\vskip8pt

Multiplication by $\vpi_D$ is a map $\frF[\vpi_D] \ra \frF[1]$ and hence induces a morphism $f: \Sigma_1 \ra \Sigma_0$ which is an \'etale covering space with Galois group $(\cO_D/(\vpi_D))^* \simeq \bbF_{q^2}^*$. We will henceforth write $\bbF_{q^2}^\ast$ for this covering group.

\vskip8pt

\begin{rem}\label{subgroup} The group $\GL_2(L)$ acts on $\Sigma_{0,L}$ by fractional linear transformations, and therefore also on $\Sigma_0 = \Sigma_{0,L} \times_{\Sp(L)} \Sp(\hat{L}^{\rm nr})$, and on the formal model $\widehat{\Sigma}_0$. However, this action, which is $\fronr$-linear, is {\it not} the action which is induced by the modular interpretation of $\widehat{\Sigma}_0$ as a deformation space, cf. \cite[I.6]{BoutotCarayol}. However, the group

$$\GL_2(L)^{(0)} = \Big\{g \in \GL_2(L) \midc \det(g) \in \fro_L^* \Big\}$$

\vskip8pt

does act $\fronr$-linearly on $\widehat{\Sigma}_0$, and $\hat{L}^{\rm nr}$-linearly on $\Sigma_0$, and the action of this group lifts to $\Sigma_1$. But in this section we will anyway only consider the action of $G_0 = \GL_2(\fro_L)$.
\end{rem}

{\it The subspaces $\Sigma_0(n)$ and $\Sigma_1(n)$.} There is a map

$$r: \Sigma_0 \ra |\cT|$$

\vskip8pt

from $\Sigma_0$ to the geometric realization $|\cT|$ of the Bruhat-Tits tree $\cT$ of ${\rm PGL}_2(L)$.
We refer to \cite[I.2.2]{BoutotCarayol} for the definition of this map (it is denoted by $\lambda$ in this paper).
Let $v_0 \in \cT$ be the vertex which is fixed by $\GL_2(\fro)$, and we denote by $|\cT|_{\le n} \sub |\cT|$ the set of points whose distance to $v_0$ is less or equal to $n$. For instance, $|\cT|_{\le 0} = \{v_0\}$. Then

$$\Sigma_0(n) = r^{-1}\Big(|\cT|_{\le n}\Big) \;,$$

\vskip8pt

is an affinoid subdomain of $\Sigma_0$, and is equal to the generic fiber of $\widehat{\Sigma}_0(n) := \frX_n^\circ \times_{\Spf(\fro)} \Spf(\fronr)$. We also put

$$\Sigma_1(n) = f^{-1}\Big(\Sigma_0(n)\Big) \;,$$

\vskip8pt

and we let $f_n: \Sigma_1(n) \ra \Sigma_0(n)$ denote the restriction of $f$.
\end{para}

\begin{para} {\it A semistable model for $\Sigma_1$.} In \cite{Teitelbaum_EtaleCov} J. Teitelbaum has shown that there is a finite (tamely ramified) extension $K$ of $\hat{L}^{\rm nr}$ such that $\Sigma_{1,K} = \Sigma_1 \times_{\Sp(L)} \Sp(K)$ has a semistable formal model $\widehat{\Sigma}_1$ over $\Spf(\fro_K)$. It follows from \cite[p. 72]{Teitelbaum_EtaleCov} that $\Sigma_{1,K}$ has $q^2-1$ connected components, which are transitively permuted by the action of the covering group, and each of these is an \'etale covering of $\Sigma_{0,K}$ of degree $q+1$. Consequently, the formal scheme $\widehat{\Sigma}_1$ has $q-1$ connected components, each of which is finite of degree $q+1$ over $\widehat{\Sigma}_0 \times_{\Spf(\fro_L)} \Spf(\fro_K)$.

\vskip8pt

As a general convention, we consider from now on the rigid spaces $\Sigma_0$, $\Sigma_0(n)$, $\Sigma_1$, $\Sigma_1(n)$, etc., as rigid spaces over $K$ (i.e., we perform a base change to $\Sp(K)$), and we consider the formal schemes $\widehat{\Sigma}_0$, $\widehat{\Sigma}_0(n)$, etc., as formal schemes over $\Spf(\fro_K)$ (i.e., we perform a base change to $\Spf(\fro_K)$).
The morphism $f: \Sigma_1 \ra \Sigma_0$ extends to a finite flat morphism $\widehat{f}: \widehat{\Sigma}_1 \ra \widehat{\Sigma}_0$ which is \'etale over the complement of the singular points of the special fiber of $\widehat{\Sigma}_0$. The irreducible components of the special fiber of $\widehat{\Sigma}_1$ are isomorphic to

\begin{numequation}\label{Drinfeld_curve}\Big\{[x:y:z] \in \bbP^2 \midc xy^q-x^qy=z^{q+1}\Big\} \;.
\end{numequation}

According to \cite[sec. 2]{DeligneLusztig}, it was V. Drinfeld who proved that the discrete series representations of $\SL_2(\bbF_q)$ are furnished by the $\ell$-adic \'etale cohomology of the affine curve $xy^q-x^qy=1$. Each of these projective curves lies over a unique irreducible component of the special fiber of $\widehat{\Sigma}_0$ (isomorphic to $\bbP^1$), and there are $q-1$ such curves over each irreducible component of the special fiber of $\widehat{\Sigma}_0$. The formal scheme

$$\widehat{\Sigma}_1(n) \eqdef \widehat{\Sigma}_1 \times_{\widehat{\Sigma}_0} \widehat{\Sigma}_0(n)$$

\vskip8pt

is then a formal model for $\Sigma_1(n)$, and it is an open formal subscheme of $\widehat{\Sigma}_1$. The morphism $\widehat{f}$ restricts to a morphism

$$\widehat{f}_n: \widehat{\Sigma}_1(n)  \lra \widehat{\Sigma}_0(n)$$

\vskip8pt

which is simply the projection from $\widehat{\Sigma}_1(n) = \widehat{\Sigma}_1 \times_{\widehat{\Sigma}_0} \widehat{\Sigma}_0(n)$ to the second factor. Denote by $\widehat{\Sigma}_1(n)^c$ the formal completion $\widehat{\Sigma}_1$ along the {\it closure} of $\widehat{\Sigma}_1(n)$. Similarly, let $\widehat{\Sigma}_0(n)^c$ be the completion of $\widehat{\Sigma}_0$ along the {\it closure} of $\widehat{\Sigma}_0(n)$. The morphism $\widehat{f}_n$ extends uniquely to a morphism $\widehat{f}_n^c:\widehat{\Sigma}_1(n)^c \ra  \widehat{\Sigma}_0(n)^c$ such that one has a commutative diagram of morphisms of formal schemes

\begin{numequation}\label{Tsuzuki_diagram}
\xymatrixcolsep{3pc}\xymatrix{
 \widehat{\Sigma}_1(n) \ar[r] \ar[d]^{\widehat{f}_n} & \widehat{\Sigma}_1(n)^c \ar[r] \ar[d]^{\widehat{f}_n^c} & \widehat{\Sigma}_1 \ar[d]^{\widehat{f}}\\
 \widehat{\Sigma}_0(n) \ar[r] & \widehat{\Sigma}_0(n)^c \ar[r] & \widehat{\Sigma}_0 \\
}
\end{numequation}

The special fiber of a formal scheme will always be denoted by the subscript ``$s$'', e.g., $\widehat{\Sigma}_1(n)_s$ denotes the special fiber of $
\widehat{\Sigma}_1(n)$. When it is clear from the context that it is only the special fiber of a given formal scheme $\cY$ which matters, we may drop the index ``$s$'' and write $\cY$ instead of $\cY_s$. Put $Z_{1,n} = \widehat{\Sigma}_1(n)^c_s \, \setminus \, \widehat{\Sigma}_1(n)_s$. This is a finite set of points of $\widehat{\Sigma}_{1,s}$.
\end{para}

\begin{para} {\it Sheaves of overconvergent functions.} Let

$${\rm sp}_{\widehat{\Sigma}_1}: \Sigma_1 \lra \widehat{\Sigma}_{1}$$

\vskip8pt

be the specialization map, and let

$$v_n: \widehat{\Sigma}_1(n)_s \hra \widehat{\Sigma}_{1,s}$$

\vskip8pt

be the open immersion. Associated to this morphism is the sheaf $v_n^\dagger \cO_{\Sigma_1}$ on $\Sigma_1$, cf. \cite[4.0.1]{Berthelot_Trento}, and we consider the sheaf

\begin{numequation}\label{F_n} \sF_n \eqdef ({\rm sp}_{\widehat{\Sigma}_1})_* v_n^\dagger \cO_{\Sigma_1}
\end{numequation}

on $\widehat{\Sigma}_{1,s}$ which is supported on $\widehat{\Sigma}_1(n)^c_s$. Another way to describe $\sF_n$ is as the sheaf of rigid-analytic functions on $\widehat{\Sigma}_1(n)^c$ which are regular on (the tubular neighborhood of) $\widehat{\Sigma}_1(n)$ and have overconvergent singularities along $Z_{1,n}$:

$$\sF_n = \cO_{\widehat{\Sigma}_1(n)^c,\Q}\Big({}^\dagger Z_{1,n}\Big) \;.$$

\vskip8pt

By its very definition, the space of global sections of $\sF_n$ on $\widehat{\Sigma}_1$ is equal to the space of the overconvergent functions on $\Sigma_1(n)$, which we denote by $H^0(\Sigma_1(n),\cO)^\dagger$, i.e.,

$$H^0\Big(\widehat{\Sigma}_1, \sF_n \Big) = H^0\Big(\widehat{\Sigma}_1(n)^c, \sF_n \Big) = H^0(\Sigma_1(n),\cO)^\dagger \;.$$

\vskip8pt
\end{para}

\subsection{Main result: coadmissibility of the space of global functions}

Recall that $G_0$ denotes the group $\GL_2(\fro_L)$. Let $\cD^{\rm an}(\bbG(0)^\circ)^{\rm nr}$ be the completed tensor product over $L$ of $\cD^{\rm an}(\bbG(0)^\circ)$ with the finite extension $K$ of $\hat{L}^{\rm nr}$. Similarly, denote by $D(\bbG(n)^\circ,G_0)^{\rm nr}$ the completed tensor product of $D(\bbG(n)^\circ,G_0)$ with $K$ over $L$.

\begin{thm}\label{admissibility_I} (i) For every $n \in \bbZ_{\ge 0}$ the space $H^0(\Sigma_1(n),\cO)^\dagger$ is a $\cD^{\rm an}(\bbG(n)^\circ)^{\rm nr}$-module of finite presentation.

\vskip8pt

(ii) For every $n \in \bbZ_{\ge 0}$ the space $H^0(\Sigma_1(n),\cO)^\dagger$ is a finitely generated $D(\bbG(n)^\circ,G_0)^{\rm nr}$-module.

\vskip8pt

(iii) For every $n \in \bbZ_{\ge 1}$ the restriction map

$$H^0(\Sigma_1(n),\cO)^\dagger \lra H^0(\Sigma_1(n-1),\cO)^\dagger$$

\vskip8pt

induces an isomorphism of $D(\bbG(n)^\circ,G_0)^{\rm nr}$-modules

\begin{numequation}\label{comp} D(\bbG(n-1)^\circ,G_0)^{\rm nr} \otimes_{D(\bbG(n)^\circ,G_0)^{\rm nr}} H^0(\Sigma_1(n),\cO)^\dagger \stackrel{\simeq}{\lra} H^0(\Sigma_1(n-1),\cO)^\dagger \;.
\end{numequation}

(iv) The space $H^0(\Sigma_1,\cO)$ is a coadmissible $D(G_0,K)$-module.
\end{thm}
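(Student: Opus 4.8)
The plan for part (iv), granting (i)--(iii): the substance of the theorem is contained in the first three parts, which rest on the coherence theorems for direct images in crystalline and rigid cohomology cited in the introduction together with the localization equivalence of Section~\ref{loc_n}; given these, (iv) follows formally from the weak Fr\'echet--Stein formalism of \cite[ch. 1]{EmertonA}.

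\emph{Step 1: identify} $M:=H^0(\Sigma_1,\cO)$ \emph{with} $\varprojlim_n M_n$, \emph{where} $M_n:=H^0(\Sigma_1(n),\cO)^\dagger$. Since $\Sigma_1=\bigcup_n\Sigma_1(n)$ is an admissible increasing cover by affinoid subdomains --- pulled back from the Stein exhaustion $\Sigma_0=\bigcup_n\Sigma_0(n)$ along the finite morphism $f$ --- one has $H^0(\Sigma_1,\cO)=\varprojlim_n H^0(\Sigma_1(n),\cO)$. On the other hand $M_n$ consists, by definition, of rigid-analytic functions on a strict neighbourhood of (the tube of) $\widehat\Sigma_1(n)$, and such a neighbourhood contains $\Sigma_1(n)$ and is contained in $\Sigma_1(n')$ for a suitable $n'>n$. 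Thus the restriction maps organise the two projective systems $\big(M_n\big)_n$ and $\big(H^0(\Sigma_1(n),\cO)\big)_n$ into a commutative ladder, which exhibits them as mutually cofinal and hence yields $M=\varprojlim_n M_n$.

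\emph{Step 2: equip} $M$ \emph{with a coadmissible structure over} $D(G_0,K)=\varprojlim_n D(\bbG(n)^\circ,G_0)^{\rm nr}$, the base change to $K$ of the congruence (weak Fr\'echet--Stein) structure $D(G_0)=\varprojlim_n D(\bbG(n)^\circ,G_0)$ of \cite[Prop. 5.3.1]{EmertonA}. By (i), $M_n$ is finitely presented over $\cD^{\rm an}(\bbG(n)^\circ)^{\rm nr}$; since $D(\bbG(n)^\circ,G_0)^{\rm nr}$ is finite free over this subring, cf.~\ref{equ-finitefree}, a finite presentation over the subring propagates to one over $D(\bbG(n)^\circ,G_0)^{\rm nr}$ --- the kernel of a surjection from a finitely generated projective module onto a finitely presented module is again finitely generated --- so $M_n$ is finitely presented over $D(\bbG(n)^\circ,G_0)^{\rm nr}$, which in particular re-proves (ii). By (iii) we have isomorphisms $D(\bbG(n-1)^\circ,G_0)^{\rm nr}\otimes_{D(\bbG(n)^\circ,G_0)^{\rm nr}}M_n\car M_{n-1}$, and Lemma~\ref{lem-decomplete} --- applied with $A'=\cD^{\rm an}(\bbG(n)^\circ)^{\rm nr}$, $A=D(\bbG(n)^\circ,G_0)^{\rm nr}$, $B'=\cD^{\rm an}(\bbG(n-1)^\circ)^{\rm nr}$, $B=D(\bbG(n-1)^\circ,G_0)^{\rm nr}$ and the finitely presented module $M_n$, using the ``over $K$'' version of Lemma~\ref{lem-compacttype} (cf.\ the final remark of Section~\ref{loc}) --- identifies the source of this map with the completed tensor product. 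Hence $\big(M_n\big)_n$ is a $\big(D(\bbG(n)^\circ,G_0)^{\rm nr}\big)_n$-sequence for $M$ in the sense of \cite[Def. 1.3.8]{EmertonA}.

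\emph{Step 3: conclude.} A module admitting such a sequence relative to a weak Fr\'echet--Stein structure is coadmissible, and coadmissibility does not depend on the chosen structure; for $D(G_0,K)$ the congruence structure refines the Schneider--Teitelbaum Fr\'echet--Stein structure, cf.~\cite[Cor. 5.3.19]{EmertonA}. It follows that $M=H^0(\Sigma_1,\cO)$ is a coadmissible $D(G_0,K)$-module. I expect the only genuinely non-formal point among these steps to be the cofinality bookkeeping in Step~1 --- making precise the comparison between overconvergent functions on $\Sigma_1(n)$ and honest functions on nearby members of the exhaustion --- while Steps~2 and 3 are direct applications of results already established in the paper.
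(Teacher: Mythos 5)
Your treatment of part (iv) is correct and in fact follows the same route as the paper, which disposes of (iv) in one line by citing Emerton's criterion; you usefully make explicit two points the paper leaves implicit, namely the cofinality identification $H^0(\Sigma_1,\cO)=\varprojlim_n H^0(\Sigma_1(n),\cO)^\dagger$ (the quasi-Stein exhaustion interlaced with the overconvergent spaces) and the passage from the ordinary tensor products of \ref{comp} to the completed tensor products required for a $(D(\bbG(n)^\circ,G_0)^{\rm nr})_n$-sequence, via Lemma \ref{lem-decomplete}. Your upgrade of (ii) from finite generation to finite presentation over the crossed product (Schanuel applied to the finite free extension \ref{equ-finitefree}) is also correct and is exactly what is needed for the decompletion lemma to apply. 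So the formal skeleton is sound and matches the paper's.

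The gap is that you have granted precisely the parts of the theorem that carry its mathematical content. The paper's proof of (i) occupies Sections \ref{starting_point} and \ref{extending}: one must show that $(\widehat{f}^c_n)_*\sF_n$ is a coherent $\tsD^\dagger_{n,n,\Q}$-module on $\frX_n^{\rm nr}$, which is done by a case analysis at three types of points using the coherence of direct images of overconvergent ($F$-)isocrystals (Tsuzuki, Caro, Noot-Huyghe--Trihan, and Shiho's logarithmic convergent cohomology at the singular points), followed by the localization equivalence \ref{thm-equivalence} to pass from sheaf coherence to finite presentation of global sections over $\cD^{\rm an}(\bbG(n)^\circ)^{\rm nr}$. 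Likewise (iii) is not formal: Section \ref{compatibility} proves it by combining density of the restriction maps (quasi-Steinness), closedness of images of maps of finitely generated modules over compact type algebras (\ref{prop-cantop}), flatness of $D(G_0)^{\rm nr}\rightarrow D(\bbG(n)^\circ,G_0)^{\rm nr}$, and an identification of $H^0(\Sigma_1(n),\cO)^\dagger$ with the dual of the $\bbG(n)^\circ$-analytic vectors via \cite[3.4.5]{EmertonA}. Without arguments for (i) and (iii), what you have is a correct reduction of (iv) to (i)--(iii), not a proof of the theorem; you flag this yourself, but be aware that the "only genuinely non-formal point" is not the cofinality bookkeeping in your Step 1 -- it is the coherence and base-change statements you have assumed.
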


\vskip8pt

\Pf (i) This will be shown in the following section \ref{starting_point}, where we treat first the case when $n=0$, and in section \ref{extending}, which deals with the case of general $n$.

\vskip8pt

(ii) This follows from (i) because $D(\bbG(n)^\circ,G_0)^{\rm nr}$ contains $\cD^{\rm an}(\bbG(n)^\circ)^{\rm nr}$.

\vskip8pt

(iii) This will be shown in section \ref{compatibility}.

\vskip8pt

(iv) This is a consequence of (ii) and (iii), by \cite[6.1.20]{EmertonA}.
\qed

\vskip8pt

Consider a character $\chi: \bbF_{q^2}^\ast \ra K^\ast$ of the covering group $\bbF_{q^2}^*$, and denote by $H^0(\Sigma_1,\cO)^\chi$ the $\chi$-isotypic component of $H^0(\Sigma_1,\cO)$. As $G_0$ commutes with the action of $\bbF_{q^2}^*$, this is a $D(G_0,K)$-submodule of $H^0(\Sigma_1,\cO)$, and we have the decomposition of $D(G_0,K)$-modules

$$H^0(\Sigma_1,\cO) = \bigoplus_\chi H^0(\Sigma_1,\cO)^\chi \;,$$

\vskip8pt

where $\chi$ runs over all characters of $\bbF_{q^2}^*$.

\vskip8pt

\begin{cor}\label{admissibility_II} For every character $\chi: \bbF_{q^2}^\ast \ra K^\ast$ the space $H^0(\Sigma_1,\cO)^\chi$ is a coadmissible $D(G_0,K)$-module.
\end{cor}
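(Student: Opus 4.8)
The plan is to deduce this from Theorem~\ref{admissibility_I}(iv) together with the good categorical behaviour of coadmissible modules, exploiting that the decomposition into $\chi$-isotypic components is split by a finite-group idempotent.

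First I would observe that the covering group $\bbF_{q^2}^\ast$ of the \'etale covering $\Sigma_1 \ra \Sigma_0$ acts $K$-linearly on $M := H^0(\Sigma_1,\cO)$, and that this action commutes with the $G_0$-action (as already noted just before the statement of \ref{admissibility_II}), so that $\bbF_{q^2}^\ast$ acts on $M$ through automorphisms of $D(G_0,K)$-modules. Since $q^2-1 = |\bbF_{q^2}^\ast|$ is invertible in $K$, for each character $\chi\colon \bbF_{q^2}^\ast \ra K^\ast$ the standard idempotent $e_\chi = (q^2-1)^{-1}\sum_{a \in \bbF_{q^2}^\ast} \chi(a)^{-1} a$ in the group algebra $K[\bbF_{q^2}^\ast]$ acts on $M$ as a $D(G_0,K)$-linear endomorphism whose image is exactly $H^0(\Sigma_1,\cO)^\chi = e_\chi M$, and $M = \bigoplus_\chi e_\chi M$ as $D(G_0,K)$-modules. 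Here $\chi$ is $K$-valued by hypothesis, so $e_\chi$ does lie in $K[\bbF_{q^2}^\ast]$.

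Next, by Theorem~\ref{admissibility_I}(iv) the module $M$ is coadmissible over $D(G_0,K)$. I would then invoke \cite{ST5}: for a Fr\'echet--Stein algebra such as $D(G_0,K)$ the coadmissible modules form a full abelian subcategory of the category of all abstract $D(G_0,K)$-modules, and the inclusion into the latter is exact. Since $e_\chi$ is an endomorphism of the coadmissible module $M$, its image formed in this abelian category coincides with the honest image $e_\chi M$; hence $H^0(\Sigma_1,\cO)^\chi$ is coadmissible, which is the assertion. (Equivalently: a $D(G_0,K)$-module direct summand of a coadmissible module is a quotient of it by a closed submodule, and coadmissible submodules are closed.)

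I do not expect a genuine obstacle: the corollary is essentially formal once Theorem~\ref{admissibility_I} is available, the only content being the elementary observation that a finite-group-equivariant direct summand inherits coadmissibility. As a robustness check, and more in the spirit of the proof of Theorem~\ref{admissibility_I} itself, one can instead argue layer by layer: $\bbF_{q^2}^\ast$ acts $D(\bbG(n)^\circ,G_0)^{\rm nr}$-linearly on each $H^0(\Sigma_1(n),\cO)^\dagger$, so the summand $e_\chi H^0(\Sigma_1(n),\cO)^\dagger$ is still finitely generated over $D(\bbG(n)^\circ,G_0)^{\rm nr}$; applying $e_\chi$ to the isomorphisms of Theorem~\ref{admissibility_I}(iii) yields the analogous isomorphisms for the $\chi$-components (tensor product commutes with the idempotent), and $\varprojlim_n e_\chi H^0(\Sigma_1(n),\cO)^\dagger = H^0(\Sigma_1,\cO)^\chi$ since passage to the $\chi$-component commutes with the projective limit; one then concludes by \cite[6.1.20]{EmertonA} exactly as in the proof of Theorem~\ref{admissibility_I}(iv). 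If one wanted to allow characters not valued in $K$, one would first base change to a finite extension of $K$ — under which coadmissibility is preserved — and descend the $K$-rational summand afterwards.
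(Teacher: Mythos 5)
Your argument is correct and is essentially the paper's own proof: the paper likewise uses the averaging idempotent $f \mapsto \frac{1}{q^2-1}\sum_{\zeta}\chi(\zeta)\,(\zeta^{-1}.f)$ to split off the $\chi$-isotypic component and then concludes from the stability properties of coadmissible modules (quotient by a closed submodule, equivalently image/kernel of a $D(G_0,K)$-linear endomorphism of a coadmissible module). The layer-by-layer variant you sketch is not needed but would also work.
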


\Pf The projection map

$$H^0(\Sigma_1,\cO) \twoheadrightarrow H^0(\Sigma_1,\cO)^\chi \;, \;\; f \mapsto \frac{1}{q^2-1}\sum_{\zeta \in \bbF_{q^2}^*} \chi(\zeta) \cdot (\zeta^{-1}.f) \;,$$

\vskip8pt

is continuous and has as section the natural inclusion. Hence each isotypic component $H^0(\Sigma_1,\cO)^\chi$ is the quotient of a coadmissible module by a closed submodule, and is thus coadmissible too.
\qed

\begin{rem} Consider the de Rham sequence

$$0 \lra K \lra H^0(\Sigma_1,\cO) \lra H^0(\Sigma_1,\Omega) \lra H^1_{{\rm dR}}(\Sigma_1/K) \lra 0 \;.$$

\vskip8pt

It is exact because $\Sigma_1$ is a Stein space (as is $\Sigma_0$). One can show that the de Rham cohomology on the right is the (algebraic) dual space of a finite number of irreducible smooth representations of $\GL_2(L)^{(0)}$ (cf. \ref{subgroup} for the notation)\footnote{As far as we know, a proof of this has not been written up so far. It should be possible to do so by using the work of E. Grosse-Kl\"onne on Hyodo-Kato cohomology for such spaces as $\Sigma_1$.}. It follows from \cite[3.2]{ST5} that the category of coadmissible modules is closed under extensions. Therefore, \ref{admissibility_I} implies that $H^0(\Sigma_1,\Omega)$ is a coadmissible $D(G_0,K)$-module, and the same is true for the isotypic components $H^0(\Sigma_1,\Omega)^\chi$.
\end{rem}

\vskip8pt

\subsection{The starting point: the affinoid $\Sigma_1(0)$}\label{starting_point}

The case when $n=0$ is somewhat easier to deal with because $\widehat{\Sigma}_1(0)^c_s$ is smooth, since it is a disjoint union of $q-1$ copies of the curve in \ref{Drinfeld_curve}. Moreover, the morphism $\widehat{f}_{0,s}: \widehat{\Sigma}_1(0)_s \ra \widehat{\Sigma}_0(0)_s$ is smooth too. It is because of these simplifications that we treat this case beforehand. Denote by $\frX_0^{\rm nr}$ the fibre product $\frX_0 \times_{\Spf(\fro)} \Spf(\fro_K)$. Let $\sD^\dagger_{\frX^{\rm nr}_0,\Q}$ be the sheaf of arithmetic differential operators on the smooth formal scheme $\frX_0^{\rm nr}$.

\vskip12pt

\begin{prop}\label{case_n_0} The space $H^0\Big(\Sigma_1(0),\cO \Big)^\dagger$ is a finitely presented $\cD^{\rm an}(\bbG(0)^\circ)^{\rm nr}$-module.
\end{prop}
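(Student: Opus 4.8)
The plan is to reduce the statement to the $\sD^\dagger_{\frX_0^{\rm nr},\Q}$-affinity established earlier, via the finiteness of the pushforward along the finite \'etale morphism $\widehat{f}_0$. First I would set up the relevant sheaf: on $\frX_0^{\rm nr}$ consider $\cG_0 := (\widehat{f}_{0,s})_*\sF_0$, where $\sF_0$ is the sheaf of overconvergent functions on $\widehat{\Sigma}_1(0)^c$ from \ref{F_n}, pushed down to $\widehat{\Sigma}_0(0)^c$ and then spread out over all of $\frX_0^{\rm nr}$ by extension by zero (recall $\widehat{\Sigma}_0(0)_s = \frX_0^{\circ}$ base-changed, sitting inside $\frX_{0,s}^{\rm nr}$). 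The key geometric input is Diagram \ref{Tsuzuki_diagram}: since $\widehat{f}_0$ is finite flat and \'etale away from the singular points of the special fiber, its direct image of $\cO_{\Sigma_1(0)}$ with overconvergent poles along $Z_{1,n}$ is, by the cited results of Caro, Noot-Huyghe--Trihan, Shiho and Tsuzuki on coherence of direct images in crystalline cohomology, a coherent $\sD^\dagger_{\frX_0^{\rm nr},\Q}$-module on $\frX_0^{\rm nr}$. This is the heart of the matter and the step I expect to be the main obstacle: one must carefully match the overconvergent-isocrystal direct image $\widehat{f}_{0,s,+}$ with the naive sheaf-theoretic pushforward of $v_n^\dagger \cO_{\Sigma_1}$, and invoke smoothness of $\widehat{\Sigma}_1(0)^c_s$ (a disjoint union of copies of the curve \ref{Drinfeld_curve}) together with smoothness of $\widehat{f}_{0,s}$ to be in the geometric situation where those coherence theorems apply.

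Granting that $\cG_0$ is a coherent $\sD^\dagger_{\frX_0^{\rm nr},\Q}$-module, the next step is to apply the localization theory: since $\frX_0^{\rm nr}$ is $\sD^\dagger_{\frX_0^{\rm nr},\Q}$-affine (this is Huyghe's theorem, which is the case $(n,k)=(0,0)$ of our \ref{thm-equivalence}, base-changed to $\fro_K$), the global sections $H^0(\frX_0^{\rm nr},\cG_0)$ form a finitely presented module over $H^0(\frX_0^{\rm nr},\sD^\dagger_{\frX_0^{\rm nr},\Q})$. By construction and by \ref{F_n},
$$H^0(\frX_0^{\rm nr},\cG_0) = H^0(\widehat{\Sigma}_0(0)^c, (\widehat{f}_0)_*\sF_0) = H^0(\widehat{\Sigma}_1,\sF_0) = H^0(\Sigma_1(0),\cO)^\dagger \;.$$
It then remains to identify the ring $H^0(\frX_0^{\rm nr},\sD^\dagger_{\frX_0^{\rm nr},\Q})$ with $\cD^{\rm an}(\bbG(0)^\circ)^{\rm nr}$. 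But $\sD^\dagger_{\frX_0^{\rm nr},\Q} = \tsD^\dagger_{0,0,\Q}\hat{\otimes}_L K$, so by \ref{global_sec_tsD}(v) (and the final remark of section \ref{loc} on base change to $K$) its global sections are $\cD^{\rm an}(\bbG(0)^\circ)_{\theta_0}^{\rm nr}$; since the trivial-central-character reduction is harmless here (the overconvergent functions carry the trivial infinitesimal character, matching $\theta_0$), we obtain that $H^0(\Sigma_1(0),\cO)^\dagger$ is a finitely presented $\cD^{\rm an}(\bbG(0)^\circ)^{\rm nr}$-module, as claimed.

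One subtlety worth flagging in the write-up: the covering group $\bbF_{q^2}^\ast$ acts on everything, and the decomposition into isotypic components commutes with the $G_0$-action; this is not strictly needed for the finiteness statement itself, but it is convenient to keep track of it, because the coherence results for $\widehat{f}_{0,+}$ are most cleanly stated componentwise (each connected component of $\widehat{\Sigma}_1(0)^c_s$ being a single smooth curve finite of degree $q+1$ over the corresponding component of $\widehat{\Sigma}_0(0)^c_s$). I would therefore organize the coherence argument component-by-component and then reassemble, using that a finite direct sum of finitely presented modules is finitely presented. The other routine point to check is that extension by zero from $\widehat{\Sigma}_0(0)^c_s$ to all of $\frX_{0,s}^{\rm nr}$ preserves coherence over $\sD^\dagger_{\frX_0^{\rm nr},\Q}$ — this follows because the complement is closed and the sheaf is already supported on a closed formal subscheme, so no cohomology is lost in taking global sections over $\frX_0^{\rm nr}$ versus over $\widehat{\Sigma}_0(0)^c$.
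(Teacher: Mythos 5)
Your proposal follows essentially the same route as the paper: one shows that $(\widehat{f}^c_0)_*\sF_0$ is a coherent (in fact holonomic) $\sD^\dagger_{\frX^{\rm nr}_0,\Q}$-module on $\frX^{\rm nr}_{0,s}\simeq\widehat{\Sigma}_0(0)^c_s$ via Tsuzuki's finite-\'etale pushforward theorem and Caro/Noot-Huyghe--Trihan, and then applies Huyghe's $\sD^\dagger$-affinity of the smooth model together with \ref{global_sec_tsD} to conclude finite presentation over $\cD^{\rm an}(\bbG(0)^\circ)^{\rm nr}$. The one step you flag as ``the main obstacle'' --- knowing that $\sF_0$ is an overconvergent $F$-isocrystal on the pair $(\widehat{\Sigma}_{1,s}(0),\widehat{\Sigma}_{1,s}(0)^c)$ so that Tsuzuki applies --- is supplied in the paper by Grosse-Kl\"onne's work on the Deligne--Lusztig curves $xy^q-x^qy=1$; also note that no extension by zero is needed, since $\widehat{\Sigma}_0(0)^c_s$ is already all of $\frX^{\rm nr}_{0,s}$.
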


\vskip8pt

\Pf Recall the sheaf $\sF_0 = ({\rm sp}_{\widehat{\Sigma}_1})_* v_0^\dagger \cO_{\Sigma_1}$ introduced in \ref{F_n}.
This sheaf on $\widehat{\Sigma}_1(0)^c_s$ depends only on $\widehat{\Sigma}_1(0)_s$ which is a disjoint union of affine Deligne-Lusztig curves $xy^q-x^qy=1$. By \cite[sec. 2]{GKDeligneLusztig}, $\sF_0$ is an overconvergent $F$-isocrystal on the pair $(\widehat{\Sigma}_{1,s}(0), \widehat{\Sigma}_{1,s}(0)^c)$, where we use here the notation of \cite{TsuzukiBaseChange}. It is overconvergent along $Z_{1,0} = \widehat{\Sigma}_1(0)^c_s \, \setminus \, \widehat{\Sigma}_1(0)_s$. Consider the diagram

\begin{numequation}\label{Tsuzuki_diagram_0}
\xymatrixcolsep{3pc}\xymatrix{
 \widehat{\Sigma}_1(0)_s \ar[r] \ar[d]^{\widehat{f}_{0,s}} & \widehat{\Sigma}_1(0)^c_s \ar[r] \ar[d]^{\widehat{f}_{0,s}^c} & \widehat{\Sigma}_1 \ar[d]^{\widehat{f}}\\
 \widehat{\Sigma}_0(0)_s \ar[r] & \widehat{\Sigma}_0(0)^c_s \ar[r] & \widehat{\Sigma}_0 \\
}
\end{numequation}

cf. \ref{Tsuzuki_diagram}. By \cite[Thm. 4.1.4]{TsuzukiBaseChange}, the direct image $(\widehat{f}^c_0)_* \sF_0$ is an overconvergent $F$-isocrystal on the pair $(\widehat{\Sigma}_0(0)_s, \widehat{\Sigma}_0(0)^c_s)$. The preceding diagram \ref{Tsuzuki_diagram_0} corresponds to the diagram in \cite[Thm. 4.1.1]{TsuzukiBaseChange}. Note that $\widehat{\Sigma}_0(0)^c_s$ is isomorphic to the projective line over the residue field of $\fro_K$, which is just $\frX^{\rm nr}_{0,s}$.

\vskip8pt

By \cite[Thm. 4.3.5]{Caro06} or \cite[Prop. 3.1]{NH_Trihan07}, the $\sD_{\frX^{\rm nr}_0,\Q}^\dagger$-module $(\widehat{f}^c_0)_* \sF_0$ on $\frX_{0,s}^{\rm nr}$, is holonomic, and, in particular, coherent. By the main result of \cite{Huyghe97} (or by \ref{thm-equivalence}), the space of global sections of this sheaf, namely

$$H^0\Big(\frX_{0,s}^{\rm nr},(\widehat{f}^c_0)_* \sF_0\Big) = H^0\Big(\Sigma_1(0),\cO \Big)^\dagger \;,$$

\vskip8pt

is a finitely presented $H^0(\frX^{\rm nr}_{0,s},\sD_{\frX^{\rm nr}_0,\Q}^\dagger)$-module. Using \ref{global_sec_tsD} we see that it is a finitely presented $\cD^{\rm an}(\bbG(0)^\circ)^{\rm nr}_{\theta_0}$-module.  \qed

\vskip8pt

\subsection{Extending the arguments to $\Sigma_1(n)$ for all $n$}\label{extending}

In this section we prove \ref{admissibility_I} for all $n$. To simplify notation we will occasionally write $\tsD^\dagger_{n,\Q}$ instead of $\tsD^\dagger_{n,n,\Q}$. Denote by $\frX_n^{\rm nr}$ the fibre product $\frX_n \times_{\Spf(\fro)} \Spf(\fro_K)$. Note that the special fiber $\widehat{\Sigma}_0(n)^c_s$ of $\widehat{\Sigma}_0(n)^c$ is canonically isomorphic to the special fiber of $\frX_n^{\rm nr}$.

\begin{para} {\it Three types of closed points.} We want to show that $(\widehat{f}_n^c)_* \sF_n$ is a coherent $\tsD^\dagger_{n,\Q}$-module on $\frX_n^{\rm nr}$. Once we have this result at hand we can apply \ref{prop-genglobal} to deduce that

$$H^0(\frX_{n,s}^{\rm nr},(\widehat{f}_n^c)_* \sF_n) = H^0(\Sigma_1(n),\cO)^\dagger$$

\vskip8pt

is a coherent $\cD^{\rm an}(\bbG(n)^\circ)_{\theta_0}$-module and we are done. In order to show that $(\widehat{f}_n^c)_* \sF_n$ is a coherent $\tsD^\dagger_{n,\Q}$-module, we distinguish the following types of closed points on $\frX_{n,s}^{\rm nr}$:

\vskip8pt

(a) Points $x$ which lie on the smooth locus of $\frX_{n,s}^{\rm nr}$, but not in $\frX_n(\Fq)$.

\vskip5pt

(b) Points $x$ which lie on the singular locus of $\frX_{n,s}^{\rm nr}$.

\vskip5pt

(c) Points $x$ which lie on the smooth locus of $\frX_{n,s}^{\rm nr}$ and in $\frX_n(\Fq)$. These are exactly the smooth $\Fq$-rational points of $\frX_n$, the set of which we denote by $\frX_n^{\rm sm}(\Fq)$.
\end{para}

\vskip8pt

\begin{prop}\label{prop-prep}
Let $U$ be an open formal subscheme of $\frX_n$ and let $\mathring{\sE}$ be a $\tcD^{(m)}_{n,n}|_U$-module which is coherent as an $\cO_U$-module. Then:
\vskip8pt

(i) $\mathring{\sE}$ is coherent over $\tcD^{(m)}_{n,n}|_U$,

\vskip8pt
(ii)  the canonical homomorphism
$$ \mathring {\sE}\car \tsD^{(m)}_{n,n}|_U\otimes_{\tcD^{(m)}_{n,n}|_U} \mathring{\sE}$$
given by $a\mapsto 1\otimes a$ is an isomorphism.
\end{prop}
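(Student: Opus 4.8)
The plan is to reduce Proposition \ref{prop-prep} to statements already established for the structure sheaf and for the sheaves $\tcD^{(m)}_{n,n}$ in section \ref{new_sheaves}, and then to invoke Berthelot's formalism of coherent modules over $p$-adically complete sheaves of rings. First I would treat (i). Since the question is local on $U$, I may shrink $U$ to an affine open belonging to the basis $\frU$ of \ref{global_sec_tsD}(i), so that $H^0(U,\tcD^{(m)}_{n,n})$ is noetherian and $\tcT_{n,n}|_U$ is free over $\cO_U$. On such a $U$ the sheaf $\tcD^{(m)}_{n,n}|_U$ is, as a filtered sheaf, a ``twisted polynomial algebra'' over $\cO_U$ in one variable (the generator of $\tcT_{n,n}|_U$), and its sections over affine opens are noetherian; moreover $\mathring{\sE}$, being $\cO_U$-coherent, is in particular generated over $\tcD^{(m)}_{n,n}|_U$ by a coherent $\cO_U$-submodule (namely itself), so $\mathring{\sE}$ has a presentation by finite free $\tcD^{(m)}_{n,n}|_U$-modules. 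The standard argument (cf. \cite{BB81}, or \cite[Ch. I]{HuyA}-type reasoning for $\cD$-modules on noetherian schemes with noetherian rings of sections) then shows that a $\tcD^{(m)}_{n,n}|_U$-module which is $\cO_U$-coherent is automatically $\tcD^{(m)}_{n,n}|_U$-coherent: the kernel of any local presentation is again $\cO_U$-coherent because $\cO_U$ is noetherian, hence finitely generated.

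Next I would prove (ii). The key point is that completion interacts well with $\cO_U$-coherent modules. Write $\tsD^{(m)}_{n,n}|_U = \varprojlim_j \tcD^{(m)}_{n,n}|_U/p^{j+1}$ for the $p$-adic completion. Since $\mathring{\sE}$ is $\cO_U$-coherent, its $p$-adic completion $\widehat{\mathring{\sE}}$ agrees with $\mathring{\sE}$ itself as a sheaf on the (noetherian, $p$-adic) formal scheme $U$ — this is precisely the statement that $\cO_U$-coherent sheaves on a noetherian formal scheme are already $p$-adically complete, cf. \cite[3.2.2, 3.2.3]{BerthelotDI}. Now the natural map $\mathring{\sE}\to \tsD^{(m)}_{n,n}|_U\otimes_{\tcD^{(m)}_{n,n}|_U}\mathring{\sE}$ can be checked to be an isomorphism after reduction modulo $p^{j+1}$ for all $j$: modulo $p^{j+1}$ the map $\tcD^{(m)}_{n,n}|_U\to \tsD^{(m)}_{n,n}|_U$ becomes an isomorphism of sheaves of rings (this is \ref{iso_uncompl_compl} with $p$ replaced by $p^{j+1}$, or follows by the same argument), so the base-changed module is unchanged mod $p^{j+1}$. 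Passing to the inverse limit over $j$, using part (i) together with \cite[3.2.3]{BerthelotDI} to identify $\tsD^{(m)}_{n,n}|_U\otimes_{\tcD^{(m)}_{n,n}|_U}\mathring{\sE}$ with its own $p$-adic completion (it is $\tcD^{(m)}_{n,n}|_U$-coherent by (i), as base change preserves coherence, hence $p$-adically complete on the noetherian formal scheme), one concludes that the map is an isomorphism.

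The main obstacle I anticipate is a bookkeeping one rather than a conceptual one: one must be careful that $\tcD^{(m)}_{n,n}$ — unlike $\sD^{(m)}_{\frX_0}$ in Huyghe's or Berthelot's setting — is not a priori given to satisfy all the hypotheses of \cite[3.3.1, 3.4.1]{BerthelotDI} on the nose, so I would need to verify (or cite from \ref{graded_tcD} and \ref{global_sec_tsD}) that on the distinguished basis $\frU$ the sheaf has noetherian sections, that the filtration pieces $\tcD^{(m)}_{n,n;d}$ are $\cO_{\bbX_n}$-coherent, and that these properties localize correctly to the open $U$. Once this local set-up is in place, both (i) and (ii) are formal consequences of the noetherian hypothesis and Berthelot's completion lemmas, exactly as in the smooth case. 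I would also remark that (ii) gives, in particular, that $\tsD^{(m)}_{n,n}|_U\otimes_{\tcD^{(m)}_{n,n}|_U}\mathring{\sE}$ is independent, as a sheaf, of the chosen integral model $\mathring{\sE}$ up to the identification furnished by (ii), which is the form in which the proposition is used in section \ref{extending}.
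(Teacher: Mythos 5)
Your part (i) contains a genuine error at the key step. You assert that ``the kernel of any local presentation is again $\cO_U$-coherent because $\cO_U$ is noetherian, hence finitely generated.'' But the relevant presentation is the surjection $(\tcD^{(m)}_{n,n}|_U)^{\oplus r}\twoheadrightarrow\mathring{\sE}$ induced from an $\cO_U$-presentation of $\mathring{\sE}$, and its kernel $\cN$ sits inside $(\tcD^{(m)}_{n,n}|_U)^{\oplus r}$, which has infinite rank over $\cO_U$; so $\cN$ is \emph{not} an $\cO_U$-coherent module, only a filtered union of coherent $\cO_U$-modules $\cN_i$ (the kernels of $(F_i(\tcD^{(m)}_{n,n}|_U))^{\oplus r}\to\mathring{\sE}$). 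The missing ingredient is exactly the one the paper supplies: shrink $U$ so that $\Gamma(U,\tcD^{(m)}_{n,n})$ is noetherian (Prop.\ \ref{graded_tcD}(vi)); then $\Gamma(U,\cN)=\varinjlim_i\Gamma(U,\cN_i)$ is a submodule of the finitely generated module $\Gamma(U,\tcD^{(m)}_{n,n})^{\oplus r}$ over a noetherian ring, hence finitely generated, and since each $\cN_i$ is generated by global sections on the affine $U$ and $\Gamma(U,\cdot)$ commutes with filtered colimits, one obtains a surjection $(\tcD^{(m)}_{n,n}|_U)^{\oplus s}\to\cN$. With that repaired, your conclusion via coherence of the sheaf of rings $\tcD^{(m)}_{n,n}|_U$ is correct.

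For part (ii) your route is genuinely different from the paper's and, granting (i), it is viable. The paper proves injectivity by observing that the $\tcD^{(m)}_{n,n}|_U$-action on $\mathring{\sE}$ extends by continuity to $\tsD^{(m)}_{n,n}|_U$, and proves surjectivity by an explicit division argument in the style of \cite[3.1.3]{Berthelot_Trento}: every $P$ is a convergent series $\sum_k B_k\partial'^k$ in $\partial'=\partial^{\langle p^m\rangle}$, each local section $e$ satisfies a monic relation $(\partial'^{\ell}-\sum_{j<\ell}a_j\partial'^{j}).e=0$ because of $\cO_U$-coherence, and division with remainder pulls $P\otimes e$ back into $1\otimes\mathring{\sE}$. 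Your argument instead reduces modulo $p^{j+1}$, where $\tcD^{(m)}_{n,n}$ and $\tsD^{(m)}_{n,n}$ coincide, and passes to the limit using that both $\mathring{\sE}$ (as a coherent $\cO_U$-module on a noetherian formal scheme) and $\tsD^{(m)}_{n,n}|_U\otimes_{\tcD^{(m)}_{n,n}|_U}\mathring{\sE}$ (as a coherent $\tsD^{(m)}_{n,n}|_U$-module, which requires (i)) are $p$-adically separated and complete; the latter is where \cite[3.2.3]{BerthelotDI} enters and where you genuinely need the hypotheses on $\tsD^{(m)}_{n,n}$ verified in \ref{global_sec_tsD}. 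This is cleaner and coordinate-free, at the cost of leaning harder on Berthelot's completion formalism, whereas the paper's explicit computation is self-contained and is reused almost verbatim in \ref{direct}.
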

\Pf
We follow the arguments of \cite[Prop. 3.1.3]{Berthelot_Trento}.
We may assume that $U$ is affine and such that the ring $\tcD^{(m)}_{n,n}(U):=\Gamma(U,\tcD^{(m)}_{n,n})$ is noetherian, cf. Prop. \ref{graded_tcD}.
Since $\mathring{\sE}$ is coherent over $\cO_{U}$, there is a surjection $\cO_U^{\oplus r}\rightarrow \mathring{\sE}$ and hence a surjection $(\tcD^{(m)}_{n,n}|_U)^{\oplus r}\rightarrow \mathring{\sE}$. Let $\cN$ be its kernel.
The subsheaf $\tcD^{(m)}_{n,n}|_U\sub\cD^{(m)}_{U^{\rm log}}$ has the induced order filtration $F_i(\tcD^{(m)}_{n,n}|_U), i \geq 0$ which is a filtration by coherent $\cO_U$-submodules. Letting $\cN_i$ be the kernel of the morphism
 $(F_i(\tcD^{(m)}_{n,n}|_U))^{\oplus r}\rightarrow \mathring{\sE}$, it follows that $\cN$ has a filtration by coherent $\cO_U$-modules.
 Since $\tcD^{(m)}_{n,n}(U)$ is noetherian, there is a surjection $(\tcD^{(m)}_{n,n}(U))^{\oplus s}\rightarrow \Gamma(U,\cN)$. Since $\Gamma(U,.)$ commutes with
 inductive limits, one obtains from this a surjection  $(\tcD^{(m)}_{n,n}|_U)^{\oplus s}\rightarrow \cN$.
  The $\tcD^{(m)}_{n,n}|_U$-module $\mathring{\sE}$ has therefore a finite presentation. Since $\tcD^{(m)}_{n,n}|_U$ is a sheaf of coherent rings \cite[(3.1.1)]{BerthelotDI}, any module of finite presentation is coherent. This shows (i).

\vskip8pt

The map in (ii) is injective, since the module structure on $\mathring{\sE}$ extends by continuity from $\tcD^{(m)}_{n,n}|_U$ to $\tsD^{(m)}_{n,n}|_U$. We put $ \partial:=\varpi^n\partial_x$, where $x$ is the standard coordinate on $\bbX_0$, so that $\tcD^{(m)}_{n,n}|_U$ is generated over $\cO_U$ by $\partial^{\langle d \rangle}:=\frac{q_d^{(m)}!}{d!}\partial^{d}$ for $d\geq 0$. In particular,
  $F_i(\tcD^{(m)}_{n,n}|_U)=\sum_{d\leq i}a_d\partial^{\langle d \rangle}$ with local sections $a_d$ of $\cO_U$.
 Put $\partial':=\partial^{\langle p^m \rangle}$. Given an integer $r<p^m$, one has $\partial^{\langle r\rangle}\partial'^{\langle q \rangle}=u_{q,r}\partial^{\langle p^mq+r\rangle}$ where $u_{q,r}$ is a $p$-adic unit. This implies that any operator $P\in\tsD^{(m)}_{n,n}|_U$ can be written in the form $P=\sum_k B_k\partial'^k$ where $B_k\in\tcD^{(m)}_{n,n}|_U$ is an operator of order $<p^m$ and $B_k$ tends $p$-adically to zero for $k\rightarrow \infty$. Let $e$ be a local section of $\mathring{\sE}$. Since $\mathring{\sE}$ is coherent over $\cO_U$, the $\cO_U$-submodule generated by the elements $\partial'^k.e$ for $k\geq 0$ is generated by finitely many of those elements. Thus, there is a number $\ell$ and sections $a_0,...,a_{\ell-1}$ of $\cO_U$ and a relation $$(\partial'^\ell-\sum_{j<\ell}a_j\partial'^j).e=0.$$ It follows as in the proof of \cite[3.1.3]{Berthelot_Trento} that for any element $B_k$ there are operators $R_k\in\tcD^{(m)}_{n,n}|_U$ of order $<p^m\ell$ and $Q_k\in\tcD^{(m)}_{n,n}|_U$ such that

$$ B_k\partial'^k=Q_k(\partial'^\ell-\sum_{j<\ell}a_j\partial'^j)+R_k$$

\vskip8pt

with $Q_k$ and $R_k$ tending $p$-adically to zero if $B_k$tends $p$-adically to zero. It follows that $P\otimes e=1\otimes (\sum_k R_k).e$ in $\tsD^{(m)}_{n,n}|_U\otimes_{\tcD^{(m)}_{n,n}|_U} \mathring{\sE}$ which proves the surjectivity in (ii). \qed

\vskip8pt

\begin{cor}\label{cor-prep}
Let $U$ be an open formal subscheme of $\frX_n$ and let $\mathring{\sE}$ be coherent $\cO_U$-module. Put $\sE:=\mathring{\sE}\otimes\bbQ$. Then:

\vskip8pt

(i) If $\mathring{\sE}$ is a $\tcD^{(m)}_{n,n}|_U$-module for some $m$ then the canonical homomorphism
	
$$\sE\car \tsD^{(m)}_{n,n,\bbQ}|_U \otimes_{\tcD^{(0)}_{n,n,\bbQ}|_U} \sE$$

\vskip8pt

is an isomorphism and $\sE$ is coherent over $\tsD^{(m)}_{n,n,\bbQ}|_U$.

\vskip8pt

(ii) If $\mathring{\sE}$ is a $\tcD^{(m)}_{n,n}|_U$-module for any $m$ then the canonical homomorphism

$$\sE\car \tsD^{\dagger}_{n,n,\bbQ}|_U \otimes_{\tcD^{(0)}_{n,n,\bbQ}|_U} \sE$$
	
\vskip8pt

is an isomorphism and $\sE$ is coherent over $\tsD^{\dagger}_{n,n,\bbQ}|_U$.

\end{cor}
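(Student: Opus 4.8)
The plan is to derive Corollary \ref{cor-prep} directly from Proposition \ref{prop-prep} by inverting $p$ and, in part (ii), passing to the inductive limit over $m$. First I would observe that all the assertions are local on $U$, so I may assume $U$ is affine and small enough that $\tcD^{(m)}_{n,n}(U)$ is noetherian, exactly as in the proof of Proposition \ref{prop-prep}. Granting that, the first step for (i) is simply to tensor the isomorphism $\mathring{\sE}\car \tsD^{(m)}_{n,n}|_U\otimes_{\tcD^{(m)}_{n,n}|_U}\mathring{\sE}$ of Proposition \ref{prop-prep}(ii) with $\bbQ$ over $\bbZ$; since $\tsD^{(m)}_{n,n,\bbQ}|_U = \tsD^{(m)}_{n,n}|_U\otimes\bbQ$ and $\tcD^{(m)}_{n,n,\bbQ}|_U = \tcD^{(m)}_{n,n}|_U\otimes\bbQ$, and tensoring with $\bbQ$ is exact, this yields the stated isomorphism $\sE\car \tsD^{(m)}_{n,n,\bbQ}|_U\otimes_{\tcD^{(m)}_{n,n,\bbQ}|_U}\sE$. (Note that the displayed map in the corollary is written with $\tcD^{(0)}_{n,n,\bbQ}|_U$ in the subscript, which is harmless because $\tcD^{(0)}_{n,n}\subseteq \tcD^{(m)}_{n,n}$ and the tensor product is computed over the larger ring; one should just remark that the two tensor products agree, the $\tcD^{(0)}$-module structure factoring through $\tcD^{(m)}$.) Coherence of $\sE$ over $\tsD^{(m)}_{n,n,\bbQ}|_U$ then follows from coherence of $\mathring{\sE}$ over $\tcD^{(m)}_{n,n}|_U$ (Proposition \ref{prop-prep}(i)) together with the fact that $\tsD^{(m)}_{n,n,\bbQ}|_U$ is the $p$-adic completion of $\tcD^{(m)}_{n,n}|_U$ inverted, so that a finite presentation of $\mathring{\sE}$ over $\tcD^{(m)}_{n,n}|_U$ base-changes to a finite presentation of $\sE$ over $\tsD^{(m)}_{n,n,\bbQ}|_U$; alternatively one invokes that $\tsD^{(m)}_{n,n,\bbQ}|_U$ is a coherent sheaf of rings, cf. \ref{global_sec_tsD}.

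For part (ii), the extra input is that $\mathring{\sE}$ is a module over $\tcD^{(m)}_{n,n}|_U$ simultaneously for all $m$, i.e. over $\varinjlim_m \tcD^{(m)}_{n,n}|_U$. The second step is then to apply part (i) for each $m$ and take the inductive limit over $m$: since $\tsD^\dagger_{n,n,\bbQ}|_U = \varinjlim_m \tsD^{(m)}_{n,n,\bbQ}|_U$ by definition \ref{tsD_dagger_Q}, and since tensor products commute with inductive limits, one gets
$$\tsD^\dagger_{n,n,\bbQ}|_U\otimes_{\tcD^{(0)}_{n,n,\bbQ}|_U}\sE = \varinjlim_m\Big(\tsD^{(m)}_{n,n,\bbQ}|_U\otimes_{\tcD^{(0)}_{n,n,\bbQ}|_U}\sE\Big) = \varinjlim_m \sE = \sE \;,$$
the middle equality being part (i). Coherence of $\sE$ over $\tsD^\dagger_{n,n,\bbQ}|_U$ then follows because $\sE$ is already coherent over the fixed ring $\tsD^{(0)}_{n,n,\bbQ}|_U$ (by part (i) with $m=0$) and hence, via the isomorphism just established, over $\tsD^\dagger_{n,n,\bbQ}|_U$; more precisely a finite presentation over $\tsD^{(0)}_{n,n,\bbQ}|_U$ base-changes along the flat map $\tsD^{(0)}_{n,n,\bbQ}|_U\to\tsD^\dagger_{n,n,\bbQ}|_U$ (flatness by \ref{global_sec_tsD}(ii)) to a finite presentation, and $\tsD^\dagger_{n,n,\bbQ}|_U$ is a coherent sheaf of rings, cf. \ref{global_sec_tsD}(iii).

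The main obstacle is essentially bookkeeping rather than a genuine difficulty: one must make sure the various tensor-product ring structures are consistently identified (the $\tcD^{(0)}$ versus $\tcD^{(m)}$ subscripts, the interplay between $p$-adic completion and inverting $p$, and the passage to the limit over $m$) and that "coherent as $\cO_U$-module together with the action of $\tcD^{(m)}_{n,n}|_U$" is exactly the hypothesis of Proposition \ref{prop-prep}. Since the corollary is a formal consequence of that proposition, no new geometric or cohomological input is needed; the proof is short and I would simply write: "This follows from Prop. \ref{prop-prep} by tensoring with $\bbQ$, and, in case (ii), additionally passing to the inductive limit over $m$; coherence over the ring with inverted $p$ (resp. over $\tsD^\dagger_{n,n,\bbQ}|_U$) follows since these are coherent sheaves of rings, cf. \ref{global_sec_tsD}." \qed
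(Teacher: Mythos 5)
Your proposal is correct and follows essentially the same route as the paper: tensor the isomorphism of Proposition \ref{prop-prep}(ii) with $\bbQ$, pass to the inductive limit over $m$ for part (ii), and deduce coherence by base-changing a finite presentation to the coherent sheaves of rings $\tsD^{(m)}_{n,n,\bbQ}|_U$ and $\tsD^{\dagger}_{n,n,\bbQ}|_U$. The only small point worth tightening is your justification for the $\tcD^{(0)}$ versus $\tcD^{(m)}$ subscript: the clean reason is not that the tensor product is "computed over the larger ring" but simply that $\tcD^{(0)}_{n,n}$ and $\tcD^{(m)}_{n,n}$ coincide after tensoring with $\bbQ$, so the two base rings are literally equal.
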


\Pf %We follow the arguments in \cite[3.1.4]{Berthelot_Trento}.
Since $\tcD^{(m)}_{n,n}$ and $\tcD^{(0)}_{n,n}$ agree upon tensoring with $\bbQ$, the isomorphism in claim (i) follows from tensoring the isomorphism in part (ii) of \ref{prop-prep} with $\bbQ$. Taking the limit over $m$ gives the isomorphism in claim (ii). It remains to prove the assertions about coherence. According to part (i) of  \ref{prop-prep}, the module $\mathring{\sE}$ is coherent over $\tcD^{(m)}_{n,n}|_U$. Passing to an open subspace of $U$ we may therefore assume that there is a finite presentation

$$(\tcD^{(m)}_{n,n}|_U)^{\oplus s}\rightarrow (\tcD^{(m)}_{n,n}|_U)^{\oplus r}\rightarrow \mathring{\sE}\rightarrow 0 \;.$$

\vskip8pt

Tensoring with $\bbQ$ gives a presentation of $\sE$ over $\tcD^{(0)}_{n,n,\bbQ}|_U$. Tensoring the latter with $\tsD^{(m)}_{n,n,\bbQ}|_U$ or $\tsD^{\dagger}_{n,n,\bbQ}|_U$ and
gives a finite presentation of $\sE$ over $\tsD^{(m)}_{n,n,\bbQ}|_U$ or $\tsD^{\dagger}_{n,n,\bbQ}|_U$.
Since the latter sheaves are sheaves of coherent rings, any module of finite presentation is coherent. This completes the proof. \qed

\vskip8pt

For $1\leq \nu \leq n$ and $\underline{a}\in\cR_\infty\times\cR^{\nu-1}$ recall the open subscheme $\bbX_ {\underline{a}}^{(\nu)}$ of $\bbX^\circ_n$ with its local coordinate $x_{\underline{a}}^{(\nu-1)}.$
%respectively $z_{\underline{a}}^{(\nu)}$
For $a_n\in\cR$ recall the smooth open subscheme $\bbD_{\underline{a},a_n}^{(n)}$ in the outermost component of $\bbX_n$ with its local coordinate $x_{\underline{a},a_n}^{(n)}$. The open sets $\bbX_ {\underline{a}}^{(\nu)}\Big[\frac{1}{x_{\underline{a}}^{(\nu-1)}}\Big]$ together with the $\bbD_{\underline{a},a_n}^{(n)}$ form an open covering of the smooth locus of $\bbX_n$.
For any open subspace $U\sub\bbX_n$ we let $\cD^{(m)}_{U^{\rm log}}$ denote the usual sheaf of logarithmic arithmetic differential operators of level $m$ on $U$ as considered in \cite{PSS2}.
We write $\sD^{(m)}_{U^{\rm log}}$ for its $p$-adic completion and $\sD^{\dagger}_{U^{\rm log},\bbQ}$ for the union of the $\sD^{(m)}_{U^{\rm log},\bbQ}$.

\begin{lemma}\label{lemma-prep1}
Suppose $U$ is an open subspace of $\bbX_n$ which is smooth. The inclusion $\cD^{(m)}_{U^{\rm log}}\sub \cD^{(m)}_{U}$ is an equality.
\end{lemma}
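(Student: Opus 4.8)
The claim is that over a smooth open $U \subseteq \bbX_n$, the sheaf $\cD^{(m)}_{U^{\log}}$ of logarithmic arithmetic differential operators of level $m$ coincides with the ordinary sheaf $\cD^{(m)}_U$ of level-$m$ differential operators. The plan is to reduce to a purely local computation on the members of the explicit open cover of the smooth locus of $\bbX_n$ mentioned just before the lemma: the opens $\bbX_{\underline a}^{(\nu)}\bigl[\tfrac{1}{x_{\underline a}^{(\nu-1)}}\bigr]$ and the opens $\bbD_{\underline a,a_n}^{(n)}$ in the outermost components. On each such open $U$ the coordinate situation is that of a smooth curve over the base: there is a single local coordinate $x$ (namely $x_{\underline a}^{(\nu-1)}$, resp. $x_{\underline a,a_n}^{(n)}$), and the logarithmic structure is that associated with the special fiber, i.e.\ with the divisor cut out by $\varpi$. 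Since $U$ is smooth, this divisor is smooth (or empty), hence the log structure is the trivial/"constant" one along the fibre direction: there is no horizontal boundary divisor on a smooth piece. Concretely, on such a $U$ one has $\cT_{U^{\log}} = \cT_U$ as subsheaves of $\cT_{U}\otimes L$, because the only component of the special fibre meeting $U$ is smooth and the log-derivation $\varpi \partial_\varpi$ does not impose any extra condition in the relative (geometric) direction; equivalently, relative to the base the relative log tangent sheaf equals the relative tangent sheaf.

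From the equality of the degree-one pieces one concludes the equality of the full sheaves of differential operators: both $\cD^{(m)}_{U^{\log}}$ and $\cD^{(m)}_U$ are generated over $\cO_U$, in the level-$m$ (divided-power) sense, by the operators $\partial^{\langle d\rangle}$ attached to a local generator $\partial$ of the (log) tangent sheaf, together with the multiplication operators; and on a smooth piece the generator of $\cT_{U^{\log}}$ is also a generator of $\cT_U$ and vice versa. Thus the divided-power subalgebras of $\cD_{U}\otimes L$ they generate coincide. One should phrase this using the standard description of $\cD^{(m)}$ via the filtration $F_i$ and the graded pieces $\Sym^{(m)}$ of the tangent sheaf, exactly as in \cite[3.1.1]{BerthelotDI}: since $\mathrm{gr}\,\cD^{(m)}_{U^{\log}} = \Sym^{(m)}(\cT_{U^{\log}}) = \Sym^{(m)}(\cT_U) = \mathrm{gr}\,\cD^{(m)}_U$ and the filtrations are exhaustive, the inclusion $\cD^{(m)}_{U^{\log}}\subseteq \cD^{(m)}_U$ is an isomorphism.

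The main obstacle — and really the only point requiring care — is checking that on each chart of the smooth locus the log tangent sheaf genuinely equals the ordinary tangent sheaf, i.e.\ that passing to a smooth open truly kills the logarithmic correction. This needs the explicit description of the charts recalled before the lemma: on $\bbX_{\underline a}^{(\nu)}$ the two local coordinates $x,z$ satisfy $xz=\varpi$, and inverting one of them, say on $\bbX_{\underline a}^{(\nu)}[\tfrac1x]$, one has $z = \varpi/x$ with $x$ invertible, so the chart becomes $\Spf(\fro\langle x\rangle[\tfrac1{x^{q-1}-1},x^{-1}])$-like with $\varpi$ a non-zero-divisor not dividing the coordinate — a smooth formal curve whose special fibre component is smooth, on which $\varpi\partial_\varpi$ acts trivially on the coordinate direction and hence $\cT_{U^{\log}}=\fro\langle\cdots\rangle\cdot \partial_x = \cT_U$; similarly $\bbD_{\underline a,a_n}^{(n)}$ is an affine formal line $\Spf(\fro\langle t\rangle)$ with $\cT_{U^{\log}}=\cO_U\partial_t=\cT_U$. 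Once these local identifications are in place the rest is formal, and the lemma follows by gluing over the cover.
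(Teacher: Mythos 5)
Your proposal is correct and follows essentially the same route as the paper: reduce to the explicit charts $\bbX_{\underline a}^{(\nu)}\bigl[\tfrac{1}{x_{\underline a}^{(\nu-1)}}\bigr]$ and $\bbD_{\underline a,a_n}^{(n)}$, observe that the free generator $x_{\underline a}^{(\nu-1)}\partial_{x_{\underline a}^{(\nu-1)}}$ of the log tangent sheaf becomes a unit multiple of $\partial_{x_{\underline a}^{(\nu-1)}}$ once the coordinate is inverted, so $\cT_{U^{\rm log}}=\cT_U$, and then pass to the level-$m$ differential operators. The only difference is that you spell out the last step (via the graded pieces $\Sym^{(m)}$ of the tangent sheaf), which the paper leaves implicit.
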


\Pf The logarithmic tangent sheaf $\cT_{\bbX_n^{\rm log}}$ is generated on $\bbX_ {\underline{a}}^{(\nu)}$ freely by $x_{\underline{a}}^{(\nu-1)}\partial_{x_{\underline{a}}^{(\nu-1)}}$. It therefore coincides with the usual tangent sheaf on
$\bbX_ {\underline{a}}^{(\nu)}[\frac{1}{x_{\underline{a}}^{(\nu-1)}}]$. Similarly on
$\bbD_{\underline{a},a_n}^{(n)}$. Hence $\cT_{U^{\rm log}}=\cT_U$ which implies the claim. \qed

\vskip8pt

\begin{lemma}\label{lemma-prep2}
One has $\tcD^{(m)}_{n,k} \sub \cD_{\bbX_n^{\rm log}}^{(m)}$ for any $k\geq n$. For $k=n$ this inclusion becomes an equality over each $\bbD^{(n)}_{\underline{a},a_n}$.
\end{lemma}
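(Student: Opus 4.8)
The plan is to reduce both assertions to explicit local computations on the standard affine charts of $\bbX_n$, using the description $\tcT_{n,k}=\vpi^{k}({\rm pr}_{n,0})^{*}\cT_{\bbX_0}$ inside $\cT_{\bbX_n^{\log}}\otimes_{\fro}L$ provided by Lemma \ref{tcT_lemma}(i). As a first, soft observation: since $\tcT_{n,k}$ is by construction an $\cO_{\bbX_n}$-submodule of $\cT_{\bbX_n^{\log}}$, the subalgebra $\tcD^{(0)}_{n,k}$ of differential operators it generates over $\cO_{\bbX_n}$ is contained in $\cD^{(0)}_{\bbX_n^{\log}}$; the content of the lemma is that this persists at level $m$, and that over $\bbD^{(n)}_{\underline a,a_n}$ it becomes an equality. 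I would cover $\bbX_n$ by the smooth charts $\bbX_{\underline a}^{(\nu)}[1/x_{\underline a}^{(\nu-1)}]$ and $\bbD^{(n)}_{\underline a,a_n}$ together with the dumbbell charts $\Spf(\fro\langle x,z\rangle[\dots]/(xz-\vpi))$ around the nodes, and treat each type in turn.

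On a smooth chart, Lemma \ref{lemma-prep1} identifies $\cD^{(m)}_{\bbX_n^{\log}}$ with $\cD^{(m)}_{\bbX_n}$, and an easy induction on the blow-up steps (cf. the rings $R'$, $R''$ in the proof of Lemma \ref{direct_im_str_sh}) shows that the standard coordinate $x$ of the chart is related to the pulled-back $\bbX_0$-coordinate by an affine substitution whose slope is a power of $\vpi$; hence $\tcT_{n,k}=\cO\cdot\vpi^{a}\partial_{x}$ for some $a\ge 0$, and $\tcD^{(m)}_{n,k}$ is generated over $\cO$ by the operators $\frac{q^{(m)}_d!}{d!}(\vpi^{a}\partial_{x})^{d}=\vpi^{ad}\,\partial_{x}^{\langle d\rangle}\in\cO\cdot\partial_{x}^{\langle d\rangle}\subseteq\cD^{(m)}_{\bbX_n}$. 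On a dumbbell chart, $\cT_{\bbX_n^{\log}}$ is freely generated by the Euler operator $\theta=x\partial_{x}=-z\partial_{z}$, and the same recursion gives $\tcT_{n,k}=\cO\cdot\vpi^{b}z\,\theta$ with $b\ge 0$, using $k\ge n$. Here I would invoke the operator identity $(z\theta)^{d}=z^{d}\,\theta(\theta-1)\cdots(\theta-d+1)$, which holds because $\theta(z)=-z$; it shows that $\frac{q^{(m)}_d!}{d!}(\vpi^{b}z\theta)^{d}$ equals $\vpi^{bd}z^{d}$ times the level-$m$ logarithmic divided power $\theta^{\langle d\rangle}$ of $\theta$, and hence lies in $\cO\cdot\theta^{\langle d\rangle}\subseteq\cD^{(m)}_{\bbX_n^{\log}}$. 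Passing to the inductive limit over the order $d$ yields $\tcD^{(m)}_{n,k}\subseteq\cD^{(m)}_{\bbX_n^{\log}}$ on each chart, hence globally; everything here is uniform in $k\ge n$.

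For the equality over $\bbD^{(n)}_{\underline a,a_n}$ in the case $k=n$: here $x=x^{(n)}_{\underline a,a_n}$ is a genuine coordinate, and by Lemma \ref{lemma-prep1} one has $\cD^{(m)}_{\bbX_n^{\log}}=\cD^{(m)}_{\bbX_n}$ and $\cT_{\bbX_n^{\log}}=\cO\partial_{x}$; moreover the blow-up recursion along the outermost branch expresses the standard $\bbX_0$-coordinate as $s=c+\vpi^{n}x$ for a constant $c\in\fro$, whence $\partial_{s}=\vpi^{-n}\partial_{x}$. By Lemma \ref{tcT_lemma}(i) this gives $\tcT_{n,n}=\vpi^{n}({\rm pr}_{n,0})^{*}\cT_{\bbX_0}=\vpi^{n}\cdot\cO\cdot\vpi^{-n}\partial_{x}=\cO\partial_{x}=\cT_{\bbX_n^{\log}}$. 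Hence, over $\bbD^{(n)}_{\underline a,a_n}$, both $\tcD^{(m)}_{n,n}$ and $\cD^{(m)}_{\bbX_n^{\log}}$ are generated over $\cO$ by the level-$m$ divided powers of one and the same coordinate field $\partial_{x}$, so they coincide.

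The step requiring the most care is the bookkeeping of the blow-up charts — the precise coordinate changes, the powers of $\vpi$ they carry, and in particular the eigenfunction relation $\theta(z)=-z$ at the nodes — which I would import from the explicit construction of $\bbX_n$ in \cite{PSS2}. Once the local generators of $\tcT_{n,k}$ are pinned down, the divided-power manipulations above are routine, and I do not expect any further obstacle.
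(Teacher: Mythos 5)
Your proof is correct and follows essentially the same route as the paper's: the inclusion comes from $\tcT_{n,k}\subseteq\cT_{\bbX_n^{\log}}$ (the paper treats the passage to level $m$ as immediate, whereas you verify it explicitly on the smooth and dumbbell charts via the identity $(z\theta)^d=z^d\theta(\theta-1)\cdots(\theta-d+1)$), and the equality over $\bbD^{(n)}_{\underline a,a_n}$ rests on the same coordinate relation $\partial_{x^{(n)}_{\underline a,a_n}}=\vpi^{n}\partial_x$ from \cite{PSS2}. The extra local bookkeeping is sound but not needed beyond what the definitions already give.
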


\Pf By definition one has $\tcT_{n,k} \subseteq \cT_{\bbX_n^{\rm log}}$ and this implies $\tcD^{(m)}_{n,k} \subseteq \cD_{\bbX_n^{\rm log}}^{(m)}$.
Now suppose $k=n$ and $U = \bbD^{(n)}_{\underline{a},a_n}$. Since $U$ is smooth, $\cD_{U^{\rm log}}^{(m)} = \cD_{U}^{(m)}$ is generated over $\cO_U$ by
$\frac{q_d^{(m)}!}{d!} \partial_{x_{\underline{a},a_n}^{(n)}}^d$ whereas the subsheaf
$\tcD^{(m)}_{n,n}|_U$ is generated over $\cO_U$ by $\frac{q_d^{(m)}!}{d!}\varpi^{dn}\partial_x^d$ where
  $x$ is the standard coordinate on $\bbX_0$. But according to equation (24) in the proof of \cite[Prop. 5.2(a)]{PSS2}, we have $\partial_{x_{\underline{a},a_n}^{(n)}}=\varpi^{n}\partial_x$. This implies the claim.
\qed

\vskip8pt

\begin{para}\label{type_a}{\it Coherence at points of type (a).} By definition of the sheaf $(\widehat{f}^c_n)_* \sF_n$ we have for its restriction to the open subset $\hat{\Sigma}_0(n)_s \sub \frX_{n,s}^{\rm nr}$ that

$$\left((\widehat{f}^c_n)_* \sF_n\right)\Big|_{\hat{\Sigma}_0(n)} = ({\rm sp}_{\widehat{\Sigma}_0(n)})_*  (f_n)_* \cO_{\Sigma_1(n)}$$

\vskip8pt

where $f_n$ is the restriction of $f$ to $\Sigma_1(n)=f^{-1}(\Sigma_0(n))$. The points of type (a) lie in the smooth locus $U := \hat{\Sigma}_0(n)^{\rm sm}$ of $\hat{\Sigma}_0(n)$. Let $V:=\hat{f}_n^{-1}(U)$. The morphism

$$\hat{f}_n: V \lra U$$

\vskip8pt

induced by $\hat{f}_n$ is smooth and proper. Under these assumptions, the coherent $\cO_{U,\bbQ}$-module
$\sE:=\bbQ \otimes (\hat{f}_n)_* \cO_{V}$ is a convergent isocrystal over (the closed fibre of) $U$ \cite[3.7]{Ogus_Iso}. According to \cite[3.1.2]{Berthelot_Trento},
there exists, for every $m$, a $\sD^{(m)}_U$-module $\mathring{\sE}$ which is coherent over $\cO_U$ together with an $\sD^{(m)}_{U,\bbQ}$-linear isomorphism $\bbQ\otimes\mathring{\sE}\simeq\sE$. Via the natural inclusion $\tsD^{(m)}_{n,n}|_{U} \subseteq \sD^{(m)}_U$, cf. \ref{lemma-prep1} and \ref{lemma-prep2}, we may endow $\mathring{\sE}$ with a $\tsD^{(m)}_{n,n}$-module structure and apply part (ii) of \ref{cor-prep}. Thus, $\sE$ is a coherent $\tsD^\dagger_{n,n,\bbQ}$-module. It follows that $\left((\widehat{f}^c_n)_* \sF_n\right)\Big|_{U}$ is a coherent $\tsD^\dagger_{n,n,\bbQ}$-module, too. This proves our claim for points of type (a).
\end{para}

\vskip8pt

\begin{para}\label{type_b}{\it Coherence at points of type (b).}
We have
$$\left((\widehat{f}^c_n)_* \sF_n\right)\Big|_{\hat{\Sigma}_0(n)} = ({\rm sp}_{\widehat{\Sigma}_0(n)})_*  (f_n)_* \cO_{\Sigma_1(n)}$$

as explained above. Since points of type (b) are singular, we will make
use of logarithmic structures. Let $\ok$ be the residue field of $\fro_K$ equipped with the trivial log structure.
The morphism $$\hat{f}_{n,s}: \hat{\Sigma}_1(n)_s\longrightarrow \hat{\Sigma}_0(n)_s$$ is a finite flat morphism of degree $|\bbF_q^\times|$. We equip source and target with their fine log structures coming from the singular loci \cite[3.7(2)]{Kato_Log}. In particular, source and target are log smooth over $\ok$ and $\hat{f}_{n,s}$ is a log morphism. The target $\hat{\Sigma}_0(n)_s$ is covered by (the base change to $\ok$ of) the reductions of the $\bbX_ {\underline{a}}^{(\nu)}$ for $1\leq \nu \leq n$ and $\underline{a}\in\cR_\infty\times\cR^{\nu-1}$. On the reduction of $\bbX_ {\underline{a}}^{(\nu)}$ we have local coordinates $x_{\underline{a}}^{(\nu-1)}$ and $z_{\underline{a}}^{(\nu)}$ and a unique double point at $x_{\underline{a}}^{(\nu-1)}z_{\underline{a}}^{(\nu)}=0$. The log structure is therefore given by the monoid formed by the powers $(x_{\underline{a}}^{(\nu-1)})^{n_1}(z_{\underline{a}}^{(\nu)})^{n_2}$ with $n_i\in\bbN$. There is a chart for $\hat{f}_{n,s}$ over the reduction of $\bbX_ {\underline{a}}^{(\nu)}$ in which the corresponding map of monoids (or rather of their group completions) is given by multiplication with $|\bbF_q^\times|$ on $\bbZ^{\oplus 2}$. The latter map is therefore injective and the order of its cokernel is prime to $p$. According to \cite[3.5]{Kato_Log} and \cite[4.1(1)iv]{Kato_Log}, the morphism $\hat{f}_{n,s}$ is therefore log smooth and integral.

\vskip5pt
In the following we will make use of the work of Shiho on the logarithmic version of the convergent topos in characteristic $p$ \cite{Ogus_Conv}. To make
the comparison easier for the reader, we adapt some notation. Let $V:=\fro_K$ and equip ${\rm Spf}\; V$ with its log structure coming from the closed point of $Spec V$. Let $X_i$ be $\hat{\Sigma}_i(n)_s$ for $i=1,2$. Following \cite[2.1.3]{Shiho_CrystallineII} we denote by $(X_i/V)^{\rm log}_{\rm conv,Zar}$ the log convergent site of $X_i$ over ${\rm Spf}\; V$ with respect to the Zariski topology and by
$\cK_{X_1/V}$ the isocrystal $$T\mapsto \Gamma(T,\cO_T)\otimes\bbQ$$ on $(X_1/V)^{\rm log}_{\rm conv,Zar}$. Since $X_0$ is log smooth over $\ok$, the morphism $\hat{f}_{n,s}$ has even log smooth parameter in the sense of \cite[3.4]{Shiho_RelativeI}. We may therefore apply \cite[4.10]{Shiho_RelativeI} to $\hat{f}_{n,s}$ and $\cK_{X_1/V}$. Hence, for each number $q\geq 0$, there is a unique isocrystal $\cF^q$ on $(X_0/V)^{\rm log}_{\rm conv,Zar}$ whose value on the enlargement $\hat{\Sigma}_0(n)$ is given by the relative log analytic cohomology $R^q(f_s)_{X_1/\hat{\Sigma}_0(n),{\rm an}*}$ of $\cK_{X_1/V}$ (as defined in \cite[4.1]{Shiho_RelativeI}). By definition, the latter analytic cohomology is computed on the level of rigid analytic generic fibres via the log de Rham complex. Since the rigid analytic morphism $f$ is \'etale, one finds $\cF^0(\hat{\Sigma}_0(n))=f_*\cO_{\Sigma_1(n)}$ and $\cF^q(\hat{\Sigma}_0(n))=0$ for $q>0$.

\vskip5pt

The isocrystal $\cF^0$ admits an interpretation via tubular neighbourhoods, as in the non-logarithmic setting \cite[2.2.5]{Berthelot_preprint}.
 In particular, $\cF^0$ induces a convergent logarithmic connection $\nabla$ on $f_*\cO_{\Sigma_1(n)}$ and
 $\sE:=({\rm sp}_{\widehat{\Sigma}_0(n)})_*f_*\cO_{\Sigma_1(n)}$, cf. \cite[2.2.7]{Shiho_RelativeI} and \cite[p.67]{Shiho_RelativeI}. We claim that the actions of the logarithmic derivations induced by $\nabla$ extend to an action of the sheaf of logarithmic arithmetic differential operators $\sD^{\dagger}_{\hat{\Sigma}_0^{\rm log},\bbQ}$ on $\sE$. Since sections of $\sD^{\dagger}_{\hat{\Sigma}_0^{\rm log},\bbQ}$ locally admit convergent power series expansions \cite[2.3.2.C.]{Montagnon}, it suffices to verify the appropriate convergence condition for the Taylor series of $\nabla$ \cite[(3.0.1.1)]{Berthelot_Trento}. But this convergence condition follows from the fibration lemma \cite[2.31]{Shiho_RelativeI} applied to the projection $p_1$, as in the non-logarithmic setting \cite[2.2.13]{Berthelot_preprint}.

\vskip5pt

The $\sD^{\dagger}_{\hat{\Sigma}_0^{\rm log},\bbQ}$-module $\sE$ is coherent over $\cO_{\hat{\Sigma}_0(n),\bbQ}$ and therefore coherent over
$\sD^{\dagger}_{\hat{\Sigma}_0^{\rm log},\bbQ}$, cf. \cite[3.1]{Crew_formalcurve}. It follows from the proof of loc.cit. that there
is for every $m$, a $\sD^{(m)}_{\hat{\Sigma}_0^{\rm log}}$-module $\mathring{\sE}$ which is coherent over $\cO_{\hat{\Sigma}_0(n)}$ together with an $\sD^{(m)}_{\hat{\Sigma}_0^{\rm log},\bbQ}$-linear isomorphism $\bbQ\otimes\mathring{\sE}\simeq\sE$. Via the natural inclusion
$\tsD^{(m)}_{n,n}|_{\hat{\Sigma}_0} \subseteq \sD^{(m)}_{\hat{\Sigma}_0^{\rm log}}$, cf. \ref{lemma-prep2}, we may endow $\mathring{\sE}$ with a $\tsD^{(m)}_{n,n}$-module structure and apply part (ii) of \ref{cor-prep}. Thus, $\sE$ is a coherent
$\tsD^\dagger_{n,n,\bbQ}$-module. %It follows that its $\chi$-component $\sF|_{\hat{\Sigma}_0}$ is a coherent $\tsD^\dagger_{n,n,\bbQ}$-module, too.
This proves our claim for points of type (b).

\end{para}

\vskip8pt

\begin{para}\label{direct}{\it An alternative argument valid when the ramification index is small.}
Suppose the ramification index of $L/\bbQ_p$ is $\leq p-1$. The fact that $(\widehat{f}^c_n)_* \sF_n$ is a coherent $\tsD^{\dagger}_{n,n,\bbQ}$-module at points of type (a) and (b) can then be seen explicitly as follows. The open subset $\hat{\Sigma}_0(n)$ is covered by open formal subschemes $U$ equal to (the base change to $\fro_K$ of) the formal completions $\hat{\bbX}_ {\underline{a}}^{(\nu)}$ for $1 \leq \nu \leq n$ and $\underline{a}\in\cR_\infty \times \cR^{\nu-1}$.
Over $U$ the sheaf $\tcD^{(m)}_{n,n}$ is generated by $\frac{q_d^{(m)}!}{d!}\varpi^{dn}\partial_x^d$ where
$x$ is the standard coordinate on $\bbX_0$. According to \cite[2.4.3]{BerthelotDI} any element $P \in \tsD^{\dagger}_{n,n}$ has therefore a unique expression over $U$ as infinite series in the form $P=\sum_{d\geq 0} a_d (\varpi^n\partial_x)^d/d!$ where the coefficients $a_d \in \cO(U)$ have the property that there exist real constants $c>0, \eta<1$ (depending on $P$) with $\Vert a_d \Vert < c\eta^d$ for all $d$. Here $\Vert . \Vert$ denotes some Banach norm on the affinoid algebra $A:=\cO(U)_\bbQ$. We denote by $\Vert . \Vert$ also a Banach norm on the finite $A$-module $B:=\sE(U)$.
We write $\cA:=\cO(U)$ and $\cB:=\mathring{\sE}(U)$. Making $U$ smaller if necessary we may assume that $\cB$ is a finite free $\cA$-module of rank $t=|\bbF_{p^2}^\times|$. More precisely, there is a unit $\Pi\in\cA^\times$ such that $\cB=\cA[X]/1-X^t\Pi$ \cite{Teitelbaum_EtaleCov}. The powers $\Pi^j$ for $j=0,...,t-1$ induce a decomposition

$$\cB\simeq \oplus_{j=0}^{t-1} \cA(j)$$

\vskip8pt

as $\cA$-modules where $\cA(j)$ equals $\cA$. Similarly for the $A$-module $B$. Since the morphism $f$ is finite \'etale, we have the Gauss-Manin connection on the \'etale $A$-algebra $B$.
A short calculation along the lines of \cite[2.(I)]{GKDeligneLusztig} shows that it respects the direct sum decomposition $B\simeq \oplus_j A(j)$ and is given on the $j$-th summand as

$$ f\mapsto d(f)-jt^{-1}fdlog(\Pi)$$

\vskip8pt

for $f\in A$. Here, $d:A\rightarrow\Omega_{A/K}$ is the canonical derivation of the $K$-algebra $A$ and $dlog(\Pi):=d(\Pi)/\Pi.$
The induced action of a derivation $\partial\in Der_K(A)$ on the $j$-th factor $A(j)$ is therefore given by

\begin{numequation}\label{equ-2} f \mapsto \partial.f:=\partial(f)-jt^{-1}f\partial(\Pi)/\Pi.
\end{numequation}

To obtain from this an action of an infinite sum like $P$ on $B$, it suffices to verify for any $f \in B$ and any $\eta<1$ the convergence property

\begin{numequation}\label{conv} \Big\Vert  \left(\frac{(\varpi^n\partial_x)^d}{d!}\right).f \Big\Vert \eta^d\rightarrow 0\end{numequation}

for $d\rightarrow\infty$ \cite[3.1.1]{Berthelot_Trento}. The $\GL_2(\fro)$-equivariance of the situation allows us to assume that the affinoid $Sp(A)={\rm sp}^{-1}(U) \sub \bbP^{1,an}_{K}$ equals

$$\{z\in \bbP^{1,an}_{K}: |z|\leq 1\} - \bigcup_{i=0,...,q-1} ( B_1(i)\cup B_{1/q}(qi)) - B_{1/q}(0)$$

\vskip8pt

where $B_\delta(z_0)$ is the open ball of radius $\delta$ centered at $z_0$ and where the affine coordinate $z$ on $\bbP^{1,an}_{K}$ is induced by $x_{\underline{a}}^{(\nu-1)}$ \cite[p. 649]{TeitelbaumDrin}.
 According to the Mittag-Leffler decomposition \cite[Prop. 2.2.6]{FresnelVanderPut}, we may assume that $f$ is a holomorphic function on the complement in $\bbP^{1,an}_{K}$ of {\it one} of the above open discs. We may assume that the radius of the open disc is $1$ (otherwise we work with the coordinate $z=z_{\underline{a}}^{(\nu)}$) and may thus write $f(z)=\sum_{n\leq 0} a_n(z-z_0)^n$ with $a_n\in K$ such that
$\lim_{n\rightarrow\infty} |a_{-n}|=0$. Moreover, $\Vert f \Vert=\max_{n\leq 0} |a_{n}|$ and so $\Vert \partial_z(f) \Vert \leq \Vert f \Vert$. According to the proof of \cite[Cor. 6]{Teitelbaum_EtaleCov} we may assume that $\Pi$ is a polynomial in $z$ with coefficients in $\fro_K$ and so $\Vert \partial_z(\Pi) \Vert \leq \Vert \Pi \Vert$. All in all, this implies
 $\Vert \partial_z.f \Vert \leq \Vert f \Vert$
according to (\ref{equ-2}). On the other hand,
by the equation (5.1.3) in the proof of \cite[Prop. 5.1.2]{PSS2}, we have $\partial_z:=\partial_{x_{\underline{a}}^{(\nu-1)}}=\varpi^{\nu-1}\partial_x$ and therefore $\varpi^{n}\partial_x=a\varpi\partial_z$ with a $p$-adic integer $a$. The convergence property \ref{conv} follows now from the fact $\frac{|\varpi^d|}{d!}\eta^d\rightarrow 0$.

\end{para}

\vskip8pt

\begin{para}\label{type_c}{\it Coherence at points of type (c).} We recall that the special fibre of $\widehat{\Sigma}_0(n)^c$ is canonically isomorphic to the special fibre of $\frX^{\rm nr}_n$. Let $\widehat{\Sigma}_{0,s}(v) \sub \widehat{\Sigma}_0(n)^c_s \simeq \frX^{\rm nr}_{n,s}$ correspond to one of the 'outermost' irreducible components of $\frX_{n,s}$. These are indexed by the vertices $v$ of $\cT$ which have distance $n$ to $v_0$. Let $\widehat{\Sigma}_{0,s}(v)^\circ$ be the smooth locus of $\widehat{\Sigma}_{0,s}(v)$. Then $\widehat{\Sigma}_{0,s}(v)^\circ$ is isomorphic to an affine line over the residue field of $\fro_K$. Let $\widehat{\Sigma}_{0,s}(v)^{\circ \circ} \sub \widehat{\Sigma}_{0,s}(v)^\circ$ be the complement of the $\Fq$-rational points of $\widehat{\Sigma}_{0,s}(v)^\circ$. Put

$$\Sigma_1(v) = f^{-1}\Big({\rm sp}_{\widehat{\Sigma}_0}^{-1}\Big(\widehat{\Sigma}_{0,s}(v)^{\circ \circ}\Big)\Big) \;.$$

\vskip8pt

The \'etale covering

$$f|_{\Sigma_1(v)}: \Sigma_1(v) \lra {\rm sp}_{\widehat{\Sigma}_0}^{-1}\Big(\widehat{\Sigma}_{0,s}(v)^{\circ \circ}\Big)$$

\vskip8pt

is isomorphic to the covering $\Sigma_1(0) \ra \Sigma_0(0)$. Let $\widehat{\Sigma}_{1,s}(v) \sub \widehat{\Sigma}_{1,s}$ be the image of $\Sigma_1(v)$ under the specialization map. Denote, as before, by the superscript ``$c$'' the topological closure. Note that $\widehat{\Sigma}_{0,s}(v)^c$ is equal to $\left[\widehat{\Sigma}_{0,s}(v)^{\circ \circ}\right]^c$ and is isomorphic to a projective line over the residue field of $\fro_K$. Then we have a commutative diagram

\begin{numequation}\label{Tsuzuki_diagram_n}
\xymatrixcolsep{3pc}\xymatrix{
 \widehat{\Sigma}_{1,s}(v) \ar[r] \ar[d]^{\widehat{f}(v)_s} & \widehat{\Sigma}_{1,s}(v)^c \ar[r] \ar[d]^{\widehat{f}(v)_s^c} & \widehat{\Sigma}_1 \ar[d]^{\widehat{f}}\\
 \widehat{\Sigma}_{0,s}(v)^{\circ \circ} \ar[r] & \widehat{\Sigma}_{0,s}(v)^c \ar[r] & \widehat{\Sigma}_0 \\
}
\end{numequation}

\vskip8pt

where the vertical morphisms on the left and in the middle are those induced by $\widehat{f}$. This diagram is analogous to the diagram \ref{Tsuzuki_diagram_0}. By \cite[Thm. 4.1.4]{TsuzukiBaseChange}, the sheaf $(\widehat{f}(v)^c)_* \sF_n$ is an overconvergent $F$-isocrystal on the pair

$$\left(\widehat{\Sigma}_{0,s}(v)^{\circ \circ},\widehat{\Sigma}_{0,s}(v)^c\right) \;.$$

\vskip8pt

By \cite[Thm. 4.3.5]{Caro06} or \cite[Prop. 3.1]{NH_Trihan07}, the $\sD_{\widehat{\Sigma}_{0,s}(v)^c,\Q}^\dagger$-module $(\widehat{f}(v)^c)_* \sF_n$ is holonomic, and, in particular, coherent. Here, $\sD^\dagger_{\widehat{\Sigma}_{0,s}(v)^c,\Q}$ is the usual sheaf of arithmetic differential operators on $\widehat{\Sigma}_{0,s}(v)^c$.

\vskip8pt

We now use the fact that the sheaf $\tsD^\dagger_{n,n,\Q}$ on $\frX_n$, when restricted to $\widehat{\Sigma}_{0,s}(v) \sub \widehat{\Sigma}_0(n)^c_s \simeq \frX^{\rm nr}_{n,s}$ is isomorphic to
$\sD^\dagger_{\widehat{\Sigma}_{0,s}(v),\Q}$, cf. \ref{lemma-prep1} and \ref{lemma-prep2}. Therefore, $(\widehat{f}(v)^c)_*(\sF_n)|_{\widehat{\Sigma}_{0,s}(v)^c}$ is a coherent module over $\tsD^\dagger_{n,n,\Q}|_{\widehat{\Sigma}_{0,s}(v)^c}$.
In particular, if $x$ is a smooth $\bbF_q$-rational point of $\frX_n$, which necessarily lies on $\widehat{\Sigma}_{0,s}(v)$ for some vertex $v$ which has distance $n$ to $v_0$, the sheaf $(\widehat{f}^c_n)_*(\sF_n)$ is coherent over $\tsD^\dagger_{n,n,\Q}$ at $x$.
This proves our claim for points of type (c). \qed
\end{para}

\vskip8pt

\subsection{$\bbG(n-1)^\circ$-analytic vectors in the dual of $H^0(\Sigma_1(n),\cO)^\dagger$}\label{compatibility}

We abbreviate

$$R:=D(G_0)^{\rm nr},~R_n:=D(\bbG^\circ(n),G_0)^{\rm nr} \hskip5pt {\rm and} \hskip5pt E_n:=H^0(\Sigma_1(n),\cO)^\dagger \;.$$

\vskip8pt

The rigid-analytic space $\Sigma_1$ is quasi-Stein with defining affinoid covering given by the $\Sigma_1(n)$. This implies that the restriction map $E_n\rightarrow E_{n-1}$ has dense image. Hence, the induced homomorphism of finitely generated $R_{n-1}$-modules

$$h_n: R_{n-1}\otimes_{R_n} E_n \lra E_{n-1}$$

\vskip8pt

has dense image. On the other hand, this image is closed by \ref{prop-cantop}, and so $h_n$ is surjective. We now show that $h_n$ is injective. To start with, the restriction map $E_n\rightarrow E_{n-1}$ is injective. Since the natural ring homomorphism $R\rightarrow R_n$ is flat, the map

$$R_{n-1}\otimes_R E_n \lra  R_{n-1}\otimes_R E_{n-1}$$

\vskip8pt

is injective. Hence, the injectivity of $h_n$ follows if we can show that, for each $n$, the natural map $$w_n: R_n\otimes_R E_n\car E_n$$ is a bijection. Since $w_n$ is obviously surjective, we will prove its injectivity in the following. Since the group $G_{n+1}$ acts $\bbG(n)^\circ$-analytically on the strong dual $(E_n)'_b$, we may dualise the isomorphism in \cite[3.4.5]{EmertonA}
and obtain the isomorphism

$$\cD^{\rm an}(\bbG(n)^\circ)\hat{\otimes}_{D(G_{n+1})} E_n\car E_n \;.$$

\vskip8pt

The latter factors through the natural surjection
$$ \hat{w}_n: R_n\hat{\otimes}_R E_n\rightarrow E_n$$ which must then be an isomorphism, too. It therefore remains to see that the natural map

$$ \iota: R_n\otimes_R E_n \lra R_n\hat{\otimes}_R E_n$$

\vskip8pt

is injective (and hence bijective). Fix a set of generators $e_1,...,e_s$ for the
$R_n$-module $E_n$. Consider the topological $R_n$-module $M:=R_n\otimes_{K,\pi} E_n$. Its underlying topological space is Hausdorff \cite[17.5]{NFA} and has a completion which is of compact type \cite[1.1.32]{EmertonA}. Let $N$ be its closed $R_n$-submodule generated by the finitely many elements $1\otimes \delta_{g} e_j-\delta_{g}\otimes e_j$ where $g$ runs through a finite set of topological generators for the group $G_0$. There is a natural quotient map $q: M/N\rightarrow  R_n\otimes_R E_n$. On the other hand, the natural map $\iota': M/N \ra \widehat{M/N}$ of $M/N$ into its Hausdorff completion is injective. Arguing as in \cite[3.4]{SchmSt}, the completion $\widehat{M/N}$ is the same as $R_n\hat{\otimes}_R E_n$. We obtain an injective map $\iota': M/N\rightarrow R_n\hat{\otimes}_R E_n$ which is seen to equal $\iota\circ q$. Hence $\iota$ must be injective, too.

\bibliographystyle{plain}
\bibliography{mybib}

\end{document}